\newcommand{\old}[1]{}
\newcommand{\SLE}{\operatorname{SLE}}
\theoremstyle{plain}
\newtheorem{thm}{Theorem}[section]
\newtheorem{cor}[thm]{Corollary}
\newtheorem{lem}[thm]{Lemma}
\newtheorem{prop}[thm]{Proposition}
\newtheorem{conj}[thm]{Conjecture}
\def\@rst #1 #2other{#1}
\newcommand\MR[1]{\relax\ifhmode\unskip\spacefactor3000 \space\fi
  \MRhref{\expandafter\@rst #1 other}{#1}}
\newcommand{\MRhref}[2]{\href{http://www.ams.org/mathscinet-getitem?mr=#1}{MR#2}}
\newcommand{\arXiv}[1]{\href{http://arxiv.org/abs/#1}{arXiv:#1}}
\newcommand{\arxiv}[1]{\href{http://arxiv.org/abs/#1}{#1}}
\theoremstyle{definition}
\newtheorem{defn}[thm]{Definition}
\newtheorem{remark}[thm]{Remark}
\numberwithin{equation}{section}
\newcommand{\dsb}{\begin{adjustwidth}{2.5em}{0pt}
\begin{footnotesize}}
\newcommand{\dse}{\end{footnotesize}
\end{adjustwidth}}
\newcommand{\ssb}{\begin{adjustwidth}{2.5em}{0pt}}
\newcommand{\sse}{\end{adjustwidth}}
\newcommand{\aryb}{\begin{eqnarray*}}
\newcommand{\arye}{\end{eqnarray*}}
\def\alb#1\ale{\begin{align*}#1\end{align*}}
\newcommand{\eqb}{\begin{equation}}
\newcommand{\eqe}{\end{equation}}
\newcommand{\eqbn}{\begin{equation*}}
\newcommand{\eqen}{\end{equation*}}
 \newcommand{\hypertop}[1]{\Hy@raisedlink{\hypertarget{#1}{}}}
\newcommand{\cI}{\protect\hyperlink{def-identification}{\mcl I}}
\newcommand{\cN}{\protect\hyperlink{def-theta-count}{\mcl N\!}}
\newcommand{\cB}{\protect\hyperlink{def-theta-count}{\mcl B}}
\newcommand{\cO}{\protect\hyperlink{def-theta-count}{\mcl O}}
\newcommand{\cC}{\protect\hyperlink{def-theta-count}{\mcl C}}
\newcommand{\cQ}{\protect\hyperlink{def-theta-count}{\mcl D}}
\newcommand{\dd}{\protect\hyperlink{def-theta-count}{\vec{d}}}
\newcommand{\dpt}{\protect\hyperlink{def-theta-count}{d}}
\newcommand{\ddt}{\protect\hyperlink{def-theta-count}{d^*}}
\newcommand{\dipt}{\protect\hyperlink{def-d-z}{d}}
\newcommand{\didt}{\protect\hyperlink{def-d-z}{d^*}}
\newcommand{\ddi}{\protect\hyperlink{def-d-z}{\vec{d}}}
\newcommand{\cR}{\protect\hyperlink{def-reduce}{\mcl R}}
\newcommand{\J}{\protect\hyperlink{def-J}{J}}
\newcommand{\CHI}{\protect\hyperlink{def-J}{\chi}}
\newcommand{\Jh}{J^\hb}
\newcommand{\BB}{\mathbbm}
\newcommand{\one}{\BB 1}
\newcommand{\E}{\BB E}
\newcommand{\CC}{\BB C}
\newcommand{\R}{\BB R}
\newcommand{\Z}{\BB Z}
\newcommand{\N}{\BB N}
\newcommand{\Q}{\BB Q}
\newcommand{\pvec}{\vec p}
\newcommand{\ol}{\overline}
\newcommand{\op}{\operatorname}
\newcommand{\eqD}{\overset{d}{=}}
\newcommand{\ep}{\varepsilon}
\newcommand{\rta}{\rightarrow}
\newcommand{\wt}{\widetilde}
\newcommand{\wh}{\widehat}
\newcommand{\mcl}{\mathcal}
\newcommand{\act}{\operatorname{a}}
\newcommand{\dup}{\operatorname{d}}
\newcommand*\tb[1]{{\mathsf{#1}}}
\newcommand\cb{\tb c}
\newcommand\hb{\tb h}
\newcommand\db{\tb d}
\newcommand\eb{\tb e}
\newcommand\co{\tb C}
\newcommand\ho{\tb H}
\newcommand\fo{\tb F}
\newcommand\so{\tb S}
\newcommand\Hc{{\ho\cb}}
\newcommand\Ch{{\co\hb}}
\newcommand\PP{\BB P}
\let\originalleft\left
\let\originalright\right
\renewcommand{\left}{\mathopen{}\mathclose\bgroup\originalleft}
\renewcommand{\right}{\aftergroup\egroup\originalright}
\title{\vspace*{-75pt}\mbox{Active~spanning~trees~with~bending~energy~on~planar~maps} and SLE-decorated Liouville quantum gravity for $\kappa > 8$}
\date{}
\author{
\begin{tabular}{c}Ewain Gwynne\\[-5pt]\small MIT\end{tabular}\quad
\begin{tabular}{c}Adrien Kassel\\[-5pt]\small ENS Lyon\end{tabular}\quad
\begin{tabular}{c}Jason Miller\\[-5pt]\small Cambridge\end{tabular}\quad
\begin{tabular}{c}David B.\! Wilson\\[-5pt]\small Microsoft Research\end{tabular}
}
\begin{document}

\maketitle
\thispagestyle{empty}

\begin{abstract}
  We introduce a two-parameter family of probability measures on
  spanning trees of a planar map. One of the parameters controls the
  activity of the spanning tree and the other is a measure of its
  bending energy.  When the bending parameter is 1, we recover the
  active spanning tree model, which is closely related to the critical
  Fortuin--Kasteleyn model.  A random planar map decorated by a
  spanning tree sampled from our model can be encoded by means of a
  generalized version of Sheffield's hamburger-cheeseburger bijection.
  Using this encoding, we prove that for a range of parameter values
  (including the ones corresponding to maps decorated by an active
  spanning tree), the infinite-volume limit of spanning-tree-decorated
  planar maps sampled from our model converges in the peanosphere
  sense, upon rescaling, to an $\SLE_\kappa$-decorated
  $\gamma$-Liouville quantum cone with $\kappa > 8$
  and $\gamma = 4/\sqrt\kappa \in (0,\sqrt 2)$.
\end{abstract}

\enlargethispage{50pt}
\tableofcontents


\section{Introduction}
\label{sec-intro}
\subsection{Overview}

We study a family of probability measures on spanning-tree-decorated rooted planar maps, which we
define in Section~\ref{sec-burger}, using a generalization of the Sheffield hamburger-cheeseburger model~\cite{shef-burger}.
This family includes as special cases maps decorated by a uniform spanning tree \cite{mullin-maps},
planar maps together with a critical Fortuin--Kasteleyn (FK) configuration~\cite{shef-burger},
and maps decorated by an active spanning tree~\cite{kassel-wilson-active}.
These models converge in a certain sense (described below) to
Liouville quantum gravity (LQG) surfaces decorated by Schramm--Loewner evolution ($\SLE_\kappa$)~\cite{schramm0},
and any value of~$\kappa>4$ corresponds to some measure in the family.
Although our results are motivated by SLE and LQG, our proofs are entirely self-contained,
requiring no knowledge beyond elementary probability theory.

Consider a spanning-tree-decorated rooted planar map $(M, e_0, T)$, where $M$
is a planar map, $e_0$ is an oriented root edge for $M$, and $T$ is a
spanning tree of $M$. Let $M^*$ be the dual map of $M$ and let
$T^*$ be the dual spanning tree, which consists of the edges of $M^*$
which do not cross edges of $T$.
Let~$Q$ be the quadrangulation whose vertex set is the union of the vertex sets of $M$ and $M^*$, obtained by identifying each vertex of $M^*$ with a point in the corresponding face of $M$, then connecting it by an edge (in $Q$) to each vertex of $M$ on the boundary of that face. Each face of~$Q$ is bisected by either an edge of $T$ or an edge of $T^*$ (but not both).
Let $\BB e_0$ be the oriented edge of~$Q$ with the same initial endpoint as~$e_0$ and which is the first edge in the clockwise direction from $e_0$ among all such edges.
As explained in, e.g.,~\cite[\S~4.1]{shef-burger}, there is a path $\lambda$ consisting of edges of (the dual of) $Q$
which snakes between the primal tree $T$ and dual tree $T^*$, starts and ends at $\BB e_0$, and hits each edge of $Q$ exactly once.
This path $\lambda$ is called the \textit{Peano curve\/} of $(M,e_0,T)$. See Figure~\ref{fig:map} for an illustration.

For Euclidean lattices, Lawler, Schramm, and Werner \cite{lsw-lerw-ust} showed that the uniform spanning tree Peano curve converges to $\SLE_8$.  For random tree-decorated planar maps with suitable weights coming from the critical Fortuin--Kasteleyn model, Sheffield \cite{shef-burger} proved a convergence result which, when combined with the continuum results of \cite{wedges}, implies that the Peano curve converges in a certain sense to a space-filling version of $\SLE_\kappa$ with $4<\kappa\leq 8$ on an LQG surface.  The measures on tree-decorated planar maps we consider generalize these, and converge in this same sense to $\SLE_\kappa$ with $4<\kappa<\infty$.

\begin{figure}[htb!]
\centering
\vspace{-9pt}
\includegraphics[width=.9\textwidth]{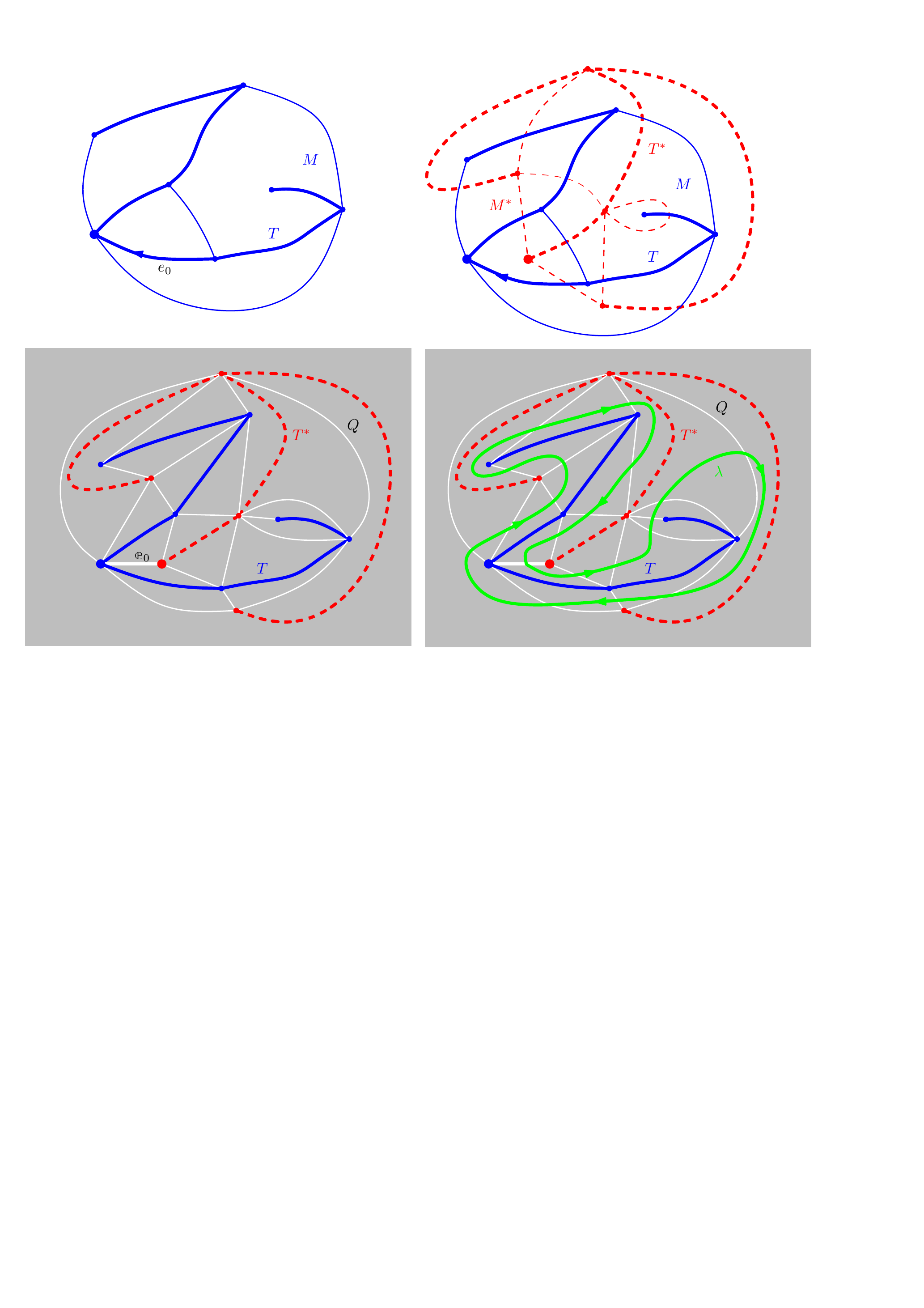}
\caption{Top left: a rooted map $(M, e_0)$ (in blue) with a spanning tree $T$ (heavier blue lines). Top right: the dual map $M^*$ (dashed red) with the dual spanning tree $T^*$ (heavier dashed red lines). Bottom left: the quadrangulation $Q$ (in white) whose vertices are the vertices of $M$ and $M^*$. Bottom right: the Peano curve $\lambda$ (in green), exploring clockwise. Formally, $\lambda$ is a cyclic ordering of the edges of $Q$ with the property that successive edges share an endpoint. The triple $(M, e_0, T)$ can be encoded by means of a two-dimensional simple walk excursion in the first quadrant with $2n$ steps, equivalently a word consisting of elements of the set $\Theta_0$ defined below which reduces to the empty word; see Figure~\ref{fig:word}. }\label{fig:map}
\end{figure}

For the measures on tree-decorated planar maps which we consider in this paper, the conjectured scaling limit of the Peano curve $\lambda$ is a \textit{whole-plane space-filling $\SLE_\kappa$ from $\infty$ to $\infty$\/} for an appropriate value of $\kappa > 4$. In the case when $\kappa \geq 8$, $\SLE_\kappa$ is space-filling \cite{schramm-sle}, and whole-plane space-filling $\SLE_\kappa$ from $\infty$ to $\infty$ is just a whole-plane variant of chordal $\SLE_\kappa$ (see~\cite[footnote~9]{wedges}). It is characterized by the property that for any stopping time $\tau$ for the curve, the conditional law of the part of the curve which has not yet been traced is that of a chordal SLE$_{\kappa}$ from the tip of the curve to $\infty$.  Ordinary $\SLE_\kappa$ for $\kappa \in (4,8)$ is not space-filling \cite{schramm-sle}.  In this case, whole-plane space-filling $\SLE_\kappa$ from $\infty$ to $\infty$ is obtained from a whole-plane variant of ordinary chordal $\SLE_\kappa$ by iteratively filling in the ``bubbles'' disconnected from $\infty$ by the curve. The construction of space-filling $\SLE_\kappa$ in this case is explained in~\cite[\S~1.2.3 and 4.3]{ig4}. For $\kappa > 4$, whole-plane space-filling $\SLE_\kappa$ is the Peano curve of a certain tree of $\SLE_{16/\kappa}$-type curves, namely the set of all flow lines (in the sense of~\cite{ig1,ig2,ig3,ig4}) of a whole-plane Gaussian free field (GFF) started from different points but with a common angle.

There are various ways to formulate the convergence of
spanning-tree-decorated planar maps toward space-filling
$\SLE_\kappa$-decorated LQG surfaces. One can embed the map $M$
into~$\CC$ (e.g.\ via circle packing or Riemann uniformization) and
conjecture that the Peano curve of $T$ (resp.\ the measure
which assigns mass $1/n$ to each vertex of $M$) converges in the
Skorokhod metric (resp.\ the weak topology) to the space-filling
$\SLE_\kappa$ (resp.\ the volume measure associated with the
$\gamma$-LQG surface). Alternatively, one can first try to define a
metric on an LQG surface (which has so far been accomplished only in
the case when $\gamma = \sqrt{8/3}$~\cite{qle,sphere-constructions,tbm-characterization,lqg-tbm1,lqg-tbm2,lqg-tbm3},
in which case it is isometric to some variant of the Brownian
map~\cite{legall-sphere-survey,miermont-survey}), and then try to show
that the graph metric on $M$ (suitably rescaled) converges in the
Hausdorff metric to an LQG surface.  Convergence in the former sense
has only recently been shown for ``mated-CRT maps'' using the
  Tutte (harmonic or barycentric) embedding \cite{gms-tutte}.
  It has not yet been proved for any other random planar map model, and convergence
in the latter (metric) sense has been established only for
uniform planar maps and slight variants thereof (which correspond to
$\gamma=\sqrt{8/3}$)~\cite{legall-uniqueness,miermont-brownian-map}.

\begin{figure}[ht!]
\begin{center}
\vspace{-30pt}
\begin{minipage}[t]{0.43\textwidth}
\includegraphics[width=\textwidth]{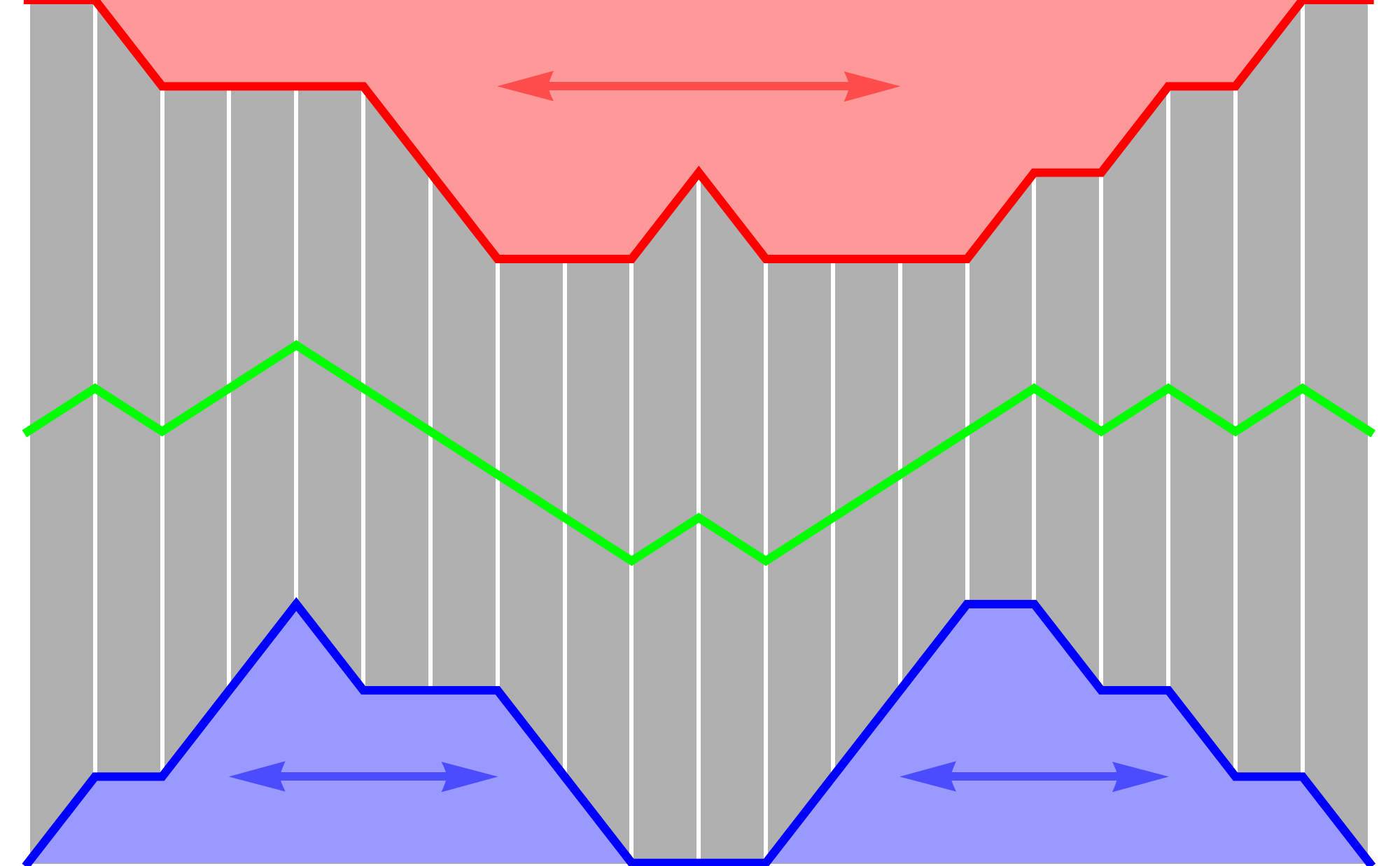}
\end{minipage}
\hspace{-8pt}
\begin{minipage}[t]{0.12\textwidth}
\vspace{-0.13\textheight}
\includegraphics[scale=0.9,trim=0mm 0mm 10mm 0mm]{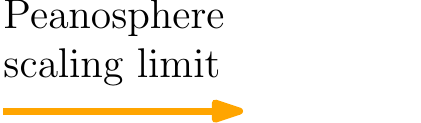}
\end{minipage}
\hspace{1pt}
\begin{minipage}[t]{0.43\textwidth}
\includegraphics[width=\textwidth]{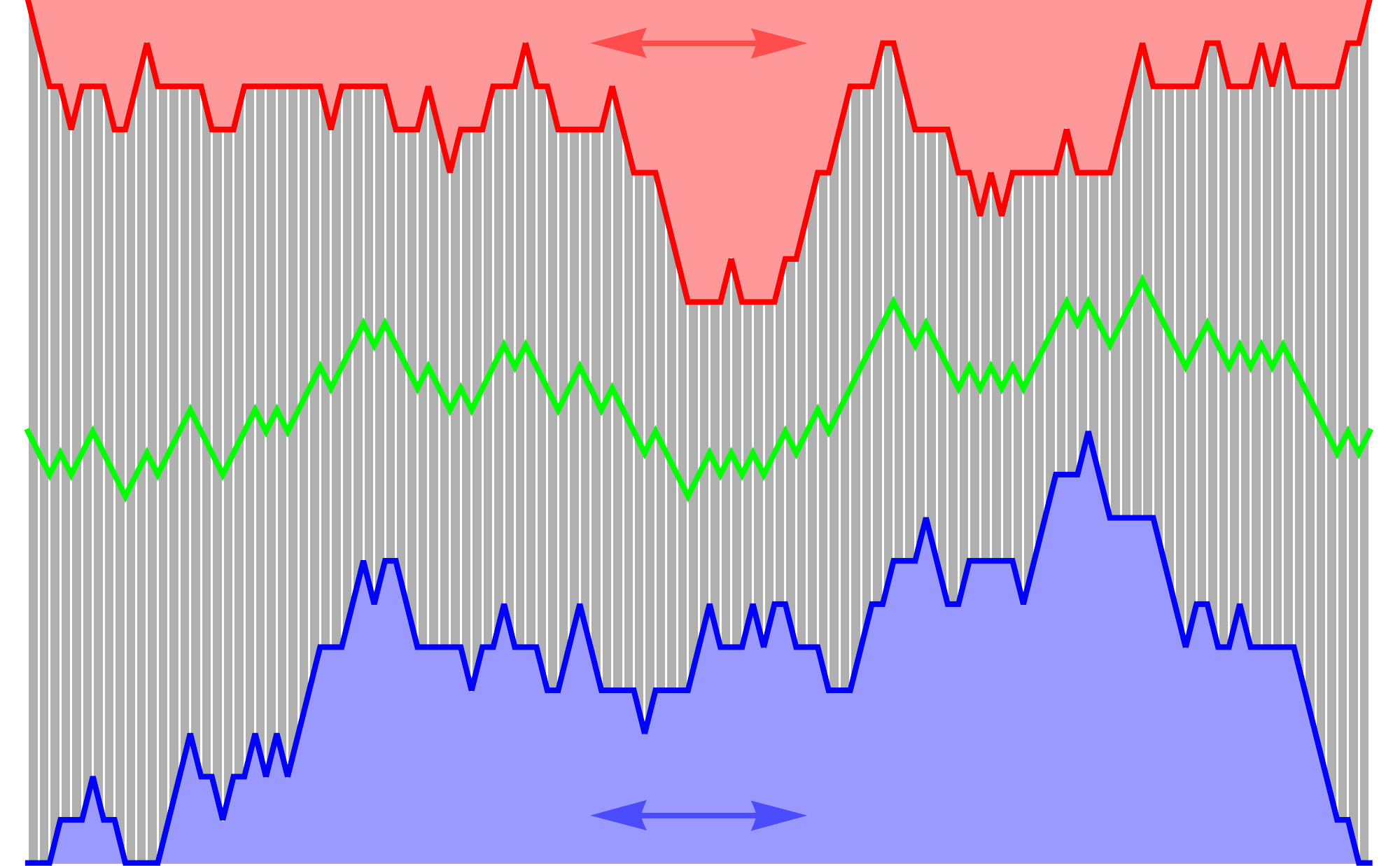}
\end{minipage}
\begin{minipage}[t]{0.39\textwidth}
\includegraphics[scale=0.85,page=1]{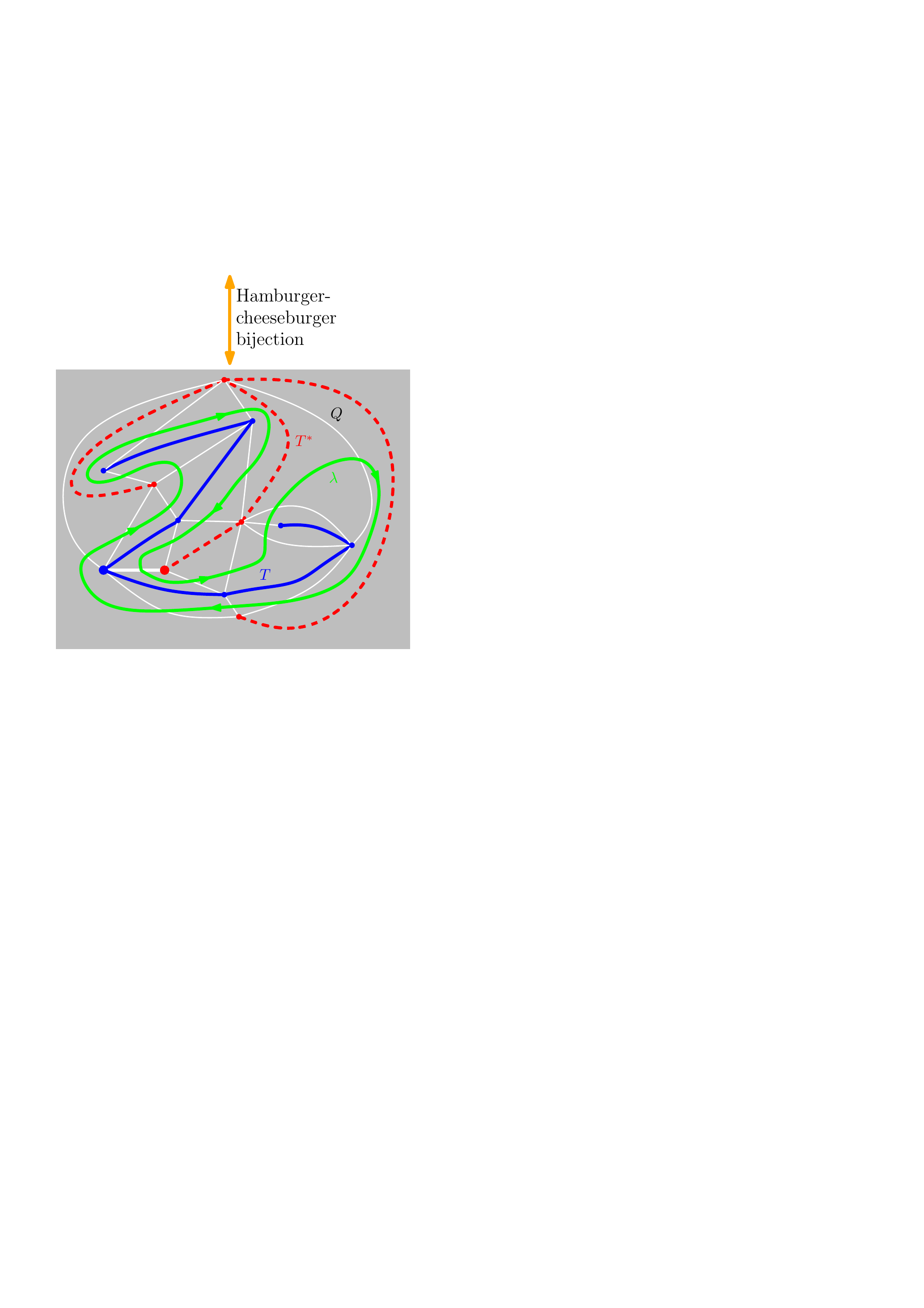}
\end{minipage}
\begin{minipage}[t]{0.12\textwidth}
\phantom{\includegraphics[scale=0.9,trim=0mm 0mm 10mm 0mm]{peanosphere-scaling-limit}}	
\end{minipage}
\begin{minipage}[t]{0.39\textwidth}
\hspace{-0.1\textwidth}
\includegraphics[scale=0.85,page=2]{tree-gluing}
\end{minipage}
\end{center}
\caption{
Shown on the top left are the contour functions for the discrete primal tree (blue) and dual tree (red) for the tree-decorated planar map on the bottom left using Sheffield's hamburger-cheeseburger bijection \cite{shef-burger}.
Vertices of the tree and dual tree correspond to blue and red horizontal segments in the contour representation; edges of the tree and dual tree correspond to matching up and down steps.
The white boundaries between quadrangles in the quadrangulation correspond to the white vertical segments between the blue and red contour functions; the bold white boundary in the quadrangulation, which marks the starting point of the Peano curve, corresponds to the left and right edges in the contour diagram.
The main contribution of the current paper is to establish an infinite volume version of the scaling limit result indicated by the orange horizontal arrow on the top.  That is, if one first takes a limit as the size of the map tends to infinity, then the contour functions for the infinite discrete pair of trees converge to a two-dimensional correlated Brownian motion which encode a pair of infinite continuum random trees (CRTs) --- this is convergence in the so-called \textit{peanosphere sense}.  The main result of \cite{wedges} implies that these two infinite CRTs glued together as shown (i.e.\ contracting the vertical white segments in addition to gluing along the horizontal arrows) determine their embedding into an $\SLE$-decorated LQG surface.  That is, if one observes the two contour functions on the top right, then one can measurably recover the LQG surface decorated with an $\SLE$ indicated on the bottom right and conversely if one observes the $\SLE$-decorated LQG surface on the bottom right then one can measurably recover the contour functions on the top right.  This allows us to interpret our scaling limit result as a convergence result to $\SLE$-decorated LQG.}
\label{fig:peanosphere}
\end{figure}

Here we consider a different notion of convergence, called convergence in the \textit{peanosphere sense}, which we now describe (see Figure~\ref{fig:peanosphere}).
This notion of convergence is based on the work~\cite{wedges}, which shows how to encode a $\gamma$-quantum cone (a certain type of LQG surface parametrized by $\CC$, obtained by zooming in near a point sampled from the $\gamma$-LQG measure induced by a GFF~\cite[\S~4.3]{wedges}) decorated by an independent whole-plane space-filling $\SLE_\kappa$ curve $\eta$ with $\kappa = 16/\gamma^2$ in terms of a correlated two-dimensional Brownian motion $Z$, with correlation $-\cos(4\pi/\kappa)$.  Recall that the contour function of a discrete, rooted plane tree is the function one obtains by tracing along the boundary of the tree starting at the root and proceeding in a clockwise manner and recording the distance to the root from the present vertex.  The two coordinates of the Brownian motion $Z$ are the contour functions of the $\SLE_{16/\kappa}$ tree (whose Peano curve is $\eta$) and that of the corresponding dual tree (consisting of GFF flow lines whose angles differ from the angles of the flow lines in the original tree by $\pi$).  Here, the distance to the root is measured using $\gamma$-LQG length.  On the discrete side, the entire random planar map is determined by the pair of trees.  One non-obvious fact established in \cite{wedges} is that the corresponding statement is true in the continuum: the entire $\gamma$-quantum cone and space-filling $\SLE$ turn out to be almost surely determined by the Brownian motion $Z$.  We say that the triple $(M, e_0, T)$ converges in the scaling limit (in the peanosphere sense) to a $\gamma$-quantum cone decorated by an independent whole-plane space-filling $\SLE_\kappa$ if the joint law of the contour functions (or some slight variant thereof) of the primal and dual trees~$T$ and~$T^*$ converges in the scaling limit to the joint law of the two coordinates of $Z$.

The present paper is a generalization of~\cite{shef-burger}, which was the first work to study peanosphere convergence. The paper~\cite{shef-burger} considered rooted critical FK planar maps. For $n\in\N$ and $q\geq 0$, a \textit{rooted critical FK planar map with parameter $q$ and size $n$\/} is a triple $(M,e_0,S)$ consisting of a planar map $M$ with~$n$ edges, a distinguished oriented root edge $e_0$ for $M$, and a set $S$ of edges of $M$, sampled with probability proportional to $q^{K(S)/2}$, where $K(S)$ is the number of connected components of $S$ plus the number of complementary connected components of $S$.  The conditional law of $S$ given $M$ is that of the self-dual FK model on~$M$~\cite{fk-cluster}. An \textit{infinite-volume rooted critical FK planar map with parameter~$q$\/} is the infinite-volume limit of rooted critical FK planar maps of size $n$ in the sense of Benjamini--Schramm~\cite{benjamini-schramm-topology}.

There is a natural (but not bijective) means of obtaining a spanning tree $T$ of $M$ from the FK edge set~$S$, which depends on the choice of $e_0$; see~\cite{bernardi-sandpile,shef-burger}.
It is conjectured~\cite{shef-burger,wedges} that the triple $(M, e_0, T)$ converges in the scaling limit to an LQG sphere with parameter $\gamma$ decorated by an independent whole-plane space-filling $\SLE_\kappa$ with parameters satisfying
\eqb \label{eqn-kappa-q}
\sqrt q = - 2\cos\left(\frac{4\pi}{\kappa} \right),\qquad \gamma = \frac{4}{\sqrt\kappa} \,.
\eqe

In~\cite[Thm.~2.5]{shef-burger}, this convergence is proven in the peanosphere sense in the case of infinite-volume FK planar maps. This is accomplished by means of a bijection, called the \textit{Sheffield hamburger-cheeseburger bijection}, between triples $(M, e_0, S)$ consisting of a rooted planar map of size $n$ and a distinguished edge set~$S$; and certain words in an alphabet of five symbols (representing two types of ``burgers'' and three types of ``orders''). This bijection is essentially equivalent for a fixed choice of $M$ to the bijection of~\cite{bernardi-sandpile}. The word associated with a triple $(M, e_0, S)$ gives rise to a walk on $\Z^2$ whose coordinates are (roughly speaking) the contour function of the spanning tree $T$ of $M$ naturally associated with $S$ (under the mapping mentioned in the previous paragraph) and the contour function of the dual spanning tree $T^*$ of the dual map $M^*$.  There is also an infinite-volume version of Sheffield's bijection which is a.s.\ well defined for infinite-volume FK planar maps. See~\cite{chen-fk} for a detailed exposition of this version of the bijection.

Various strengthenings of Sheffield's scaling limit result (including an analogous scaling limit result for finite-volume FK planar maps) are proven in~\cite{gms-burger-cone,gms-burger-local,gms-burger-finite}. See also~\cite{chen-fk,blr-exponents} for additional results on FK planar maps and~\cite{gwynne-miller-cle} for a scaling limit result in a stronger topology which is proven using the above peanosphere scaling limit results.

In~\cite{kassel-wilson-active}, a new family of probability measures on spanning-trees of (deterministic) rooted planar maps, which generalizes the law arising from the self-dual FK model, was introduced. As explained in that paper, the law on trees $T$ of a rooted map $(M,e_0)$ arising from a self-dual FK model is given by the distribution on all spanning trees of $M$ weighted by~$y^{\act(T)}$, where $y = \sqrt q +1$ and $\act(T) = \act(T,e_0) \in \N$ is the ``embedding activity'' of $T$ (which depends on the choice of root $e_0$; we will remind the reader of the definition later). It also makes sense to consider the probability measure on trees $T$ weighted by~$y^{\act(T)}$ for $y \in (0,1)$, so that trees with a lower embedding activity are more likely. The (unifying) discrete model corresponding to any $y\ge 0$ is called a $y$-active spanning tree.

In the context of the current paper, it is natural to look at a joint law on the triple $(M,e_0,T)$ such that the marginal on $(M,e_0)$ is the measure which weights a rooted planar map by the partition function of active spanning trees.  Indeed, as we explain later, with this choice of law, exploring the tree respects the Markovian structure of the map. We call a random triple sampled from this law a \textit{random rooted active-tree-decorated planar map with parameter $y\geq 0$ and size $n\in\N$}. The limiting case $y = 0$ corresponds to a spanning tree conditioned to have the minimum possible embedding activity, which is equivalent to a bipolar orientation on~$M$ for which the source and sink are adjacent~\cite{bernardi-polynomial} (see~\cite{kmsw-bipolar} for more on random bipolar-oriented planar maps). 

It is conjectured in~\cite{kassel-wilson-active} that for $y \in [0,1)$ the scaling limit of a random spanning tree~$T$ on large subgraphs of a two-dimensional lattice sampled with probability proportional to~$y^{\act(T)}$ is an $\SLE_\kappa$ with $\kappa \in (8,12]$ determined by
\eqb \label{eqn-kappa-y}
\frac{y-1}{2} = -\cos\left(\frac{4\pi}{\kappa} \right) \,.
\eqe
It is therefore natural to expect that the scaling limit of a rooted active-tree-decorated planar map is a $\gamma$-LQG surface decorated by an independent space-filling $\SLE_\kappa$ with $\kappa \in (8,12] $ as in~\eqref{eqn-kappa-y} and $\gamma = 4/\sqrt\kappa$.

We introduce in Section~\ref{sec-burger} a two-parameter family of probability measures on words in an alphabet of 8 symbols which generalizes the hamburger-cheeseburger model of~\cite{shef-burger}.
Under the bijection of~\cite{shef-burger}, each of these models corresponds to a probability measure on spanning-tree-decorated planar maps. One parameter in our model corresponds to the parameter $y$ of the active spanning tree, and the other, which we call~$z$, controls the extent to which the tree $T$ and its corresponding dual tree $T^*$ are ``tangled together''.  This second parameter can also be interpreted in terms of some form of \textit{bending energy\/} of the Peano curve which separates the two trees, in the sense of~\cite{bbg-bending,DiFrancesco}; see Remark~\ref{remark-bending}.  We prove an analogue of~\cite[Thm.~2.5]{shef-burger} for our model which in particular implies that active-tree-decorated planar maps for $0 \leq y < 1$ converge in the scaling limit to $\gamma$-quantum cones decorated by $\SLE_\kappa$ in the peanosphere sense for $\kappa \in (8,12]$ as in~\eqref{eqn-kappa-y} and $\gamma = 4/\sqrt\kappa$. If we also vary~$z$, the other parameter of our model, we obtain tree-decorated random planar maps which converge in the peanosphere sense to $4/\sqrt\kappa$-quantum cones decorated by space-filling $\SLE_\kappa$ for any value of $\kappa > 8$.

\begin{remark} \label{remark-bipolar}
 When $y = 0$, an active-tree-decorated planar map is equivalent to a uniformly random bipolar-oriented planar map~\cite{bernardi-sandpile}.
  In~\cite{kmsw-bipolar}, the authors use a bijective encoding of bipolar-oriented planar maps, which is not equivalent to the one used in this paper, to show that random bipolar-oriented planar maps with certain face degree distributions converge in the peanosphere sense to an $\SLE_{12}$-decorated $\sqrt{4/3}$-LQG surface, both in the finite-volume and infinite-volume cases (see also~\cite{ghs-bipolar} for a stronger convergence result). In the special case when $z = 1$, our Theorem~\ref{thm-all-S} implies convergence of infinite-volume uniform bipolar-oriented planar maps in the peanosphere sense, but with respect to a different encoding of the map than the one used in~\cite{kmsw-bipolar}.
More precisely, bipolar-oriented maps are encoded in~\cite{kmsw-bipolar} by a random walk in $\Z^2$ with a certain step distribution.  The encoding of bipolar-oriented maps by the generalized hamburger-cheeseburger bijection corresponds to a random walk in $\Z^2 \times \{0,1\}$ with a certain step distribution. Both of these walks converge in law to a correlated Brownian motion (ignoring the extra bit in the hamburger-cheeseburger bijection), and the correlations are the same, so we say that they both converge in the peanosphere sense. 
\end{remark}

\subsection{Basic notation}
\label{sec-basic}

We write $\N$ for the set of positive integers.
\vspace{6pt}

\noindent
For $a < b \in \R$, we define the discrete intervals $[a,b]_\Z \colonequals [a, b]\cap \Z$ and $(a,b)_\Z \colonequals (a,b)\cap \Z$.
\vspace{6pt}

\noindent
If $a$ and $b$ are two quantities, we write $a\preceq b$ (resp.\ $a \succeq b$) if there is a constant $C$ (independent of the parameters of interest) such that $a \leq C b$ (resp.\ $a \geq C b$). We write $a \asymp b$ if $a\preceq b$ and $a \succeq b$.

\subsection{Generalized burger model}
\label{sec-burger}

We now describe the family of words of interest to us in this paper. These are (finite or infinite) words which we read from left to right and which consist of letters representing burgers and orders which are matched to one another following certain rules. Several basic properties of this model are proved in Appendix~\ref{sec-prelim}.
Let
\begin{equation}
\Theta_0 \colonequals \left\{\hb,\cb,\ho,\co\right\},
\end{equation}
and let $\mcl W(\Theta_0)$ be the set of all finite words consisting of elements of $\Theta_0$.
The alphabet $\Theta_0$ generates a semigroup whose elements are words in $\mcl W(\Theta_0)$ modulo the relations
\begin{equation} \label{eqn-theta-relations}
\begin{split}
&\cb \co = \hb \ho = \emptyset \quad\quad\quad\text{(order fulfillment)} \\
&\cb \ho = \ho \cb,\quad \hb \co = \co \hb  .
\end{split}
\end{equation} 
Following Sheffield~\cite{shef-burger}, we think of $\hb,\cb,\ho,\co$ as representing a hamburger, a cheeseburger, a hamburger order, and a cheeseburger order, respectively. A hamburger order is fulfilled by the freshest available hamburger (i.e., the rightmost hamburger which has not already fulfilled an order) and similarly for cheeseburger orders.
We say that an order and a burger which cancel out via the first relation of~\eqref{eqn-theta-relations} have been \textit{matched}, and that the order has \textit{consumed\/} the burger. See Fig.~\ref{fig:word}~(a) for a diagram representing matchings in an example. 

We enlarge the alphabet by defining
\begin{equation}
\Theta \colonequals \Theta_0 \cup \left\{ \db, \eb, \fo, \so \right\},
\end{equation}
and let $\mcl W(\Theta)$ be the set of all finite words consisting of elements of $\Theta$. The alphabet $\Theta$ generates a semigroup whose elements are finite words consisting of elements of $\Theta$ modulo the relations~\eqref{eqn-theta-relations} and the additional relations
\begin{equation} \label{eqn-theta-relations'}
\begin{aligned}
 \hb \fo & = \hb \ho = \emptyset & \quad \cb \fo & = \cb \co = \emptyset \\
\hb \so &= \hb \co& \cb \so &= \cb \ho \\
\hb \db &= \hb \hb& \cb \db &= \cb \cb \\
\hb \eb &= \hb \cb& \cb \eb &= \cb \hb.
\end{aligned}
\end{equation}
In the language of burgers, the symbol $\fo$ represents a ``flexible order'' which requests the freshest available burger. The symbol $\so$ represents a ``stale order'' which requests the freshest available burger of the type \textit{opposite\/} the freshest available burger. The symbol $\db$ represents a ``duplicate burger'' which acts like a burger of the same type as the freshest available burger. The symbol $\eb$ represents an ``opposite burger'' which acts like a burger of the type opposite the freshest available burger.  The model of~\cite{shef-burger} includes the flexible order~$\fo$ but no other elements of $\Theta \setminus \Theta_0$.

If a symbol in $\left\{ \db, \eb, \fo, \so \right\}$ has been replaced by a symbol in $\Theta_0$ via one of the relations in~\eqref{eqn-theta-relations'}, we say that this symbol is \textit{identified by\/} the earlier symbol in the relation; and \textit{identified as\/} the symbol in $\Theta_0$ with which it has been replaced. 

Given a word $x = x_1 \cdots x_n \in \mcl W(\Theta)$, we write $|x| = n$ for the number of symbols in~$x$.

\begin{defn}\hypertop{def-reduce} \label{def-reduce}
A word in $\mcl W(\Theta)$ is called \textit{reduced\/} if all of its orders, $\db$'s, and $\eb$'s lie to the left of all of its $\hb$'s and $\cb$'s.  
In Lemma~\ref{prop-reduction} we show that for any finite word $x$, there is a unique reduced word which can be obtained from $x$ by applying the relations~\eqref{eqn-theta-relations} and~\eqref{eqn-theta-relations'}, which we call the \text{reduction\/} of $x$, and denote by $\cR(x)$.
\end{defn} 

An important property of the reduction operation (proved in Lemma~\ref{prop-associative}) is  
\[\cR(xy) =\cR(\cR(x) \cR(y)).\]
Note that for any $x\in \mcl W(\Theta)$, we have $|\cR(x)|\le |x|$.

\begin{defn}\hypertop{def-identification} \label{def-identification}
We write $x'=\cI(x)$ (the \textit{identification\/} of $x$) for the word with $|x'| = |x|$ obtained from $x$ as follows.
For each $i\in \{1,\ldots,|x|\}$,
if $x_i \in \Theta_0$, we set $x_i' = x_i$.
If $x_i \in \{\fo, \so\}$ and $x_i$  
is replaced by a hamburger order (resp.\ cheeseburger order) via~\eqref{eqn-theta-relations'} when we pass to the reduced word $\cR(x)$, we set $x_i' = \ho$ (resp.\ $x_i' = \co$).
If $x_i \in \{\db, \eb\}$ and $x_i$ is replaced with a hamburger (resp.\ cheeseburger) via~\eqref{eqn-theta-relations'} when we pass to the reduced word, we set $x_i' = \hb$ (resp.\ $x_i' = \cb$).  Otherwise, we set $x_i'=x_i$.
We say that a symbol $x_i$ is \textit{identified in the word $x$\/} if $x_i'$ is an element of $\Theta_0$, and \textit{unidentified in the word $x$\/} otherwise.
\end{defn}
 
For example,
\begin{equation*}  
\begin{aligned}
\cR\left(\cb \fo \db   \hb \so   \right) &=  \db \co  \hb  \\
\cI\left( \cb \fo \db   \hb \so  \right) &= \cb \co \db \hb \co .
\end{aligned}
\end{equation*}

Note that $\cR(\cI(x)) = \cR(x)$. 
Note also that any symbol $x_i$ which has a match when we pass to $\cR(x)$ is necessarily identified, but identified symbols are not necessarily matched.  Indeed, symbols in
$\Theta_0$ are always identified, and there may be $\so$, $\db$, and/or $\eb$ symbols in $x$ which are identified, but do not have a match.

\begin{defn} \label{def-theta-count} \hypertop{def-theta-count}
For $\theta\in \Theta$ and a finite word $x$ consisting of elements of $\Theta$, we write
\begin{align*}
\cN_{\theta}(x) &\colonequals \text{number of $\theta$-symbols in $x$}\\
\cN_{\theta_1|\cdots|\theta_k}(x) &\colonequals \cN_{\theta_1}(x)+\cdots+\cN_{\theta_k}(x)\\
\intertext{We also define}
\cB(x)&\colonequals \cN_{\hb|\cb|\db|\eb}(x)=\text{number of burgers in $x$}\\
\cO(x)&\colonequals \cN_{\ho|\co|\fo|\so}(x)=\text{number of orders in $x$}\\
\cC(x)&\colonequals \cB(x)-\cO(x)
\intertext{and}
\dpt(x)  &\colonequals \cN_{\hb}(x) - \cN_{\ho}(x) \\
\ddt(x) &\colonequals \cN_{\cb}(x) - \cN_{\co}(x)\\
\dd(x) &\colonequals \left(\dpt(x),\, \ddt(x)\right) \\
\cQ(x) &\colonequals \dpt(x)-\ddt(x)\,.
\end{align*}
\end{defn}

The reason for the notation $\dpt$ and $\ddt$ is that these quantities represent distances to the root edge in the primal and dual trees, respectively, in the construction of~\cite[\S~4.1]{shef-burger} (see the discussion just below). Note that these quantities are still defined even if $x$ has some symbols in $\left\{ \db, \eb, \fo, \so \right\}$.

Fig.~\ref{fig:word}~(b) shows a random-walk representation of $\dd$ computed on increasing prefixes of a finite (identified) word. This process will later be our main object of study.

If $x$ is a finite word consisting of elements of $\Theta$ with $\cR(x) = \emptyset$, then the bijection described in~\cite[\S~4.1]{shef-burger} applied to $\cI(x)$ uniquely determines a rooted spanning-tree-decorated map $(M, e_0, T)$ associated with $x$.

\begin{figure}[b!]
\captionsetup[subfigure]{position=below,justification=justified,singlelinecheck=false,labelfont=bf}
\begin{subfigure}{.48\textwidth}
\centering
\includegraphics[width=.9\textwidth]{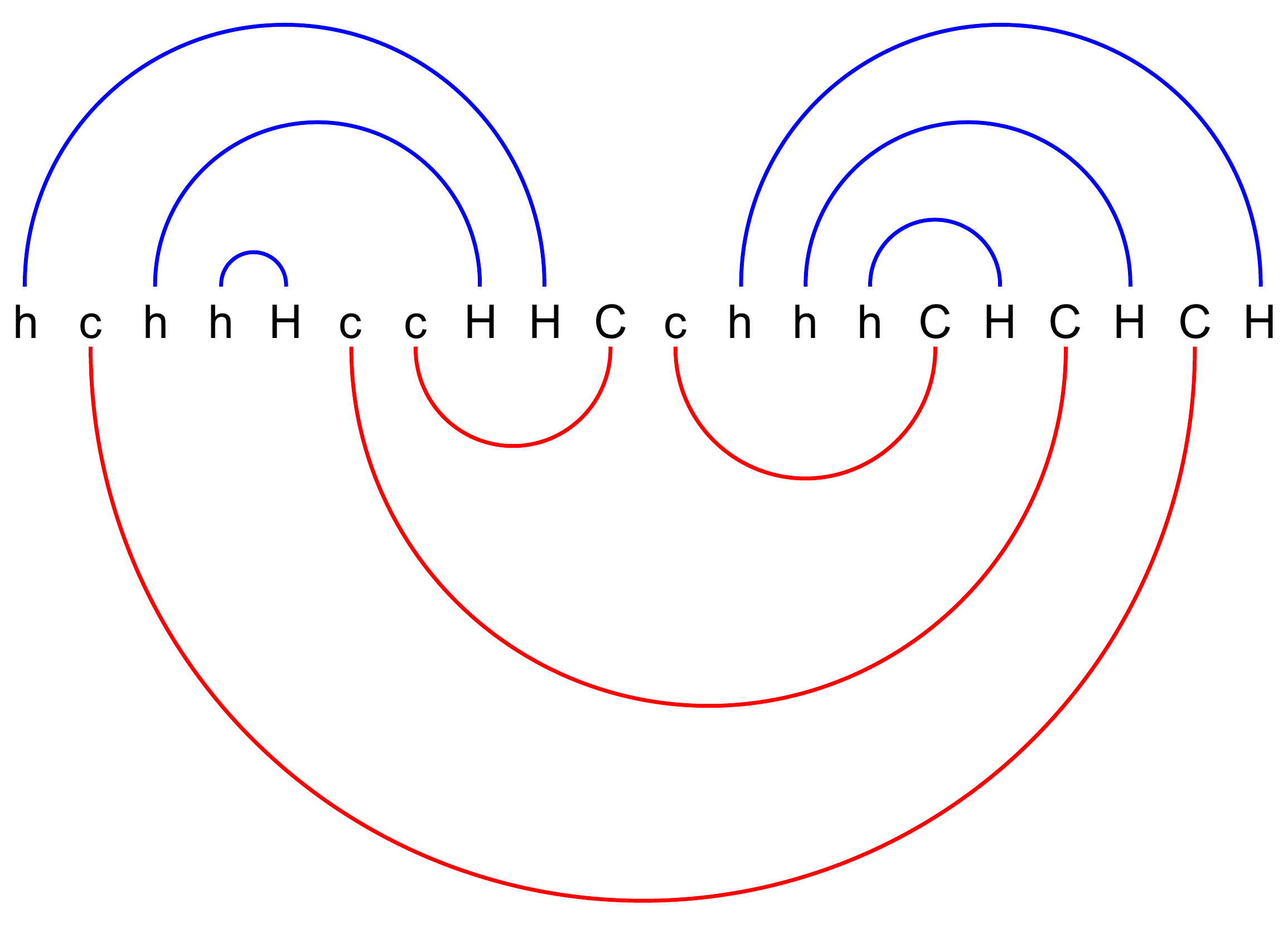}
\caption{The word associated to the decorated map of Fig.~\ref{fig:map}. The chords represent the matchings between orders and burgers that fulfill them.}
\end{subfigure}
\hfill
\begin{subfigure}{.48\textwidth}
\centering
\includegraphics[width=.68\textwidth]{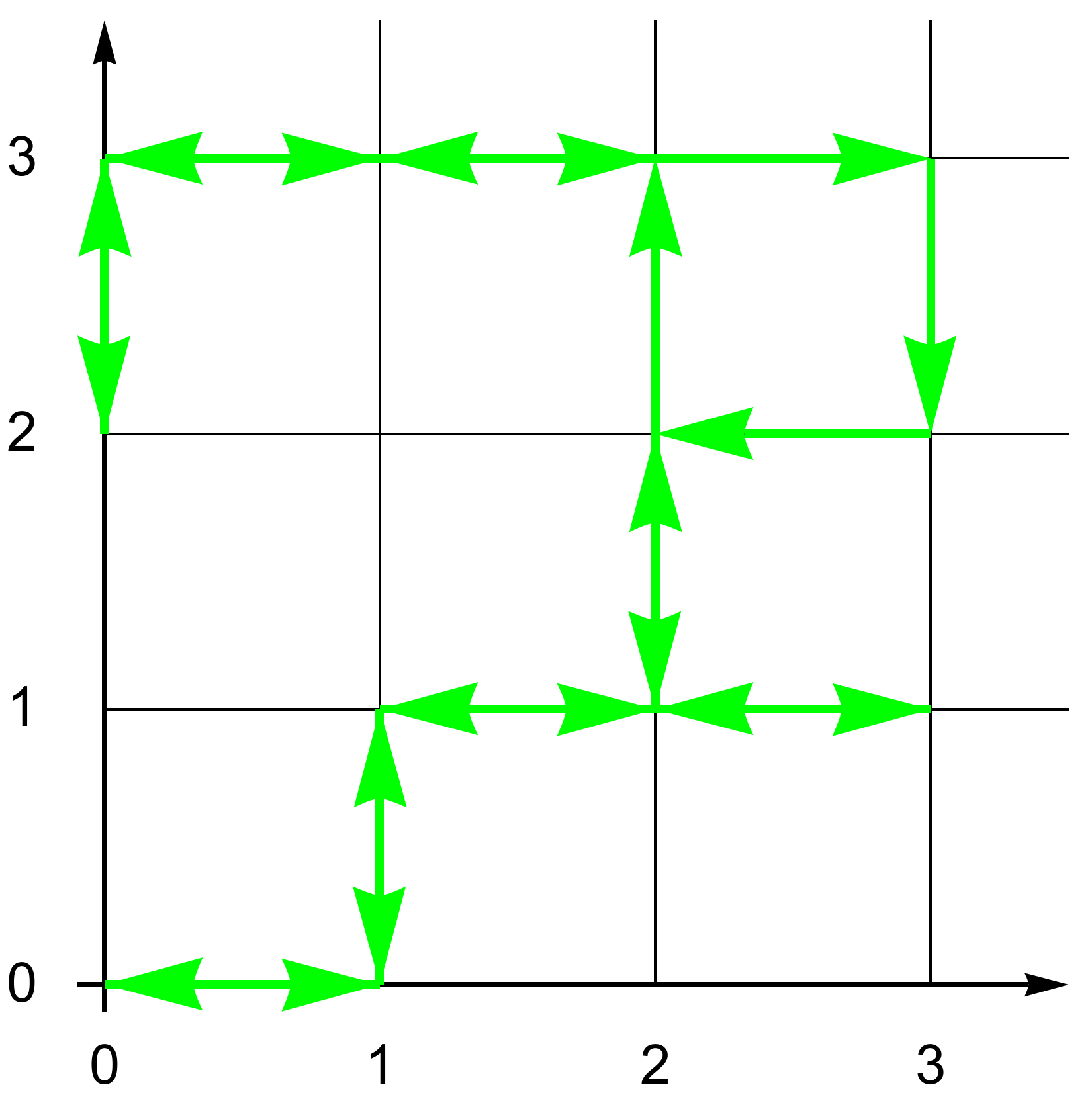}
\caption{The trace of the walk $(\dd_i)_{0\le i \le |x|}$ corresponding to the $\dd$ vector of increasing prefixes of the word $x=\hb\cb\hb\hb\ho\cb\cb\ho\ho\co\cb\hb\hb\hb\co\ho\co\ho\co\ho$.  The walk gives the number of available hamburgers and cheeseburgers as a function of time.}
\end{subfigure}
\caption{}\label{fig:word}
\end{figure}

We now describe the probability measure on words which gives rise to the law on spanning-tree-decorated planar maps which we are interested in.
Let
\alb
\mcl P \colonequals \left\{ (p_\fo, p_\so, p_\db, p_\eb ) \in [0,1]^4 \,:\, p_\fo + p_\so \leq 1 \quad \op{and} \quad p_\db + p_\eb < 1 \right\}.
\ale
For a vector $\pvec = (p_\fo, p_\so, p_\db,p_\eb ) \in \mcl P$, we define a probability measure $\PP = \PP_{\pvec}$ on $\Theta$ by
\begin{equation} \label{eqn-theta-prob}
\begin{aligned}
\PP\!\left(\fo\right) &= \frac{p_\fo}{2}, & \PP\!\left(\so\right) &= \frac{p_\so}{2}, & \PP\!\left(\ho \right) = \PP\!\left(\co\right) &= \frac{1-p_\fo-p_\so}{4} \\
\PP\!\left(\db\right) &= \frac{p_\db}{2}, & \PP\!\left(\eb \right) &= \frac{p_\eb}{2}, & \PP\!\left(\hb \right) = \PP\!\left(\cb\right) &= \frac{1-p_\db - p_\eb}{4}.
\end{aligned}
\end{equation}
Let $X= \cdots X_{-1} X_0 X_1 \cdots$ be a bi-infinite word whose symbols are i.i.d.\ samples from the probability measure~\eqref{eqn-theta-prob}. 
The identification procedure extends naturally to bi-infinite words, and we
show in Appendix~\ref{sec-prelim} that a.s.\ the bi-infinite identified word $X' = \cI(X)$ exists and contains only elements of $\Theta_0$.
 Furthermore, a.s.\ each order in $X$ consumes a burger and each burger in $X$ is consumed by an order. That is, each symbol $X_i$ in $X$ has a match~$X_{\phi(i)}$ which cancels it out, so that in effect the  reduced bi-infinite word $\cR(X)$ is a.s.\ empty.

\begin{defn} \label{def-X-identification}
We write $X' = \cdots X_{-1}' X_0' X_1' \cdots$ for the identification of the bi-infinite word~$X$.
\end{defn}

\begin{defn} \label{def-match}
For $i\in \Z$, we write $\phi(i) \in \Z$ for the index of the symbol matched to $X_i$ in the word $X$.
(From the above property, a.s.\ $\phi$ is an involution of $\Z$.)
\end{defn}

For $a < b \in \R$, we write
\eqb \label{eqn-X(a,b)}
X(a,b) \colonequals \cR(X_{\lfloor a \rfloor} \cdots X_{\lfloor b \rfloor}) \quad \op{and} \quad X'(a,b) \colonequals \cR(X_{\lfloor a \rfloor}' \cdots X_{\lfloor b \rfloor}').
\eqe

The aforementioned results of Appendix~\ref{sec-prelim} allow us to use the infinite-volume version of Sheffield's bijection~\cite{shef-burger} (which is described in full detail in~\cite{chen-fk}) to construct an infinite-volume rooted spanning-tree-decorated planar map $(M^\infty, e_0, T^\infty)$ from the identified word $X'$ of Definition~\ref{def-X-identification}. 

The set $\mcl P$ describes a four-parameter family of probability measures on $\Theta$, and hence a four-parameter family of probability measures on triples $(M^\infty, e_0, T^\infty )$. However, as we will see in Corollary~\ref{prop-identification-law} below, the law of $X'$ (and hence also the law of $(M^\infty, e_0,  T^\infty )$) depends only on the two parameters $p_\fo - p_\so$ and $p_\db - p_\eb$ (equivalently the parameters $y$ and $z$ defined in~\eqref{eqn-y-z}).

\begin{remark} \label{remark-generality}
The model described above includes three special symbols which are natural generalizations of the special order $\fo$ included in~\cite{shef-burger}: the order $\so$ has the opposite behavior as the order $\fo$, and the burgers $\db$ and $\eb$ behave in the same way as $\so$ and $\fo$ but with burgers in place of orders. As we will see in Section~\ref{sec-peano-model}, each of these symbols has a natural topological interpretation in terms of the spanning-tree-decorated rooted planar maps encoded by words consisting of elements of $\Theta$.
\end{remark}
 
\begin{remark} \label{remark-difference}
As we will see, the words we consider in this paper can behave in very different ways from the words considered in~\cite{shef-burger}, which do not include the symbols $\so,\db,$ or $\eb$. For example, in the setting of Section~\ref{sec-variable-SD}, where we allow $\so$'s and $\db$'s but not $\fo$'s or $\eb$'s, the net hamburger/cheeseburger counts $\dpt(X(1,n))$ and $\ddt(X(1,n)) $ in a reduced word tend to be \emph{negatively} correlated (Theorem~\ref{thm-variable-SD}) and the reduced word $X(1,n)$ tends to have \emph{more} symbols than the corresponding reduced word in the case when $p_\fo = p_\so =p_\db = p_\eb = 0$ (Lemma~\ref{prop-mean-mono}). The opposite is true in the setting of~\cite{shef-burger}. As another example, in the setting of Section~\ref{sec-variable-SD} we expect,  but do not prove, that the infinite reduced word $X(1,\infty)$ a.s.\ contains only finitely many unidentified $\so$'s and $\db$'s, whereas $X(1,\infty)$ a.s.\ contains infinitely many unidentified $\fo$'s in the setting of~\cite{shef-burger}  (Remark~\ref{remark-I-infinite}).
\end{remark} 

\subsection{Active spanning trees with bending energy}
\label{sec-peano-model}

Let $(M, e_0)$ be a (deterministic) planar map with $n$ edges with oriented root edge $e_0$. Let $M^*$ be the dual map of $M$ and let $(Q, \BB e_0)$ be the associated rooted quadrangulation (as described at the beginning of the introduction). In this subsection we introduce a probability measure on spanning trees of $M$ which is encoded by the model of Section~\ref{sec-burger}. 

There is a bijection between spanning trees on $M$ and \textit{noncrossing Eulerian cycles\/}
on the \textit{medial graph\/} of $M$, which is the planar dual graph of $Q$.
(An Eulerian cycle is a cycle which traverses each edge exactly once, vertices may be repeated.)
To describe this bijection, let $\lambda$ be a noncrossing Eulerian cycle on the dual of $Q$ starting and ending at $\BB e_0$. By identifying an edge of $Q^*$ with the edge of $Q$ which crosses it, we view $\lambda$ as a function from $[1,2n]_\Z$ to the edge set of $Q$. Each quadrilateral of $Q$ is bisected by one edge of $M$ and one edge of $M^*$, and $\lambda$ crosses each such quadrilateral exactly twice (one such quadrilateral is shown in gray in Figure~\ref{fig:active-sketch}). Hence $\lambda$ crosses each edge of $M$ and each edge of $M^*$ either 0 or 2 times. The set $T$ of edges of $M$ which are not crossed by $\lambda$ is a spanning tree of $M$ whose discrete Peano curve is $\lambda$ and the set $T^*$ of edges of $M^*$ not crossed by $\lambda$ is the corresponding dual spanning tree of $M^*$. Each quadrilateral of $Q$ is bisected by an edge of either $T$ or $T^*$ (but not both). This establishes a one-to-one correspondence between noncrossing Eulerian cycles on the dual of $Q$ starting and ending at $\BB e_0$ and spanning trees of $M$.

Now fix a noncrossing Eulerian cycle $\lambda$ as above. For $i\in [1,2n]_\Z$ we let $\ol e_i$ be the edge of $T \cup T^*$ which bisects the last quadrilateral of $Q$ crossed by $\lambda$ exactly once at or before time $i$,
if such a quadrilateral exists.
Let $e$ be an edge of $T\cup T^*$, and let $j,k\in[1,2n]_\Z$ be the first and second times respectively that $\lambda$ crosses the quadrilateral of $Q$ bisected by $e$.  Observe that if $e$ and $\ol e_{k-1}$ both belong to $M$ or both belong to $M^*$,
then in fact $e = \ol e_{k-1}$.  In this case,
we say that $e$ is of \textit{active type}; this definition coincides with ``embedding activity'',
as illustrated in Figure~\ref{fig:active-sketch}.
If $\ol e_{j-1}$ exists and $e$ and $\ol e_{j-1}$ either both belong to $M$ or both belong to $M^*$, then
we say that $e$ is of \textit{duplicate type}; duplicate edges are illustrated in Figure~\ref{fig-duplicate}, and Remark~\ref{remark-bending} below discusses their relevance.
Figure~\ref{fig:active-duplicate-map} shows the active and duplicate edges from Figure~\ref{fig:map}.
An edge can be of both active and duplicate type, or of neither active nor duplicate type. 

\begin{figure}[h!]
\centering
\hfill\includegraphics[width=.35\textwidth]{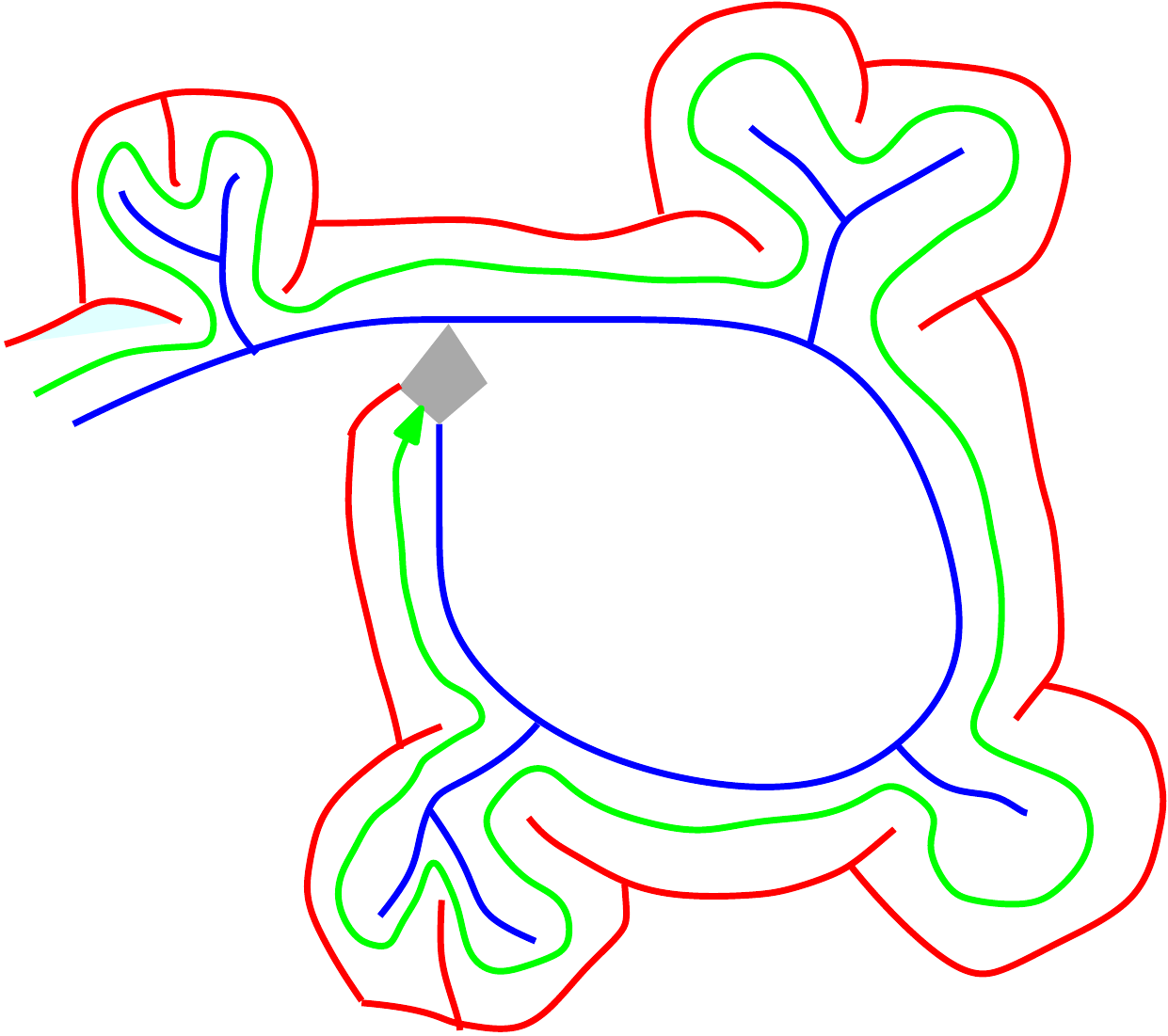}\hfill\raisebox{60pt}{$\rightarrow$}\hfill
\includegraphics[width=.35\textwidth]{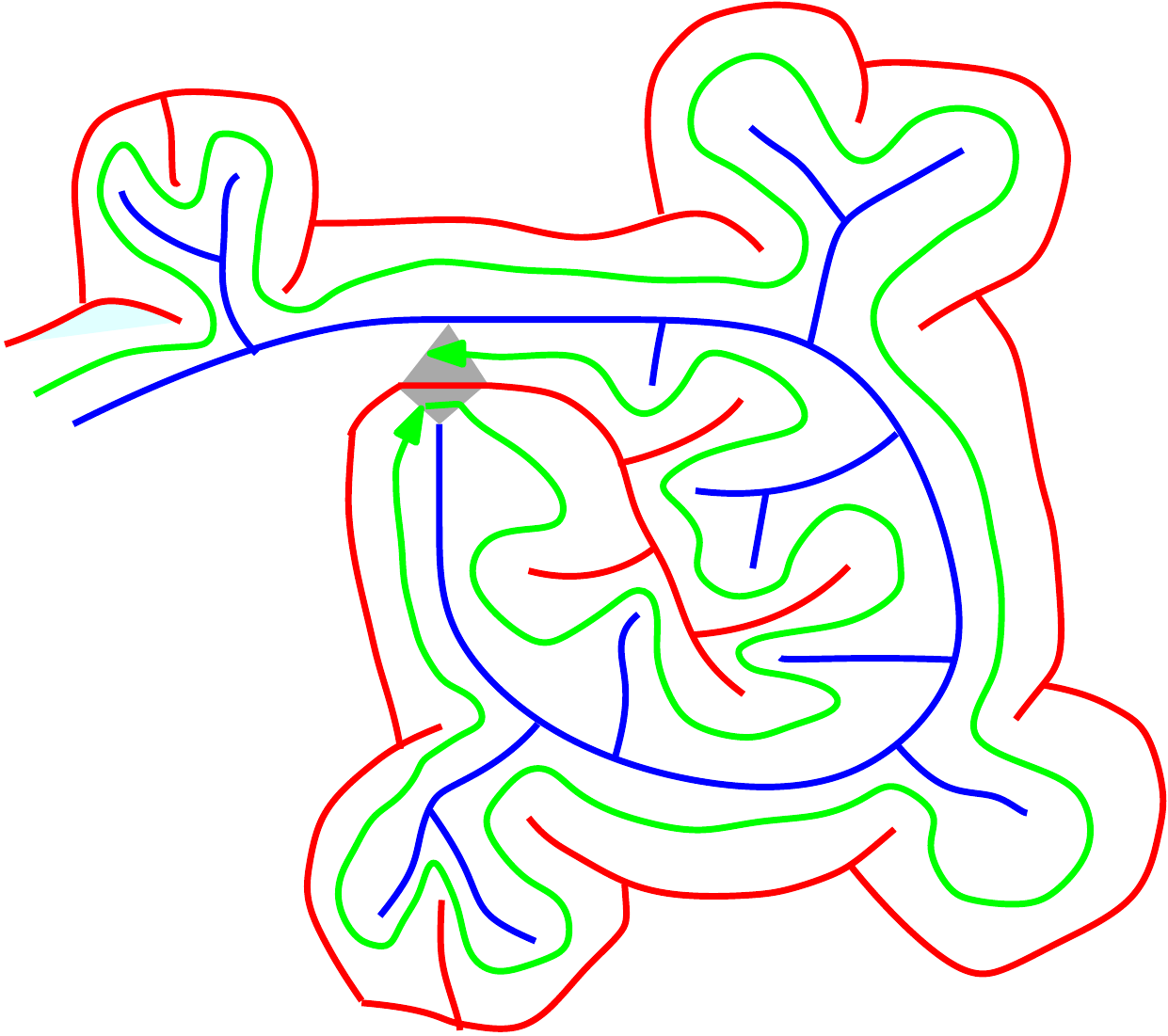}\hfill{}
\caption{ The Peano exploration process with the Peano path $\lambda$ in green,
  primal tree $T$ in blue, and dual tree $T^*$ in red.  When the gray
  quadrilateral is first encountered (left panel), the dual edge $e$ is
  forced to be present (otherwise there would be a primal cycle). 
  This means that $e$ is ``embedding active'', in the sense of~\cite{bernardi-sandpile} (see also~\cite{courtiel-activity}). 
  The Peano curve then explores the map in
  the region enclosed by the blue near-cycle and exits through the
  same (gray) quadrilateral (right panel).  Just before the second time the gray quadrilateral
  is encountered, the most recent quadrilateral encountered exactly once
  is the gray quadrilateral, so $\bar{e}_{k-1}=e$, so $e$ is of active type 
  as defined above.  This characterization of the embedding activity was explained in \cite{shef-burger}.
}
\label{fig:active-sketch}
\end{figure}

Following~\cite{bernardi-sandpile,shef-burger}, a noncrossing Eulerian cycle $\lambda$ based at $\BB e_0$ can be encoded by means of a word $x$ of length $2n$ consisting of elements of $\Theta_0$ with reduced word $\cR(x) = \emptyset$. The symbol $\hb$ (resp.\ $\ho$) corresponds to the first (resp.\ second) time that $\lambda$ crosses an edge of $M$, and the symbol $\cb$ (resp.\ $\co$) corresponds to the first (resp.\ second) time that $\lambda$ crosses an edge of $M^*$. The two times that $\lambda$ crosses a given quadrilateral of $Q$ correspond to a burger and the order which consumes it. With $\ol e_i$ as above, the burger corresponding to the quadrilateral bisected by $\ol e_i$ is the same as the rightmost burger in the reduced word
 $\cR(x_1\cdots x_{i})$;
the edge $\ol e_i$ is undefined if and only if this reduced word is empty.
Therefore edges of active type correspond to orders which consume the most recently added burger that has not yet been consumed, and edges of duplicate type correspond to burgers which are the same type as the the most recently added burger that has not yet been consumed.

\begin{figure}[t!]
\centering
\includegraphics[scale=.8]{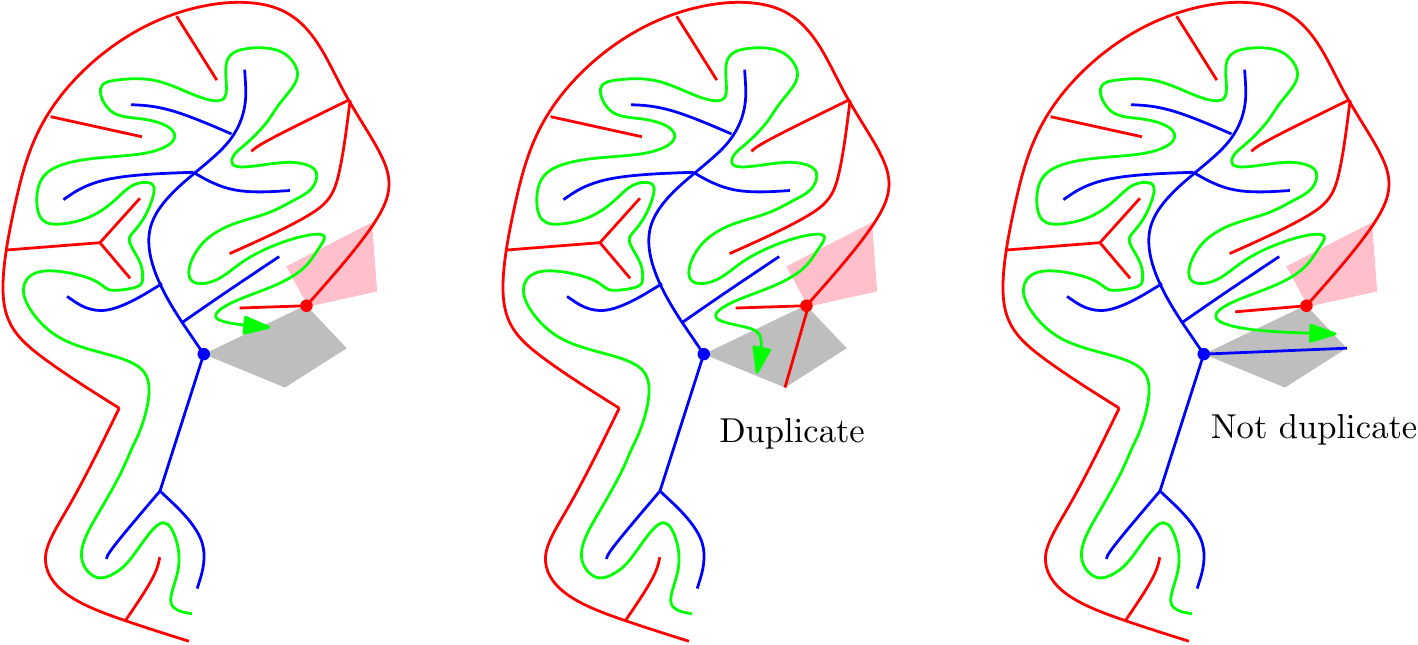}
\caption{Left: the two trees $T$ and $T^*$ and the Peano curve $\lambda$ (in green) run up until step $i-1$. The pink quadrilateral is the most recent one which has been crossed exactly once by $\lambda$ by
time $i-1$ and $\ol e_{i-1}$ is the red edge which bisects this quadrilateral.
The vertices $v_{i-1}^0$ and $v_{i-1}^1$ discussed in Remark~\ref{remark-bending} are shown in red and blue, respectively.
At step $i$, $\lambda$ will either bend away from the red vertex (middle) or toward the red vertex (right). In the former case the edge which bisects the grey quadrilateral belongs to the same tree as $\ol e_{i-1}$, so the edge $\lambda(i)$ is of duplicate type.
 }\label{fig-duplicate}
\end{figure}

\begin{figure}[b!]
\centering
\includegraphics[width=0.55\textwidth]{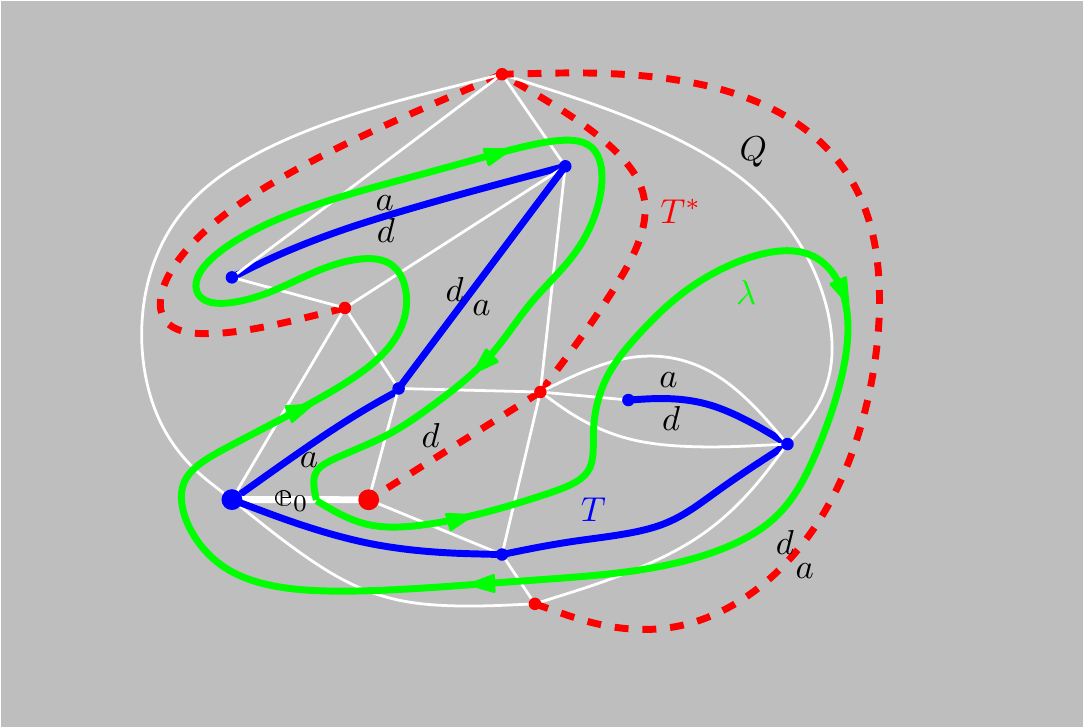}
\caption{The quadrangulation $Q$, the trees $T$ and $T^*$, and the Peano curve $\lambda$ constructed from the triple $(M, e_0, T)$ of Figure~\ref{fig:map} with active (resp.\ duplicate) edges of $T \cup T^*$ indicated with an $a$ (resp.\ a $d$). Edges can be both active and duplicate. The root edge $\BB e_0$ is indicated by a thicker white line. If we allow symbols in~$\Theta$ (rather than just $\Theta_0$), the triple $(M, e_0, T)$ can be encoded by many different words of length~$2n$; more precisely, it can be encoded by any word whose identification is the word shown in Figure~\ref{fig:word}.  The word corresponding to $(M, e_0, T)$ with the smallest possible number of elements of $\Theta_0$ is
$\hb\eb\eb\db\fo\eb\db\so\so\fo\cb\eb\db\db\so\fo\so\fo\so\fo$. In this word, $\fo$ (resp.\ $\so$) symbols correspond to the second time $\lambda$ crosses a quadrilateral of $Q$ bisected by an active (resp.\ inactive) edge and $\db$ (resp.\ $\eb$) symbols correspond to the first time $\lambda$ crosses a quadrilateral of $Q$ bisected by a duplicate (resp.\ non-duplicate) edge. The $\hb$ and $\cb$ symbols correspond to times $i$ for which the edge $\ol e_i$ is not defined.
}\label{fig:active-duplicate-map}
\end{figure}

For a spanning tree $T$ of $M$ rooted at $e_0$, we let $\act(T)$ be the number of active edges and~$\dup(T)$ the number of duplicate edges of its Peano curve $\lambda$.  These quantities depend on the choice of $e_0$. We define the partition function
\begin{equation}
\mathcal{Z}(M,e_0,y,z)=\sum_{\text{spanning tree}\, T}y^{\act(T)}z^{\dup(T)}\,,
\end{equation}
which gives rise, when $y,z\ge 0$, to a probability measure
\begin{equation}\label{eqn-spanning-tree-law}
\PP[T]=\frac{y^{\act(T)}z^{\dup(T)}}{\mathcal{Z}(M,e_0,y,z)}\,,
\end{equation}
on the set of spanning trees $T$ of $M$.
This distribution on spanning trees
satisfies a domain Markov property: for $i\in [1,2n]_\Z$, the conditional law of $\lambda|_{[i+1,2n]_\Z}$ given $\lambda|_{[1,i]_\Z}$ depends only on the set of quadrilaterals and half-quadrilaterals not yet visited by $\lambda$ together with the starting and ending points of the path $\lambda([1,i]_\Z)$. See Figure~\ref{fig:markov} for an illustration of the Markov property of the random decorated map. We call a spanning tree sampled from the above
distribution
an \textit{active spanning tree with bending energy}, for reasons which are explained in the remarks below.

\begin{figure}[t]
\centering
\includegraphics[width=\textwidth/2]{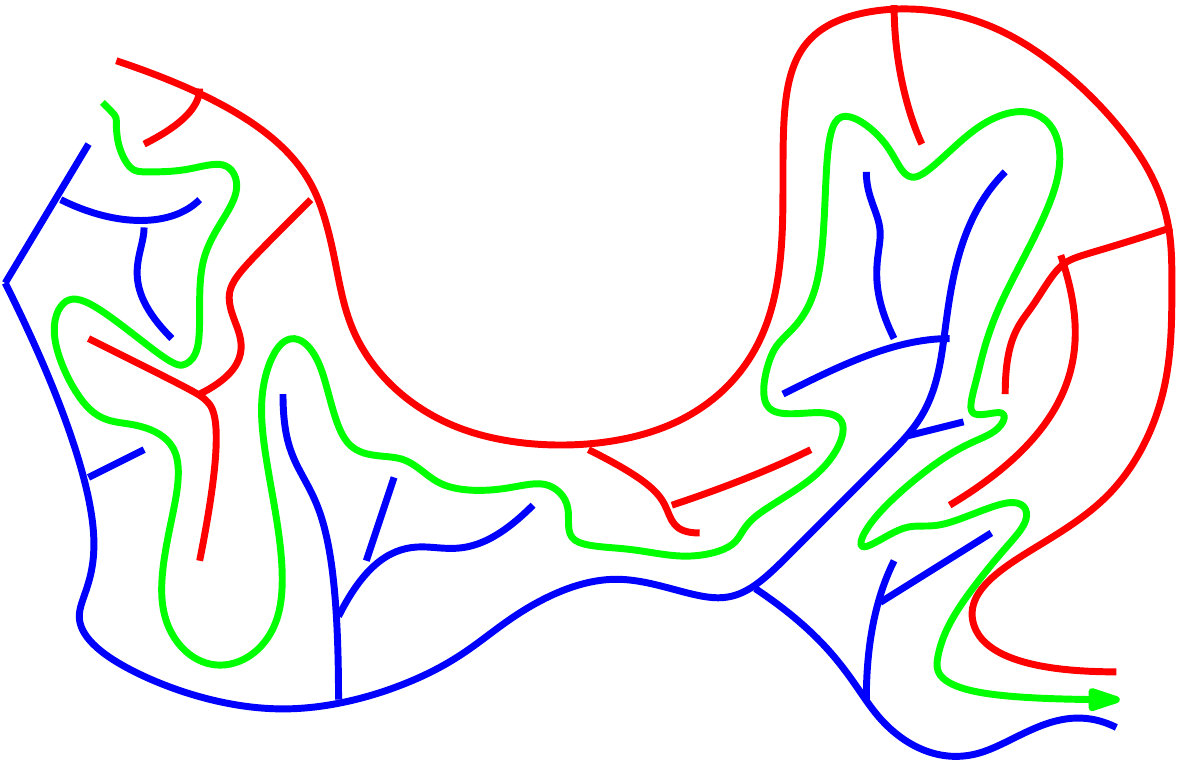}
\caption{Given an initial portion of the exploration process $\lambda$, the set of active and duplicate edges in the remainder of the graph only depends on the initial segment through its boundary.  Consequently,
the law of the decorated random map conditional on the part already drawn only depends on the white region with the boundary components consisting in the red and blue curves only visited on one side by the green curve.  }\label{fig:markov}
\end{figure}

\begin{remark} \label{remark-active-def}
There are other notions of ``active edge'', each of which gives rise to the same Tutte polynomial
\[T_M(x,y) = \sum_{\text{spanning trees $t$ of $M$}} x^{\text{\# internally active edges of $t$}}\,y^{\text{\# externally active edges of $t$}}\,.\]
The embedding activity illustrated in Figure~\ref{fig:active-sketch} differs from Tutte's original definition, but is more natural in this context because it has the domain Markov property, and has a simple characterization in terms of the hamburger-cheesburger model.  The embedding activity is similar to Bernardi's definition \cite[\S~3.1, Def.~3]{bernardi-sandpile}, but with ``maximal'' in place of ``minimal''.
The partition function
$\mathcal{Z}(M,e_0,y,1)=T_M(y,y)$ is the Tutte polynomial of $M$ evaluated at $(y,y)$.  In this case ($z=1$), the partition function is that of the active spanning tree model of~\cite{kassel-wilson-active}, which when $y \geq 1$ coincides with the partition function of the self-dual Fortuin--Kasteleyn (FK) model with parameter $q = (y-1)^2$.
\end{remark}

\begin{remark} \label{remark-bending}
To our knowledge, the notion of edges of duplicate type does not appear elsewhere in the literature. However, this notion can be viewed as a variant of the notion of \textit{bending energies\/} studied in~\cite{bbg-bending} and initially introduced in a different guise in~\cite{DiFrancesco}.
Suppose $(\mcl T, \BB v)$ is a rooted triangulation and $\ell$ is a non-self-crossing  oriented loop in the dual of $\mcl T$, viewed as a cyclically ordered sequence of distinct triangles in $\mcl T$. For each triangle $t$ hit by loop $\ell$, there is a single edge of $t$ which is not shared by the triangles hit by $\ell$ immediately before and after~$t$.
We say that $t$
points
 outward (resp.\ inward) if this edge is on the same (resp.\ opposite) side of the loop $\ell$ as the root vertex $\BB v$. The \textit{bending\/} of $\ell$ is the number of pairs of consecutive triangles which either both point outward or both face inward.
Such a pair of triangles corresponds to a time when loop $\ell$ ``bends around'' a vertex.
If we view the Peano curve $\lambda$ considered above as a loop in the triangulation whose edges are the union of the edges of the quadrangulation $Q$ and the trees $T$ and $T^*$, then the bending of $\lambda$ in the sense of~\cite{bbg-bending} is the number of consecutive pairs of symbols of one of the forms $\hb \hb$, $\ho\ho$, $\hb\ho$, $\ho\hb$, $\cb \cb$, $\co\co$, $\cb\co$, or $\co \cb$ in the identified word which encodes the triple $(M, e_0, T)$ under Sheffield's bijection.

The loops considered in~\cite{bbg-bending} are those arising from variants of the $O(n)$ model, so are expected to be non-space-filling in the limit (in fact they are conjectured to converge to CLE$_\kappa$ loops for $\kappa \in (8/3,8)$~\cite{shef-cle}). For space-filling loops (such as the Peano curve $\lambda$), it is natural to keep track of times when the loop returns to a triangle which shares a vertex with one it hits previously, and then bends toward the set of triangles which it has hit more recently.

Let us now be more precise about what this means. It is easy to see from Sheffield's bijection (and is explained in~\cite[\S~4.2]{chen-fk}) that two edges $\lambda(i)$ and $\lambda(j)$ for $i,j \in [1,2n]_\Z$ share a primal (resp.\ dual) endpoint if and only if the rightmost hamburger (resp.\ cheeseburger) in the reduced words $\cR(x_1\cdots x_i)$ and $\cR(x_1\cdots x_j)$ both correspond to the same burger in the original word $x$, or if these reduced words both have no hamburgers (resp.\ cheeseburgers). Consequently, an edge of duplicate type can be equivalently defined as an edge $\lambda(i)$ such that $\lambda$ crosses a quadrilateral of $Q$ for the first time at time $i$ and the following is true. Let $v_{i-1}^0$ and $v_{i-1}^1$ be the endpoints of $\lambda(i-1)$, enumerated in such a way that $\lambda$ hits an edge which shares the endpoint $v_{i-1}^0$ for the first time before it hits an edge which shares the endpoint $v_{i-1}^1$ for the first time. Then $\lambda$ turns toward $v_{i-1}^1$ at time $i$ (\textit{cf.}\ Figure~\ref{fig-duplicate}).
From this perspective, a time when $\lambda$ crosses a quadrilateral bisected by an edge of duplicate type can be naturally interpreted as a time when $\lambda$ ``bends away from the set of triangles which it has hit more recently''. Hence our model is a probability measure on planar maps decorated by an active spanning tree (in the sense of~\cite{kassel-wilson-active}), weighted by an appropriate notion of the bending of the corresponding Peano curve.
\end{remark}

The generalized burger model of Section~\ref{sec-burger} encodes a random planar map decorated by an active spanning tree with bending energy. The correspondence between the probability vector $\pvec = (p_\fo, p_\so, p_\db, p_\eb) \in \mcl P$ and the pair of parameters $(y,z)$ is given by
\begin{equation} \label{eqn-y-z}
y=\frac{1+p_\fo-p_\so}{1-p_\fo+p_\so}\quad\text{and}\quad z=\frac{1+p_\db-p_\eb}{1-p_\db+p_\eb}\,,
\end{equation}
i.e.
\begin{equation}
p_\fo - p_\so =\frac{y-1}{1+y}\quad\text{and}\quad p_\db-p_\eb=\frac{z-1}{1+z}\,.
\end{equation}
To see why this is the case, let $\dot X$ be a random word of length $2n$ sampled from the conditional law of $X_1 \cdots X_{2n}$ given $\{X(1,2n) = \emptyset\}$, where $X$ is the bi-infinite word from Section~\ref{sec-burger} (in the case when $p_\so = 1$, we allow the last letter of $\dot X$ to be a
flexible order, since a word whose orders are all $\so$'s cannot reduce to the empty word). Let $\dot X' \colonequals \cI(\dot X)$ and let $(M, e_0, T)$ be the rooted spanning-tree-decorated planar map associated with $\dot X'$ under the bijection of~\cite[\S~4.1]{shef-burger}.

\begin{lem} \label{prop-activity}
\begin{enumerate}
\item The law of $(M, e_0, T)$ is that of the uniform measure on edge-rooted, spanning-tree decorated planar maps weighted by $y^{\act(T)} z^{\dup(T)}$, with $y$ and $z$ as in~\eqref{eqn-y-z}. \label{item-activity-law}
\item The conditional law of $T$ given $(M,e_0)$ is given by the law~\eqref{eqn-spanning-tree-law}; and when $z = 1$, the law of $(M, e_0, T)$ is that of an active-tree-decorated planar map (as defined in the introduction). \label{item-activity-cond}
\item If $(M^\infty, e_0^\infty, T^\infty)$ is the infinite-volume rooted spanning-tree-decorated planar map associated with $X$ (by the infinite-volume version of Sheffield's bijection, see the discussion just after~\eqref{eqn-X(a,b)}), then $(M^\infty, e_0^\infty, T^\infty)$ has the law of the Benjamini-Schramm limit~\cite{benjamini-schramm-topology} of the law of $(M,e_0,T)$ as $n\rta\infty$. \label{item-activity-infinite}
\end{enumerate}
\end{lem}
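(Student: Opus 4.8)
The plan is to derive all three parts of the lemma from a single weight computation in the hamburger--cheeseburger encoding.

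For the first part, observe that since $\cR(\cI(x))=\cR(x)$ the word $\dot X'=\cI(\dot X)$ lies in $\mcl W(\Theta_0)$, has length $2n$, and satisfies $\cR(\dot X')=\emptyset$; by the bijection of \cite[\S 4.1]{shef-burger} it therefore encodes a well-defined rooted spanning-tree-decorated map $(M,e_0,T)$ with $n$ edges, and this is a bijection between such words $w$ and such triples. Consequently
\[
\PP[\dot X'=w]=\frac{\PP_\pvec[\cI(X_1\cdots X_{2n})=w]}{\sum_{w'}\PP_\pvec[\cI(X_1\cdots X_{2n})=w']}
\]
(the sums over all admissible $w'$, the degenerate case $p_\so=1$ being handled by the convention used in defining $\dot X$), so it suffices to show that $\PP_\pvec[\cI(X_1\cdots X_{2n})=w]$ equals a constant depending only on $n$ and $\pvec$ times $y^{\act(T)}z^{\dup(T)}$, with $y,z$ as in \eqref{eqn-y-z}. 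To evaluate this probability I would use that identification is prefix measurable: $\cI(X_1\cdots X_{2n})_i=\cI(X_1\cdots X_i)_i$, and conditionally on $\{\cI(X_1\cdots X_{i-1})=w_1\cdots w_{i-1}\}$ (equivalently on $\cR(X_1\cdots X_{i-1})=\cR(w_1\cdots w_{i-1})$) the event $\{\cI(X_1\cdots X_i)_i=w_i\}$ depends only on the independent symbol $X_i$ and has a probability $\rho_i$ determined by $w_i$ together with the rightmost burger (if any) of $\cR(w_1\cdots w_{i-1})$. By the chain rule $\PP_\pvec[\cI(X_1\cdots X_{2n})=w]=\prod_{i=1}^{2n}\rho_i$.

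The combinatorial core is then to read off $\rho_i$ from the identification relations \eqref{eqn-theta-prob}--\eqref{eqn-theta-relations'} and the dictionary of Section~\ref{sec-peano-model}, according to which an order fulfilled by the freshest unconsumed burger corresponds to an active edge, and a burger of the same type as the freshest unconsumed burger corresponds to a duplicate edge. One checks that at the position carrying the second crossing of an active edge, $w_i$ can be generated by $\ho/\co$ or by $\fo$, so $\rho_i=\tfrac14(1-p_\fo-p_\so)+\tfrac12 p_\fo=\tfrac14(1+p_\fo-p_\so)$; at an inactive edge, by $\ho/\co$ or by $\so$, giving $\rho_i=\tfrac14(1-p_\fo+p_\so)$; and symmetrically $\rho_i=\tfrac14(1\pm(p_\db-p_\eb))$ at burger positions according to whether the edge is of duplicate type, using $\db$ and $\eb$. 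Since $M$ has exactly $n$ orders and $n$ burgers, the product telescopes (via \eqref{eqn-y-z}) to $\PP_\pvec[\cI(X_1\cdots X_{2n})=w]=C_n(\pvec)\,y^{\act(T)}z^{\dup(T)}$, which is the first part. The second part is then immediate: summing over spanning trees $T$ of a fixed $(M,e_0)$ makes the $(M,e_0)$-marginal proportional to $\sum_T y^{\act(T)}z^{\dup(T)}=\mathcal Z(M,e_0,y,z)$, so the conditional law of $T$ given $(M,e_0)$ is $y^{\act(T)}z^{\dup(T)}/\mathcal Z(M,e_0,y,z)$, which is \eqref{eqn-spanning-tree-law}; and at $z=1$ one has $\mathcal Z(M,e_0,y,1)=T_M(y,y)$ and this is the $y$-active spanning tree law of \cite{kassel-wilson-active} (Remark~\ref{remark-active-def}), so $(M,e_0,T)$ is an active-tree-decorated planar map. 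I expect the case analysis for $\rho_i$ to be the main obstacle: one must verify that each of $\fo,\so,\db,\eb$ is identified as the intended $\Theta_0$-symbol in exactly the complementary context, and take care of the positions at which $\cR(w_1\cdots w_{i-1})$ is empty and at which the stale/opposite symbols fail to be identifiable; normalizing $\pvec$ first (keeping $p_\fo-p_\so$ and $p_\db-p_\eb$ fixed) trims the bookkeeping considerably.

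For the third part, recall from Appendix~\ref{sec-prelim} that a.s.\ every symbol of the bi-infinite word $X$ is matched and $\cR(X)=\emptyset$, so $(M^\infty,e_0^\infty,T^\infty)$ is well defined from $X$ via the infinite-volume bijection. I would then run the standard ``conditioning on a rare event'' argument: show that as $n\to\infty$ the law of $\dot X$, regarded as a bi-infinite word marked at its first symbol (the root corner), converges in the local topology to the law of the i.i.d.\ word $X$ marked at position $0$ --- that is, the burger-walk bridge from the origin to the origin converges locally to the free burger walk. This requires only mild estimates on $\PP_\pvec[X(1,m)=\emptyset]$ and $\PP_\pvec[X(m,2n)=\emptyset]$ ensuring the conditioning is asymptotically harmless on any fixed window, exactly as in the treatment of the \cite{shef-burger} model carried out in \cite{chen-fk}. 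Since the restriction of the decorated map to a bounded neighborhood of the root is a measurable function of the word on a bounded window around position $1$ (respectively $0$) --- locality of Sheffield's bijection, also from \cite{chen-fk} --- this local convergence of words upgrades to Benjamini--Schramm convergence of $(M,e_0,T)$ to $(M^\infty,e_0^\infty,T^\infty)$. Beyond invoking \cite{chen-fk}, the only additional point is to check that the new symbols $\so,\db,\eb$ do not disturb these estimates.
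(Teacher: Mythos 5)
Your approach for part (1) is correct but genuinely different from the paper's. You compute $\PP_\pvec[\cI(X_1\cdots X_{2n})=w]$ directly as a product $\prod_{i}\rho_i$ via the chain rule, using the fact that the identification is prefix-measurable and that, conditionally on $\cR(w_1\cdots w_{i-1})$, the event $\{\cI(X_1\cdots X_i)_i=w_i\}$ depends only on the independent symbol $X_i$. The paper instead writes down $\PP(\dot X = x)$ and $\PP(\dot X = x\mid \dot X'=\cI(x))$ separately (the latter via an explicit resampling rule on the sets $A^\bullet,\wt A^\bullet,D^\bullet,\wt D^\bullet$) and takes the ratio. Your route is more elementary and avoids introducing the resampling measure, at the cost of a more delicate step-by-step case analysis; the paper's route front-loads the combinatorics into the definition of the resampling sets. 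Your per-step probabilities $\rho_i=\tfrac14(1\pm(p_\fo-p_\so))$ at order positions and $\tfrac14(1\pm(p_\db-p_\eb))$ at burger positions are correct, and the telescoping via \eqref{eqn-y-z} is the right conclusion.

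Two points to tighten. First, you flag but do not resolve the positions where $\cR(w_1\cdots w_{i-1})=\emptyset$, at which $X_i$ is forced to lie in $\Theta_0$ and $\rho_i=\tfrac14(1-p_\db-p_\eb)$ rather than one of your four cases. For your conclusion (and, implicitly, for the paper's \eqref{eqn-tree-law2}, where the sets $D^\bullet,\wt D^\bullet$ exclude such positions) you need the number of such positions to depend only on $n$. In fact, for any word encoding a rooted spanning-tree-decorated map under Sheffield's bijection, $\cR(x_1\cdots x_i)\neq\emptyset$ for every $0<i<2n$ (equivalently, the induced noncrossing matching has a single top-level arch), so there is exactly one forced position, namely $i=1$; this is a property of the bijection worth stating explicitly, since it is what makes the extra factor a constant depending only on $n$ and $\pvec$. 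Second, the suggestion to ``normalize $\pvec$ first, keeping $p_\fo-p_\so$ and $p_\db-p_\eb$ fixed'' relies on Corollary~\ref{prop-identification-law}, which in the paper is \emph{deduced from} Lemma~\ref{prop-activity}; using it here would be circular. You present it only as a bookkeeping shortcut, so the logic of your argument does not depend on it, but it should be dropped or replaced by a direct observation. Your treatment of parts (2) and (3) matches the paper's: part (2) is immediate from part (1), and part (3) follows the conditioned-to-bridge local-limit argument from \cite{shef-burger} as elaborated in \cite{chen-fk}, with the only new point being that the stale/duplicate/opposite symbols do not affect the required estimates (which is where the Appendix~\ref{sec-prelim} results enter).
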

\begin{proof}
Throughout the proof we write $a \propto b$ if $a/b$ is a constant depending only on $n$ and $\pvec$. 
Let $x \in \mcl W(\Theta )$ be a word of length $2n$ which satisfies $\cR(x) = \emptyset$. Note that $x$ must contain $n$ burgers and $n$ orders. Then in the notation of Definition~\ref{def-theta-count},
\begin{multline} \label{eqn-tree-law1}
\PP\!\left(\dot X = x \right) \propto \\
  \left(\frac{2p_\fo}{1-p_\fo-p_\so} \right)^{\cN_{\fo}(x)} \left( \frac{2p_\so}{1-p_\fo-p_\so} \right)^{\cN_{\so}(x)} \left( \frac{2p_\db}{1-p_\db-p_\eb} \right)^{\cN_{\db}(x)} \left(\frac{2p_\eb}{1-p_\db-p_\eb} \right)^{\cN_{\eb}(x)} .
\end{multline} 

Let $A^\ho$ (resp.\ $\wt A^\ho$) be the set of $i\in [1,2n]_\Z$ for which $\dot X_i'$ is a hamburger order matched to a hamburger which is (resp.\ is not) the rightmost burger in $\dot X'(1,i-1)$ (notation as in~\eqref{eqn-X(a,b)}). Let $D^\ho$ (resp.\ $\wt D^\ho$) be the set of $i\in [2,2n]_\Z$ for which $\dot X_i'$ is a hamburger, $\dot X'(1, i-1) \neq\emptyset$, and the rightmost burger in $\dot X'(1,i-1)$ is a hamburger (resp.\ cheeseburger). Define $A^\co$, $\wt A^\co$, $D^\co$, and $\wt D^\co$ similarly but with hamburgers and cheeseburgers interchanged. Then
\eqbn
\act(T) = \# A^\ho + \# A^\co \quad \op{and} \quad \dup(T) = \# D^\ho +\# D^\co.
\eqen
If we condition on $\dot X' $, then we can re-sample $\dot X$ as follows. For each $i \in A^\ho$, independently sample $\dot X_i \in \{\ho, \fo\}$ from the probability measure $\PP( \ho) = (1-p_\fo - p_\so)/(1+ p_\fo -p_\so )$, $\PP(\fo) = 2 p_\fo/(1 + p_\fo-p_\so )$. For each $i \in \wt A^\ho$, independently sample $\dot X_i \in \{\ho, \so\}$ from the probability measure $\PP( \ho) = (1-p_\fo - p_\so)/(1 -p_\fo+p_\so)$, $\PP(\so) = 2 p_\so/(1 -p_\fo+p_\so )$. For each $i \in D^\ho$, independently sample $\dot X_i \in \{\hb, \db\}$ from the probability measure $\PP( \hb) = (1-p_\db - p_\eb)/(1+ p_\db - p_\eb)$, $\PP(\db) = 2 p_\db/(1+p_\db - p_\eb )$. For each $i \in \wt D^\ho$, independently sample $\dot X_i \in \{\hb, \eb\}$ from the probability measure $\PP( \hb) = (1-p_\db - p_\eb)/(1-p_\db +p_\eb)$, $\PP(\eb) = 2 p_\eb/(1- p_\db +p_\eb )$. Then do the same for $A^\co$, $\wt A^\co$, $D^\co$, and $\wt D^\co$ but with hamburgers and cheeseburgers interchanged.  

The above resampling rule implies that with $x$ as above,
\begin{align} \label{eqn-tree-law2}
\PP\!\left(\dot X = x \,|\, \dot X' = \cI(x) \right)
&\propto \left(\frac{1 - p_\fo + p_\so}{1 +p_\fo - p_\so} \right)^{\act(T)} \left(\frac{1 - p_\db + p_\eb}{1 + p_\db - p_\eb} \right)^{\dup(T)} \left(\frac{2p_\fo}{1-p_\fo-p_\so} \right)^{\cN_{\fo}(x)} \notag \\
&\qquad \times \left( \frac{2p_\so}{1-p_\fo-p_\so} \right)^{\cN_{\so}(x)} \left( \frac{2p_\db}{1-p_\db-p_\eb} \right)^{\cN_{\db}(x)} \left(\frac{2p_\eb}{1-p_\db-p_\eb} \right)^{\cN_{\eb}(x)}.
\end{align}
By dividing~\eqref{eqn-tree-law1} by~\eqref{eqn-tree-law2}, we obtain
\eqbn
\PP\!\left( \dot X' = \cI(x) \right) \propto y^{\act(T)} z^{\dup(T)}.
\eqen
Therefore, the probability of any given realization of $(M, e_0, T)$ is proportional to $y^{\act(T)} z^{\dup(T)} $, which gives assertion~\ref{item-activity-law}. Assertion~\ref{item-activity-cond} is an immediate consequence of assertion~\ref{item-activity-law}. Assertion~\ref{item-activity-infinite} follows from the same argument used in~\cite[\S~4.2]{shef-burger} together with the results of Appendix~\ref{sec-prelim}.
\end{proof}

\begin{remark} \label{remark-duality}
The model described in Lemma~\ref{prop-activity} is self dual in the sense that the law of $(M , e_0 , T)$ is the same as the law of $(M^* , e_0^* , T^*)$, where $M^*$ is the dual map of $M$, $e_0^*$ is the edge of $M^*$ which crosses $e_0$, and $T^*$ is the dual spanning tree (consisting of edges of $M^*$ which do not cross edges of $T$). 
This duality corresponds to the fact that the law of the inventory accumulation model of Section~\ref{sec-burger} is invariant under the replacements $\hb \leftrightarrow \cb$ and $\co \leftrightarrow \ho$. It may be possible to treat non-self dual variants of this model in our framework by relaxing the requirement that $\PP(\hb) = \PP(\cb)$ and $\PP(\co) = \PP(\ho)$ in~\eqref{eqn-theta-prob}, but we do not investigate this.  We remark that there are bijections and Brownian motion scaling limit results analogous to the ones in this paper for other random spanning-tree-decorated map models which do not possess this self duality; see, e.g.,~\cite{kmsw-bipolar,lsw-schnyder-wood}. 
\end{remark} 

We end by recording the following corollary of Lemma~\ref{prop-activity}, which says that the law of the identification of the word $X$ (and therefore the law of the associated tree-decorated map) depends on the parmaeter $\pvec$ only via the quantities $y$ and $z$ of~\eqref{eqn-y-z}. 
\begin{cor} \label{prop-identification-law}
Suppose $\pvec = (p_\fo,p_\so,p_\db,p_\eb)$ and $\wt{\PP} = (\wt p_\fo,\wt p_\so, \wt p_\db,\wt p_\eb)$ are two vectors in $\mcl P$ which satisfy $p_\fo-p_\so = \wt p_\fo - \wt p_\so$ and $p_\db - p_\eb = \wt p_\db - \wt p_\eb $. Let $X = \cdots X_{-1} X_0 X_1 \cdots$ (resp.\ $\wt X = \cdots \wt X_{-1} \wt X_0 \wt X_1 \cdots $) be a bi-infinite word such that $\{X_i\}_{i\in\N}$ (resp.\ $\{\wt X_i\}_{i\in\N}$) is a collection of i.i.d.\ samples from the probability measure~\eqref{eqn-theta-prob} with probabilities $\pvec$ (resp.\ with $\wt{\pvec}$).
Then the identifications $\cI(X)$ and $\cI(\wt X)$ agree in law.
\end{cor}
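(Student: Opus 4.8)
The plan is to deduce this from Lemma~\ref{prop-activity} by passing through the decorated maps. First note that $1\pm(p_\fo-p_\so)$ and $1\pm(p_\db-p_\eb)$ depend only on the differences $p_\fo-p_\so$ and $p_\db-p_\eb$, so by~\eqref{eqn-y-z} the parameters $y$ and $z$ attached to $\pvec$ and to $\wt\pvec$ coincide. Now invoke Lemma~\ref{prop-activity}~\ref{item-activity-law}: for each fixed $n$, the law of the finite-volume triple $(M,e_0,T)$ associated with the word $\dot X$ of length $2n$ is the uniform measure on edge-rooted spanning-tree-decorated planar maps with $n$ edges weighted by $y^{\act(T)}z^{\dup(T)}$, which manifestly depends on $\pvec$ only through the pair $(y,z)$. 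By Lemma~\ref{prop-activity}~\ref{item-activity-infinite}, the law of the infinite-volume triple $(M^\infty,e_0^\infty,T^\infty)$ is the Benjamini--Schramm limit of these finite-volume laws; since a Benjamini--Schramm limit of measures that depend only on $(y,z)$ again depends only on $(y,z)$, the law of $(M^\infty,e_0^\infty,T^\infty)$ depends on $\pvec$ only through $(y,z)$.

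It remains to transfer this statement from maps back to words. The triple $(M^\infty,e_0^\infty,T^\infty)$ is constructed from $\cI(X)$ by the infinite-volume version of Sheffield's bijection; conversely, the Peano curve $\lambda^\infty$ of $(M^\infty,e_0^\infty,T^\infty)$ is determined by that triple, and running $\lambda^\infty$ from its starting point and recording, for each edge it crosses, whether the edge is primal or dual and whether the crossing is the first or the second one (exactly as in the definition of the $\Theta_0$-valued word encoding a noncrossing Eulerian cycle in Section~\ref{sec-peano-model}) recovers $\cI(X)$. Hence $\cI(X)$ and $(M^\infty,e_0^\infty,T^\infty)$ are measurable functions of one another, so the law of $\cI(X)$ is the pushforward, under this measurable inverse, of the law of $(M^\infty,e_0^\infty,T^\infty)$; thus it too depends on $\pvec$ only through $(y,z)$. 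As $\pvec$ and $\wt\pvec$ give the same $(y,z)$, we conclude $\cI(X)\eqD\cI(\wt X)$.

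The main point requiring care is the bijectivity used in the last paragraph: to know that $\cI(X)$ can be recovered measurably from $(M^\infty,e_0^\infty,T^\infty)$, one needs the almost-sure structural facts about the bi-infinite identified word proved in Appendix~\ref{sec-prelim}---that $\cI(X)$ exists and lies in $\Theta_0^{\Z}$, that $\cR(X)=\emptyset$, and that the matching $\phi$ is a bijection of $\Z$---so that the infinite-volume bijection of~\cite{shef-burger} (spelled out in~\cite{chen-fk}) genuinely applies in both directions on a full-measure set; the rest is routine. One could instead attempt to compute the finite-window marginals of $\cI(X)$ directly by means of the re-sampling identity from the proof of Lemma~\ref{prop-activity}, but the factors coming from times at which no burger is available involve $p_\fo+p_\so$ and $p_\db+p_\eb$ (not just the differences), so the argument through the maps is the cleaner route.
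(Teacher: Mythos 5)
Your proposal is correct and follows essentially the same route as the paper: note that $(y,z)$ depends only on the differences, invoke Lemma~\ref{prop-activity} to conclude that the associated infinite-volume decorated maps agree in law, and then observe that the decorated map determines the identified word deterministically so the laws of $\cI(X)$ and $\cI(\wt X)$ coincide. The paper's proof is just a two-sentence compression of exactly this argument, and your closing observation about what the bijectivity really requires (the almost-sure facts from Appendix~\ref{sec-prelim}) is a sensible clarification rather than a departure.
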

\begin{proof}
It follows from Lemma~\ref{prop-activity} that the infinite-volume tree-decorated planar maps $(M^\infty, e_0^\infty, T^\infty)$ and $(\wt M^\infty, \wt e_0^\infty, \wt T^\infty)$ associated with $\cI(X)$ and $\cI(\wt X)$ agree in law. Since these maps uniquely determine $\cI(X)$ and $\cI(\wt X)$, respectively, via the same deterministic procedure, we infer that $\cI(X) \eqD \cI(\wt X)$.
\end{proof}

\subsection{Statement of main results}
\label{sec-result}

Fix $\pvec = (p_\fo,p_\so,p_\db,p_\eb) \in \mcl P$ and let $X$ be the bi-infinite word from Section~\ref{sec-burger}, whose symbols are i.i.d.\ samples from the probability measure~\ref{eqn-theta-prob}.
Also let $X' = \cdots X_{-1}' X_0' X_1' \cdots = \cI(X)$ be the identification of $X$, as in Definition~\ref{def-X-identification} and recall the notation~\eqref{eqn-X(a,b)}.
For $i \in \Z$, define (in the notation of Definition~\ref{def-theta-count})
\eqbn\hypertop{def-d-z}
\dipt(i) \colonequals \begin{cases}
\dpt( X'(1,i)) \quad &i \geq 1 \\
0 \quad &i = 0 \\
\dpt( X'(i+1,0)) \quad &i \leq -1
\end{cases}
\quad \op{and} \quad
\didt(i) \colonequals \begin{cases}
\ddt(X'(1,i)) \quad &i \geq 1 \\
0 \quad &i = 0 \\
\ddt( X'(i+1,0)) \quad &i \leq -1.
\end{cases}
\eqen
We extend $\dipt$ and $\didt$ to $\R$ by linear interpolation, and define
  $\ddi(t) \colonequals (\dipt(t), \didt(t))$.

For $n\in\N$ and $t\in \R$, let
\eqb \label{eqn-Z^n-def}
U^n(t) \colonequals n^{-1/2} \dipt (n t),\quad V^n(t) \colonequals n^{-1/2} \didt(n  t), \quad Z^n(t) \colonequals (U^n(t), V^n(t) ).
\eqe

It is an immediate consequence of~\cite[Thm.~2.5]{shef-burger} that in the case where $p_\db=p_\eb=p_\so = 0$ and $p_\fo \in (0,1/2)$, the random path $Z^n$ converges in law as $n\rta \infty$ in the topology of uniform convergence on compact intervals to a two-sided two-dimensional correlated Brownian motion $Z = (U,V)$ with $Z(0) =0$ and
\eqb \label{eqn-bm-cov-all-F}
\op{Var}(U(t) ) = \op{Var}(V(t)) = \frac{1}{y+1}|t| \quad \op{and} \quad \op{Cov}(U(t), V(t) ) = \frac{y-1}{2(y+1)} |t|, \quad \forall t\in\R
\eqe
with $y$ as in~\eqref{eqn-y-z}. In the case when $p_\db=p_\eb=p_\so = 0$ and $p_\fo \in [1/2,1]$, the coordinates of~$Z^n$ instead converge in law to two identical two-sided Brownian motions with variance $1/4$.

In light of Corollary~\ref{prop-identification-law} above, the above implies that if $(p_\fo, p_\so, p_\db,p_\eb) \in \mcl P$ with $p_\db = p_\eb = 0$ and $p_\fo - p_\so \geq 0$ (equivalently $y\geq 1$ and $z=1$), then $Z^n$ converges in law as $n\rta\infty$ to a Brownian motion as in~\eqref{eqn-bm-cov-all-F} (resp.\ a pair of identical Brownian motions with variance $1/4$) if $1 \leq y < 3$ (resp.\ $y \geq 3$ and $z=1$).
Our main contribution is to prove that the path $Z^n$ converges to a correlated Brownian motion for additional values of $y$ and $z$.

\begin{thm} \label{thm-all-S}
Let $\pvec = (p_\fo,p_\so,p_\db,p_\eb) \in \mcl P$ with $p_\fo =0$ and $p_\so = 1$ (equivalently, in the notation~\eqref{eqn-y-z}, $y=0$ and $z\ge 0$ is arbitrary). Then with $Z^n$ as in~\eqref{eqn-Z^n-def}, we have $Z^n\rta Z$ in law in the topology of uniform convergence on compacts, where $Z = (U,V)$ is a two-sided correlated Brownian motion with $Z(0) = 0$ and
\eqb \label{eqn-bm-cov-all-S}
\op{Var}( U(t)) = \op{Var}( V(t)) = \frac{(1+z)|t|}{2} \quad \op{and} \quad \op{Cov}( U(t), V(t)) = -\frac{ z |t|}{2 },\quad \forall t \in \R.
\eqe
\end{thm}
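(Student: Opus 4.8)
The plan is to reduce the statement to a functional central limit theorem for an additive functional of a two-state Markov chain, which can then be treated by elementary means. As usual it suffices to prove that $Z^n\to Z$ in the sense of finite-dimensional distributions together with tightness in $C(\R,\R^2)$; since the increments of $(\dipt,\didt)$ are bounded by $1$, tightness will follow from a linear second-moment bound on increments, so the crux is the finite-dimensional convergence with covariances \eqref{eqn-bm-cov-all-S}. It is convenient to pass to $A^n:=U^n+V^n$ and $W^n:=U^n-V^n$: if $(A^n,W^n)\to(B,\widetilde B)$ jointly, where $B$ is a standard two-sided Brownian motion, $\widetilde B$ is a two-sided Brownian motion with $\op{Var}(\widetilde B(t))=(1+2z)|t|$, and $B\perp\widetilde B$, then $Z^n=\tfrac12(A^n+W^n,\,A^n-W^n)\to\tfrac12(B+\widetilde B,\,B-\widetilde B)=Z$, and this limit has exactly the covariances \eqref{eqn-bm-cov-all-S}. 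The ``sum'' coordinate is immediate: the functionals $\dpt,\ddt$ are invariant under the reduction relations \eqref{eqn-theta-relations} restricted to $\mcl W(\Theta_0)$ (e.g.\ $\hb\ho=\emptyset$ lowers $\cN_\hb$ and $\cN_\ho$ by one each), and $\cN_{\hb|\cb}-\cN_{\ho|\co}$ is in addition invariant under identification, so $\dipt(n)+\didt(n)=\cC(X_1\cdots X_n)$, a sum of i.i.d.\ $\pm1$ variables taking each value with probability $\tfrac12$ by \eqref{eqn-theta-prob}; hence $A^n\to B$ by Donsker's theorem.

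For the ``difference'' coordinate, the same invariance gives $\dipt(n)=\sum_{i=1}^n(\one[X_i'=\hb]-\one[X_i'=\ho])$ and $\didt(n)=\sum_{i=1}^n(\one[X_i'=\cb]-\one[X_i'=\co])$, so $W^n$ is a rescaled partial sum of the stationary $\{\pm1\}$-valued sequence $\omega_i:=\one[X_i'\in\{\hb,\co\}]-\one[X_i'\in\{\cb,\ho\}]$. The key structural observation, read off from the relations \eqref{eqn-theta-relations}--\eqref{eqn-theta-relations'} and the semantics of $\so,\db,\eb$, is this. Let $b_j\in\{\hb,\cb\}$ be the type of the freshest burger of $\cR(X_{(-\infty,j]})$; by the results of Appendix~\ref{sec-prelim} (each order consumes a burger to its left) this reduced word is a.s.\ an infinite word consisting of hamburgers and cheeseburgers with both types present, so $b_j$ is well defined. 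Then $b_{j+1}$ is a deterministic function of $(b_j,X_{j+1})$: since $p_\so=1$ every order is stale and removes a burger of type opposite to $b_j$, hence does not change the top; a duplicate burger copies $b_j$, an opposite burger flips it, and an $\hb$ or $\cb$ sets it. Thus $(b_j)_{j\in\Z}$ is a stationary two-state Markov chain flipping at each step with probability $\tfrac{1-p_\db+p_\eb}{4}=\tfrac1{2(z+1)}$ by \eqref{eqn-y-z}, so its one-step correlation is $\rho:=\tfrac z{z+1}$; and $X_i'$ (hence $\omega_i$) is a deterministic function of $(b_{i-1},X_i)$.

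Consequently $W_n=\sum_{i\le n}\omega_i$ is an additive functional of the finite-state chain $(b_i)$ driven by the i.i.d.\ innovations $(X_i)$, while $A_n$ is a functional of the innovations alone, so a joint functional CLT for $(A_n,W_n)$ is standard (e.g.\ by a martingale decomposition via the Poisson equation for the finite-state chain, together with the martingale CLT), and it only remains to identify the limiting covariance matrix. Writing $\sigma(\hb)=+1$, $\sigma(\cb)=-1$, one computes $\E[\omega_i\mid b_{i-1}]=\rho\,\sigma(b_{i-1})$, so $\omega_i=\rho\,\sigma(b_{i-1})+\xi_i$ with $(\xi_i)$ a martingale-difference sequence; combining the asymptotic variances of the slowly-mixing part $\rho\sum\sigma(b_i)$ and of $\sum\xi_i$ with the cross term (evaluated using the identity $\E[\omega_i\sigma(b_i)\mid b_{i-1}]=1$) and substituting $\rho=z/(z+1)$ gives $\op{Var}(W_n)\sim(1+2z)n$. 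A similar computation, using that the increments of $A_n$ are mean-zero and independent of the past and that $\E[(\one[X_i\text{ burger}]-\one[X_i\text{ order}])\,\sigma(b_i)\mid b_{i-1}]=\tfrac12(p_\db-p_\eb-1)\,\sigma(b_{i-1})$ averages to zero under the stationary law of $b_{i-1}$, yields $\op{Cov}(A_n,W_n)=0$; being jointly Gaussian in the limit, $B$ and $\widetilde B$ are independent. Tightness follows from the ensuing bound $\E[(W_{n+m}-W_n)^2]\le Cm$ (a consequence of the exponential decay of correlations of the chain $(b_i)$) via the standard moment criterion, and the two-sided statement holds because $(\omega_j)_{j\in\Z}$ is stationary on all of $\Z$.

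The step I expect to be the main obstacle is the structural input in the second paragraph: verifying that $X_i'$ is genuinely a \emph{local} function of the recent past, equivalently that $\cR(X_{(-\infty,j]})$ is a.s.\ a nonempty burger word containing both types and that a stale order at its top behaves as claimed. This is precisely where $p_\so=1$ is delicate --- with only stale orders a finite word need not reduce to the empty word (cf.\ the discussion preceding Lemma~\ref{prop-activity}), so one must carefully combine the a.s.\ matching statements of Appendix~\ref{sec-prelim} with an analysis of how an $\so$ interacts with the top of the reduced word. Once this is in place the relevant chain is genuinely finite-state and Markov, and the functional CLT together with the covariance computation above complete the proof.
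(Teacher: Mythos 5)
Your proposal is correct, and it reaches the same covariance matrix as the paper, but it packages the argument in a genuinely different way.

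Both approaches rest on the same structural observation, which you identify clearly: because $p_\so=1$, the type $b_j$ of the ``top'' burger in $\cR(X_{(-\infty,j]})$ is a deterministic function of $(b_{j-1},X_j)$, hence a two-state Markov chain, and $X_j'$ is determined by $(b_{j-1},X_j)$. The paper exploits this by introducing the renewal times $\iota_k$ at which $b$ flips (so $\iota_k-\iota_{k-1}$ is geometric), noting that the block increments $\xi_k$ of $\ddi$ are independent, computing their variance/covariance directly, applying Donsker to $\Xi_k=\sum_{j\leq k}\xi_j$, and finally undoing the time change $k\mapsto\iota_k$ via the law of large numbers. You instead diagonalize immediately: $A^n=U^n+V^n$ is exactly a rescaled simple random walk (since $\cC(X'(1,\cdot))=\cC(X(1,\cdot))$), $W^n=U^n-V^n$ is a rescaled additive functional $\sum\omega_i$ of the chain $(b_i)$, and you deduce joint convergence from the martingale CLT after a Poisson-equation decomposition, identifying the asymptotic covariance matrix from $\E[\omega_i\omega_j]=\rho^{|i-j|}$ with $\rho=z/(z+1)$ and the vanishing cross term $\E[\alpha_i\omega_j]$. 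Your flip probability $\tfrac{1-p_\db+p_\eb}{4}=\tfrac1{2(z+1)}$, the identity $\E[\omega_i\sigma(b_i)\mid b_{i-1}]=1$, and the resulting $\op{Var}(W_n)\sim(1+2z)n$, $\op{Cov}(A_n,W_n)=0$ all check out and agree with \eqref{eqn-bm-cov-all-S} after undiagonalizing. The paper's route is perhaps a hair more elementary (i.i.d.\ blocks + Donsker + LLN, no Markov-chain CLT needed), and it avoids having to justify the FCLT for Markov additive functionals; your route makes the covariance structure transparent from the outset (the sum is exactly SRW, the two coordinates are asymptotically orthogonal, and all the $z$-dependence sits in the mixing rate of $b$). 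The worry you flag about $p_\so=1$ making $\cR(X_{(-\infty,j]})$ delicate is handled by Lemma~\ref{prop-identification-exists}, which guarantees a.s.\ infinitely many burgers of each type in the backward reduced word, so $b_j$ and the identification of each $\so$ are indeed a.s.\ well defined; the paper relies on the same fact when it asserts that the most recently added burger is never consumed by a subsequent $\so$.
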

We prove Theorem~\ref{thm-all-S} in Section~\ref{sec-all-S}.

\begin{thm} \label{thm-variable-SD}
Let $\pvec =(p_\fo,p_\so,p_\db,p_\eb) \in \mcl P$ with $p_\fo -p_\so \leq 0$ and $p_\eb - p_\db \leq 0$ (equivalently, with $y$ and $z$ as in~\eqref{eqn-y-z}, we have $y \in [0,1]$ and $z\in [1, \infty)$).
There is a parameter $\CHI \in (1,\infty)$, depending only on $y$ and $z$, such that
with $Z^n$ as in~\eqref{eqn-Z^n-def},
$Z^n$ converges in law (in the topology of uniform convergence on compacts) to a two-sided correlated Brownian motion $Z = (U,V)$ with $Z(0) = 0$ and
\begin{equation} \label{eqn-bm-cov-variable-SD}
\begin{aligned}
\op{Var}( U(t)) = \op{Var}( V(t)) &= \frac12 \left( 1 + \frac{(z-y)\CHI }{ (y+1)(z+1)} \right) |t| \quad \op{and} \\
\qquad \op{Cov}( U(t), V(t)) &= - \frac{(z-y)\CHI }{2(y+1)(z+1)}  |t|,\quad\quad \forall t \in \R.
\end{aligned}
\end{equation}
In the case when $z = 1$, we have $\CHI = 2$. When $y=0$, we have $\CHI = z+1$.
\end{thm}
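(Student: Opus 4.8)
\emph{Setup and reductions.} The plan is to follow and extend the method of~\cite{shef-burger}. By Corollary~\ref{prop-identification-law} the law of $\cI(X)$, hence of the walk $\ddi$, depends on $\pvec$ only through $p_\fo-p_\so$ and $p_\db-p_\eb$; since both are nonpositive in the present setting we may assume $p_\fo=p_\eb=0$, so that the only letters of $X$ outside $\Theta_0$ are stale orders $\so$ and duplicate burgers $\db$, with $p_\so\in[0,1]$, $p_\db\in[0,1)$, $y=(1-p_\so)/(1+p_\so)\in[0,1]$ and $z=(1+p_\db)/(1-p_\db)\in[1,\infty)$. Two facts come for free. First, since $X$ is i.i.d.\ and the law~\eqref{eqn-theta-prob} is invariant under the simultaneous swaps $\hb\leftrightarrow\cb$, $\ho\leftrightarrow\co$ (which commute with identification and reduction), $\ddi$ has stationary increments and $(\dipt,\didt)\eqD(\didt,\dipt)$; in particular $\op{Var}(\dipt(n))=\op{Var}(\didt(n))$ and the processes $\dipt+\didt$ and $\dipt-\didt$ are uncorrelated. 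Second, because reduction preserves $\cC$ on $\mcl W(\Theta_0)$ and identification preserves the burger/order status of letters, $\dipt(i)+\didt(i)=\cC(X'(1,i))=\cC(X(1,i))$, and the last expression is a genuine random walk: its increment at step $i$ is $+1$ if $X_i$ is a burger and $-1$ if $X_i$ is an order, and $\PP(\text{burger})=\tfrac12$ for every $\pvec\in\mcl P$. Hence $U^n+V^n$ converges in law to a standard Brownian motion; combined with the symmetry this also yields $\E[\dipt(n)]=\E[\didt(n)]=0$.

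\emph{The difference coordinate.} The heart of the matter is the walk $W_i\colonequals\dipt(i)-\didt(i)=\cQ(X'(1,i))$, so that $U^n-V^n$ is its rescaled, interpolated version. This $W$ is again a $\pm1$ walk: its increment at step $i$ equals $+1$ if $X_i'\in\{\hb,\co\}$ and $-1$ if $X_i'\in\{\cb,\ho\}$. On an $\hb,\cb,\ho$, or $\co$-step this increment is a deterministic function of $X_i$; but on a $\so$- or $\db$-step it equals $+1$ exactly when the freshest available burger at step $i$ (the rightmost burger of $\cR(X_1'\cdots X_{i-1}')$) is a hamburger, and that burger may sit atop the stack having been placed there arbitrarily far in the past. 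Consequently the increments of $W$ are not independent, and this is where the work lies. The aim is a functional central limit theorem for arbitrary linear combinations $a\dipt(n)+b\didt(n)$ --- equivalently, joint convergence of $U^n-V^n$ with $U^n+V^n$. I would obtain it through: \textbf{(i)} observing that the increment sequence of $a\dipt+b\didt$ is a bounded, centered, stationary ergodic sequence, being a factor of the i.i.d.\ word $X$; \textbf{(ii)} a concentration estimate with polynomial tails for the reduced-word length, $|X(1,n)|\asymp n^{1/2}$, which must be established afresh for the $\so,\db$ model because (contrary to the $\fo$ model of~\cite{shef-burger}) the reduced word here tends to be \emph{larger}, cf.\ Remark~\ref{remark-difference} and Lemma~\ref{prop-mean-mono}; \textbf{(iii)} a near-independence statement deduced from (ii): with probability $1-o(1)$ at a power-law rate, the increment of $W$ at step $i$ is measurable with respect to boundedly many letters of $X$ near position $i$ relative to the scale, so that increments over suitably separated time intervals are independent up to a small coupling error; \textbf{(iv)} combining (i)--(iii) with a uniform bound $\op{Var}(W_n)\le Cn$ from (ii) to get tightness of $U^n-V^n$ and, via a block/Lindeberg argument, convergence to a Brownian motion of variance $c|t|$, where $c\colonequals\lim_n n^{-1}\op{Var}(W_n)$, the limit existing by the CLT together with the uniform second-moment bound.

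\emph{Assembling the limit and the constant $\CHI$.} Since $U^n+V^n$ and $U^n-V^n$ converge jointly to \emph{independent} Brownian motions --- independence because their limits are Gaussian and the finite-$n$ cross-covariances vanish --- of variances $|t|$ and $c|t|$, it follows that $Z^n\to Z=(U,V)$ with $\op{Var}(U(t))=\op{Var}(V(t))=\tfrac14(1+c)|t|$ and $\op{Cov}(U(t),V(t))=\tfrac14(1-c)|t|$. Defining $\CHI\colonequals\frac{(y+1)(z+1)(c-1)}{2(z-y)}$ for $y\neq z$, and $\CHI\colonequals 2$ when $y=z=1$ (where $c=1$), puts this in the form~\eqref{eqn-bm-cov-variable-SD}. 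Finiteness of $\CHI$ is the upper bound in (iv). The bound $\CHI>1$ is the quantitative negative-correlation statement $c>1+\tfrac{2(z-y)}{(y+1)(z+1)}$, which I would extract from the estimates of (ii)--(iii) (or from monotonicity in $z$ together with the base case). The value $\CHI=z+1$ at $y=0$ is read off directly from Theorem~\ref{thm-all-S}. The value $\CHI=2$ at $z=1$ follows from a direct computation giving $c=(3-y)/(y+1)$, feasible because with no duplicate burgers the reduced-word dynamics are much more transparent; it is exactly the value for which the limiting correlation $\op{Cov}(U,V)/\op{Var}(U)=(1-c)/(1+c)$ equals $\tfrac{y-1}{2}$, consistent with~\eqref{eqn-kappa-y}.

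\emph{Expected main obstacle.} The crux is steps (ii)--(iii): bounding the size of $X(1,n)$ and the effective ``lookback length'' that controls the dependence among the increments of $W$, now with $\so$ and $\db$ letters present. Because these letters make the reduced word grow rather than shrink, and reverse the sign of the correlations compared to~\cite{shef-burger}, the estimates there do not transfer and have to be proved afresh; once they are in hand, the functional CLT and the identification and bounds for $\CHI$ follow by fairly standard arguments.
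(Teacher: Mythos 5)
Your high-level strategy --- diagonalize into the sum $U^n+V^n$ (a simple random walk) and the difference $U^n-V^n=\cQ(X'(1,\cdot))$, exploit the $\hb\leftrightarrow\cb$ symmetry, establish tightness from a tail bound on $|X(1,n)|$, and deduce near-independence of widely separated increments from the scarcity of unidentified $\so$'s and $\db$'s --- matches the paper's. However, the proposal is missing the central technical device that makes the theorem's explicit statement possible: the backward renewal time $\J=\min\{j\in\N:\cN_{\hb|\cb}(X(-j,-1))>0\}$ and the concrete definition $\CHI\colonequals\E(|X(-\J,-1)|)$. The paper's Proposition~\ref{prop-var-limit} computes $n^{-1}\op{Var}(\cQ(X'(1,n)))\to 1+(p+q)\CHI$ by decomposing $\E(\cQ(X_0')\cQ(X'(-n,-1)))$ at time $-\J$, and all the explicit content of the theorem --- $\CHI\in(1,\infty)$, $\CHI=2$ when $z=1$, $\CHI=z+1$ when $y=0$ --- flows from this. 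You instead \emph{define} $\CHI$ as a function of the unknown variance constant $c$, and the argument for the existence of $c$ is circular: you cite ``the CLT'' to produce $c$, but your block/Lindeberg CLT needs $c$ as input (or else a mixing CLT for stationary sequences, which you have not specified; your observation (i) that the increments are ``stationary ergodic'' is far from sufficient). The paper avoids this by first proving tightness, showing any subsequential limit has independent stationary increments (via the $\Upsilon_j^{n_k}$ coupling), hence is Brownian, and then pinning down its variance by Vitali convergence together with Proposition~\ref{prop-var-limit}.

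The secondary gaps are downstream of the first. You claim the $z=1$ case is ``a direct computation giving $c=(3-y)/(y+1)$, feasible because \ldots the reduced-word dynamics are much more transparent''; the correct value does indeed satisfy that formula, but the paper's route to $\CHI=2$ at $q=0$ is not a direct computation --- it is the identity $\E(\cC(X(-\J,-1)))=0$ (Lemma~\ref{prop-J-count-mean}), which rests on an optional-stopping argument requiring $\E(\J^\zeta)<\infty$ for $\zeta<1/2$, the overshoot bound of Lemma~\ref{prop-overshoot-finite}, and $\E(M)=\infty$ (Lemma~\ref{E[M]}). Likewise, proving $\CHI<\infty$ (Proposition~\ref{prop-J-finite}) and the word-length bound $\E|X(1,n)|\asymp n^{1/2}$ require the monotonicity Lemma~\ref{prop-mean-mono}, a genuinely new tool (an involution-flipping argument comparing the $\so,\db$-model to the $p=q=0$ walk), which your proposal does not anticipate; you acknowledge that the estimates of~\cite{shef-burger} ``do not transfer and have to be proved afresh'' but offer no mechanism. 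And while the bound $\CHI>1$ is immediate from $\CHI=\E(|X(-\J,-1)|)$ and $\PP(\J\geq 2)>0$, your proposed route to it (extracting a quantitative lower bound on $c$ from tail estimates) would be substantially harder. In short: right skeleton, but without the renewal time $\J$ the proof cannot reach the explicit formula, the finiteness of $\CHI$, or the special values at $z=1$ and $y=0$.
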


\begin{figure}[htb!]
 \begin{center}
\includegraphics[scale=.7]{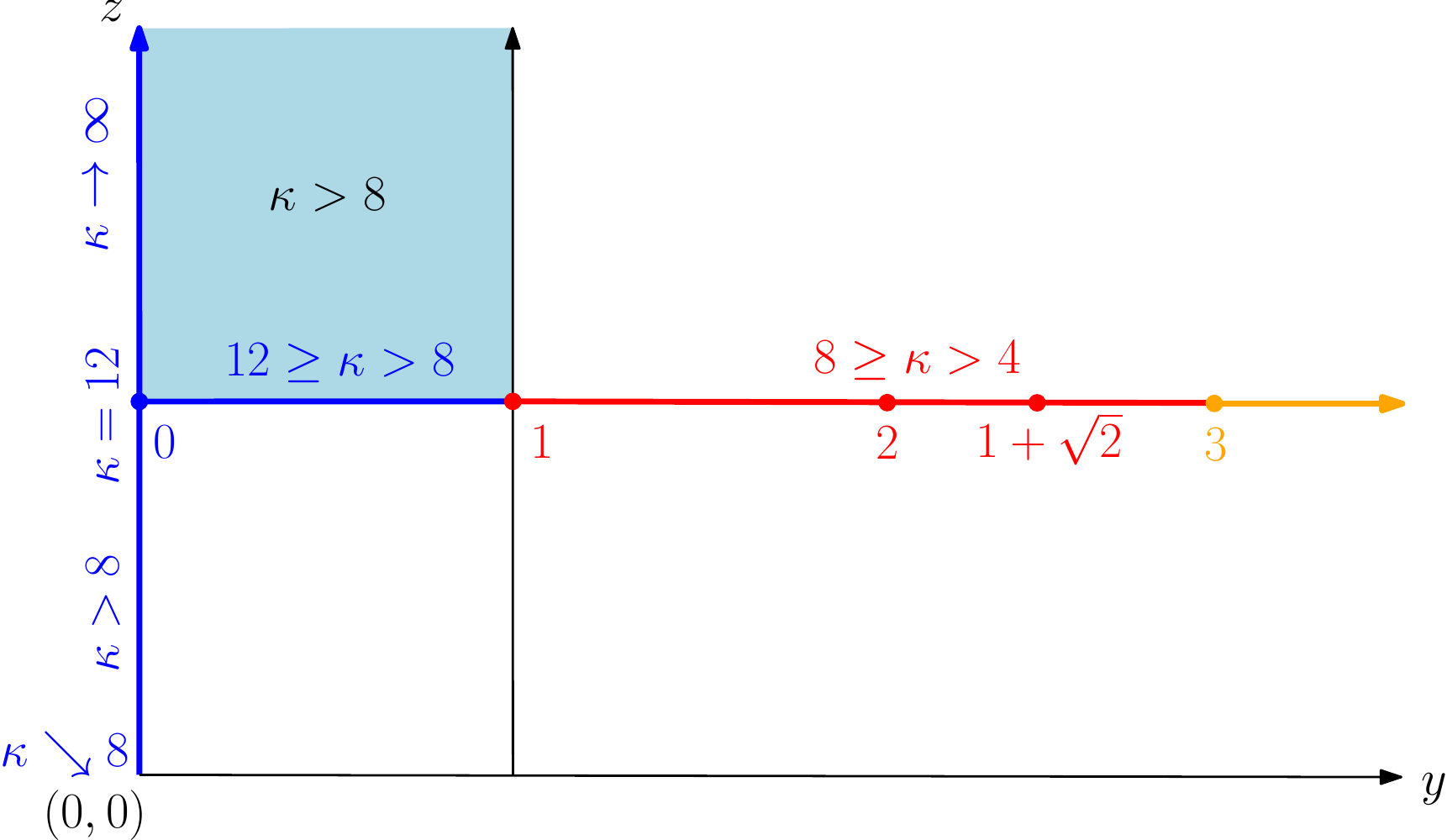}
\caption{A graph of the range of parameter values for which peanosphere scaling limit results for spanning-tree-decorated random planar maps are known, along with the corresponding values of $\kappa$. On the red and orange segments, the path $Z^n$ converges to a non-negatively correlated Brownian motion~\cite{shef-burger}. On the orange segment the correlation is $1$ and the maps are not conjectured to converge to $\SLE_\kappa$-decorated LQG for any $\kappa > 4$. On the red segment, which corresponds to critical FK planar maps for $q\in [0,4)$, peanosphere scaling limit results are known both in the infinite-volume and finite-volume cases~\cite{shef-burger,gms-burger-cone,gms-burger-local,gms-burger-finite}, and several additional results are known~\cite{chen-fk,blr-exponents,gwynne-miller-cle}. The blue and light blue regions are treated in this paper, and give negatively correlated Brownian motions in the scaling limit. The blue line segments are values for which an infinite-volume peanosphere scaling limit result is known and the exact correlation of the limiting Brownian motion (equivalently the limiting values of $\gamma$ and $\kappa$) is known. The horizontal blue segment corresponds to active-tree-decorated planar maps with parameter $y \in [0,1]$ and the vertical segment corresponds to various laws on bipolar-oriented planar maps. The light blue region is the set of parameter values for which the path $Z^n$ is known to converge to a negatively correlated Brownian motion but the exact correlation is unknown. Special parameter values are shown with dots. The case when $(y,z) = (0,1)$ corresponds to a uniform bipolar-oriented random planar map, as studied in~\cite{kmsw-bipolar}. The case when $(y,z) = (1,1)$ corresponds to a random planar map decorated by a uniform spanning tree. The case $(y,z) = (2,1)$ corresponds to the uniform distribution on the underlying planar map $M$, and is the only case where metric scaling limit results are known. The case $(y,z) = (1 + \sqrt 2, 1)$ corresponds to the FK Ising model. }\label{fig-parameter-graph}
\end{center}
\end{figure}

Figure~\ref{fig-parameter-graph} illustrates the range of parameter values for which Theorems~\ref{thm-all-S} and~\ref{thm-variable-SD} (and their analogues elsewhere in the literature) apply. The value of $\CHI$ when $y=0$ follows from Theorem~\ref{thm-all-S}. The value of $\CHI$ when $z=1$ will be obtained in the course of proving Theorem~\ref{thm-variable-SD}.
It remains an open problem to compute $\CHI$ in the case when $z \neq 1$ and $y\neq0$ or to obtain any scaling limit result at all in the case when $z \in [0,1)$ and $y>0$ or when $z \neq1$ and $y \geq 1$.

Theorem~\ref{thm-variable-SD} combined with~\cite[Thm.~1.13]{wedges} and~\cite[Thm.~1.1]{kappa8-cov} tells us that the infinite-volume rooted spanning-tree-decorated random planar map $(M^\infty, e_0^\infty, T^\infty)$ converges in the peanosphere sense, upon rescaling, to a $\gamma$-quantum cone decorated by an independent whole-plane space-filling $\SLE_\kappa$ with $\gamma = 4/\sqrt\kappa$ for some $\kappa \geq 8$. Furthermore, since we know the value of $\CHI$ when $z = 1$, Theorem~\ref{thm-variable-SD} together with~\cite[Thm.~2.5]{shef-burger} and Lemma~\ref{prop-activity} below imply the following.

\begin{cor} \label{prop-active-conv}
Suppose $\pvec \in \mcl P$ is such that $z = 1$ and $y \in [0,3)$. Then $Z^n$ converges in law to a correlated Brownian motion $Z = (U,V)$ with
\eqbn
\op{Var}( U(t)) = \op{Var}( V(t)) = \frac{1}{ y+1 } |t| \quad \op{and} \notag \\
\qquad \op{Cov}( U(t), V(t)) = \frac{y-1}{2(y+1) }  |t|,\quad \forall t \in \R.
\eqen
Hence the scaling limit of an infinite-volume active-tree-decorated planar map with parameter $y\in [0,3)$ in the peanosphere sense is a $\gamma$-quantum cone decorated by an independent whole-plane space-filling $\SLE_\kappa$ with
\eqbn
\frac{y-1}{2} =-\cos\left(\frac{4\pi}{\kappa} \right),\quad \gamma = \frac{4}{\sqrt\kappa},\quad \kappa > 4\,.
\eqen
\end{cor}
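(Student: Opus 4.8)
The plan is to prove the Brownian scaling limit by splitting the range $y\in[0,3)$ at $y=1$, combining two scaling-limit inputs that are already available to us, and then transferring the result to the continuum via the mating-of-trees encoding. For $y\in[0,1]$ (with $z=1$) I would apply Theorem~\ref{thm-variable-SD} directly: its hypotheses $y\in[0,1]$, $z\in[1,\infty)$ hold, so $Z^n$ converges in the topology of uniform convergence on compacts to a correlated Brownian motion with covariance~\eqref{eqn-bm-cov-variable-SD}, and since $z=1$ the theorem also gives $\CHI=2$. Substituting $z=1$ and $\CHI=2$ into~\eqref{eqn-bm-cov-variable-SD} and simplifying,
\[
\op{Var}(U(t)) = \tfrac12\bigl(1+\tfrac{1-y}{y+1}\bigr)|t| = \tfrac{1}{y+1}|t|, \qquad \op{Cov}(U(t),V(t)) = -\tfrac{1-y}{2(y+1)}|t| = \tfrac{y-1}{2(y+1)}|t|,
\]
which is exactly the covariance structure asserted in the corollary.

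For $y\in(1,3)$ (still $z=1$) I would reduce to the original hamburger-cheeseburger model of~\cite{shef-burger}. By~\eqref{eqn-y-z} the pair $(y,z)$ corresponds to $p_\fo-p_\so=(y-1)/(1+y)\in(0,1/2)$ and $p_\db-p_\eb=0$. Since the process $Z^n$ is a deterministic function of the identified word $\cI(X)$, whose law by Corollary~\ref{prop-identification-law} depends on $\pvec$ only through the differences $p_\fo-p_\so$ and $p_\db-p_\eb$, I may replace $\pvec$ by $\bigl((y-1)/(1+y),\,0,\,0,\,0\bigr)\in\mcl P$ without affecting the law of $Z^n$. For these parameters the convergence of $Z^n$ to a correlated Brownian motion with $\op{Var}(U(t))=\op{Var}(V(t))=\tfrac1{y+1}|t|$ and $\op{Cov}(U(t),V(t))=\tfrac{y-1}{2(y+1)}|t|$ is exactly the consequence of~\cite[Thm.~2.5]{shef-burger} recorded in~\eqref{eqn-bm-cov-all-F}. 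The two regimes overlap at $y=1$, where both descriptions yield independent coordinates of variance $\tfrac12|t|$ (the uniform-spanning-tree case), so together they establish convergence of $Z^n$ to the stated Brownian motion for all $y\in[0,3)$.

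To obtain the peanosphere statement, note that $z=1$ forces $(M,e_0,T)$ to be an active-tree-decorated planar map with parameter $y$, and that by Lemma~\ref{prop-activity} the triple $(M^\infty,e_0^\infty,T^\infty)$ built from $X$ by the infinite-volume Sheffield bijection is its Benjamini--Schramm limit. Since $Z^n$ is, up to rescaling, the pair of processes $(\dipt,\didt)$ encoding the primal and dual trees of this decorated map, the convergence $Z^n\to Z$ established above is by definition convergence of $(M^\infty,e_0^\infty,T^\infty)$ in the peanosphere sense. The limit $Z$ has correlation $\op{Cov}(U(t),V(t))/\op{Var}(U(t))=(y-1)/2$, which lies in $[-\tfrac12,1)$ as $y$ runs over $[0,3)$. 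Invoking~\cite[Thm.~1.13]{wedges} for $y\in(1,3)$ (where $\kappa\in(4,8)$) and~\cite[Thm.~1.1]{kappa8-cov} for $y\in[0,1]$ (where $\kappa\ge8$), $Z$ almost surely determines, and is determined by, a $\gamma$-quantum cone decorated by an independent whole-plane space-filling $\SLE_\kappa$ with $\gamma=4/\sqrt\kappa$ and $(y-1)/2=-\cos(4\pi/\kappa)$; the above range of the correlation forces $\kappa\in(4,12]$, in particular $\kappa>4$.

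There is essentially no hard step in the corollary itself: the probabilistic content lives in Theorem~\ref{thm-variable-SD} (including the value $\CHI=2$ at $z=1$, which is obtained within its proof) and in~\cite{shef-burger}, while the continuum encoding is quoted from~\cite{wedges} and~\cite{kappa8-cov}. The only points that require a modicum of care are verifying that $Z^n$ is a measurable function of $\cI(X)$, so that Corollary~\ref{prop-identification-law} genuinely licenses the reduction to $p_\so=p_\db=p_\eb=0$ when $y>1$, and the short algebraic simplification of~\eqref{eqn-bm-cov-variable-SD} at $z=1$, $\CHI=2$ displayed above.
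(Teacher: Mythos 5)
Your proposal is correct and matches the paper's intended argument: the paper derives the corollary precisely from Theorem~\ref{thm-variable-SD} with $\CHI=2$ (for $y\in[0,1]$, $z=1$), from~\cite[Thm.~2.5]{shef-burger} via Corollary~\ref{prop-identification-law} as already recorded in~\eqref{eqn-bm-cov-all-F} (for $y\in(1,3)$, $z=1$), and from Lemma~\ref{prop-activity} together with~\cite{wedges,kappa8-cov} for the peanosphere/SLE interpretation. The algebra you display and the $\kappa$ ranges you identify are all as intended.
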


\subsection{Outline}
\label{sec-outline}

The remainder of this paper is structured as follows. In Section~\ref{sec-all-S}, we prove Theorem~\ref{thm-all-S}. The key observation in the proof is that if every order in $X$ is an $\so$, then the most recently added burger which has not yet been consumed is the same as the most recently added burger. This allows us to break up the word $X$ into i.i.d.\ blocks of geometric size corresponding to increments of $X$ between the times when the type of the most recently added burger changes. Donsker's theorem applied to the change of $\ddi$ over each of the blocks then concludes the proof.

The proof of Theorem~\ref{thm-variable-SD}, which is given in Section~\ref{sec-variable-SD}, is much more involved than that of Theorem~\ref{thm-all-S}.  Section~\ref{sec-variable-SD} is independent from Section~\ref{sec-all-S}. 

The proof of Theorem~\ref{thm-variable-SD} uses many of the same ideas as the proof of~\cite[Thm.~2.5]{shef-burger}. 
However, the argument used in~\cite{shef-burger} does not suffice for our purposes.
One of the key inputs in the proof~\cite[Thm.~2.5]{shef-burger} is a tail bound for the law of the length of the reduced word $|X(1,n)|$ (see~\cite[Lem.~3.13]{shef-burger}). This tail bound is deduced from the fact that changing a single symbol in the word $X_1\cdots X_n$ changes the value of $\cQ(X(1,n))$, defined as in Definition~\ref{def-theta-count}, by at most 2 (this fact implies that a certain martingale has bounded increments and allows one to apply Azuma's inequality). When we consider words with stale orders and/or duplicate burgers, the above Lipschitz property does not hold.  For example, the reduction of the word
$\hb \hb \hb \cb \so \so \so$
consists of a single $\cb$, but if we change the $\cb$ to an $\hb$, the reduced word has length 7.
We still obtain an analogue of~\cite[Lem.~3.13]{shef-burger} in the setting of Theorem~\ref{thm-variable-SD} (see Proposition~\ref{prop-length-sup} below), but our proof of this result requires analogues of most of the other lemmas in~\cite[\S~3]{shef-burger} as well as some additional estimates.
 
Section~\ref{sec-variable-SD} is structured as follows. 
In Section~\ref{sec-chen-lemma}, we prove a monotonicity result (Lemma~\ref{prop-mean-mono}) which says that for a general choice of $p_\so$ and $p_\db$, the expected number of burgers and the expected number of orders in the reduced word $X(1,n)$ is greater than or equal to the corresponding expectation under the law where $p_\so = p_\db = p_\fo = p_\eb = 0$. Under this latter law, the process $\ddi$ of Definition~\ref{def-theta-count} is a simple random walk on $\Z^2$. 
In fact, this monotonicity holds even if we condition on an event $E$ which depends only on the one-dimensional simple random walk $i\mapsto \cC(X(1,i))$ for $i\in [1,n]_{\Z}$ (Definition~\ref{def-identification}). The proof proceeds by way of a careful analysis of how the length of the reduction of a finite word changes when we replace the rightmost symbol among all of the $\so$ and $\db$ symbols by an element of $\Theta_0$. 

In Section~\ref{sec-few-SD}, we prove a result to the effect that the number of unidentified $\db$'s and $\so$'s in $X(1,n)$ is typically negligible in comparison to the number of unmatched $\ho$'s or $\co$'s (Lemma~\ref{prop-few-SD}). 
Since the $\db$'s and $\so$'s in the reduced word are the only thing which prevents the walk $\ddi$ of Definition~\ref{def-theta-count} from having independent increments, this result tells us that macroscopic increments of $\ddi$ are in some sense ``close'' to being independent. 
This fact will be used frequently in the later subsections.
To prove Lemma~\ref{prop-few-SD}, we use the monotonicity lemma from Section~\ref{sec-chen-lemma} to show that the expected number of unmatched $\ho$'s added to the word between successive times that unidentified $\db$'s and $\so$'s are added is infinite. 

In Section~\ref{sec-J-basic}, we study the time $\J$ which is the smallest $j\in \BB N$ such that $X(-j,-1)$ contains an $\hb$ or $\cb$. The analogue of the time $\J$ also plays a key role in~\cite{shef-burger,gms-burger-cone,gms-burger-local,gms-burger-finite}. 
The importance of $\J$ in our setting is that the burger $X_{-\J}$ determines the identification of the symbol $X_0$. 
We will prove a number of facts about $\J$, the most important of which are Proposition~\ref{prop-J-finite} (which shows that $\CHI := \BB E(|X(-\J,-1)| ) < \infty$) and Lemma~\ref{prop-J-count-mean} (which shows that the expected number of burgers and the expected number of orders in $X(-\J,-1)$ are the same) and Lemma~\ref{prop-J-limit} (a uniform integrability result for $|X(-n,-1)|$ on the event $\{\J> n\}$). 

Section~\ref{sec-var-bound} contains the calculation which leads to the formula for the variances and covariances of the limiting Brownian motions in Theorem~\ref{thm-variable-SD}. This calculation is based on the results of Section~\ref{sec-J-basic} and is similar to~\cite[\S~3.1]{shef-burger}. 

Section~\ref{sec-moment-bound} shows that $\BB E(|X(1,n)|) \asymp n^{1/2}$. The upper bound follows from an analysis of the times at which burgers of a given type are added when we read the word backwards. The upper bound for the number of $\db$'s and $\so$'s in $X(1,n)$ from Lemma~\ref{prop-few-SD} plays an important role in the proof of this estimate since it allows us to avoid worrying about such unmatched symbols. The proof of the corresponding lower bound uses a comparison to a simple random walk on $\Z^2$ based on Lemma~\ref{prop-mean-mono}. 

In Section~\ref{sec-word-length}, we build on the results of Section~\ref{sec-moment-bound} to prove an exponential upper tail bound for $n^{-1/2} |X(1,n)|$ analogous to~\cite[Lem.~3.13]{shef-burger} (Proposition~\ref{prop-length-sup}). 

In Section~\ref{sec-variable-SD-proof}, we use this tail bound to deduce tightness of the law of the re-scaled random walk $Z^n$ in the local uniform topology, then conclude the proof of Theorem~\ref{thm-variable-SD} by using our upper bound for the number of $\db$'s and $\so$'s in $X(1,n)$ to show that any subsequential limiting law must have independent, stationary increments. 

Section~\ref{sec-open-problems} contains some open problems related to the model studied in this paper. Appendix~\ref{sec-prelim} proves some basic facts about the reduction operation $\cR$ and the bi-infinite word $X$.

\bigskip

\noindent \textbf{Acknowledgements.}
We thank the Isaac Newton Institute in Cambridge, UK, where this work was started, for its hospitality.
Part of this work was completed while E.G.\ was an intern with the Microsoft Research Theory group. E.G.\ was partially supported by the U.S.\ Department of Defense via an NDSEG fellowship. When this project was completed, A.K. was supported by ETH Z\"urich and was part of NCCR SwissMAP of the Swiss National Science Foundation. J.M.\ was supported by NSF grant DMS-1204894.
We thank two anonymous referees for helpful comments on an earlier version of this paper.

\section{Scaling limit when all orders are stale}
\label{sec-all-S}

In this section we prove Theorem~\ref{thm-all-S}, which yields the scaling limit of the law of the walk $Z^n$ when all orders are $\so$. Throughout this section we use the notation of Sections~\ref{sec-burger} and~\ref{sec-result} with $p_\fo = 0$ and $p_\so =1$ and to lighten notation, we set
\eqbn
p \colonequals p_\db\quad \op{and}\quad q \colonequals p_\eb.
\eqen
We recall in particular the bi-infinite word $X$ and its identification $X' = \cI(X)$.

The idea of the proof of Theorem~\ref{thm-all-S} is to break up the word $X$ into independent and (almost) identically distributed blocks of random size such that, within each block, the identifications of the symbols $\db$, $\eb$, and $\so$ are determined.  We then apply Donsker's invariance principle to a certain random walk obtained by summing over the blocks.

Let $\iota_0$ be the smallest $i \geq 0$ for which $X_{i } = \hb$. Inductively, if $k\in\N$ and $\iota_{k-1}$ has been defined, let $\iota_k$ be the smallest $i \geq \iota_{k-1}+1$ for which
\eqbn
\begin{dcases}
X_i \in \left\{\eb, \cb\right\} \quad &\text{$k$ odd}, \\
X_i \in \left\{\eb, \hb \right\} \quad &\text{$k$ even}.
\end{dcases}
\eqen
(In other words, the sequence $(\iota_k)_{k\ge 0}$ is the sequence of nonnegative indices which correspond to alternation in the type of burger produced.)

Let
\begin{equation} \label{eqn-inc-def}
\xi_k = \left(\xi_k^\ho,\, \xi_k^\co \right)
\colonequals \begin{dcases}
\left(\cN_{\hb|\db|\eb}\left(X_{\iota_{k-1} } \cdots X_{\iota_k-1 } \right),\, - \cN_{\so}\left(X_{\iota_{k-1} } \cdots X_{\iota_k-1 }\right) \right),\quad &\text{$k$ odd}\\
\left( - \cN_{\so}\left(X_{\iota_{k-1} } \cdots X_{\iota_k-1 }\right),\, \cN_{\cb|\db|\eb}\left(X_{\iota_{k-1} } \cdots X_{\iota_k-1 }\right)\right),\quad &\text{$k$ even}.
\end{dcases}
\end{equation}
(There are no $\eb$ symbols in the subword $X_{\iota_{k-1} } \cdots X_{\iota_k-1 }$ except possibly for $X_{\iota_{k-1}}$.)  Let
\eqb \label{eqn-inc-sum-def}
\Xi_k = (\Xi_k^\ho,\Xi_k^\co) \colonequals \sum_{j=1}^k \xi_j \,.
\eqe

\begin{lem} \label{prop-inc-walk}
In the setting described just above, we have the following.
\begin{enumerate}
\item For each $k\in\N$, we have $\ddi\left( \iota_k-1 \right) - \ddi(\iota_0-1) = \Xi_k$. \label{item-inc-walk-sum}
\item For each odd (resp.\ even) $k \in\N$, we have $\iota_k - \iota_{k-1} = \xi_k^\ho - \xi_k^\co$ (resp.\ $\iota_k - \iota_{k-1} = \xi_k^\co - \xi_k^\ho  $). \label{item-inc-walk-time}\\[-\baselineskip]
\item The random variables $\xi_k$ for $k\in\N$ are independent. \label{item-inc-walk-ind}
\item For each $k\in\N$, the law of $\iota_k - \iota_{k-1}$ is geometric with success probability $(1-p + q)/4$. If $k$ is odd (resp.\ even), then given $\iota_k-\iota_{k-1}$ the symbols of $X_{\iota_{k-1}+1}\cdots X_{\iota_{k}-1}$ are i.i.d., and each is a burger with probability $(1+p-q)/(3+p-q)$. In particular, the conditional law of $\xi_k^\ho -1$ (resp.\ $\xi_k^\co -1$) given $\iota_k - \iota_{k-1}$ is the binomial distribution with parameters $\iota_k - \iota_{k-1}-1$ and $(1+p-q)/(3+p-q)$. \label{item-inc-walk-law}
\end{enumerate}
\end{lem}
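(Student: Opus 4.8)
The plan is to first describe the identification $\cI(X)$ explicitly on a single ``block'' $\{\iota_{k-1},\dots,\iota_k-1\}$, and then read off the four assertions from this description together with the strong Markov property of the i.i.d.\ word. Since $p_\fo=0$ and $p_\so=1$, the only letters occurring in $X$ are $\hb,\cb,\so,\db,\eb$, and on an odd block the interior letters $X_{\iota_{k-1}+1},\dots,X_{\iota_k-1}$ all lie in $\{\hb,\db,\so\}$ (by definition of $\iota_k$, no $\eb$ or $\cb$ occurs there). The heart of the argument is the claim that in the identification $X'=\cI(X)$ (which is a.s.\ well-defined with all letters matched, by Appendix~\ref{sec-prelim}), for odd $k$ the freshest available burger at every position $i\in\{\iota_{k-1},\dots,\iota_k-1\}$ is a hamburger, and symmetrically a cheeseburger for even $k$. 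I would prove this by induction on $k$: $X_{\iota_0}=\hb$ is a genuine hamburger, which anchors the case $k=1$; the endpoint $X_{\iota_{k-1}}$ is either a genuine burger of the new type or an $\eb$, which by the inductive hypothesis for block $k-1$ is identified as a burger of the new type; and within a block a $\db$ is identified as a burger of the current freshest type (so it preserves the type), while an $\so$ is identified as an \emph{order} for the opposite type, which therefore reaches back past position $\iota_{k-1}$ to consume a burger and leaves the freshest current-type burger untouched. Consequently, on an odd block every $\hb$, $\db$, $\eb$ is identified as $\hb$ and every $\so$ as $\co$, so $X'_{\iota_{k-1}}\cdots X'_{\iota_k-1}$ consists of exactly $\cN_{\hb|\db|\eb}(X_{\iota_{k-1}}\cdots X_{\iota_k-1})$ hamburgers and $\cN_{\so}(X_{\iota_{k-1}}\cdots X_{\iota_k-1})$ letters $\co$, with the mirror statement on even blocks.

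Parts~\ref{item-inc-walk-sum} and~\ref{item-inc-walk-time} then follow by bookkeeping. Because the only cancellations performed by $\cR$ are $\hb\ho$ and $\cb\co$ and the commutation relations preserve letter counts, $\dpt$ and $\ddt$ are additive under concatenation and invariant under $\cR$; hence, directly from the definition of $\ddi$, summing the per-letter contributions of $X'$ over an odd block and invoking the previous paragraph gives $\ddi(\iota_k-1)-\ddi(\iota_{k-1}-1)=(\cN_{\hb|\db|\eb},-\cN_{\so})=\xi_k$ (and the mirror identity for even $k$); summing over $k$ yields part~\ref{item-inc-walk-sum}. For part~\ref{item-inc-walk-time}, the block $\{\iota_{k-1},\dots,\iota_k-1\}$ has $\iota_k-\iota_{k-1}$ positions, each carrying (on an odd block) either a burger, counted by $\xi_k^\ho$, or an $\so$, counted by $-\xi_k^\co$.

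For parts~\ref{item-inc-walk-ind} and~\ref{item-inc-walk-law} I would apply the strong Markov property of the i.i.d.\ word. Since $\PP(\hb)=(1-p-q)/4>0$, the first time $\iota_0$ at which $X$ equals $\hb$ is a.s.\ finite and $(X_{\iota_0+j})_{j\ge1}$ is i.i.d.\ with law~\eqref{eqn-theta-prob} and independent of $\mcl F_{\iota_0}$; each subsequent $\iota_k$ is the first time after $\iota_{k-1}$ that this stream hits a ``flip set'' ($\{\eb,\cb\}$ for odd $k$, $\{\eb,\hb\}$ for even $k$), each of probability $(1-p+q)/4>0$, so all the $\iota_k$ are a.s.\ finite. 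The key point for part~\ref{item-inc-walk-ind} is that $\xi_k$ depends only on $\iota_{k-1}$, the interior letters $X_{\iota_{k-1}+1},\dots,X_{\iota_k-1}$, and $\iota_k$, but \emph{not} on the actual value of $X_{\iota_{k-1}}$ --- that letter, whether a literal burger or an $\eb$, always contributes exactly $1$ to the relevant coordinate of $\xi_k$ and $0$ to the other --- so conditionally on $\mcl F_{\iota_{k-1}}$ the variable $\xi_k$ is a function of the post-$\iota_{k-1}$ i.i.d.\ stream with a law not depending on $\mcl F_{\iota_{k-1}}$, and an induction gives part~\ref{item-inc-walk-ind}. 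For part~\ref{item-inc-walk-law}, $\iota_k-\iota_{k-1}$ is the first hitting time of a probability-$(1-p+q)/4$ set by an i.i.d.\ stream, hence geometric with that success probability; conditioned on $\iota_k-\iota_{k-1}=\ell$, the $\ell-1$ interior letters are i.i.d.\ samples from~\eqref{eqn-theta-prob} conditioned to lie in $\{\hb,\db,\so\}$, each of which is a burger with probability $\frac{(1-p-q)/4+p/2}{(3+p-q)/4}=\frac{1+p-q}{3+p-q}$, and since $\xi_k^\ho$ equals $1$ plus the number of interior burgers on an odd block (resp.\ $\xi_k^\co$ on an even block), the stated binomial law follows.

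The one genuinely non-routine step is the block-structure claim of the first paragraph: one must check that an identified stale order consumes a burger lying strictly before the start of the current block, so that scanning a block never changes the type of the freshest available burger. This rests on the observation that inside the block all identified burgers are of the block's type, so the opposite-type order that an $\so$ creates has no burger to consume until it reaches back past $\iota_{k-1}$. Everything after this is manipulation of $\cR$ and a standard renewal argument.
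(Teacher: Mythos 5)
Your proposal is correct and follows essentially the same route as the paper's proof: identify the freshest-burger-type invariant on each block (the paper states this as the single observation that when all orders are $\so$ the freshest unconsumed burger equals the most recently added burger, while you reach the same conclusion via an explicit induction on blocks), then do the letter-count bookkeeping for parts (1)–(2) and apply the strong Markov property for parts (3)–(4). One small merit of your write-up is that you explicitly address why $\xi_k$ is independent of $\mcl F_{\iota_{k-1}}$ even though its definition formally involves $X_{\iota_{k-1}}$ — namely, that symbol is always in $\{\hb,\eb\}$ (on an odd block) and so contributes the fixed vector $(1,0)$ — a point the paper leaves implicit.
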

\begin{proof}
Since the only orders are of type $\so$, for any $i\in\Z$ the most recently added burger which hasn't yet been consumed is the same as the most recently added burger.  By the definition of the times $\iota_*$, if $\iota_{k-1}\leq i <\iota_k$, then the top burger is of type $\hb$ if $k$ is odd and of type $\cb$ if $k$ is even.  For simplicity we assume throughout the rest of the proof that $k$ is odd; the case when $k$ is even is symmetric.

For $\iota_{k-1}\leq i <\iota_k$, if $X_i$ is a burger, then $X'_i=\hb$, and if $X_i$ is an order, then $X'_i=\co$, which implies $\ddi(\iota_k-1)-\ddi(\iota_{k-1}-1)=\xi_k$. Summing this relation and the analogous relation in the case when $k$ is even gives assertion~\ref{item-inc-walk-sum}.  Since $k$ is assumed to be odd, the total number of burgers and orders in $X_{\iota_{k-1}}\cdots X_{\iota_k}$ is $\xi^\ho_k-\xi^\co_k$, which implies assertion~\ref{item-inc-walk-time}.

Since $\iota_{k-1}$ for $k\in\N$ is a stopping time for the filtration generated by $X_1\cdots X_n$ for $n\in\N$, the
strong Markov property implies
$X_{\iota_{k-1} +1} \cdots X_{\iota_k}$ is independent of
$X_1\cdots X_{\iota_{k-1}}$, which implies
assertion~\ref{item-inc-walk-ind}.

In view of the strong Markov property (and again recalling that $k$ is assumed to be odd), we see that $X_{\iota_{k-1}+1}\cdots X_{\iota_{k}}$ is a string of i.i.d.\ symbols terminated at the first $\cb$ or $\eb$. By~\eqref{eqn-theta-prob}, the terminating symbol occurs with probability $\PP(\eb) + \PP(\cb) = p_\eb/2 +(1-p_\eb-p_\db)/4 =(1-p+q)/4$, which implies the geometric law for $\iota_k-\iota_{k-1}$.  Given the length of the string $X_{\iota_{k-1}+1}\cdots X_{\iota_{k}}$, each symbol except the last is a burger independently with probability
\eqbn
\frac{ \PP(\hb) + \PP(\db)  }{ \PP(\hb) + \PP(\db) + \PP(\so)  } 
= \frac{(1+p-q)/4}{(3+p-q)/4},
\eqen
which finishes proving assertion~\ref{item-inc-walk-law}.
\end{proof}

\begin{prop}
For odd $k\in\N$,
\begin{equation} \label{eqn-inc-walk-var}
\begin{aligned}
\E\left(\iota_k - \iota_{k-1} \right) &= \frac{4}{1-p+q} \,,& \op{Var}\left(\iota_k - \iota_{k-1} \right) &= \frac{4(3+p-q)}{(1-p+q)^2} \,, \\
\E\left(\xi_k^\ho \right) &= \frac{2}{1-p+q}\,, & \op{Var}\left(\xi_k^\ho \right) &= \frac{2 (1 + p - q)}{(1 - p + q)^2} \,,\\
\E\left(\xi_k^\co \right) &= -\frac{2}{1-p+q}\,,\quad& \op{Var}\left( \xi_k^\co \right) &= \frac{2(3-p+q)}{(1-p+q)^2} \,,\\
&&\op{Cov}\left(\xi_k^\ho, \xi_k^\co\right) &= -\frac{2(1+p-q)}{ (1-p+q)^2} \,.
\end{aligned}
\end{equation}
For even $k\in\N$, the same holds with $\xi_k^\ho$ and $\xi_k^\co$ interchanged.
\end{prop}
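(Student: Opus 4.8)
The plan is to reduce everything to two elementary facts supplied by Lemma~\ref{prop-inc-walk}: for odd $k$, the block length $N \colonequals \iota_k - \iota_{k-1}$ is geometric with success probability $s \colonequals (1-p+q)/4$, and conditionally on $N$ the variable $\xi_k^\ho - 1$ is binomial with parameters $N-1$ and $r \colonequals (1+p-q)/(3+p-q)$; moreover assertion~\ref{item-inc-walk-time} of that lemma gives the identity $\xi_k^\co = \xi_k^\ho - N$. Everything else is bookkeeping with conditional first and second moments. By the symmetry between $\hb$ and $\cb$ (equivalently, the last sentence of Lemma~\ref{prop-inc-walk}), it suffices to treat odd $k$.

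First I would record the standard geometric moments $\E N = 1/s$ and $\op{Var} N = (1-s)/s^2$, and substitute $s = (1-p+q)/4$, so that $1-s = (3+p-q)/4$. This immediately gives the claimed formulas for $\E(\iota_k - \iota_{k-1})$ and $\op{Var}(\iota_k - \iota_{k-1})$.

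Next, writing $\xi_k^\ho = 1 + B$ with $B \mid N \sim \mathrm{Bin}(N-1, r)$, I would compute $\E\xi_k^\ho = 1 + r\,\E(N-1) = 1 + r(1-s)/s$ and use the algebraic identity $r(1-s)/s = (1+p-q)/(1-p+q)$ to get $\E\xi_k^\ho = 2/(1-p+q)$. For the variance I would apply the law of total variance, $\op{Var}(B) = r(1-r)\,\E(N-1) + r^2\,\op{Var}(N)$, put both terms over the common denominator $(3+p-q)(1-p+q)^2$, and observe that the numerator factors as $2(1+p-q)(3+p-q)$, yielding $\op{Var}\xi_k^\ho = 2(1+p-q)/(1-p+q)^2$.

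Finally, for the $\co$-quantities and the covariance I would exploit $\xi_k^\co = \xi_k^\ho - N$: then $\E\xi_k^\co = \E\xi_k^\ho - \E N$, and, setting $\sigma^2 \colonequals \op{Var}\xi_k^\ho$, $v \colonequals \op{Var} N$, $c \colonequals \op{Cov}(\xi_k^\ho, N)$, one has $\op{Var}\xi_k^\co = \sigma^2 + v - 2c$ and $\op{Cov}(\xi_k^\ho,\xi_k^\co) = \sigma^2 - c$. The only genuinely new computation is $c = \op{Cov}(B,N) = r\bigl(\E(N^2) - \E N\bigr) - r\,\E(N-1)\,\E N$; expanding $\E(N^2) = v + (\E N)^2$ and simplifying gives $c = 4(1+p-q)/(1-p+q)^2$, after which all remaining formulas follow by direct substitution. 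There is no conceptual obstacle here — the only real risk is an arithmetic slip in the polynomial simplifications, so the steps I would treat most carefully are the total-variance evaluation of $\op{Var}\xi_k^\ho$ and of $\op{Cov}(B,N)$; keeping everything expressed through the single parameter combination $p-q$ (equivalently through $s$ and $r$) keeps these manageable.
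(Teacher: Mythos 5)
Your computation is correct — I verified the algebra for $\E\xi_k^\ho$, $\op{Var}\xi_k^\ho$, $\op{Cov}(\xi_k^\ho,N) = r(1-s)/s^2 = 4(1+p-q)/(1-p+q)^2$, and the resulting $\op{Var}\xi_k^\co$ and $\op{Cov}(\xi_k^\ho,\xi_k^\co)$, and they all match~\eqref{eqn-inc-walk-var}. The route you take differs from the paper's in its bookkeeping, though both are elementary consequences of Lemma~\ref{prop-inc-walk}. The paper introduces position-indexed indicator variables $Z_i$ (burger in position $i$) and $Z_i^\so$ (order in position $i$) of the subword $X_{\iota_{k-1}+1}\cdots X_{\iota_k-1}$, writes $\xi_k^\ho = 1 + \sum_i Z_i$ and $\xi_k^\co = -\sum_i Z_i^\so$, and obtains all five moment quantities from the pairwise moments $\E[Z_i Z_j]$, $\E[Z_i^\so Z_j^\so]$, $\E[Z_i Z_j^\so]$; in particular the covariance is computed directly from $\E[Z_i Z_j^\so]$. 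You instead condition on the geometric block length $N$, use the binomial law of $\xi_k^\ho-1$ given $N$, apply the law of total variance, and — crucially — use the linear identity $\xi_k^\co = \xi_k^\ho - N$ (assertion~\ref{item-inc-walk-time} of Lemma~\ref{prop-inc-walk}) to derive every $\co$-quantity and the covariance from $\op{Var}\xi_k^\ho$, $\op{Var} N$, and the single new covariance $\op{Cov}(\xi_k^\ho,N)$. Your organization is slightly more economical because that linear identity eliminates the separate treatment of the $\so$-indicators and the mixed second moments; the paper's indicator-sum approach is a bit more symmetric in its treatment of burgers and orders. Both buy the same result with essentially the same effort.
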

\begin{proof}
Let $Z_i$ be the indicator random variable for the word $X_{\iota_{k-1}+1}\cdots X_{\iota_k-1}$ having length at least $i$ and having a burger in position~$i$, and let $Z^\so_i$ be the indicator variable for the word having length at least $i$ and having an order in position~$i$. For odd $k$, $\xi_k^\ho=1+\sum_{i=1}^\infty Z_i$ and $\xi_k^\co=-\sum_{i=1}^\infty Z^\so_i$, and vice versa for even $k$. Assertions~\ref{item-inc-walk-time} and~\ref{item-inc-walk-law} of Lemma~\ref{prop-inc-walk} yield $\E[Z_i]$, $\E[Z^\so_i]$, $\E[Z_i Z_j]$, $\E[Z^\so_i Z^\so_j]$, and $\E[Z_i Z^\so_j]$, from which \eqref{eqn-inc-walk-var} follows from a short calculation.
\end{proof}

\begin{proof}[Proof of Theorem~\ref{thm-all-S}]
For $k \in \N \cup \{0\}$ let $\Xi_k^\ho$ and $\Xi_k^\co$ be as in~\eqref{eqn-inc-sum-def}. Extend $\Xi^\ho$ and $\Xi^\co$ from $\N \cup \{0\}$ to $[0,\infty)$ by linear interpolation.
For $t \geq 0$ and $n\in\N$, let
\eqbn
\wh U^n(t) \colonequals n^{-1/2}\, \Xi_{2n t}^\ho, \quad \wh V^n(t) \colonequals n^{-1/2}\, \Xi_{2n t}^\co, \quad \op{and} \quad \wh Z^n(t) \colonequals \left(\wh U^n(t), \wh V^n(t) \right).
\eqen

It follows from~\eqref{eqn-inc-walk-var} that for each $k\in\N$,
\begin{align*}
&\E\left(\xi_{2k-1}^\ho + \xi_{2k}^\ho \right) = \E\left(\xi_{2k-1}^\co + \xi_{2k}^\co \right) = 0 \,,\\
&
\op{Var}\left( \xi_{2k-1}^\ho + \xi_{2k}^\ho\right) = \op{Var}\left( \xi_{2k-1}^\co + \xi_{2k}^\co \right) = \frac{8}{(1-p+q)^2}\,, \\
&\op{Cov}\left( \xi_{2k-1}^\ho + \xi_{2k}^\ho, \xi_{2k-1}^\co + \xi_{2k}^\co\right) = -\frac{4(1+p-q)}{ (1-p+q)^2} \,.
\end{align*}
By Lemma~\ref{prop-inc-walk}, the pairs $(\xi_{2k-1}^\ho + \xi_{2k}^\ho, \xi_{2k-1}^\co + \xi_{2k}^\co)$ for each $k \in \N$ are i.i.d.
By Donsker's invariance principle (see~\cite[Thm.~4.3.5]{whitt-limits-book} for a statement in general dimension), $\wh Z^n$ converges in law as $n\rta \infty$ in the topology of uniform convergence on compacts to a pair $\wh Z = (\wh U, \wh V)$ of correlated Brownian motions with $\wh Z(0) = 0$
\eqb \label{eqn-hat-bm-cov-all-S}
\op{Var}(\wh U(t)) = \op{Var}(\wh V(t)) = \frac{8t }{(1-p+q)^2} \quad \op{and} \quad \op{Cov}(\wh U(t), \wh V(t)) = -\frac{4(1+p-q) t}{ (1-p+q)^2},\quad \forall t \geq 0.
\eqe
By the law of large numbers, a.s.\
\begin{equation}
\label{eqn-time-convergence}
\lim_{k\rta\infty} k^{-1} \iota_{\lfloor t k \rfloor} = \frac{4 t}{1-p+q},\quad \forall t\in \Q.
\end{equation}
By the Skorokhod representation theorem, we can find a coupling of a sequence of words $(X^n)$, each with the same law as $X$, with the correlated Brownian motion $\wh Z$ such that (with $\wh Z^n$ and $\iota_k^n$ defined with respect to the word $X^n$) we a.s.\ have $\wh Z^n \rta \wh Z$ and $k^{-1} \iota_{\lfloor t k \rfloor}^n \rta 4t/(1-p+q)$ for each $t\in\Q$.
Combining~\eqref{eqn-time-convergence} with the fact that each coordinate of $Z^n$ is monotone between subsequent renewal times, and the continuity of Brownian motion, we obtain that
\eqbn
(t \mapsto Z^n(t)) \xrightarrow{n\rta\infty} \left( t \mapsto \wh Z\left(\frac{1-p+q}{8} t \right) \right)
\eqen
in the topology of uniform convergence on compacts of $[0,\infty)$. By~\eqref{eqn-hat-bm-cov-all-S}, $t \mapsto \wh Z\left(\frac{1-p+q}{8} t \right)$ is a Brownian motion with variances and covariances as in~\eqref{eqn-bm-cov-all-S}. We thus obtain $Z^n|_{[0,\infty)} \rta Z|_{[0,\infty)}$ in law, with $Z$ as in the theorem statement.
Since the law of the bi-infinite word $X$ is translation invariant, we also have that $\left(Z^n - Z^n(s_0)\right)|_{[s_0,\infty)} \rta \left(Z - Z (s_0)\right)|_{[s_0,\infty)}$ in law for each $s_0\in\R$. Since $Z^n(0) = Z(0) = 0$ for each $n\in\N$, for $s_0 < 0$ and $t\geq s_0$, 
\eqbn
Z^n(t) = \left( Z^n(t) - Z^n(s_0) \right)  - \left( Z^n(0) - Z^n(s_0) \right)
\eqen
and the analogous relation holds for $Z$. From this we infer that $Z^n|_{[s_0, \infty)} \rta Z|_{[s_0, \infty)}$ in law for each $s_0 \in \R$.
Since $s_0$ can be be made arbitrarily negative, we infer that $Z^n \rta Z$ in law in the topology of uniform convergence on compact subsets of $\BB R$. 
\end{proof}

\section{Scaling limit with stale orders and duplicate burgers}
\label{sec-variable-SD}

In this section we prove Theorem~\ref{thm-variable-SD}.  Since the paths $Z^n$ are deterministic functions of the identified word $X' = \cI(X)$, Corollary~\ref{prop-identification-law} implies that we only need to prove Theorem~\ref{thm-variable-SD} in the case when $p_\fo = p_\eb = 0$.

Throughout this section, we fix $p\in [0,1)$ and $q\in [0,1)$ and let $\PP^{p,q}$ denote the law of the bi-infinite word $X$ whose symbols are i.i.d.\ samples from the law~\eqref{eqn-theta-prob} with $p_\fo = p_\eb = 0$, $p_\so = p$, and $p_\db=q$. Let $\E^{p,q}$ denote the corresponding expectation. When there is no ambiguity (i.e.\ only one pair $(p,q)$ is under consideration) we write $\PP = \PP^{p,q}$ and $\E = \E^{p,q}$.

Since we think of $p_\so$ and $p_\db$ as being fixed, we abuse notation and allow ``constants'' to depend on $p_\so$ and $p_\db$,
including the implicit constants in asymptotic notation.

\subsection{Comparison of expected lengths of reduced words}
\label{sec-chen-lemma}

The following lemma is
one of our main tools for estimating expectations of quantities associated with the word $X$.

\begin{lem} \label{prop-mean-mono}
Suppose we are in the setting described at the beginning of this section.
Let $n\in\N$ and let $E$ be an event which is measurable with respect to the $\sigma$-algebra generated by $\left\{\cC(X(1,i))\right\}_{i \in [1,n]_\Z}$ (where here $\cC$ is as in Definition~\ref{def-theta-count}, i.e., $\cC=\cB-\cO$).
For each $n\in\N$ and $(p,q) \in [0,1] \times [0,1)$, we have (in the notation of Definition~\ref{def-theta-count})
\eqb \label{eqn-mean-mono-B}
\E^{p,q}\left( \cB\left(X(1,n)\right) \one_E \right) \geq \E^{0,0}\left( \cB\left(X(1,n)\right) \one_E \right)
\eqe
and 
\eqb \label{eqn-mean-mono-O}
\E^{p,q}\left( \cO\left(X(1,n)\right) \one_E \right) \geq \E^{0,0}\left( \cO\left(X(1,n)\right) \one_E \right).
\eqe
\end{lem}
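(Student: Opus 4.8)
\emph{Proof plan.} First I would reduce everything to a statement conditional on the burger/order pattern. Recall that under $\PP^{p,q}$ the only symbols outside $\Theta_0$ are $\so$ and $\db$. Every relation in~\eqref{eqn-theta-relations}--\eqref{eqn-theta-relations'} preserves the quantity $\cB-\cO$, so $\cC(X(1,i))=\cC(X_1\cdots X_i)$ is just the number of burgers minus the number of orders among $X_1,\dots,X_i$; hence $E$ is measurable with respect to the sequence $\vec\epsilon=(\epsilon_1,\dots,\epsilon_n)$ recording for each $X_i$ whether it is a burger or an order, and the law of $\vec\epsilon$ is that of $n$ i.i.d.\ fair coins, independently of $(p,q)$. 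It therefore suffices to prove, for each fixed $\vec\epsilon$, that $\E^{p,q}\big(\cB(\cR(X_1\cdots X_n))\mid\vec\epsilon\big)\ge\E^{0,0}\big(\cB(\cR(X_1\cdots X_n))\mid\vec\epsilon\big)$, and likewise with $\cO$. Since $\cB(\cR(X_1\cdots X_n))-\cO(\cR(X_1\cdots X_n))=\cC(X_1\cdots X_n)$ is a deterministic function of $\vec\epsilon$, these two assertions are equivalent to each other and to the single inequality
\[
\E^{p,q}\big(\,|\cR(X_1\cdots X_n)|\ \big|\ \vec\epsilon\,\big)\ \ge\ \E^{0,0}\big(\,|\cR(X_1\cdots X_n)|\ \big|\ \vec\epsilon\,\big) .
\]

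Next I would interpolate between the two laws one symbol at a time. Fix $\vec\epsilon$, and for $0\le j\le n$ let $\mu_j$ be the law under which $X_1,\dots,X_j$ are drawn independently from~\eqref{eqn-theta-prob} conditioned on the category ($B$ or $O$) prescribed by $\vec\epsilon$, while $X_{j+1},\dots,X_n$ are drawn the same way but with $p_\so=p_\db=0$ (so that they are plain and uniform within their category). Then $\mu_0=\PP^{0,0}(\cdot\mid\vec\epsilon)$ and $\mu_n=\PP^{p,q}(\cdot\mid\vec\epsilon)$, so it is enough to prove $\E_{\mu_j}|\cR(X_1\cdots X_n)|\ge\E_{\mu_{j-1}}|\cR(X_1\cdots X_n)|$ for each $j\in\{1,\dots,n\}$. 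Couple $\mu_{j-1}$ and $\mu_j$ so that the two words agree off position $j$, that $X_j$ is a uniform plain symbol $\mathsf{pl}$ of the right category under $\mu_{j-1}$, and that under $\mu_j$ one has $X_j=\mathsf{sp}$ (where $\mathsf{sp}=\so$ or $\db$ according to $\epsilon_j$) with the appropriate probability $p$ or $q$ and $X_j=\mathsf{pl}$ otherwise. Writing $U:=\cR(X_1\cdots X_{j-1})$ and $W:=X_{j+1}\cdots X_n$, the inequality $\E_{\mu_j}\ge\E_{\mu_{j-1}}$ is then equivalent to
\begin{equation}\label{eqn-mm-star}
\E\big(\,|\cR(U\,\mathsf{sp}\,W)|\,\big)\ \ge\ \E\big(\,|\cR(U\,\mathsf{pl}\,W)|\,\big),
\end{equation}
the expectation being over the random reduced prefix $U$, the random all-plain suffix $W$, and $\mathsf{pl}$. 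As all of $W$ lies in $\Theta_0$, the symbol $\mathsf{sp}$ is the rightmost symbol of $U\,\mathsf{sp}\,W$ lying outside $\Theta_0$, so~\eqref{eqn-mm-star} says precisely that replacing the rightmost $\so$ or $\db$ of a word by a uniform plain symbol cannot increase the expected length of the reduction, in the average over admissible prefixes $U$ and suffixes $W$.

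To prove~\eqref{eqn-mm-star} I would reveal $U$ and split into cases, with $\epsilon_j$ an order (the burger case being identical, with $\db$ and the relations $\hb\db=\hb\hb$, $\cb\db=\cb\cb$ in place of $\so$). The unidentified $\so$'s and $\db$'s of $U$ sit at its left end and are never matched in any $\cR(U\,\theta\,W)$ with $\theta\in\Theta_0$, so they contribute the same constant to every term and may be discarded; thus we may assume $U$ is a word in $\hb,\cb,\ho,\co$, and we let $\beta(U)$ be its freshest (rightmost) burger. If $\beta(U)=\hb$, the relation $\hb\,\so=\hb\,\co$ gives $\cR(U\,\so\,W)=\cR(U\,\co\,W)$, so, conditioned on this $U$, the difference of the two sides of~\eqref{eqn-mm-star} equals $\tfrac12\,\E_W\big(|\cR(U\,\co\,W)|-|\cR(U\,\ho\,W)|\big)$; if $\beta(U)=\cb$ it equals $\tfrac12\,\E_W\big(|\cR(U\,\ho\,W)|-|\cR(U\,\co\,W)|\big)$; and if $U$ has no burger, then $\so$ is unidentified and a direct check (the unmatched $\so$ stays to the left of $W$ and, unlike $\ho$ or $\co$, cannot be displaced by the commutations in~\eqref{eqn-theta-relations}) makes the difference nonnegative pointwise in $W$. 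Hence~\eqref{eqn-mm-star} follows once one establishes
\begin{equation}\label{eqn-mm-starstar}
\E\Big[\big(\one_{\{\beta(U)=\hb\}}-\one_{\{\beta(U)=\cb\}}\big)\big(|\cR(U\,\co\,W)|-|\cR(U\,\ho\,W)|\big)\Big]\ \ge\ 0 .
\end{equation}
A convenient device for~\eqref{eqn-mm-starstar} is the splitting $|\cR(x)|=L_H(x)+L_C(x)$ for a word $x$ in $\hb,\cb,\ho,\co$, where $L_H$ (resp.\ $L_C$) counts the unmatched symbols among the $\hb$'s and $\ho$'s (resp.\ $\cb$'s and $\co$'s) and depends only on that subsequence; inserting one order into such a subsequence changes the count by $\pm1$ according to an explicit rule, and~\eqref{eqn-mm-starstar} then reduces to comparing how often a $\ho$ versus a $\co$ inserted just after $U$ cancels an existing burger, using that $\{\beta(U)=\hb\}$ forces $U$ to carry an unmatched $\hb$ and that the laws of $U$ and $W$ are invariant under the $\hb\leftrightarrow\cb$, $\ho\leftrightarrow\co$ symmetry of the model.

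The hard part will be~\eqref{eqn-mm-starstar} (equivalently~\eqref{eqn-mm-star}): the essential subtlety is that it is \emph{false pointwise} in $U$. For instance, with $U=\cb\cb\hb$ and $W$ a single order, inserting $\so$ --- which in this case acts as $\co$ --- can strictly shorten $\cR$, while inserting $\ho$ lengthens it, so the bracketed quantity in~\eqref{eqn-mm-starstar} can be negative for individual $U$. The inequality is recovered only after averaging over the random reduced prefix $U$, and making this averaging rigorous --- a careful analysis of how $|\cR|$ of a finite word changes when its rightmost $\so$ or $\db$ is replaced by an element of $\Theta_0$, organized so that the contributions of the unfavourable prefixes are dominated by those of the favourable ones --- is where essentially all of the work lies.
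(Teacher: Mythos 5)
Your reduction is correct and matches the paper's up to reorganization: $E$ is determined by the burger/order pattern $\vec\epsilon$, whose law is independent of $(p,q)$, so it suffices to prove the conditional inequality given $\vec\epsilon$; $\cB-\cO$ is then deterministic, so the two inequalities collapse to one for $|\cR(X_1\cdots X_n)|$; and the one-symbol-at-a-time interpolation (yours from left to right, the paper's by repeatedly replacing the rightmost special symbol) reduces the claim to a single comparison of the form~\eqref{eqn-mm-star}, i.e.\ that replacing the rightmost $\so$ or $\db$ in a word by a uniformly random symbol of the same category does not increase the expected reduced length. Your discarding of the unidentified $\so$'s and $\db$'s in $U$ and the reformulation~\eqref{eqn-mm-starstar} are also both correct.

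The gap is that you do not prove~\eqref{eqn-mm-star}/\eqref{eqn-mm-starstar}, and the device you sketch (splitting $|\cR|=L_H+L_C$ and appealing to the $\hb\leftrightarrow\cb$ symmetry of the laws of $U$ and $W$) does not supply what is needed. As you correctly observe, the inequality is false pointwise in $U$, so some mechanism must pair the ``bad'' prefixes (where replacing the special symbol \emph{increases} the reduced length) against ``good'' ones where it over-achieves; symmetry of the marginal laws of $U$ and $W$ alone cannot produce this pairing because the quantity being averaged is not odd under the $\hb\leftrightarrow\cb$ involution applied to the whole word. The paper's resolution (Lemma~\ref{prop-word-flip}) is a genuinely different and more delicate combinatorial ingredient: it introduces the word-level involution $\Psi$, which flips $\hb\leftrightarrow\cb$, $\ho\leftrightarrow\co$ separately on the portion of the word strictly to the left of the burger $x_{r(x)}$ that identifies the special symbol, \emph{and} on the portion strictly to the right of the special symbol $x_{s(x)}$, while leaving the middle block untouched. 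One then shows that (a) whenever the replacement $x\mapsto x^\Hc$ (or $x^\Ch$) \emph{increases} $\cB(\cR(\cdot))$ it does so by exactly $1$ and the same replacement applied to $\Psi(x)$ \emph{decreases} $\cB(\cR(\cdot))$ by exactly $1$; and (b) $\Psi$ preserves the law of the word conditionally on $\vec\epsilon$. Point (a) is the heart of the matter and is established by a careful case analysis showing $\cR(x_{r(x)}\cdots x_{s(x)-1})=\co^{n}\hb^{m}$ and then computing both sides explicitly; point (b) uses that $r(x)$ and $s(x)$ are stopping times when the word is read backwards, so the strong Markov property lets one flip the conditionally independent outer blocks. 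Without $\Psi$ and the transfer inequality of Lemma~\ref{prop-word-flip}, the averaging you describe cannot be carried out, and this is exactly the content you defer as ``where essentially all of the work lies.''
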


The intuitive reason why we expect Lemma~\ref{prop-mean-mono} to be true is that it is ``harder'' for a $\db$ or $\so$ to find a match than it is for an element of $\Theta_0$ to find a match, since the $\db$ or $\so$ has to be identified, then matched. So, replacing $\db$'s and $\so$'s by elements of $\Theta_0$ should tend to reduce the number of burgers and orders in the word. 

To prove the lemma, we will iteratively replace the rightmost symbol amongst all of the $\db$'s and $\so$'s in $X_1\dots X_n$ by an $\hb$ or $\cb$ with equal probability (if it is a $\db$) or by an $\ho$ or $\co$ with equal probability (if it is an $\so$) and argue that each of these replacements reduces the expected number of burgers and orders in $X(1,n)$. The key tool in the proof is Lemma~\ref{prop-word-flip} below.

\begin{remark}
The proof of Lemma~\ref{prop-mean-mono} is based on an argument of Linxiao Chen which appears in the proof of Lemma~5 in the original arXiv version of~\cite{chen-fk}.  Chen's argument does not in fact yield the stochastic domination statement claimed in his Lemma~5, but does prove the analogue of Lemma~\ref{prop-mean-mono} in the setting where $p_\so=p_\db=p_\eb=0$ and $p_\fo \in [0,1]$.
\end{remark}

Define an involution $\theta\mapsto \theta^\dagger$ on $\Theta$ by
\begin{equation} \label{eqn-involution}
\begin{split}
&\hb^\dagger = \cb \quad \cb^\dagger = \hb \quad \ho^\dagger = \co \quad \co^\dagger = \ho \\
&\db^\dagger = \db \quad \eb^\dagger = \eb \quad \fo^\dagger =\fo \quad \so^\dagger =\so.
\end{split}
\end{equation}
For a word $x = x_1 \cdots x_{|x|}$ consisting of elements of $\Theta $, we write $x^\dagger = x_1^\dagger \cdots x_{|x|}^\dagger$.

For such a word $x$, we write $s(x)$ for the index of the rightmost symbol among all of the $\db$ or $\so$ symbols in $x$ (or $s(x) =0$ if no such $x$ exists).
We define
\[
x^\Hc = \begin{cases}
 \text{word obtained from $x$ by replacing $x_{s(x)}$ with $\ho$} & \text{$s(x)>0$ and $x_{s(x)} = \so$}\\
 \text{word obtained from $x$ by replacing $x_{s(x)}$ with $\cb$} & \text{$s(x)>0$ and $x_{s(x)} = \db$}\\
 x &  s(x) = 0\,,
\end{cases}
\]
and we define $x^\Ch$ similarly but with $\co$ and $\hb$ in place of $\ho$ and $\cb$.

We write $r(x)$ for the largest $k \in [1,s(x)-1]_\Z$ for which $x_k=\hb$ or $x_k=\cb$ and $x_k$ has
no match in $x_{1} \cdots x_{s(x)-1}$ (or $r(x) = 0$ if no such $k$ exists).
We define an involution
\eqbn
\Psi(x) = \begin{cases}
\left( x_1 \cdots x_{r(x)-1} \right)^\dagger x_{r(x)} \cdots x_{s(x)} \left( x_{s(x)+1} \cdots x_{|x|} \right)^\dagger & r(x)>0\\
x_1 \cdots x_{s(x)} \left( x_{s(x)+1} \cdots x_{|x|} \right)^\dagger & r(x)=0.
\end{cases}
\eqen

We make the following elementary observations about the above operations.
\begin{enumerate}
\item Involution commutes with reduction, i.e.\ $\cR(x^\dagger) = \cR(x)^\dagger$ for all words $x$.
\item $s(\Psi(x)) = s(x)$ and $r(\Psi(x)) = r(x)$ for all words $x$, and hence $\Psi(\Psi(x)) = x$.
\end{enumerate}

\begin{lem} \label{prop-word-flip}
Let $x = x_1\cdots x_{|x|}$ be a word consisting of elements of $\Theta_0 \cup \left\{\db, \so \right\}$.
If
\eqb \label{eqn-word-flip-hyp}
\cB\left( \cR( x^\Hc )\right)  > \cB( \cR( x) )
\eqe
then
\eqb \label{eqn-word-flip-conc}
\cB\left( \cR( x^\Hc) \right) = \cB( \cR( x ) ) + 1 \quad \op{and} \quad \cB\left( \cR( \Psi(x)^\Hc ) \right) = \cB( \cR( \Psi(x) ) ) - 1.
\eqe
\end{lem}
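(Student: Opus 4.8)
Throughout put $s=s(x)$ and $r=r(x)$, and assume $x_s=\db$, so that $x^\Hc$ replaces $x_s$ by $\cb$; the case $x_s=\so$ is handled the same way. Decompose $x=P\,B\,x_s\,v$ with $P=x_1\cdots x_{r-1}$, $B=x_r\cdots x_{s-1}$ and $v=x_{s+1}\cdots x_{|x|}$; since $x_s$ is the rightmost $\db$ or $\so$ in $x$ we have $v\in\mcl W(\Theta_0)$, and $x^\Hc=P\,B\,\cb\,v$, while $\Psi(x)=P^\dagger B\,x_s\,v^\dagger$ and $\Psi(x)^\Hc=P^\dagger B\,\cb\,v^\dagger$. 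The first step is a one-dimensional transfer formula for appending a word of $\Theta_0$ to a reduced word. For $w\in\mcl W(\Theta_0)$ let $M^\hb(w)$ be the maximum of $\cN_\ho(w')-\cN_\hb(w')$ over prefixes $w'$ of $w$, let $M^\cb(w)$ be the analogue with $\co,\cb$, and for $c\ge0$ set $f^\hb_w(c):=\cN_\hb(w)-\cN_\ho(w)+\max(c,M^\hb(w))$ and likewise $f^\cb_w(c)$. A short induction shows that if $y$ is reduced with burger part having $\alpha$ hamburgers and $\beta$ cheeseburgers and order part $y^-$ having $\gamma$ unidentified $\db$'s, then $\cR(yw)=y^-\cR(y^+w)$ and $\cB(\cR(yw))=\gamma+f^\hb_w(\alpha)+f^\cb_w(\beta)$ (reading a word of $\Theta_0$ leaves the hamburger and cheeseburger dynamics decoupled); in particular $f^\hb_w(c+1)-f^\hb_w(c)=\one[c\ge M^\hb(w)]$, and similarly for $f^\cb_w$.

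Apply this with $y=W:=\cR(x_1\cdots x_{s-1})$ and $w=v$. Comparing $\cR(W\db)$ with $\cR(W\cb)$ term by term shows that if the burger part of $W$ is empty or has a cheeseburger on top then $\cB(\cR(x^\Hc))\le\cB(\cR(x))$, so the hypothesis forces this burger part to be nonempty with a hamburger on top; then $\cR(W\db)$ and $\cR(W\cb)$ have the same order part and burger parts differing only in one hamburger versus one cheeseburger. Writing $h_0,k_0$ for the hamburger and cheeseburger counts of the burger part of $W$, the transfer formula gives
\[
\cB(\cR(x^\Hc))-\cB(\cR(x))=\one[k_0\ge M^\cb(v)]-\one[h_0\ge M^\hb(v)]\in\{-1,0,1\}.
\]
Being positive it equals $1$, which is the first assertion, and moreover $k_0\ge M^\cb(v)$ and $h_0<M^\hb(v)$.

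Now do the same for $\Psi(x)$. Since $\Psi$ fixes $r$ and $s$ and leaves $x_r\cdots x_s$ intact, $x_r$ is still the freshest unmatched genuine burger of $\Psi(x)_1\cdots\Psi(x)_{s-1}=P^\dagger B$, so $\cR(P^\dagger B)$ again has a hamburger on top; let $\hat h_0,\hat k_0$ be the hamburger and cheeseburger counts of its burger part. As $\Psi$ also daggers $v$, and $M^\hb(v^\dagger)=M^\cb(v)$, $M^\cb(v^\dagger)=M^\hb(v)$, the computation above applies verbatim to $\Psi(x)$ and yields
\[
\cB(\cR(\Psi(x)^\Hc))-\cB(\cR(\Psi(x)))=\one[\hat k_0\ge M^\hb(v)]-\one[\hat h_0\ge M^\cb(v)].
\]
In view of the consequences of the hypothesis just recorded, it therefore suffices to prove $\hat h_0\ge k_0$ and $\hat k_0\le h_0$: then $\hat h_0\ge k_0\ge M^\cb(v)$ and $\hat k_0\le h_0<M^\hb(v)$, so the right-hand side is $0-1=-1$. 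To reduce these to a statement about the middle block, let $f^\hb:=f^\hb_{\cI(B)}$ and $f^\cb:=f^\cb_{\cI(B)}$. Because $x_r$ is unmatched in $x_1\cdots x_{s-1}$ it is never consumed while $B$ is read, so during $B$ the top of the stack always has freshness at least that of $x_r$; hence the identifications of the $\db$'s and $\so$'s inside $B$ — and thus $\cI(B)\in\mcl W(\Theta_0)$ — do not change when $P$ is replaced by $P^\dagger$. Writing $P',Q'$ for the hamburger and cheeseburger counts of the burger part of $\cR(P)$, the transfer formula then gives $h_0=f^\hb(P')$, $k_0=f^\cb(Q')$, $\hat h_0=f^\hb(Q')$ and $\hat k_0=f^\cb(P')$, so both inequalities follow once we show $f^\hb(c)>f^\cb(c)$ for every $c\ge0$.

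This last point is the crux, and the place where the real work lies. A \emph{chain} argument — tracing each surviving burger of $\cR(B)$ back through the duplicate-burger relations $\hb\db=\hb\hb$, $\cb\db=\cb\cb$ and invoking the maximality of $r$ (a genuine unmatched burger fresher than $x_r$ is impossible, and a surviving duplicate cheeseburger can be shown, again via the chain, to force one) — shows that $x_r$ is a hamburger and that every burger of $\cR(B)$ surviving above $x_r$ is a duplicate hamburger; in particular $\cN_\cb(\cR(B))=0$ and $\cN_\hb(\cR(B))\ge1$. Since $x_r$ is never consumed, $B$ also leaves no hamburger orders, so $\cN_\ho(\cR(B))=0$ and hence $M^\hb(\cI(B))=0$. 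A direct computation with the transfer formula then gives $f^\hb(c)=\cN_\hb(\cR(B))+c$ and $f^\cb(c)=\cN_\cb(\cR(B))+\max(0,\,c-M^\cb(\cI(B)))=\max(0,\,c-M^\cb(\cI(B)))$, whence $f^\hb(c)\ge 1+c>f^\cb(c)$ for all $c\ge0$. The main obstacle is precisely this combinatorial analysis of $\cR(B)$ — the chain argument and the structure of the burger stack of $\cR(B)$ it produces — together with verifying that it is insensitive to the replacement $P\mapsto P^\dagger$; the remainder is bookkeeping with the transfer formula.
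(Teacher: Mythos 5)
Your strategy is essentially the paper's: the same decomposition of $x$ around $r(x)$ and $s(x)$, the same reduction to the structure of the middle block (your $\cR(B)$; the paper's~\eqref{eqn-only-hC}, $\cR(x_{r(x)}\cdots x_{s(x)-1}) = \co^n\hb^m$ with $m\geq1$), and the same final comparison of burger counts after appending the suffix. The transfer functions $f^\hb_w, f^\cb_w$ repackage the paper's explicit $\max(\cdot,0)$ computations more transparently: the paper's two cases $\alpha\in\{\hb,\co\}$ become a uniform indicator difference, and the crux is distilled into the clean inequality $f^\hb_{\cI(B)}(c)>f^\cb_{\cI(B)}(c)$ together with $\hat h_0\geq k_0$, $\hat k_0\leq h_0$. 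The step you defer to a ``chain argument'' --- that $x_r$ is a hamburger, that $\cR(B)$ contains only $\co$'s and $\hb$'s with at least one $\hb$ and no $\ho$'s, and that $\cI(B)$ is unchanged under $P\mapsto P^\dagger$ --- is exactly the content of the paper's first three paragraphs, and you are right that it carries all the combinatorial weight. Your sketch of it is sound: because $x_r$ is never consumed while $B$ is read, the top of the stack stays at or above $x_r$, so $\cR(B)$ has no $\ho$'s, and the identification chain of any surviving burger of $\cR(B)$ consists of surviving burgers terminating at a genuine $\hb$ or $\cb$, which by maximality of $r(x)$ can only be $x_r$ itself. Two points to tighten: your decomposition $x=P\,B\,x_s\,v$ presupposes $r(x)>0$, which should be derived first (as the paper does) from the observation that~\eqref{eqn-word-flip-hyp} forces $\cR(x_1\cdots x_{s-1})$ to contain a burger, whose identification chain leads back to an unmatched $\hb$ or $\cb$; and the $x_s=\so$ case is not literally ``the same'' as $x_s=\db$, since an order removes a burger from the stack rather than adding one, so the indicator thresholds shift by one (the hypothesis now gives $k_0-1\geq M^\cb(v)$ and $h_0\leq M^\hb(v)$) --- the inequalities $\hat h_0\geq k_0$, $\hat k_0\leq h_0$ still finish it, but that should be checked rather than waved through.
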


To prove Lemma~\ref{prop-word-flip}, we first explain why~\eqref{eqn-word-flip-hyp} implies that $x_{s(x)}$ is identified in $x$ (i.e., $r(x) > 0$) and that the word $\cR(x_{r(x)} \dots x_{s(x)-1})$ must take the form $\co^n \hb^m$ for some $m\geq 1$ and $n\geq 0$ (where here $\co^n$ denotes the word which is a concatenation of $n$ $\co$'s, etc.); see~\eqref{eqn-only-hC}. 
By means of~\eqref{eqn-reduced-decomp}, we then reduce to the case when $\cR( x_1\dots x_{r(x)-1}) $ (resp.\ $\cR( x_{s(x)} \dots x_{|x|})$) contains only $\hb$'s and $\cb$'s (resp.\ $\ho$'s and $\co$'s). 
This reduction together with~\eqref{eqn-only-hC} will allow us to write down explicit expressions for the quantities in~\eqref{eqn-word-flip-conc} in terms of $n$ and $m$. Comparing these expressions will yield~\eqref{eqn-word-flip-conc}.

\begin{proof}[Proof of Lemma~\ref{prop-word-flip}]
If \eqref{eqn-word-flip-hyp} holds, then $x^\Hc \neq x$, so the word $x$ contains at least one $\db$ or $\so$.
Since $x_{s(x)}$ is the rightmost $\db$ or $\so$ in the word $x$,
replacing $x_{s(x)}$ by $\cb$ or $\ho$ does not change the identification of any symbol in $x_{s(x)+1} \cdots x_{|x|}$.

We first argue that~\eqref{eqn-word-flip-hyp} implies that $x_{s(x)}$ is identified in $x$,
and hence that $r(x)>0$.
Indeed, suppose $x_{s(x)}$ is not identified in the word $x$.
Then the reduced word $x(1,s(x)-1)$ contains no $\hb$ or $\cb$ symbols, since the presence of any such symbol would identify $x_{s(x)}$ (recall~\eqref{eqn-theta-relations'}).
In this case, the reduced words $\cR(x)$ and $\cR(x^\Hc)$ would have the same set of symbols except
the symbol coming from position $s(x)$, and possibly an order in $x_{s(x)+1} \cdots x_{|x|}$ which may consume $x^\Hc_{s(x)}$ if it
is a burger.  But then $\cB\left(\cR(x^\Hc)\right) \leq \cB(\cR(x))$,
contradicting~\eqref{eqn-word-flip-hyp}.   

Henceforth assume that~\eqref{eqn-word-flip-hyp} holds, which implies (by the preceding paragraph) that $x_{s(x)}$ is identified in $x$.
If $r(x)<k<s(x)$ and $x_k$ is a burger, then
by definition of $r(x)$, either
$x_k\in\{\hb,\cb\}$ but is consumed by an order in $x_{r(x)}\cdots x_{s(x)}$, or else $x_k=\db$.
If $x_k=\db$ and is identified as $x_{r(x)}^\dagger$, consider the first such $k$.
By definition of $r(x)$, the burger that identifies $x_k$ is consumed
in $x_{r(x)}\cdots x_{s(x)}$, at a time by which $x_k$ must therefore also have been consumed.
Thus the only burgers in $\cR(x_{r(x)}\cdots x_{s(x)})$ are $\db$'s that are identified as $x_{r(x)}$ and the burger $x_{r(x)}$ itself.

If $x_{s(x)} = \so$, then since $\cR(x^\Hc) \neq \cR(x)$, it must be that
$x_{s(x)}$ corresponds to a $\co$ symbol in the identification $\cI(x)$,
which in turn implies $x_{r(x)} = \hb$.
If on the other hand
$x_{s(x) } = \db$, then $x_{s(x)}$ must be identified by an $\hb$ in the word $x$,
which again implies
$x_{r(x)} = \hb$, since from the previous paragraph we know that all potential intermediate burgers would be $\db$'s.
Since the burger $x_{r(x)}=\hb$ is not consumed in $\cR(x_{r(x)}\cdots x_{s(x)})$, any order in
this reduced word is identified and must be of type $\co$.
Regardless of $x_{s(x)}$,
\eqb \label{eqn-only-hC}
\cR\left(x_{r(x)}\cdots x_{s(x)-1}\right) = \co^{n} \hb^{m} \quad\quad\quad\text{with $m\geq1$ and $n\geq0$.}
\eqe

Write $x(1, r(x)-1) = Uu$ and $x(s(x)+1,|x|) = Vv$, where $U$ and $V$ are words consisting of only orders and $\db$'s, and $u$ and $v$ are words consisting of only $\hb$'s and $\cb$'s.  By definition of $s(x)$, $V$ contains no $\so$ or $\db$.
Let $\alpha$ denote the identification of $x_{s(x)}$ in $x_{r(x)}\cdots x_{s(x)}$, which is either $\co$ or $\hb$.
By the relation $\mcl R(\mcl R(x)\mcl R(y)) = \mcl R(xy)$ (Lemma~\ref{prop-associative}) and the commutativity of $\hb$ with $\co$,
\begin{equation} \label{eqn-reduced-decomp}
\begin{aligned}
\cR(x) &= U \cR\left( u \hb^m \co^n \alpha  V \right)  v,
& \cR\left(x^\Hc \right) &= U \cR\left( u \hb^m \co^n \alpha^\dagger  V \right)  v\,, \\
\cR(\Psi(x)) &= U^\dagger \cR\left( u^\dagger \hb^m \co^{n} \alpha  V^\dagger \right)  v^\dagger,\quad
& \cR\left( \Psi(x)^\Hc \right) &= U^\dagger \cR\left( u^\dagger \hb^m \co^n \alpha^\dagger V^\dagger \right)  v^\dagger \,.
\end{aligned}
\end{equation}
From~\eqref{eqn-reduced-decomp} we see that changing $U$ and $v$ while leaving the other words fixed does not change $\cB\left( \cR( x^\Hc) \right) - \cB( \cR( x ) )$ or $\cB\left( \cR( \Psi(x)^\Hc ) \right) - \cB( \cR( \Psi(x) ) )$, so we assume without loss of generality that $U = v = \emptyset$.

Under this assumption, the words $\cR(x)$ and $\cR(\Psi(x))$ both take the form $\cR(y Y)$, where $y$ is a word with only hamburgers and cheeseburgers and $Y$ is a word with only hamburger orders and cheeseburger orders.  If $\alpha$ is an order, then $\cR(x^\Hc)$ and $\cR(\Psi(x)^\Hc)$ also take the form $\cR(y Y)$, but if $\alpha$ is a burger and $n>0$, then $\cR(x^\Hc)$ and $\cR(\Psi(x)^\Hc)$ take the form $\cR(y \co^n \cb Y)$ (where in both cases, as above, $y$ denotes a word with only $\cb$'s and $\hb$'s, and $Y$ a word with only $\co$'s and $\ho$'s).

For convenience we define
\begin{align*}
\Delta_\hb &\colonequals \cN_{\hb}\left(u \right) - \cN_{\ho}\left(V \right) \\
\Delta_\cb &\colonequals \cN_{\cb}\left(u \right) - \cN_{\co}\left(V \right) \,.
\end{align*}

Suppose first $\alpha=\co$.  From \eqref{eqn-reduced-decomp} we see
\begin{equation*}
\begin{split}
\cB(\cR(x)) &= \left( \Delta_\hb + m \right) \vee 0 + \left(\Delta_\cb - n-1 \right) \vee 0 \\
\cB(\cR(x^\Hc)) &= \left( \Delta_\hb + m -1\right) \vee 0 + \left(\Delta_\cb - n \right) \vee 0
\end{split}
\end{equation*}
Since $\cB(\cR(x^\Hc)) > \cB(\cR(x))$ it follows that $\Delta_\hb \leq -m$ and $\Delta_\cb \geq n+1$,
and hence $\cB(\cR(x^\Hc)) = \cB(\cR(x))+1$, as claimed.
From \eqref{eqn-reduced-decomp} together with $\Delta_\hb\leq -1$ and $\Delta_\cb\geq 1$, we see
\begin{equation*}
\begin{split}
\cB(\cR(\Psi(x))) &= \left( \Delta_\cb + m \right) \vee 0 + \left(\Delta_\hb - n-1 \right) \vee 0 = (\Delta_\cb+m)+0\\
\cB(\cR(\Psi(x^\Hc))) &= \left( \Delta_\cb + m -1\right) \vee 0 + \left(\Delta_\hb - n \right) \vee 0 = (\Delta_\cb+m-1)+0
\end{split}
\end{equation*}
so $\cB(\cR(\Psi(x^\Hc))) = \cB(\cR(\Psi(x)))-1$, as claimed.

Suppose next $\alpha=\hb$.  From \eqref{eqn-reduced-decomp} we see
\begin{equation*}
\begin{split}
\cB(\cR(x)) &= \left( \Delta_\hb + m + 1 \right) \vee 0 + \left(\Delta_\cb - n \right) \vee 0 \\
\cB(\cR(x^\Hc)) &= \left( \Delta_\hb + m \right) \vee 0 +
[((\cN_\cb(u)-n)\vee 0)+1-\cN_\co(V)]\vee 0
\end{split}
\end{equation*}
The nested-$\vee$ expression arises because $\cR(x^\Hc)$ takes the form $\cR(y \co^n \cb Y)$.
Since $\Delta_\cb=\cN_\cb(u)-\cN_\co(V)$ and $\cB(\cR(x^\Hc)) > \cB(\cR(x))$, it follows (by a short argument by contradiction due to the nested-$\vee$ expression) that $\Delta_\hb\leq-m-1$ and
\[
(\cN_\cb(u)-n)\vee 0 \geq \cN_\co(V),
\]
which in turn implies either $\cN_\co(V)=0$ or $\Delta_\cb\geq n$.  In either case, $\Delta_\cb\geq 0$.
We also see $\cB(\cR(x^\Hc)) = \cB(\cR(x))+1$, as claimed.
Referring to \eqref{eqn-reduced-decomp} again, and using from above that $\Delta_\hb\le -2$ and $\Delta_\cb\ge 0$, we see
\begin{align*}
\cB(\cR(\Psi(x))) &= \left( \Delta_\cb + m + 1 \right) \vee 0 + \left(\Delta_\hb - n \right) \vee 0 = (\Delta_\cb+m+1)+0\\
\cB(\cR(\Psi(x^\Hc))) &= \left( \Delta_\cb + m \right) \vee 0 +
[((\cN_\hb(u)-n)\vee 0)+1-\cN_\ho(V)]\vee 0 \\
& = (\Delta_\cb+m)+[((\Delta_\hb-n)\vee (-\cN_\ho(V)))+1]\vee 0
\intertext{Since $\cN_{\hb}\left(u \right) - \cN_{\ho}\left(V \right)=\Delta_\hb\leq-m-1\leq -2$, it follows that $\cN_\ho(V) \geq 2$, and so}
\cB(\cR(\Psi(x^\Hc)))
& =  \Delta_\cb+m \,,
\end{align*}
so in this case as well $\cB(\cR(\Psi(x^\Hc))) = \cB(\cR(\Psi(x)))-1$, as claimed.
\end{proof}

\begin{proof}[Proof of Lemma~\ref{prop-mean-mono}]
The law of $n\mapsto \cC(X(1,n))$ is that of one-dimensional simple random walk, regardless of $p$ and $q$. Therefore $\PP^{p,q}(E) = \PP^{0,0}(E)$, so to prove~\eqref{eqn-mean-mono-B} it suffices to show
\eqb \label{eqn-mean-mono}
\E^{p,q} \left( \cB(X(1,n )) \,|\, E \right) \geq \E^{0,0}\left( \cB(X(1,n )) \,|\, E \right),\quad \forall n \in \N.
\eqe
To this end, let $X^0 = X_{1}^0 \cdots X_{n}^0$ be a word whose law is that of $X_{1} \cdots X_{n}$ under $\PP^{p,q}$. Let $\{\xi_k\}_{k\in [1,n]_\Z}$ be i.i.d.\ Bernoulli random variables with parameter $1/2$, independent from $X^0$. For $k\in [1,n]_\Z$ inductively define
\eqbn
X^k = \begin{dcases}
(X^{k-1} )^\Hc \quad &\op{if} \: \xi_k = 0\\
(X^{k-1})^\Ch \quad &\op{if} \: \xi_k = 1.
\end{dcases}
\eqen
Since $\cN_{\db|\so}(X^k)=0\vee(\cN_{\db|\so}(X^{k-1})-1)$, and the word $X^n$ is obtained from $X^0$ by replacing each $\db$ symbol in $X^0$ with an independent random symbol which is uniformly distributed on $\left\{\hb,\cb\right\}$ and each $\so$ symbol in $X$ with an independent random symbol which is uniformly distributed on $\left\{\ho,\co\right\}$, the law of $X^n$ is that of $X_{1} \cdots X_{n}$ under $\PP^{0,0}$.

We next argue that
\eqb \label{eqn-flip-law}
\Psi(X^k) \eqD X^k \quad \forall \: k\in [0,n]_\Z.
\eqe
To see this, let $k\in[1,n]_\Z$ and let $j_k$ be the $k^{\text{th}}$ largest $j\in [1,n]_\Z$ for which $X^0_{j} \in \left\{\db,\so\right\}$, or $j_k = 0$ if no such $j$ exists. Also let $j_k'$ be the largest $j \in [1, j_k-1]_\Z$ for which the reduced word $X^0(j, j_k-1)$ contains an $\hb$ or $\cb$, or $j_k' = 0$ if no such $j$ exists.
Then $j_k$ and $j_k'$ are stopping times for the filtration generated by $X^0$, read from right to left. By the strong Markov property, the conditional law of $X^0_{1} \cdots X^0_{ j_k'-1}$ given $X^0_{ j_k'}\cdots X^0_{n}$ is a string of $(j_k'-1)\vee 0$ i.i.d.\ symbols sampled from the law $\PP^{p,q}$.
Hence given $j_k'$, $X^0_{1} \cdots X^0_{ j_k'-1}$ is conditionally independent from $X^0_{ j_k + 1} \cdots X^0_{n}$ and $X^0_{ j_k'} \cdots X^0_{ j_k}$.

By the above description of the conditional law of $X^0_{1} \cdots X^0_{ j_k'-1}$ given $j_k'$ and $X^0_{ j_k'}\cdots X^0_{n}$ and the symmetry between hamburgers and cheeseburgers, we infer that this conditional law is invariant under involution. 
Since the definition of $j_k$ is invariant under involution, we infer that also the conditional law of $X^0_{ j_k+1} \cdots X^0_{n}$ given $j_k$ is invariant under involution. Since $j_k$ is a stopping time for $X^0$, read backwards, it follows that the joint conditional law of $X^0_{1} \cdots X^0_{ j_k'-1}$ and $X^0_{ j_k+1} \cdots X^0_{n}$ given $j_k$, $j_k'$ and $X^0_{ j_k'} \cdots X^0_{ j_k}$ is invariant under involution.
In particular,
\eqb \label{eqn-flip-law-whole}
X^0\eqD \left( X^0_{1} \cdots X^0_{ j_k'-1} \right)^\dagger X^0_{ j_k'}\cdots X^0_{j_k} \left( X^0_{ j_k+1} \cdots X^0_{n} \right)^\dagger.
\eqe
The word $X^k$ (resp.\ $\Psi(X^k)$) is obtained from the word on the left (resp.\ right) side of~\eqref{eqn-flip-law-whole} by replacing its $k$ rightmost $\db$ or $\so$ symbols with independent random symbols sampled uniformly from $\left\{\hb,\cb\right\}$ or $\left\{\ho,\co\right\}$ respectively.  We thus obtain~\eqref{eqn-flip-law}.

Now let $E$ be an event as in the statement of the lemma, defined with the word $X^0_1 \dots X_0^n$ in place of the word $X_1\dots X_n$. 
The operations $x\mapsto x^\Hc$, $x\mapsto x^\Ch$, and $x\mapsto \Psi(x)$ replace burgers with burgers and orders with orders in the word $x$, so the sequence $\cC(x(1,i))_{i=1,\dots,n}$ is the same for each $x \in \left\{ X^k,\Psi(X^k)\right\}$ and $k \in [0,n]_\Z$.
Since the event $E$ is determined by $\cC(X^0(1,i))_{i=1,\dots,n}$,
we see that the definition of $ E$ is unaffected if we replace $X^0$ with $X^k$ or $\Psi(X^k)$ for any $k\in [0,n]_\Z$. From this observation, we deduce the following:
\begin{enumerate}
\item The conditional law of $X^0$ given $E$ is the same as the conditional law of $X_1\cdots X_n$ given $E$ under $\PP^{p,q}$. \label{item-E-start-law}
\item The conditional law of $X^n$ given $E$ is the same as the conditional law of $X_1\cdots X_n$ given $E$ under $\PP^{0,0}$. \label{item-E-end-law}
\item $E$ is independent from the Bernoulli random variables $\{\xi_k\}_{k\in [1,n]_\Z}$. \label{item-E-ind}
\item By~\eqref{eqn-flip-law}, for each $k\in [1,n]_\Z$, the conditional laws of $X^k$ and $\Psi(X^k)$ given $E$ agree. \label{item-E-flip}
\end{enumerate}
By combining these observations with Lemma~\ref{prop-word-flip}, we find that for each $k\in [1,n]_\Z$,
\begin{align} \label{eqn-diff-compare}
&\PP\!\left( \cB\left( \cR( X^k ) \right) > \cB\left( \cR(X^{k-1} ) \right) \,|\, E \right) \notag \\
&\qquad = \tfrac12 \PP\!\left( \cB\left( \cR( (X^{k-1})^\Hc ) \right) > \cB\left( \cR(X^{k-1}) \right) \,|\, E \right)
+ \tfrac12 \PP\!\left( \cB\left( \cR( (X^{k-1})^\Ch ) \right) > \cB\left( \cR( X^{k-1} ) \right) \,|\, E \right) \notag \\
&\qquad\leq \tfrac12 \PP\!\left(\cB\left( \cR( \Psi(X^{k-1})^\Hc ) \right) = \cB\left( \cR( \Psi(X^{k-1}) ) \right) - 1 \,|\, E \right) \notag\\
&\qquad\quad+\tfrac12\PP\!\left(\cB\left( \cR( \Psi(X^{k-1})^\Ch ) \right) = \cB\left( \cR( \Psi(X^{k-1}) ) \right) - 1 \,|\, E \right) \notag \\
&\qquad= \PP\!\left( \cB\left(\cR( X^k ) \right) = \cB\left(\cR( X^{k-1} ) \right) - 1 \,|\, E \right).
\end{align}
We used observation~\ref{item-E-ind} above in the first equality and observation~\ref{item-E-flip} in the last equality. Lemma~\ref{prop-word-flip} implies that $\cB( \cR( X^k )  ) = \cB(\cR( X^{k-1} ) ) + 1$ whenever $\cB(\cR( X^k )  ) > \cB(\cR( X^{k-1} ) )$, so~\eqref{eqn-diff-compare} implies 
\alb
&\E\left( \cB\left( \cR( X^k ) \right)  -  \cB\left( \cR( X^{k-1}) \right) \,|\, E \right) \notag \\
&\qquad \leq \PP\!\left( \cB\left( \cR( X^k ) \right) > \cB\left( \cR(X^{k-1} ) \right) \,|\, E \right) - \PP\!\left( \cB\left(\cR( X^k ) \right) = \cB\left(\cR( X^{k-1} ) \right) - 1 \,|\, E \right)  \leq 0 ,
\ale 
whence
\eqbn
\E\left( \cB\left( \cR( X^k ) \right) \,|\, E \right) \leq \E\left( \cB\left( \cR( X^{k-1}) \right) \,|\, E \right) \quad \forall k\in [1,n]_\Z.
\eqen
Therefore
\eqb \label{BRXn<=BRX0}
\E\left( \cB\left( \cR( X^n ) \right) \,|\, E \right) \leq \E\left( \cB\left(\cR( X^0 ) \right) \,|\, E \right).
\eqe
By observations~\ref{item-E-start-law} and~\ref{item-E-end-law} above, we obtain~\eqref{eqn-mean-mono-B}.

The bound~\eqref{eqn-mean-mono-O} follows observations~\ref{item-E-start-law} and~\ref{item-E-end-law} above,
\eqref{BRXn<=BRX0} and
\[
\cB(\cR(X^0))-\cO(\cR(X^0))
= \cB(\cR(X^n))-\cO(\cR(X^n)) \,. \qedhere
\]
\end{proof}

\subsection{Bound on the number of unidentified symbols}
\label{sec-few-SD}

In the next three subsections we prove analogues of various results found in~\cite[\S~3]{shef-burger} in the setting of Theorem~\ref{thm-variable-SD}.
Throughout, we assume we are in the setting described just above the statement of Theorem~\ref{thm-variable-SD} for fixed $(p,q) \in [0,1] \times [0,1)$. 

The main purpose of this section is to prove the following more quantitative analogue of~\cite[Lem.~3.7]{shef-burger}.

\begin{lem} \label{prop-few-SD}
For each $\ep>0$, there are positive numbers $c_0,c_1>0$ such that, for each $n\in\N$ and $A>0$,
the event
\eqb \label{eqn-few-SD-event}
F_n(\ep,A) \colonequals \left\{ \frac{\cN_{\db|\so}(X(1,n))}{ \cN_{\ho}(X(1,n)) \vee A } \geq \ep \right\} 
\eqe
occurs with probability
\eqb \label{eqn-few-SD}
\PP\left( F_n(\ep,A) \right) \leq c_0 e^{-c_1 A }.
\eqe
\end{lem}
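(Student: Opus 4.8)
The plan is, first, to exploit two monotonicity facts. Write $\cN_{\hb|\cb}(X(1,i))$ for the number of non-duplicate burgers in the reduced word $X(1,i)$, and note we may assume $p+q>0$ (otherwise $X$ has no $\db$ or $\so$ and $F_n(\ep,A)$ has probability $0$). Both $i\mapsto\cN_{\db|\so}(X(1,i))$ and $i\mapsto\cN_{\ho}(X(1,i))$ are nondecreasing: an unidentified $\db$ or $\so$ sits among the orders of the reduced word and can neither be identified by a later symbol (genuine burgers are appended on the right and commute leftward only past orders of the opposite type) nor consumed; and an unmatched hamburger order is never matched later, since an order consumes only burgers lying to its left. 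Hence on $F_n(\ep,A)$ we have $\cN_{\db|\so}(X(1,n))\geq\ep A$ and $\cN_{\ho}(X(1,n))\leq\ep^{-1}\cN_{\db|\so}(X(1,n))$, so $F_n(\ep,A)\subseteq\bigcup_{k\geq\lceil\ep A\rceil}\{\cN_{\db|\so}(X(1,n))\geq k,\ \cN_{\ho}(X(1,n))\leq k/\ep\}$, and it is enough to bound the $k$-th event by $c\,e^{-c'k}$ and sum.

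Second, I would introduce a regeneration structure. Put $\tau_0\colonequals0$ and let $\tau_j$ be the $j$-th index $i$ with $X_i\in\{\db,\so\}$ and $\cN_{\hb|\cb}(X(1,i-1))=0$, so $\cN_{\db|\so}(X(1,n))=\#\{j:\tau_j\leq n\}$. Since $X(1,\tau_{j-1})$ has no $\hb$ or $\cb$, the burger part of the reduced word after time $\tau_{j-1}$ is rebuilt from scratch and does not interact with the order part inherited from before; hence $\cN_{\hb|\cb}(X(1,i))$ and $\cN_{\ho}(X(1,i))-\cN_{\ho}(X(1,\tau_{j-1}))$ for $i\geq\tau_{j-1}$ depend only on $X_{\tau_{j-1}+1},X_{\tau_{j-1}+2},\dots$, and so do $\tau_j-\tau_{j-1}$ and $H_j\colonequals\cN_{\ho}(X(1,\tau_j))-\cN_{\ho}(X(1,\tau_{j-1}))$. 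As $\tau_{j-1}$ is a stopping time, the strong Markov property makes the pairs $(\tau_j-\tau_{j-1},H_j)$ i.i.d. Monotonicity of $\cN_{\ho}(X(1,\cdot))$ gives, on $\{\tau_k\leq n\}$, that $\cN_{\ho}(X(1,n))\geq\sum_{j=1}^k H_j$, so the $k$-th event above lies in $\{\sum_{j=1}^k H_j\leq k/\ep\}$, and the task becomes to bound $\PP(\sum_{j=1}^k H_j\leq k/\ep)$.

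The heart of the proof — and the step I expect to be the main obstacle — is to show $\E(H_1)=\infty$. The hamburger/cheeseburger symmetry $\hb\leftrightarrow\cb,\ \ho\leftrightarrow\co$ preserves the law of $X$ and fixes $\tau_1$, so $\E(\cN_{\ho}(X(1,\tau_1)))=\E(\cN_{\co}(X(1,\tau_1)))$; combining this with $\cN_{\hb|\cb}(X(1,\tau_1))=0$ and $\cC=\cB-\cO$ yields $2\,\E(H_1)=\E(\cO(X(1,\tau_1)))+O(1)=-\E(\cC(X(1,\tau_1)))+O(1)$, with $\cC(X(1,\cdot))$ simple random walk and the $O(1)$ accounting for the single unidentified symbol $X_{\tau_1}$. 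Thus it suffices to show $\E(\cC(X(1,\tau_1)))=-\infty$. Before $\tau_1$ the walk $i\mapsto\cN_{\hb|\cb}(X(1,i))$ makes a finite (but, with positive probability, at least one) number of excursions away from $0$, and at each return to $0$ one has $\cC(X(1,i))=-\cO(X(1,i))$, so $-\cC(X(1,\tau_1))$ is, up to $O(1)$, at least the total number of unmatched orders appended during those excursions; hence it is enough to show that a single excursion of $\cN_{\hb|\cb}(X(1,\cdot))$ away from $0$ produces infinitely many unmatched orders in expectation. This is where Lemma~\ref{prop-mean-mono} is used: conditioning on the one-dimensional simple random walk $i\mapsto\cC(X(1,i))$, it compares the number of unmatched orders produced under $\PP^{p,q}$ with that under $\PP^{0,0}$, where $\ddi$ is simple random walk on $\Z^2$; for that explicit model such an excursion is comparable to a one-dimensional simple-random-walk excursion, whose infinitely-heavy-tailed length forces the number of unmatched orders produced — at least of the order of the local time of the walk at a fixed positive level, hence of the order of the square root of the excursion length — to have infinite expectation. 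Carrying this out rigorously, especially arranging the events fed to Lemma~\ref{prop-mean-mono} to be measurable with respect to $\{\cC(X(1,i))\}$, is the delicate part. (This plays the role of \cite[Lem.~3.7]{shef-burger}, but the mechanism is genuinely different: here the expected number of unmatched orders accumulated between regenerations is infinite, whereas in \cite{shef-burger} it grows only like a constant.)

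Finally, granted $\E(H_1)=\infty$, I would finish by truncation: choose $L_0=L_0(\ep)\in\N$ with $\E(H_1\wedge L_0)\geq3/\ep$. Since the $H_j\wedge L_0$ are i.i.d.\ in $[0,L_0]$ with mean at least $3/\ep$, Hoeffding's inequality gives $\PP(\sum_{j=1}^k H_j\leq k/\ep)\leq\PP(\sum_{j=1}^k(H_j\wedge L_0)\leq k/\ep)\leq\exp(-8k/(\ep^2 L_0^2))$ for all $k$; summing over $k\geq\lceil\ep A\rceil$ and combining with the reductions above gives $\PP(F_n(\ep,A))\leq c_0e^{-c_1A}$ with $c_1=8/(\ep L_0^2)$ and $c_0$ depending only on $\ep$ (and, as permitted, on $p$ and $q$).
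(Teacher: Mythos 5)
Your overall architecture is the same as the paper's: you decompose the word into i.i.d.\ blocks at the successive times when an unidentified $\db$ or $\so$ appears, show that the number of unmatched $\ho$'s produced per block has infinite expectation, and close with a truncated Hoeffding bound. The paper uses the slightly larger set of regeneration times $I_m$ (all times $i$ with $\cN_{\hb|\cb}(X(1,i))=0$) and sets $\xi_m=\cN_\ho(X(I_{m-1}+1,I_m))$; your $\tau_j$ are exactly those $I_m$ at which the symbol is a $\db$ or $\so$, and since $\tau_1 \geq I_1$ and $\cN_\ho(X(1,\cdot))$ is nondecreasing, your $H_1$ dominates the paper's $\xi_1$, so the two decompositions are interchangeable. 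Your monotonicity claims, the inclusion $F_n(\ep,A)\subseteq\bigcup_k\{\sum_{j\le k} H_j \le k/\ep\}$, the i.i.d.\ regeneration structure, and the final Hoeffding/geometric-summation step are all correct.

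The genuine gap is precisely where you flag one: the argument that $\E(H_1)=\infty$. Your reduction via $\hb\leftrightarrow\cb$ symmetry to $\E\left(-\cC(X(1,\tau_1))\right)=\infty$ is fine, but the mechanism you then propose — feed ``a single excursion of $i\mapsto\cN_{\hb|\cb}(X(1,i))$ away from $0$'' into Lemma~\ref{prop-mean-mono} — does not fit that lemma's hypotheses, which require the conditioning event to be measurable with respect to the one-dimensional walk $i\mapsto \cC(X(1,i))$. The excursion intervals of $\cN_{\hb|\cb}(X(1,\cdot))$ are not functions of $\cC(X(1,\cdot))$ (they depend on how burgers and orders interleave, not just the net count), so one cannot directly condition on the excursion length and invoke the lemma. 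What the paper does instead, in Lemmas~\ref{prop-mean-infty} and~\ref{prop-time-to-SD-infty}, is (i) prove $\E(|X(1,K)|)=\infty$ for $K$ the first time $\cC(X(1,\cdot))=-1$ — a $\cC$-stopping time, so $\{K=n\}$ is a legal event for Lemma~\ref{prop-mean-mono}, after which a direct $\PP^{0,0}$ computation ($\PP^{0,0}(K=n)\asymp n^{-3/2}$ and $\cN_\hb(X(1,n))\gtrsim n^{1/2}$ with positive conditional probability) closes the estimate; and then (ii) show, on the positive-probability event $\{X_1=\hb\}$, that $I_1$ dominates the analogous $\cC$-hitting time $K_2$ for the shifted word, transferring the infinite-mean result from $K$ to $I_1$ (hence to your $\tau_1$). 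This bridging observation $I_1\ge K_2$ is the new idea you would need to supply; without it, your sketch reduces to asserting the very thing that has to be proved.
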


Lemma~\ref{prop-few-SD} will be an important tool in what follows since it allows us in many cases to ignore the (potentially quite complicated) manner in which the $\db$'s and $\so$'s are identified. When we apply the lemma, we will typically take $\ep$ to be a small fixed parameter and $A$ to be a small positive power of $n$ (so that $\PP(F_n(\ep,A)  )$ decays faster than any negative power of $n$). 
We expect that an even stronger statement than Lemma~\ref{prop-few-SD} is true, namely, that $ \cN_{\db|\so}(X(1,\infty)) < \infty$ a.s.\ and that $\cN_{\db|\so}(X(1,\infty))$ is stochastically dominated by a geometric distribution. 
The reason for this is explained in Remark~\ref{remark-I-infinite}.

To prove Lemma~\ref{prop-few-SD}, we first observe that if $i \in [1,n]_{\Z}$ is such that $X_i$ is a $\db$ or $\so$ which is not identified in $X_1\dots X_n$, then the word $X(1,i-1)$ must contain no hamburgers or cheeseburgers (such a hamburger or cheeseburger would identify $X_i$). 
We will prove that the expected number of unmatched $\ho$'s added to the word between the successive times when $X(1,j)$ contains no burgers is infinite (Lemma~\ref{prop-time-to-SD-infty}). 
By Hoeffding's inequality and the fact that the increments of the word $X$ between these successive times are i.i.d., this will tell us that the number of $\db$'s and $\so$'s in $X(1,n)$ is typically negligible compared to the number of $\ho$'s. 

To start off, we consider the time
\begin{equation}
K = \min\left\{i \in\N : \cC(X(1,i)) = -1\right\}
\end{equation}
(here $i\mapsto \cC(X(1,i))$ is the simple random walk as in Definition~\ref{def-theta-count}).

\begin{lem} \label{prop-mean-infty}
We have
\eqb \label{eqn-mean-infty}
\E \left( \left| X(1,K)\right| \right) = \infty.
\eqe
Furthermore, if we let $P$ be the smallest $j\in \N$ for which $\cC(X(-j,-1)) =1$, then
\eqb \label{eqn-mean-infty'}
\E \left( \left| X(-P,-1)\right| \right) = \infty.
\eqe
\end{lem}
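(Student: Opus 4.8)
The plan is to prove~\eqref{eqn-mean-infty} in detail and then obtain~\eqref{eqn-mean-infty'} by running the same argument on the word read from right to left. For~\eqref{eqn-mean-infty}, I would first reduce to the case $p_\so=p_\db=0$. Since $i\mapsto\cC(X(1,i))$ is a simple random walk from $0$ for every $(p,q)$ and $K$ is its first hitting time of $-1$, the event $\{K=n\}$ lies in the $\sigma$-algebra generated by $\{\cC(X(1,i))\}_{i\in[1,n]_\Z}$, so Lemma~\ref{prop-mean-mono} applies with $E=\{K=n\}$. Summing its two inequalities over $n\in\N$ (all summands are nonnegative and $K<\infty$ a.s.) and using $|x|=\cB(x)+\cO(x)$ gives $\E^{p,q}(|X(1,K)|)\ge\E^{0,0}(|X(1,K)|)$, so it suffices to treat the pure burger model $p_\so=p_\db=0$.

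Under $\PP^{0,0}$ we have $X'=X$, the process $\ddi$ is simple random walk on $\Z^2$ with $\cC(X(1,n))=\dipt(n)+\didt(n)$, and since the reduced word contains only symbols of $\Theta_0$ we get $|X(1,n)|=\cN_\hb(X(1,n))+\cN_\cb(X(1,n))+\cN_\ho(X(1,n))+\cN_\co(X(1,n))\ge|\dipt(n)|$. Hence $\E^{0,0}(|X(1,K)|)\ge\E^{0,0}(|\dipt(K)|)=\sum_{n}\PP^{0,0}(K=n)\,\E^{0,0}(|\dipt(n)|\mid K=n)$. Conditionally on the burger/order pattern of $X_1\cdots X_n$ — which determines $\{K=n\}$ — the hamburger versus cheeseburger labels are i.i.d.\ fair coins, so on $\{K=n\}$ the variable $\dipt(n)$ has the law of a sum of $n$ independent $\{-1,0,1\}$-valued increments of total variance $n/4$ and mean $-\tfrac12$ (equivalently $\mathrm{Bin}(\tfrac{n-1}{2},\tfrac12)-\mathrm{Bin}(\tfrac{n+1}{2},\tfrac12)$); a second-moment (Hölder/Paley--Zygmund) argument using a fourth-moment bound $O(n^2)$ then yields $\E^{0,0}(|\dipt(n)|\mid K=n)\ge c\sqrt n$ for large $n$. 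Finally $\sum_n\PP^{0,0}(K=n)\sqrt n=\E^{0,0}(\sqrt K)=\int_0^\infty\PP^{0,0}(K>t^2)\,dt\ge c\int_1^\infty t^{-1}\,dt=\infty$, since $\PP^{0,0}(K>m)\asymp m^{-1/2}$ by the classical first-passage estimate for $\pm1$ random walk. This proves $\E^{0,0}(|X(1,K)|)=\infty$, hence~\eqref{eqn-mean-infty}.

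For~\eqref{eqn-mean-infty'}, note that $j\mapsto\cC(X(-j,-1))$ is again a simple random walk from $0$ (prepending a burger, resp.\ an order, to $X_{-j}\cdots X_{-1}$ shifts $\cC$ by $+1$, resp.\ $-1$), and $P$ is its first hitting time of $+1$. The argument above then goes through once we invoke the reflected version of Lemma~\ref{prop-mean-mono} — valid by the same proof, with ``the rightmost $\db$ or $\so$'' replaced by ``the leftmost'' and the word reflected — which reduces matters to $p_\so=p_\db=0$; there, translation invariance together with the fact that $\cN_\hb$ and $\cN_\ho$ depend only on the multiset of symbols identifies, jointly in law, the process $j\mapsto\cN_\hb(X_{-j}\cdots X_{-1})-\cN_\ho(X_{-j}\cdots X_{-1})$ stopped at $P$ with the process $j\mapsto\dipt(j)$ stopped at the first hitting time $K^+$ of $+1$ by $\dipt+\didt$, and the same binomial/anticoncentration estimate together with $\E^{0,0}(\sqrt{K^+})=\infty$ closes the argument.

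The one genuinely substantive step is the anticoncentration bound $\E^{0,0}(|\dipt(n)|\mid K=n)\ge c\sqrt n$ (and, to a much lesser extent, the parity bookkeeping and the right-to-left reflection of Lemma~\ref{prop-mean-mono}); everything else is either a direct application of Lemma~\ref{prop-mean-mono} or a textbook random-walk fact. It is worth emphasizing that reducing to $p_\so=p_\db=0$ before estimating is essential rather than cosmetic: for general $(p,q)$ the inequality $|X(1,n)|\ge|\dipt(n)|$ can fail, because $\dipt$ is read off the identified word $X'$ and not $X$, and the monotonicity in Lemma~\ref{prop-mean-mono} is precisely what lets one sidestep this difficulty.
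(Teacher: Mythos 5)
Your proof is correct and follows the paper's strategy: apply Lemma~\ref{prop-mean-mono} with $E=\{K=n\}$ to reduce to $\PP^{0,0}$, lower-bound the conditional expectation of a reduced-word statistic given $\{K=n\}$ by $c\sqrt{n}$, and sum against $\PP^{0,0}(K=n)\asymp n^{-3/2}$ (equivalently, observe $\E^{0,0}(\sqrt{K})=\infty$). The anticoncentration bound you use (a second/fourth-moment Paley--Zygmund estimate on $\dipt(n)$ given $\{K=n\}$) is a legitimate alternative to the paper's, which instead notes that $\cN_{\hb}(X(1,n))=\dipt(n)-\min_{0\le i\le n}\dipt(i)\ge n^{1/2}$ holds with uniformly positive conditional probability.

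Two peripheral remarks, neither of which affects the core argument. First, for~\eqref{eqn-mean-infty'} you do not need a ``reflected'' version of Lemma~\ref{prop-mean-mono}: after translation, $\{P=n\}$ becomes $\{\cC(X(1,i))\geq 1\ \forall i\in[1,n-1]_\Z\}\cap\{\cC(X(1,n))=1\}$, which is measurable with respect to $\{\cC(X(1,i))\}_{i\in[1,n]_\Z}$, so the lemma applies verbatim. Second, your closing justification is inaccurate: the inequality $|X(1,n)|\ge|\dipt(n)|$ actually holds for every $(p,q)$, since $|X(1,n)|\ge|X'(1,n)|\ge\bigl|\cN_{\hb}(X'(1,n))-\cN_{\ho}(X'(1,n))\bigr|=|\dipt(n)|$ (cf.~\eqref{eqn-reduced-compare}). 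The reduction to $\PP^{0,0}$ remains essential, but because the conditional law of $\dipt(n)$ given $\{K=n\}$ is intractable under $\PP^{p,q}$ (the identifications of $\db$'s and $\so$'s create dependence), not because the pointwise inequality fails.
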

\begin{proof}
For each $n\in\N$, the event $\{K=n\}$ depends only on $\cC(X(1,i))$ for $i\in [1,n]_\Z$. By Lemma~\ref{prop-mean-mono}, we find
\eqbn
\E \left( \left| X(1,K) \right| \times \one_{(K=n)} \right) \geq \E^{0,0}\left( \left| X(1,K) \right| \times \one_{(K=n)} \right),\quad \forall n \in \N
\eqen
where here $\E^{0,0}$ denotes the law of $X$ with $p = q= 0$. By summing over all $n$, we obtain
\eqb \label{eqn-mean-infty-compare}
\E \left( \left| X(1,K) \right| \right) \geq \E^{0,0}\left( \left| X(1,K) \right| \right).
\eqe

By standard estimates for one-dimensional simple random walk, $\PP^{0,0}\left( K = n \right) \asymp n^{-3/2}$.
Under $\PP^{0,0}$, if we condition on $\{K = n\}$, then the conditional law of the walk $\ddi = (\dipt, \didt)$ restricted to $[0,n ]_{\Z}$ is that of a two-dimensional simple random walk conditioned to first exit the diagonal half plane $\{x + y \geq 0\}$ at time $n $. With uniformly positive probability under this conditioning, it holds that
\eqbn
\dipt(n) - \inf_{i \in [0,n]_{\Z}} \dipt(i) \geq n^{1/2},
\eqen
in which case $\cN_{\hb}(X(1,n)) \geq n^{1/2}$.
Therefore,
\eqbn
\E^{0,0}\left( \left|  X(1,K) \right| \times \one_{(K=n)} \right) \succeq n^{-1 }.
\eqen
By summing over all $n\in\N$ we obtain $\E^{0,0}\left( \cB( X(1,K)) \right) = \infty$ and hence~\eqref{eqn-mean-infty}.

We similarly obtain~\eqref{eqn-mean-infty'}.
\end{proof}

\begin{lem} \label{prop-time-to-SD-infty}
Let $I_1$ be the smallest $i\in\N$ for which $X(1,i)$ contains no hamburgers or cheeseburgers. Then $\E\left(\cN_{\ho}(X(1,I_1))\right) = \infty$ (here we take $\cN_{\ho }(X(1,I_1))= \infty$ if $I_1 =\infty$).
\end{lem}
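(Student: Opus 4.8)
The plan is to combine two monotonicity facts in order to reduce the statement to an elementary computation for simple random walk, carried out under the reference law $\PP^{0,0}$.

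The first fact is that $i\mapsto\cN_{\ho}(X(1,i))$ is non-decreasing. Reading $X$ from left to right and keeping track of the burger stack, a hamburger order which is not fulfilled at the moment it is read is set aside once and for all, and no later symbol can fulfil it; moreover an $\ho$ already present in the reduced word can only be removed by a hamburger immediately to its left, whereas every newly produced burger lies to the right of all unfulfilled orders. Hence $\cN_{\ho}(X(1,i))$ can only increase with $i$. The second fact concerns the simple random walk $i\mapsto\cC(X(1,i))$ (whose law does not depend on $\pvec$, by Definition~\ref{def-theta-count}). Set $L\colonequals\min\{i\in\N:\cC(X(1,i))\le 0\}$. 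On the event $\{X_1\in\{\hb,\cb\}\}$ one has $\cC(X(1,i))\ge 1$ for all $i<L$, and an easy induction shows that as long as $\cN_{\hb|\cb}(X(1,i-1))\ge 1$ any $\db$ or $\so$ read at step $i$ is immediately identified, so that $X(1,i)$ contains no unidentified symbols; thus $\cC(X(1,i))=\dpt(X(1,i))+\ddt(X(1,i))$ and $\cN_{\hb|\cb}(X(1,i))=\dpt(X(1,i))^{+}+\ddt(X(1,i))^{+}$, which is $\ge 1$ as soon as $\cC(X(1,i))\ge 1$. Running this to $i=L-1$ gives $\cN_{\hb|\cb}(X(1,i))\ge 1$ for all $i<L$, i.e.\ $I_1\ge L$, and also that $X(1,i)$ has no unidentified $\db$ or $\so$ for $i\in[1,L]_{\Z}$.

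Putting these together, the non-decreasingness of $\cN_{\ho}$ gives
\[\E\bigl(\cN_{\ho}(X(1,I_1))\bigr)\ \ge\ \E\bigl(\cN_{\ho}(X(1,L))\,\one_{\{X_1\in\{\hb,\cb\}\}}\bigr).\]
On $\{X_1\in\{\hb,\cb\}\}$ one has $\cC(X(1,L))=0$ and $X(1,L)$ contains no $\db$'s or $\so$'s, so $\cN_{\ho}(X(1,L))+\cN_{\co}(X(1,L))=\cO(X(1,L))$, and the symmetry $\hb\leftrightarrow\cb$, $\ho\leftrightarrow\co$ of $\PP_{\pvec}$ (which fixes this event and $L$) makes the two summands equal in expectation. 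The monotonicity Lemma~\ref{prop-mean-mono}, applied to the $\cC$-measurable events $\{L=n\}$ — together with the non-decreasingness of $\cN_{\ho}$, exactly as the stopping time $K$ is handled in the proof of Lemma~\ref{prop-mean-infty} — then allows one to transfer the problem to the reference law $\PP^{0,0}$, so that the proof reduces to the elementary fact $\E^{0,0}\bigl(\cN_{\ho}(X(1,L))\,\one_{\{X_1\in\{\hb,\cb\}\}}\bigr)=\infty$. To see this, note that under $\PP^{0,0}$ the pair $\bigl(\cC(X(1,i)),\ \dpt(X(1,i))-\ddt(X(1,i))\bigr)_{i\ge 0}=(\cC_i,D_i)_{i\ge 0}$ is a pair of \emph{independent} one-dimensional simple random walks, and on $\{X_1\in\{\hb,\cb\}\}=\{\cC_1=1\}$ the time $L$ is the first return of $\cC$ to $0$, so $\PP^{0,0}(L=2k,\ \cC_1=1)\asymp k^{-3/2}$; conditionally on this event $D$ is an unconditioned simple random walk run for $2k$ steps, so $\dpt(X(1,L))=D_{2k}/2$ and $\cN_{\ho}(X(1,L))=(-D_{2k}/2)^{+}$ has conditional expectation $\tfrac14\E|D_{2k}|\asymp\sqrt{k}$. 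Hence $\E^{0,0}(\cN_{\ho}(X(1,L))\,\one_{\{X_1\in\{\hb,\cb\}\}})\asymp\sum_{k\ge 1}k^{-3/2}\cdot\sqrt{k}=\sum_{k\ge1}k^{-1}=\infty$.

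The step I expect to be the main obstacle is precisely the reduction just described. The trouble is that $I_1$ is \emph{not} a stopping time for the walk $i\mapsto\cC(X(1,i))$ (this is exactly why one must pass to the $\cC$-stopping time $L$ and exploit the non-decreasingness of $\cN_{\ho}$), and the event $\{X_1\in\{\hb,\cb\}\}$ is not $\cC$-measurable — it differs from the $\cC$-event $\{\cC(X(1,1))=1\}$ by $\{X_1=\db\}$, on which however $I_1=1$ and $\cN_{\ho}(X(1,I_1))=0$. The application of Lemma~\ref{prop-mean-mono} must therefore be set up carefully so as to keep the harmless event $\{X_1=\db\}$ separate (for instance by conditioning on $X_1$ and using a version of the involution argument behind Lemma~\ref{prop-word-flip} that fixes the first symbol), since naively comparing $\E_{\pvec}$ and $\E^{0,0}$ on the larger event $\{\cC(X(1,1))=1\}$ risks producing an $\infty-\infty$ situation.
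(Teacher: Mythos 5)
Your overall structure — monotonicity of $\cN_\ho$, the bound $I_1\ge L$ on $\{X_1\in\{\hb,\cb\}\}$, the reduction to $\cO(X(1,L))$ by symmetry, and the $\PP^{0,0}$ random-walk computation — is sound, but the transfer from $\PP^{p,q}$ to $\PP^{0,0}$ is a genuine gap, not merely a point to be ``set up carefully.'' Lemma~\ref{prop-mean-mono} only covers events measurable with respect to $\{\cC(X(1,i))\}_i$, and $\{X_1\in\{\hb,\cb\}\}$ is not of that form. Decomposing it as $\{\cC(X(1,1))=1\}\setminus\{X_1=\db\}$ does not rescue the argument: on $\{X_1=\db\}$ the quantity $\cN_\ho(X(1,I_1))$ is indeed zero (because $I_1=1$), but $\cN_\ho(X(1,L))$ is not, and there is no reason for its expectation on that event to be finite — so the $\infty-\infty$ you anticipate is a real obstruction, not a hypothetical one. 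The modified-involution fix you gesture at is not worked out and, as it turns out, is unnecessary.

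The paper's proof never applies Lemma~\ref{prop-mean-mono} at this stage; instead it conditions on $X_1$ and invokes the already-proved Lemma~\ref{prop-mean-infty} through independence. On $\{X_1=\hb\}$ your $L$ coincides with $K_2\colonequals\min\{i\ge 2:\cC(X(2,i))\le -1\}$, a $\cC$-stopping time for the \emph{shifted} word $X_2X_3\cdots$, which is independent of $X_1$ and satisfies $X_2\cdots X_{K_2}\eqD X_1\cdots X_K$. Thus $\E\bigl(\cO(X(2,K_2))\bigr)=\E\bigl(\cO(X(1,K))\bigr)=\infty$ directly from Lemma~\ref{prop-mean-infty}, where Lemma~\ref{prop-mean-mono} has already been applied to the honest $\cC$-stopping time $K$. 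Since, as you showed, on $\{X_1=\hb\}$ every symbol of $X(1,K_2)$ is identified and prepending the $\hb$ removes at most one order, one gets $\E\bigl(\cN_{\ho|\co}(X(1,I_1))\bigr)\ge\PP(X_1=\hb)\bigl(\E(\cO(X(2,K_2)))-1\bigr)=\infty$; the $\hb\leftrightarrow\cb$ symmetry is then applied to the \emph{unconditioned} expectation (summing the contributions of $\{X_1=\hb\}$ and $\{X_1=\cb\}$), avoiding the pitfall that conditioning on $X_1=\hb$ would break the symmetry between $\cN_\ho$ and $\cN_\co$. The missing idea, therefore, is to use independence of $X_1$ from the shifted word and cite Lemma~\ref{prop-mean-infty}, rather than re-deriving the $\PP^{0,0}$ estimate and trying to force Lemma~\ref{prop-mean-mono} onto a non-$\cC$-measurable event.
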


\begin{remark}  \label{remark-I-infinite}
It is possible that $I_1=\infty$ with positive probability. 
In fact, we expect (but do not prove) that this is the case since the coordinates of the re-scaled walk $Z^n$ in~\eqref{eqn-Z^n-def} should be close to attaining a simultaneous running infimum at time $I_1$; and the coordinates of the negatively correlated Brownian motion $Z$ in Theorem~\ref{thm-variable-SD} a.s.\ do not have any simultaneous running infima (this follows by applying a linear transformation and using that an uncorrelated two-dimensional Brownian motion a.s.\ has no $\theta$-cone times for $\theta < \pi/2$~\cite{shimura-cone,evans-cone}). Note that if $I_1 = \infty$ with positive probability, then a.s.\ there are only finitely many times in $\BB N$ for which $X(1,i)$ contains no $\hb$'s or $\cb$'s, and hence only finitely many unidentified $\db$'s and $\so$'s in $X(1,\infty)$. We note, by way of comparison, that in the setting when $p_\so=p_\db=p_\eb=0$ and $p_\fo \in (0,1]$, the word $X(1,\infty)$ a.s.\ contains infinitely many $\fo$'s; see~\cite[Lemma 3.7]{shef-burger} in the case $p_\fo > 1/2$ and~\cite[Proposition 3.5]{gms-burger-cone} in the case $p_\fo < 1/2$ (the same proof works for $p_\fo =1/2$). 
\end{remark} 

\begin{proof}[Proof of Lemma~\ref{prop-time-to-SD-infty}]
The statement of the lemma is obvious if $I_1 = \infty$ with positive probability, so we can assume that $I_1 < \infty$ a.s.
If $I_1 > 1$ and $X(1,I_1)$ contains a $\db$ or $\so$ symbol, then $X(1,i)$ would have to contain no hamburgers or cheeseburgers for some $i\leq I_1-1$ (corresponding to the index of the $\db$ or $\so$ in question), which contradicts the definition of $I_1$.  Thus either $I_1=1$ or the word $X(1,I_1)$ contains no unidentified $\db$'s or $\so$'s.

If $I_1>1$, since every burger in $X(1,I_1)$ is identified, by definition of $I_1$, it must be that
$X(1,I_1)$ contains no burgers.  Thus if $I_1>1$, the word $X(2,I_1)$ contains more orders than burgers.

Now let $K_2$ be the smallest $i \geq 2$ for which $\cC(X(2,i)) \leq -1$. Then $X_2 \cdots X_{K_2}$ is independent from $X_1$ and agrees in law with $X_1\cdots X_K$.  On the event $\left\{X_1 = \hb\right\}$, we have $I_1 \geq K_2$. Therefore, every order appearing in $X(2, K_2)$ except possibly one also appears in $X(1, I_1)$.  It follows that
\eqbn
\E\left(\cN_{\ho|\co}(X(1,I_1 ) \right)
\geq \frac{1-q}{4} \E\left( |X(1, K)| - 1 \right)
=\infty.
\eqen
By symmetry between $\ho$ and $\co$, we also have $\E\left(\cN_{\ho }(X(1,I_1))\right) = \infty$.
\end{proof}

\begin{proof}[Proof of Lemma~\ref{prop-few-SD}]
Let $I_0 = 0$ and for $m \in \N$, let $I_m$ be the $m^{\text{th}}$ smallest $i\in\N$ for which $X(1, i)$ contains no hamburgers or cheeseburgers.  The definition of $I_1$ is the same as that given in Lemma~\ref{prop-time-to-SD-infty}. Furthermore, if $i\in\N$ and $X_i$ is a $\db$ or a $\so$ which is not identified in $X_1 X_2 \cdots$, then $i$ must be one of the times $I_m$ for $m\in\N$.

For each $m \in\N$, the time $I_m$ is a stopping time for the filtration generated by $X$, read forward. Furthermore, for $m\in\N$ and $i\geq I_{m-1} + 1$, the word $X(1, i)$ contains no hamburgers or cheeseburgers if and only if $X(I_{m-1} + 1, i)$ contains no hamburgers or cheeseburgers. By the strong Markov property, the words $X_{I_{m-1}+1} \cdots X_{I_m}$ for $m\in\N$ are i.i.d.

For $m\in\N$, let
\[
\xi_m \colonequals \cN_{\ho}(X(I_{m-1}+1,I_m)),
\]
so that the random variables $\xi_m$ for $m\in\N$ are i.i.d.  None of the $\ho$'s in $X(I_{m-1} + 1, I_m)$ have a match in $X_1 X_2\cdots$, so for each $m\in\N$
\eqbn
\cN_{\ho}(X(1,I_m)) = \sum_{k=1}^m \xi_k.
\eqen
By Lemma~\ref{prop-time-to-SD-infty}, for each $\ep > 0$ we can find an $R > 0$ such that
\eqbn
\E\left( \xi_1 \wedge R \right) \geq 2\ep^{-1}.
\eqen
By Hoeffding's inequality for sums of i.i.d.\ bounded random variables,
for each $m\in\N$,
\begin{align} \label{eqn-SD-increment-to-infty}
\PP\!\left( \cN_{\ho }(X(1,I_m)) \leq \ep^{-1} m \right)
&\leq \PP\!\left( \frac1m \sum_{k=1}^m (\xi_k \wedge R) \leq \ep^{-1} \right) \notag \\
&\leq \exp\left(-\frac{2 m}{\ep^2  R^2} \right).
\end{align}

Given $n\in\N$, let $M_n$ be the largest $m\in\N$ for which $I_m \leq n$. Then $\cN_{\ho }(X(1,n))\geq \cN_{\ho }(X(1,I_{M_n}))$ and $\cN_{\db|\so}(X(1,n))\leq M_n$.
By~\eqref{eqn-SD-increment-to-infty},
\alb
\PP(F_n(\ep,A))
&\leq \PP\!\left( \frac{ M_n }{ \cN_{\ho }(X(1,I_{M_n})) } \geq \ep,\; M_n \geq \ep A \right)\\
& = \sum_{m = \lceil \ep A \rceil}^\infty \PP\!\left( \cN_{\ho }(X(1,I_m)) \leq \ep^{-1} m,\; M_n=m\right) \\
& \leq \sum_{m = \lceil \ep A \rceil}^\infty \exp\left(-\frac{2 m}{\ep^2 R^2} \right)
\ale
so we take $c_1=2/(\ep R^2)$ and $c_0=1/(1-e^{-c_1/\ep})$.
\end{proof}

\subsection{Renewal times in the word}
\label{sec-J-basic}

For the bi-infinite word $X$,
let $J$ be the age of the freshest (unconsumed) non-duplicate burger, as seen from the present:
\eqb \label{def-J}\hypertop{def-J}
 \J\colonequals\min\big\{j\in\N: \cN_{\hb|\cb}(X(-j,-1))>0\big\},
\eqe
and more generally we define a sequence of backward renewal times $\J_m$ by
\eqb \label{def-J_m}
 \J_m\colonequals\begin{cases} 0 & m=0 \\ \min\big\{j\in\N: \cN_{\hb|\cb}(X(-j,-J_{m-1}-1))>0\big\} & m\in\N. \end{cases}
\eqe
We also define
\eqb \label{eqn-chi-def}\hypertop{def-chi}
\CHI \colonequals \E\left(|X(-\J,-1)| \right),
\eqe
In the case $p=q=0$, $\E[\J]=\infty$, so
\textit{a priori\/} we could have $\CHI = \infty$,
but we will prove that $\CHI$ is finite in Proposition~\ref{prop-J-finite} below.

In this subsection we carry out a careful study of the time $\J$ and related quantities.
These results are needed for the variance calculation in the next subsection.
We start by recording some basic properties of $\J$ (which follow easily from the definition) in Lemma~\ref{prop-J-basic} and an alternative definition of $\J_m$ in Lemma~\ref{prop-J-af}. 
In Lemma~\ref{prop-J-moment}, we show that $\J$ has finite moments up to order $1/2$. The idea of the proof is to bound $\J$ above by a time associated with the simple random walk $j\mapsto \cC(X(-j,-1))$. 
Using this and Lemma~\ref{prop-few-SD}, we prove in Proposition~\ref{prop-J-finite} that $\CHI := \BB E( |X(-\J, -1)|)$ is finite and that $\BB E( \cC(X(-\J, -1)) )$ is non-negative.
We then show that in fact this latter expectation is 0 using a generalization of the proof of~\cite[Lem.~3.5]{shef-burger}. 
Since $|X(-\J , -1)| =  2  -  \cC(X(-\J, -1))$ whenever $X(-\J,-1)$ contains no $\db$'s, this shows in particular that $\CHI  = 2$ when $q= 0$, i.e., $z=1$ (which is why we get an exact expression for the variances and covariances in Theorem~\ref{thm-variable-SD} in this case). 
The last main result of this subsection is Lemma~\ref{prop-J-limit}, which shows that $\BB E( |X(-n,-1)| \BB 1_{(\J < n)} ) \rta 0$ as $n\rta \infty$, and is an easy consequence of the earlier results in this subsection and a dominated convergence argument.

\begin{lem} \label{prop-J-basic}
With $\J$ as in \eqref{def-J},
\begin{enumerate}
\item $\J$ is a.s.\ finite. \label{item-J-finite}
\item $X_{-\J} \in \left\{\hb, \cb \right\}$. \label{item-J-burger}
\item The symbol $X_{-\J}$ does not have a match in $X_{-\J} \cdots X_{-1}$. \label{item-J-match}
\item The reduced word $X(-\J, -1)$ consists of only hamburgers and cheeseburger orders (if $X_{-\J} = \hb$) or cheeseburgers and hamburger orders (if $X_{-\J} = \cb$). \label{item-J-reduced}
\end{enumerate}
\end{lem}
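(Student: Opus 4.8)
The plan is to analyze how the reduced word $\cR(X_{-j}\cdots X_{-1})$ evolves as the window $[-j,-1]$ is enlarged to the left, i.e.\ the effect of prepending a single symbol. By the $\dagger$-symmetry of the model (valid since $\PP(\hb)=\PP(\cb)$ and $\PP(\ho)=\PP(\co)$, and $p_\fo=p_\eb=0$), we may assume that whenever $X_{-\J}$ is a burger it equals $\hb$; the case $X_{-\J}=\cb$ follows by applying the involution.

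The key bookkeeping fact is the \emph{prepending picture}. Since $p_\fo=p_\eb=0$, every symbol lies in $\{\hb,\cb,\db,\ho,\co,\so\}$, and for a reduced word $W$ we have $\cR(\theta\,W)$ computed as follows. If $\theta$ is an order or a $\db$, then there is no burger to its left, so $\theta$ stays at the left end, remains unidentified and unmatched, and alters no symbol of $W$; hence $\cN_{\hb|\cb}(\theta\,W)=\cN_{\hb|\cb}(W)$. If $\theta=\hb$, then $\theta$ ``marches to the right'' through $W$: it commutes past each $\co$, turns each $\so$ it meets into a $\co$, turns each $\db$ it meets into an $\hb$ (a ``spare''), and is consumed by the first $\ho$ that it reaches at a moment when no spare hamburger shields it; in all cases $\cN_{\hb|\cb}$ does not decrease (and symmetrically for $\theta=\cb$). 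In particular, $j\mapsto \cN_{\hb|\cb}(X(-j,-1))$ is non-decreasing, using $\cR(X_{-j-1}\cdots X_{-1})=\cR\big(X_{-j-1}\cdot X(-j,-1)\big)$. This gives Assertion~\ref{item-J-finite}: by the results of Appendix~\ref{sec-prelim} there is a.s.\ a \emph{genuine} burger $X_i$ with $i<0$ whose matching order has index $\ge 0$ (the reduced half-infinite word $\cR(X_0X_1\cdots)$ a.s.\ contains infinitely many orders unmatched within $X_0X_1\cdots$, each of which is matched in $X$ to a burger of negative index; chasing the identification chain of $\db$'s back to its source produces a genuine such burger). For such $i$, every order of $X_{i+1}\cdots X_{-1}$ consumes the same burger as in $X$, so $X_i$ is not consumed within $X_i\cdots X_{-1}$, whence $\cN_{\hb|\cb}(X(i,-1))\ge 1$ and $\J\le -i<\infty$.

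For the remaining assertions, set $W:=\cR(X_{-\J+1}\cdots X_{-1})$, so $X(-\J,-1)=\cR(X_{-\J}\cdot W)$, and note that by minimality of $\J$ we have $\cN_{\hb|\cb}(W)=0$; thus $W$ consists only of orders (types $\ho,\co,\so$) and unidentified $\db$'s. Since $\cN_{\hb|\cb}(X(-\J,-1))>0$, the prepending picture shows $X_{-\J}$ cannot be an order or a $\db$ (those leave $\cN_{\hb|\cb}$ equal to $0$), so $X_{-\J}\in\{\hb,\cb\}$, which is Assertion~\ref{item-J-burger}. Now take $X_{-\J}=\hb$. As it marches through $W$ the freshest burger is always a hamburger (there are no cheeseburgers: the $\db$'s become $\hb$'s), so every $\so$ in $W$ becomes a $\co$, every $\db$ becomes an $\hb$, every $\ho$ in $W$ gets consumed by the freshest available hamburger, and every $\co$ survives (nothing can consume it). If $X_{-\J}$ itself were consumed by some $\ho$ in $W$ — i.e.\ matched inside $X_{-\J}\cdots X_{-1}$ — then from that point on no hamburger or cheeseburger would remain and none would be produced by the rest of $W$, forcing $\cN_{\hb|\cb}(X(-\J,-1))=0$, contradicting the definition of $\J$; this is Assertion~\ref{item-J-match}. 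Finally, by Assertion~\ref{item-J-match} the symbol $X_{-\J}=\hb$ marches all the way to the right end of $W$, so $X(-\J,-1)$ contains no $\ho$'s and no $\db$'s and equals $\co^{\,n}\hb^{\,m}$ with $m\ge 1$; that is, it consists only of hamburgers and cheeseburger orders, which (with $\hb,\cb$ interchanged when $X_{-\J}=\cb$) is Assertion~\ref{item-J-reduced}.

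The main obstacle is Assertion~\ref{item-J-finite}: Assertions~\ref{item-J-burger}--\ref{item-J-reduced} are direct consequences of the ``marching burger'' analysis, but finiteness of $\J$ genuinely requires the Appendix~\ref{sec-prelim} input that $X$ is a.s.\ fully matched, together with the identification-chain argument needed to exhibit a \emph{genuine} unconsumed burger to the left of the origin (the naive simple-random-walk argument on $\cC$ fails because a window can acquire large $\cN_\db$ with no $\hb$'s or $\cb$'s at all).
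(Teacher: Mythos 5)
Your treatment of Assertions~\ref{item-J-burger}--\ref{item-J-reduced} is correct, and it takes a genuinely different route from the paper. The paper first shows (by an induction on $\J-k$, using that $X(-\J,-k)$ always contains a burger) that every symbol of $X_{-\J}\cdots X_{-1}$ is identified, and then argues Assertion~\ref{item-J-match} by contradiction: if $X_{-\J}=\hb$ were consumed by $X_{\phi(-\J)}$, then $X(-\J,\phi(-\J))$ would consist only of $\cb$'s and $\co$'s, forcing all subsequent $\db$'s to identify as $\cb$ and hence $X(-\J,-1)$ to contain no $\hb$ --- a contradiction. Your ``marching burger'' picture compresses this into a single pass: $W\colonequals\cR(X_{-\J+1}\cdots X_{-1})$ has no $\hb$/$\cb$ by minimality, so when $\hb$ is prepended the stack of available burgers contains only $\hb$'s, and if $X_{-\J}$ (the oldest, hence last to be consumed) were ever eaten the stack would become empty and stay empty. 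Both arguments establish the same facts; yours is shorter and more explicit, at the cost of implicitly re-deriving the prepending bookkeeping which the paper organizes via Lemma~\ref{prop-backward-burger} and Lemma~\ref{prop-associative}. I also note that the paper gets Assertion~\ref{item-J-burger} as a one-liner from Lemma~\ref{prop-associative} applied to $x=X_{-\J}$, $y=X_{-\J+1}\cdots X_{-1}$, which amounts to the same observation you make.

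Your argument for Assertion~\ref{item-J-finite}, however, has a real gap, and you have made it harder than it needs to be. The paper proves Assertion~\ref{item-J-finite} in one line by citing Lemma~\ref{prop-identification-exists}, which says $\cN_\hb(X(-n,-1))\to\infty$ a.s.\ --- exactly the statement that $\J<\infty$ a.s.\ (since $\cN_\hb$ is non-decreasing in $n$). Your alternative route through ``a genuine burger $X_i$ with $i<0$ whose matching order has index $\ge 0$'' is problematic in two ways. First, the existence of such $X_i$ is asserted via a parenthetical chain-of-identifications sketch that is not actually in Appendix~\ref{sec-prelim}; in particular, chasing the identification chain of a $\db$ at $\phi(j)<0$ back to a genuine source burger gives you \emph{some} genuine $\hb$/$\cb$ at a more negative index, but you have not shown its \emph{own} match lies at index $\ge 0$. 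Second, and more seriously, the statement ``every order of $X_{i+1}\cdots X_{-1}$ consumes the same burger as in $X$'' is not a consequence of $X_i\in\{\hb,\cb\}$ and $\phi(i)\ge 0$: the identifications in a subword $X_i\cdots X_{-1}$ can differ from those in the full bi-infinite word precisely when the subword's burger stack becomes empty at some intermediate time, causing a later $\db$ or $\so$ to be unidentified in the subword but identified in $X$. The conclusion you actually need --- that $X_i$ is never consumed within $X_i\cdots X_{-1}$ --- does hold, but establishing it requires the sharper observation that \emph{as long as $X_i$ remains in the stack} the freshest burger (and hence every identification) coincides with that of the full word, so if $X_i$ were consumed at some $j<0$ then $\phi(i)=j<0$, a contradiction. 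As written, your proof uses ``every order consumes the same burger'' as a \emph{premise}, when it is in fact a \emph{consequence} of $X_i$ being unconsumed. Simply citing Lemma~\ref{prop-identification-exists}, as the paper does, avoids all of this.
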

\begin{proof}
Assertion~\ref{item-J-finite} follows from Lemma~\ref{prop-identification-exists}.
By definition of $\J$, the word $X(-\J+1,-1)$ contains no $\hb$ or $\cb$ symbols, so assertion~\ref{item-J-burger} follows from Lemma~\ref{prop-associative} (applied with $x=X_{-\J}$ and $y = X_{-\J+1}\dots X_{-1}$).

Suppose $k\in[1,\J-1]_\Z$.  By definition of $\J$, the word $X(-k,-1)$ contains no $\hb$ or $\cb$.  If $X(-\J,-k-1)$ contained
no burger, then $\cR(X(-\J,-k-1)X(-k,-1))=X(-\J,1)$ would contain no $\hb$ or $\cb$, contrary to the definition of $\J$.
So $X(-\J,-k)$ contains a burger.

We argue by induction on $\J-k\in[0,\J-1]_\Z$ that each symbol in $X_{-\J}\cdots X_{-k}$ is identified in this word.
Since $X_{-\J}\in\{\hb,\cb\}$, this is true for $k=\J$.  If the claim is true for $k$, then since $X(-\J,-k)$ contains a burger,
each of which by induction is identified, it follows that $X_{-k+1}$ is identified in $X_{-\J}\cdots X_{-k+1}$, completing the
induction.

Every burger in $X(-\J+1, -1)$ is a $\db$.  Since each burger in $X(-\J,-1)$ is identified, it must be that they are
identified to $X_{-\J}$.

Suppose that $X_{-\J}$ is matched to an order $X_{\phi(-\J )}$ for $\phi(-\J ) \in [-\J +1, -1]_\Z$.
We assume without loss of generality that $X_{-\J} = \hb$.
Consequently, $X(-\J, -1)$ contains no $\cb$.
Since $X_{-\J}=\hb$ is consumed, the reduced word $X(-\J, \phi(-\J))$ consists of only $\cb$'s and $\co$'s.
Since $X(\phi(-\J) +1,-1)$ contains no $\hb$ or $\cb$, each $\db$ in $X(\phi(-\J) +1,-1)$
is identified by a $\cb$ in $X_{-\J} \cdots X_{-1}$.
Consequently, $X(-\J,-1)$ contains no $\hb$.
We have already shown above that $X(-\J,-1)$ contains no $\cb$, so we contradict the definition of $\J$. We thus obtain assertion~\ref{item-J-match}.

Since each burger in $X(-\J,-1)$ is identified to $X_{-\J}$, and $X_{-\J}$ is not consumed, it must be that each order in $X(-\J,-1)$
is for the opposite burger type, which proves assertion~\ref{item-J-reduced}.
\end{proof}

Our next lemma is an analogue of~\cite[Lem.~A.7]{gms-burger-cone} in the setting where we read the word backward, rather than forward, and is proven in a similar manner.

\begin{lem} \label{prop-J-af}
The time $ \J_m$ from~\eqref{def-J_m} is the $m^{\text{th}}$ smallest $j\in \N$ such that $\cN_{\hb|\cb}(X(-j, -k))>0$ for all $k\in [1,j]_\Z$.
\end{lem}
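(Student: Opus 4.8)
The plan is to show, by induction on $m$, that $\J_1 < \J_2 < \cdots < \J_m$ are exactly the $m$ smallest elements of the set $\mcl L \colonequals \{j\in\N : \cN_{\hb|\cb}(X(-j,-k)) > 0 \text{ for all } k\in [1,j]_\Z\}$ of ``backward ladder times.'' Two tools will be used throughout: the associativity relation $\cR(xy) = \cR(\cR(x)\cR(y))$ (Lemma~\ref{prop-associative}), and the elementary fact -- immediate from the left-to-right reduction procedure and already implicit in the proof of Lemma~\ref{prop-J-basic} -- that if the leftmost symbol of a finite word $w$ is a burger unmatched in $w$, then it is unmatched in, hence appears in the reduction of, every prefix of $w$. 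I would also isolate one structural observation about reduced words: if $v$ is reduced and contains at least one $\hb$ or $\cb$, write $v = Lu$ with $u$ its maximal (nonempty) burger suffix; then $\cR(w'v) = \cR(w'L)\,u$ for any word $w'$, so $\cN_{\hb|\cb}(\cR(w'v)) \ge |u| \ge 1$. This holds because the honest burgers of $u$ have no order or $\db$ to their right and none of the relations~\eqref{eqn-theta-relations}--\eqref{eqn-theta-relations'} can bring an order next to one of them on the side needed to consume or relabel it, so prepending anything leaves $u$ intact. I expect verifying this observation to be the one slightly fiddly point; everything else is bookkeeping with the recursion~\eqref{def-J_m} and Lemma~\ref{prop-J-basic}.

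For the base case $m=1$ (where $\J_1 = \J$), I would note that for $j < \J$ we have $\cN_{\hb|\cb}(X(-j,-1)) = 0$ by~\eqref{def-J}, so $j\notin\mcl L$, while $\J\in\mcl L$ because Lemma~\ref{prop-J-basic} gives that $X_{-\J}$ is a burger unmatched in $X_{-\J}\cdots X_{-1}$, hence (by the prefix fact) unmatched in every $X_{-\J}\cdots X_{-k}$ with $k\in[1,\J]_\Z$, so it survives the corresponding reduction and $\cN_{\hb|\cb}(X(-\J,-k))\ge 1$. For the inductive step, assuming $\J_1 < \cdots < \J_{m-1}$ are the $m-1$ smallest elements of $\mcl L$ (so each $\J_i$, $i\le m-1$, lies in $\mcl L$), I would first observe that $\J_m \ge \J_{m-1}+1$, since $X(-j,-\J_{m-1}-1)$ is defined only for such $j$, and that no element of $\mcl L$ lies strictly between $\J_{m-1}$ and $\J_m$: if $j\in\mcl L$ with $\J_{m-1}<j<\J_m$, applying the defining condition of $\mcl L$ with $k = \J_{m-1}+1\in[1,j]_\Z$ gives $\cN_{\hb|\cb}(X(-j,-\J_{m-1}-1))>0$, which by minimality in~\eqref{def-J_m} forces $j \ge \J_m$, a contradiction.

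The remaining (and main) point is that $\J_m\in\mcl L$. Since $\J_{m-1}$ is a stopping time for $X$ read backwards, the strong Markov property shows that, conditionally on $\J_{m-1}$ and $X_{-\J_{m-1}}\cdots X_{-1}$, the block $B_m \colonequals X_{-\J_m}\cdots X_{-\J_{m-1}-1}$ is built from a fresh i.i.d.\ sequence by exactly the rule producing $X_{-\J}\cdots X_{-1}$ from $X_{-1},X_{-2},\ldots$; hence Lemma~\ref{prop-J-basic} applies to $B_m$, and in particular $X_{-\J_m}$ is a burger unmatched in $B_m$. Then for fixed $k\in[1,\J_m]_\Z$ I would split into two cases. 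If $k\ge\J_{m-1}+1$, then $X_{-\J_m}\cdots X_{-k}$ is a prefix of $B_m$, so $X_{-\J_m}$ survives its reduction and $\cN_{\hb|\cb}(X(-\J_m,-k))\ge 1$. If $k\le\J_{m-1}$, then associativity gives $X(-\J_m,-k) = \cR\big(\cR(B_m)\,X(-\J_{m-1},-k)\big)$, and since $\J_{m-1}\in\mcl L$ the reduced word $X(-\J_{m-1},-k)$ contains a burger, so the structural observation with $w' = \cR(B_m)$ yields $\cN_{\hb|\cb}(X(-\J_m,-k))\ge 1$. This proves $\J_m\in\mcl L$ and closes the induction; combined with the fact that $\J_m<\infty$ a.s.\ for every $m$ (by the strong Markov property together with Lemma~\ref{prop-J-basic}), it shows that $\mcl L$ is a.s.\ infinite and $\J_m$ is its $m$-th smallest element. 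As noted, the hard part is the structural observation about prepending; the rest of the argument is routine.
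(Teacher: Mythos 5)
Your proof is correct and takes essentially the same route as the paper's: both argue by induction, establish $\J_m \in \mcl L$ by splitting $k$ into the ranges $[\J_{m-1}+1,\J_m]_\Z$ (where the unmatched burger $X_{-\J_m}$ of Lemma~\ref{prop-J-basic}, applied to the block $X_{-\J_m}\cdots X_{-\J_{m-1}-1}$, survives every prefix reduction) and $[1,\J_{m-1}]_\Z$ (where the inductive hypothesis plus the observation that prepending cannot consume a reduced word's burger suffix -- the step the paper states without comment as ``and hence the word $X(-\J_m,-k)$ contains a hamburger or a cheeseburger'' -- does the work), and rule out elements of $\mcl L$ strictly between $\J_{m-1}$ and $\J_m$ by minimality in~\eqref{def-J_m}. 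The only inefficiency is the appeal to the strong Markov property: assertions 2--4 of Lemma~\ref{prop-J-basic} are deterministic consequences of the definition of $\J$ and so apply verbatim to the block, with no probabilistic input needed.
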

\begin{proof}
Let $\wt J_0 = 0$ and for $m\in\N$, let $\wt J_m$ be the $m^{\text{th}}$ smallest $j\in\N$ such that $X(-j, -k)$ contains a hamburger or a cheeseburger for each $k\in [1,j]_\Z$.  We show by induction that $\wt J_m = \J_m$ for each $m\in \N$.  The base case $m=0$ is trivial. Suppose $m\in\N$ and $\wt J_{m-1} = \J_{m-1}$. By assertion~\ref{item-J-match} of Lemma~\ref{prop-J-basic}, the word $X(-\J_m, - k)$ contains a hamburger or a cheeseburger (namely $X_{-\J_m}$) for each $k\in [\wt J_{m-1}+1, \J_m]_\Z$. By definition of $\wt J_{m-1}$, the word $X(-\wt J_{m-1}, -k)$ (and hence the word $X(-\J_m,-k)$) contains a hamburger or a cheeseburger for each $k\in [ 1, \wt J_{m-1}]_\Z$.  Thus $\J_m$ is one of the $\wt J_{m'}$'s, and hence $\J_m \geq \wt J_m$.  On the other hand, the word $X(-\wt J_m, -\J_{m-1}-1)$ contains a hamburger or cheeseburger by the inductive hypothesis and the definition of $\wt J_m$, so $\J_m \leq\wt J_m$, so in fact $\wt J_m = \J_m$.
\end{proof}

We next prove that $\J$ has finite moments up to order $1/2$ (actually we prove something a little stronger, which will be needed for technical reasons below).

\begin{lem} \label{prop-J-moment}
Let $M$ be the smallest $m\in\N$ for which $\cC(X(-\J_m,-1)) \geq 1$.
Almost surely $M<\infty$, and
for each $\zeta \in (0,1/2)$, we have $\E(\J_M^\zeta) < \infty$.
\end{lem}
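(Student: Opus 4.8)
The plan is to bound $\J_M$ above, pathwise, by an explicit functional of the simple random walk
\[
S_j \colonequals \cC\bigl(X(-j,-1)\bigr),\qquad j\ge 0,
\]
which is an honest simple symmetric random walk since (as $p_\fo=p_\eb=0$) each symbol of $X$ is a burger or an order with probability $\tfrac12$ each and reduction preserves $\cC$. All the conclusions will then follow from classical first‑passage estimates.

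First I would establish the key observation: if $L$ is a \emph{strict ascending ladder epoch} of $S$ — i.e.\ $S_L>S_m$ for all $0\le m\le L-1$, equivalently $\cC(X(-L,-k))\ge 1$ for every $k\in[1,L]_\Z$ — and if in addition $X_{-L}\in\{\hb,\cb\}$, then $L$ is one of the renewal times $\J_m$ and $\cC(X(-L,-1))\ge 1$. To see this, note that $\cC(X(-L,-k))\ge1$ already forces $\cB(X(-L,-k))\ge1$; the content is to rule out unidentified $\db$'s in $X(-L,-k)$ (these are the only burgers displayed as something other than $\hb$ or $\cb$, since identified $\db$'s become $\hb$ or $\cb$ and there are no $\eb$'s). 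This is done by induction on the prefixes $X_{-L}\cdots X_{-l}$, $l=L,L-1,\dots,k$: the base prefix $X_{-L}$ is a genuine burger, and whenever we append a symbol we have $\cB(\cR(X_{-L}\cdots X_{-l}))=\cC(X(-L,-l))+\cO(X(-L,-l))\ge1$ (ladder property), so a newly read $\db$ sees a genuine burger on top and is identified, while orders only consume burgers or remain unmatched and never create $\db$'s. Hence $\cN_\db(X(-L,-k))=0$, so $\cN_{\hb|\cb}(X(-L,-k))=\cB(X(-L,-k))\ge1$ for all $k\in[1,L]_\Z$; by Lemma~\ref{prop-J-af} this says $L$ is a renewal time, and $\cC(X(-L,-1))=S_L\ge1$.

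Consequently, letting $L_1<L_2<\cdots$ be the strict ascending ladder epochs of $S$ and $N$ the first $n$ with $X_{-L_n}\in\{\hb,\cb\}$, the previous paragraph and the definition of $M$ give $\J_M\le L_N$ (and $M<\infty$ whenever $L_N<\infty$). Now I would invoke the standard facts: the gaps $L_n-L_{n-1}$ are i.i.d.\ with the law of the first passage of a simple random walk to level $1$, which is a.s.\ finite with $\PP(L_1=2k-1)\asymp k^{-3/2}$, so $\E(L_1^\zeta)<\infty$ exactly for $\zeta<\tfrac12$ (while $\E L_1=\infty$). Conditionally on $S$ the step at each $L_n$ is $+1$, and the \emph{type} of that burger is $\hb,\cb,\db$ with probabilities $\tfrac{1-q}{2},\tfrac{1-q}{2},q$ independently of $S$; hence $N$ is geometric with parameter $1-q$ and independent of $(L_n-L_{n-1})_{n\ge1}$. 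Using subadditivity of $t\mapsto t^\zeta$ for $\zeta\le1$ together with this independence,
\[
\E\bigl(\J_M^{\,\zeta}\bigr)\;\le\;\E\bigl(L_N^{\,\zeta}\bigr)\;\le\;\E\!\left(\sum_{n=1}^{N}(L_n-L_{n-1})^{\zeta}\right)=\sum_{n\ge1}\E\bigl((L_n-L_{n-1})^{\zeta}\bigr)\,\PP(N\ge n)=\frac{\E(L_1^{\,\zeta})}{1-q}<\infty,
\]
for every $\zeta\in(0,\tfrac12)$, which also shows $L_N$ (hence $M$) is a.s.\ finite.

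The only genuinely delicate point is the key observation of the second paragraph, and the moral is: one must use the characterization of the renewal times through Lemma~\ref{prop-J-af} (``some $\hb$ or $\cb$ survives in every suffix $X(-L,-k)$'') rather than the naive demand that $X_{-L}$ itself survive, which can fail because of the state‑dependent $\db$ and $\so$ symbols; the ladder property is exactly what keeps $\cB\ge1$ throughout and forces every post‑$(-L)$ duplicate burger to be identified (the $\so$'s are harmless here, contributing only to the order count). I would therefore present the ladder‑epoch claim with its inductive proof first, then the random‑walk computation displayed above.
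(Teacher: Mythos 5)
Your proof is correct and follows essentially the same route as the paper: you compare $\J_M$ to the first strict ascending ladder epoch of the simple random walk $j\mapsto\cC(X(-j,-1))$ at which a genuine $\hb$ or $\cb$ appears (your $L_N$ is the paper's $P_{\wt M}$), note that this index is geometric with success probability $1-q$, and conclude via subadditivity of $t\mapsto t^\zeta$ together with a Wald-type computation. The one cosmetic difference is that your ``key observation'' is exactly Lemma~\ref{prop-backward-burger} (whose proof in the Appendix is the same induction you sketch), which the paper simply cites rather than re-deriving inline.
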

\begin{proof}
Let $P_0 = 0$ and for $m\in\N$, let $P_m$ be the smallest $j \in\N$ for which $\cC(X(-j,-1)) = m$, as in Lemma~\ref{prop-backward-burger}. Also let $\wt M$ be the smallest $m\in\N$ for which $X_{-P_m} \in \left\{\hb, \cb\right\}$. By Lemma~\ref{prop-backward-burger}, the word $X(-P_{\wt M},-n)$ contains either a hamburger or a cheeseburger for each $n\in [1,P_m]_\Z$. Therefore, Lemma~\ref{prop-J-af} implies that $P_{\wt M} = \J_{\wt m}$ for some $\wt m\in\N$. Since $\cC(X(-P_{\wt M}, -1)) = \wt M \geq 1$, we have $M \leq \wt m$. Therefore $\J_M \leq P_{\wt M}$.

For $\zeta\in (0,1/2)$, the function $t\mapsto t^\zeta$ is concave, hence subadditive. Thus, for $m\in\N$
\eqbn
P_m^\zeta \leq \sum_{k=1}^m (P_k - P_{k-1})^\zeta.
\eqen
Since $j\mapsto \cC(X(-j,-1))$ is a simple random walk, $\E\left( P_1^\zeta \right) < \infty$ for $\zeta \in (0,1/2)$. By the strong Markov property, for each $m \in \N$, it holds with conditional probability $1-q$ given $X_{-P_{m-1}} \cdots X_{-1}$ that $X_{-P_m} \in \left\{\hb, \cb\right\}$.  Therefore, the law of $\wt M$ is geometric with success probability $1-q$, and in particular $\E(\wt M) < \infty$.
By Wald's equation, it holds for each $\zeta \in (0,1/2)$ that $\E(P_{\wt M}^\zeta) < \infty$, and hence also $\E(\J_M^\zeta ) < \infty$.
\end{proof}

We are now ready to prove that the quantity $\CHI$ of~\eqref{eqn-chi-def} is finite.

\begin{prop} \label{prop-J-finite}
\eqb \label{eqn-J-mean-finite}
\CHI = \E\left(|X(-\J,-1)|\right) < \infty
\eqe
and
\eqb \label{eqn-count-mean-pos}
 \E\left( \cC(X(-\J, -1) ) \right) \geq 0.
\eqe
\end{prop}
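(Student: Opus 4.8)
The plan is to combine the structural facts about $X(-\J,-1)$ from Lemma~\ref{prop-J-basic} with the moment bound of Lemma~\ref{prop-J-moment} and the ``few $\db$'s and $\so$'s'' estimate of Lemma~\ref{prop-few-SD} (in a backward analogue, proved in the same way by reading $X$ from right to left and using the renewal times $\J_m$ of Lemma~\ref{prop-J-af}). By the $\hb\leftrightarrow\cb$, $\ho\leftrightarrow\co$ symmetry of the model it suffices to treat the event $\{X_{-\J}=\hb\}$, on which I write $w\colonequals X(-\J+1,-1)$. By assertions~\ref{item-J-burger}--\ref{item-J-reduced} of Lemma~\ref{prop-J-basic}, $w$ contains no $\hb$ or $\cb$, so (since $p_\fo=p_\eb=0$) $w$ is a word in $\{\ho,\co,\so,\db\}$ whose only burgers are $\db$'s; prepending $X_{-\J}=\hb$ and reducing then yields $X(-\J,-1)=\hb^{\cB}\co^{\cO}$ with
\begin{align*}
\cB &= 1+\cN_{\db}(w)-\cN_{\ho}(w)\ge 1,\\
\cO &= \cN_{\co}(w)+\cN_{\so}(w),
\end{align*}
the inequality $\cN_{\ho}(w)\le\cN_{\db}(w)$ coming from the fact that $X_{-\J}$ is not consumed (assertion~\ref{item-J-match}). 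Hence $|X(-\J,-1)|=\cB+\cO$ and $\cC(X(-\J,-1))=\cB-\cO$, so \eqref{eqn-J-mean-finite} and \eqref{eqn-count-mean-pos} reduce to $\E(\cB)<\infty$, $\E(\cO)<\infty$, and $\E(\cB)\ge\E(\cO)$. One more observation: any $\db$ or $\so$ surviving into the reduced word $w$ is unidentified in $X_{-\J+1}\cdots X_{-1}$ (an identifying burger would leave a surviving $\hb$ or $\cb$ in $w$), so $\cN_{\db}(w)+\cN_{\so}(w)=\cN_{\db|\so}(w)$, which is precisely the quantity controlled by the backward version of Lemma~\ref{prop-few-SD}.

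For the finiteness \eqref{eqn-J-mean-finite} I would first dispose of the burgers: $\cB\le 1+\cN_{\db|\so}(w)$, whose expectation is finite by the backward Lemma~\ref{prop-few-SD} together with the sub-$\tfrac12$-power tail of $\J\le\J_M$ from Lemma~\ref{prop-J-moment} (used to bound the number of renewal blocks occurring before time $\J$). The orders are the subtle part, because $\E(\J)=\infty$, so the trivial bound $\cO\le|X(-\J,-1)|\le\J$ is useless and there is no Azuma-type shortcut of the kind used in~\cite{shef-burger} — changing one symbol can alter $|X(-\J,-1)|$ arbitrarily much (\textit{cf.}\ the example $\hb\hb\hb\cb\so\so\so$ in Section~\ref{sec-variable-SD}). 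Instead one must exploit the exact cancellation in the identity
\[|X(-\J,-1)| = 1 + |w| - 2\cN_{\ho}(w),\]
combined with $\cN_{\ho}(w)\le\cN_{\db}(w)\le\cN_{\db|\so}(w)$ and the backward Lemma~\ref{prop-few-SD}, to reduce $\E(|X(-\J,-1)|)$ to controlling a stopped functional of the simple random walk $j\mapsto\cC(X(-j,-1))$; this last point is handled by a uniform-integrability argument for $|X(-n,-1)|$ on $\{\J>n\}$ — the content of Lemma~\ref{prop-J-limit}, proved in tandem — again using Lemma~\ref{prop-J-moment}. I expect this step, extracting a genuine first moment of $|X(-\J,-1)|$ from a time $\J$ possessing only moments below order $\tfrac12$, to be the main obstacle, and it succeeds only because the burger count and the order count in $X_{-\J}\cdots X_{-1}$ cancel to leading order.

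Granted \eqref{eqn-J-mean-finite}, the inequality \eqref{eqn-count-mean-pos} follows from a renewal argument. By Lemma~\ref{prop-J-af} the increment words $X_{-\J_i}\cdots X_{-\J_{i-1}-1}$, $i\ge 1$, are i.i.d., each distributed as $X_{-\J}\cdots X_{-1}$; since $\cC$ is additive under concatenation and invariant under reduction, $\cC(X(-\J_m,-1))=\sum_{i=1}^m c_i$ where the $c_i$ are i.i.d.\ with $c_1\eqD\cC(X(-\J,-1))$ and, by \eqref{eqn-J-mean-finite}, $\E(|c_1|)<\infty$. Running the argument of Lemma~\ref{prop-J-moment} with the first-passage time of $j\mapsto\cC(X(-j,-1))$ to a general level $L$ in place of $P_1$ shows that for every $L\in\N$ the chain $m\mapsto\cC(X(-\J_m,-1))$ a.s.\ reaches level $L$, so $\limsup_m\sum_{i=1}^m c_i=+\infty$ almost surely. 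A random walk with integrable increments cannot have this property if its drift is negative, so $\E(c_1)=\E(\cC(X(-\J,-1)))\ge 0$, which is \eqref{eqn-count-mean-pos}. (When $q=0$ there are no $\db$'s, so $\cB\equiv 1$, and the later improvement $\E(\cC(X(-\J,-1)))=0$ gives $\CHI=2$, matching Theorem~\ref{thm-variable-SD}.)
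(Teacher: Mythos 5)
Your setup is correct: on $\{X_{-\J}=\hb\}$ the reduced word $X(-\J,-1)$ has the form $\co^{\cO}\hb^{\cB}$ with $\cB=1+\cN_{\db}(w)-\cN_{\ho}(w)\geq 1$ and $\cO=\cN_{\co}(w)+\cN_{\so}(w)$ for $w=X(-\J+1,-1)$, the identities $|X(-\J,-1)|=1+|w|-2\cN_{\ho}(w)$ and $\cC(X(-\J,-1))=\cB-\cO$ hold, and all $\db$'s and $\so$'s in $w$ are unidentified so the backward Lemma~\ref{prop-few-SD} controls $\cN_{\db|\so}(w)$; moreover your renewal-plus-strong-law derivation of \eqref{eqn-count-mean-pos} from \eqref{eqn-J-mean-finite} is a genuine alternative to what the paper does. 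But the step you yourself flag as the main obstacle is not actually closed, and as written it is circular: you invoke Lemma~\ref{prop-J-limit} to extract the first moment of $|X(-\J,-1)|$, yet in the paper the proof of Lemma~\ref{prop-J-limit} is a \emph{consequence} of Proposition~\ref{prop-J-finite} -- its dominated-convergence step uses $|X(-\J,-1)|$ as the dominator, hence needs $\E(|X(-\J,-1)|)<\infty$. Saying they are ``proved in tandem'' does not supply an independent route to the uniform integrability of $|X(-n,-1)|\one_{(\J>n)}$, and you give none.

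What the paper does instead is a one-sided optional-stopping-plus-Fatou argument on the martingale $j\mapsto\cC(X(-j,-1))$, which produces \eqref{eqn-count-mean-pos} and the integrability of $\cC(X(-\J,-1))$ simultaneously rather than in your logical order. Optional stopping at $\J\wedge n$ gives $\E(Y_n)=0$ for $Y_n\colonequals\cC(X(-(\J\wedge n),-1))$. On the other hand, $Y_n$ is bounded above (for all $n$ at once) by an integrable $R=1+\J^\zeta+\Xi$, where $\Xi=\sum_m m\,\one_{F_m}$ has finite expectation by Lemma~\ref{prop-few-SD} and $\E(\J^\zeta)\leq\E(\J_M^\zeta)<\infty$ by Lemma~\ref{prop-J-moment}; the bound uses that for $m<\J$ every burger in $X(-m,-1)$ is a $\db$, so on $F_m^c$ one has $\cC(X(-m,-1))\leq\ep m^{\zeta}$. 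Then Fatou applied to $R-Y_n\geq 0$ gives $\E\!\left(R-\cC(X(-\J,-1))\right)\leq\E(R)<\infty$, from which both $\E(\cC(X(-\J,-1)))\geq 0$ and $\E|\cC(X(-\J,-1))|<\infty$ follow. Finally, the cancellation you anticipate is realized as the pathwise bound $|X(-\J,-1)|\leq 1-\bigl(1+\tfrac{2\ep}{1-\ep}\bigr)\cC(X(-\J,-1))+\tfrac{2\ep}{1-\ep}\J^\zeta+\Xi$ (obtained from $|w|=-\cC(w)+2\cN_{\db}(w)$ plus the $F_n$-control on $\cN_\db$), and each term on the right is integrable, giving \eqref{eqn-J-mean-finite}. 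To repair your writeup, replace the appeal to Lemma~\ref{prop-J-limit} with this stopped-martingale/Fatou step; your renewal argument then becomes a correct (and somewhat more conceptual) way to re-derive \eqref{eqn-count-mean-pos}, in the same spirit as the paper's later Lemma~\ref{prop-J-count-mean}.
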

\begin{proof}
Fix $\ep, \zeta \in (0,1/2)$ and for $n\in\N$, let $F_n=F_n(\ep,n^\zeta)$ be defined as in~\eqref{eqn-few-SD-event} but with $X(-n,-1)$ in place of $X(1,n)$.
Let
\eqb \label{eqn-no-F-sum}
\Xi \colonequals \sum_{n=1}^\infty n \one_{F_n}.
\eqe
By Lemma~\ref{prop-few-SD} and translation invariance, $\E(\Xi) < \infty$.

For $n\in\N$, if $F_n$ occurs, then
\eqb\label{C<=Xi}
\cC(X(-n,-1)) \leq |X(-n,-1)| \leq n \leq \Xi.
\eqe

For $n\in\N$, if $n<\J$ then every burger in $X(-n,-1)$ is a $\db$. 
If $n<\J$ and furthermore $F_n$ does not occur,
then $\cN_{\db|\so}(X(-n,-1)) \leq \ep   \cN_{\ho|\co}(X(-n,-1)) + \ep n^\zeta$ since $\ep  <1$, so
\begin{align}
\cC(X(-n,-1))&=\cN_\db(X(-n,-1))-\cN_{\ho|\co|\so}(X(-n,-1))\notag\\
&\leq\cN_{\db|\so}(X(-n,-1))-\ep\cN_{\ho|\co}(X(-n,-1))\leq\ep n^\zeta\,. \label{C<=nzeta}
\end{align}

For $n\in\N\cup\{0\}$, let
\[Y_n = \cC(X(-(\J\wedge n), -1) )\,.\]
Whether or not $F_{(\J\wedge n)-1}$ occurs, from \eqref{C<=Xi} and \eqref{C<=nzeta} applied to $(\J\wedge n)-1$, we have
\eqb \label{eqn-count-upper}
Y_n \leq 1 + \cC(X(-(\J\wedge n)+1, -1) )\leq 1 + \ep \J^\zeta + \Xi\,.
\eqe
Note that the $\Xi$ comes from the possibility that $F_{(\J\wedge n)-1}$ does not occur.
Since $\cC(X(-n,-1))$ is a martingale, the optional stopping theorem implies $\E[Y_n]=0$.
Let $R=1+\J^\zeta+\Xi$.
By Lemma~\ref{prop-J-moment} (note that $\J \leq \J_M$) and since $\E(\Xi)  <\infty$, we have $\E(R) < \infty$.
Since $0\leq R-Y_n$ and $Y_n\to \cC(X(-\J,-1))$, Fatou's lemma implies
\eqb \label{eqn-count-mean}
\E\left(R - \cC(X(-\J, -1) ) \right) \leq \liminf_n \E(R-Y_n)=\E(R).
\eqe
This in particular implies $\E(\cC(X(-\J,-1)))\geq 0$, i.e., ~\eqref{eqn-count-mean-pos}.

Since every burger in $X(-n,-1)$ is a $\db$ when $n < \J$, 
\eqb \label{eqn-J-word-split}
|X(-n, -1)| \BB 1_{\{n < \J\}} = - \cC(X(-n, -1)) + 2\cN_{\db}\left(X(-n, -1)\right).
\eqe
If $n<\J$ and $F_n$ does not occur, then
\begin{align*}
\cN_\db(X(-n,-1)) &\leq \ep\cN_{\ho|\co}(X(-n,-1)) + \ep n^\zeta \\
(1-\ep)\cN_\db(X(-n,-1)) &\leq -\ep\, \cC(X(-n,-1)) + \ep n^\zeta .
\end{align*}
Note that in the second inequality, we use that $\cN_\db(X(-n,-1)) \leq \ep n^\zeta$ and that $  -\ep \cN_\db(X(-n,-1)) \leq -\ep \cC(X(-n,-1))$ since every burger in $X(-n,-1)$ is a $\db$. 
Combining the above inequalities with \eqref{eqn-J-word-split} gives
\eqb \label{eqn-length-on-F}
|X(-n, -1)| \BB 1_{\{ n < \J \} \cap F_n^c}
\leq - \left(1 + \frac{2\ep}{1-\ep} \right) \cC(X(-n, -1)) + \frac{2\ep}{1-\ep} n^\zeta\,.
\eqe

We combine~\eqref{C<=Xi} and \eqref{eqn-length-on-F}, applied to $n = \J-1$, to obtain
\eqbn
|X(-\J, -1)|
\leq 1 - \left(1 + \frac{2\ep}{1-\ep} \right) \cC(X(-\J, -1)) + \frac{2\ep}{1-\ep} \J^\zeta + \Xi.
\eqen
Since the expectation of each term on the right side of this last inequality is finite, we obtain~\eqref{eqn-J-mean-finite}.
\end{proof}

\begin{lem} \label{prop-overshoot-finite}
With $M$ as in Lemma~\ref{prop-J-moment}, we have
$\E(\cC(X(-\J_M,-1))) < \infty$.
\end{lem}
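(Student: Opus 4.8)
The plan is to bound $\cC(X(-\J_M,-1))$ pathwise by a quantity of finite mean, reusing the construction from the proof of Lemma~\ref{prop-J-moment}. Recall from that proof the ascending ladder epochs $P_m\colonequals\min\{j\in\N:\cC(X(-j,-1))=m\}$ of the walk $j\mapsto\cC(X(-j,-1))$ and the index $\wt M\colonequals\min\{m\in\N:X_{-P_m}\in\{\hb,\cb\}\}$. Two facts are established there which I would simply quote: $\wt M$ has the geometric law with success probability $1-q$, so $\E(\wt M)=1/(1-q)<\infty$ since $q<1$; and $\J_M\le P_{\wt M}$.

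The key point is that $j\mapsto\cC(X(-j,-1))$ is a simple random walk started from $0$ with steps in $\{\pm1\}$ (as used already in the proof of Lemma~\ref{prop-mean-mono}). Such a walk cannot attain the level $\wt M$ strictly before its first hitting time $P_{\wt M}$ of that level, so $\cC(X(-n,-1))\le\wt M$ for every integer $n$ with $1\le n\le P_{\wt M}$. Evaluating this at $n=\J_M$, which is allowed because $1\le\J_M\le P_{\wt M}$, gives the deterministic bound $\cC(X(-\J_M,-1))\le\wt M$. Taking expectations,
\eqbn
\E\left(\cC(X(-\J_M,-1))\right)\le\E(\wt M)=\frac{1}{1-q}<\infty ,
\eqen
which is the assertion of Lemma~\ref{prop-overshoot-finite}. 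The left-hand expectation is well defined and lies in $[1,1/(1-q)]$ because $\cC(X(-\J_M,-1))\ge1$ by the definition of $M$.

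I do not expect a genuine obstacle here: the statement is a short consequence of facts already contained in the proof of Lemma~\ref{prop-J-moment}, the only point needing a line of justification being the elementary fact that a nearest-neighbour walk stays below a level until it first reaches it. Should one wish to avoid even that remark, one can instead use $X_{-\J_M}\in\{\hb,\cb\}$ to write $\cC(X(-\J_M,-1))=1+\cC(X(-\J_M+1,-1))$ and bound $\cC(X(-\J_M+1,-1))\le\wt M-1$ from $\J_M-1<P_{\wt M}$; but this refinement is unnecessary.
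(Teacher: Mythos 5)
Your proof is correct, and it takes a genuinely different and simpler route than the paper's. The paper bounds $\cC(X(-\J_M,-1))$ by $1+\cN_\db\left(X(-\J_M+1,-\J_{M-1}-1)\right)$, then controls that quantity by decomposing over events $E_m$ on which the $\db$-count exceeds both the order count and $m^\zeta$, and sums the resulting tail in $m$ using Lemma~\ref{prop-few-SD} together with $\E(\J_M^\zeta)<\infty$ from Lemma~\ref{prop-J-moment}. You instead extract the pathwise bound $\cC(X(-\J_M,-1))\le\wt M$ directly from the construction already used in the proof of Lemma~\ref{prop-J-moment}: since $\J_M\le P_{\wt M}$ and $j\mapsto\cC(X(-j,-1))$ is a nearest-neighbour $\pm 1$ walk (an observation already recorded in the proof of Lemma~\ref{prop-mean-mono}), that walk cannot exceed level $\wt M$ at or before its first hitting time $P_{\wt M}$, and $\E(\wt M)=1/(1-q)<\infty$ because $\wt M$ is geometric with parameter $1-q$. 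All three inputs ($\J_M\le P_{\wt M}$, the geometric law of $\wt M$, and the nearest-neighbour step structure) are already available at this point in the paper, so there is no circularity. Your argument is shorter, avoids any appeal to Lemma~\ref{prop-few-SD}, and in fact yields the explicit quantitative bound $\E\left(\cC(X(-\J_M,-1))\right)\le 1/(1-q)$, which is strictly more information than the paper's qualitative finiteness statement and is equally adequate for the downstream uses in Lemmas~\ref{prop-J-count-mean} and~\ref{prop-J-limit}.
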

\begin{proof}
By definition of $M$ and the times $\J_m$,  
\[
 1\leq \cC(X(-\J_M,-1)) \leq \cC(X(-\J_M, -\J_{M-1}-1)) \leq \cN_{\db}(X(-\J_M+1,-\J_{M-1}-1)) + 1
\, ; \] 
in the second inequality, we use that $ \cC(X(-\J_{M-1} , - 1)) \leq  0$.
Since every burger in $X(-\J_M + 1, -\J_{M-1}-1)$ is a $\db$, and 
 \eqbn
\cC(X(-\J_M+1, -\J_{M-1}-1))\geq \cC(X(-\J_M+1, -1)) \geq \cC(X(-\J_M , -1))  - 1    \geq  0  , 
\eqen 
we have
\eqbn
\cN_{\db}\left(X(-\J_M + 1, -\J_{M-1}-1) \right) \geq \cN_{\ho |\co} \left(X(-\J_M + 1, -\J_{M-1}-1) \right).
\eqen

Now fix $\zeta \in (0,1/2)$, and for $m\in\N$ let
\eqbn
E_m \colonequals \left\{ \cN_{\db}\left(X(-\J_m + 1, -\J_{m-1}-1) \right) \geq \cN_{\ho |\co}\left(X(-\J_m + 1, -\J_{m-1}-1) \right) \vee m^\zeta \right\}.
\eqen
Either $\cN_{\db}\left(X(-\J_M + 1, -\J_{M-1}-1) \right) < M^\zeta$ or $E_M$ occurs.
Therefore,
\alb
\cC \left(X(-\J_M, -1 ) \right)
&\leq \cN_{\db}\left(X(-\J_M + 1, -\J_{M-1}-1) \right) + 1 \\
&\leq  M^\zeta + \cN_{\db}\left(X(-\J_M + 1, -\J_{M-1}-1) \right) \one_{E_M} + 1 \\
&\leq \J_M^\zeta  + \sum_{m=1}^\infty \cN_{\db}\left(X(-\J_m + 1, -\J_{m-1}-1) \right) \one_{E_m}  +1.
\ale
By Lemma~\ref{prop-J-moment} we know $\E(\J_M^\zeta) <\infty$,
so to complete the proof it suffices to show
\eqb \label{eqn-overshoot-tail-event}
\sum_{m=1}^\infty \E\left( \cN_{\db}\left(X(-\J_m + 1, -\J_{m-1}-1) \right) \one_{E_m} \right) <\infty.
\eqe

Recall that the words $X_{-\J_m} \cdots X_{-\J_{m-1}-1}$ are i.i.d.\ with the same law as $X_{-\J} \cdots X_{-1}$.
For $B>0$, Lemma~\ref{prop-few-SD} and a union bound over all $n \in [1, B]_\Z$ yields
\eqbn
\PP\left( \cN_{\db}\left(X(-\J+1,-1) \right) \geq \cN_{\ho|\co}\left(X(-\J+1,-1)\right) \vee A,\; \J \leq B \right) \leq c_0 B e^{-c_1 A}
\eqen
for constants $c_0, c_1 > 0$ depending only on $p$ and $q$. Lemma~\ref{prop-J-moment} and the Chebyshev inequality together imply that $\PP(\J>B) = \PP(\J^\zeta>B^\zeta) \leq \E(\J^\zeta) B^{-\zeta}$.  Thus
\eqbn
\PP\left( \cN_{\db}\left(X(-\J+1,-1) \right) \geq \cN_{\ho|\co}\left(X(-\J+1,-1)\right) \vee A \right) \leq c_0 B e^{-c_1 A} + \E(\J^\zeta) B^{-\zeta}\,,
\eqen
and since $B$ was arbitrary, we choose $B=\exp[c_1 A/(1+\zeta)]$.  Then
\begin{align*}
\E&\left( \cN_{\db}\left(X(-\J_m + 1, -\J_{m-1}-1) \right) \one_{E_m} \right)\\
&=
\E\left( \cN_{\db}\left(X(-\J+1, -1) \right) \one\left\{ \cN_{\db}\left(X(-\J+1, -1) \right) \geq \cN_{\ho |\co} \left(X(-\J+1,-1) \right) \vee m^\zeta \right\} \right) \\
&= \sum_{k\geq m^\zeta} k\times\PP\left( k=\cN_{\db}\left(X(-\J+1, -1) \right) \geq \cN_{\ho |\co} \left(X(-\J+1,-1) \right) \vee m^\zeta \right)\\
&\leq \sum_{k\geq m^\zeta} k\times\PP\left( \cN_{\db}\left(X(-\J+1, -1) \right) \geq \cN_{\ho |\co} \left(X(-\J+1,-1) \right) \vee k \right)\\
&\leq \sum_{k\geq m^\zeta} k\times(c_0+\E[\J^\zeta]) \times\exp[-(c_1 \zeta/(1+\zeta)) k]\\
&\leq (m^\zeta+\text{const}) \times \text{const} \times \exp[-\text{const}\times m^\zeta]\,,
\end{align*}
which is summable in $m$, establishing \eqref{eqn-overshoot-tail-event}.
\end{proof}

The next two lemmas correspond to \cite[Lem.~3.5]{shef-burger}.
However, slightly more work is needed to prove Lemma~\ref{prop-J-count-mean} below in our setting
because the word $X(-\J,-1)$ can contain more than one burger,
so with $\J_M$ as in Lemma~\ref{prop-J-moment}, we might have $\cC(X(-\J_M,-1)) > 1$.

\begin{lem} \label{E[M]}
Let $M$ be the smallest $m\in\N$ for which $\cC\left(X(-\J_m,-1) \right) \geq 1$, as in Lemma~\ref{prop-J-moment}.
Then $\E[M]=\infty$.
\end{lem}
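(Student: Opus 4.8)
The plan is to realize $M$ as a first–passage time of an integer–valued random walk and then bound its expectation from below by a quantity controlled by the simple random walk $j\mapsto\cC(X(-j,-1))$, whose passage time to level $1$ has infinite mean (a consequence of Lemma~\ref{prop-mean-infty}). First I would record a random walk representation: since $\cR$ preserves $\cC$ and $\cC$ is additive under concatenation, while the backward renewal blocks $X_{-\J_m}\cdots X_{-\J_{m-1}-1}$, $m\in\N$, are i.i.d.\ (strong Markov property for the word read from right to left, as in the proof of Lemma~\ref{prop-J-af}), the sequence
\[
Y_m\colonequals\cC\!\left(X(-\J_m,-1)\right)=\sum_{k=1}^m\Delta_k,\qquad \Delta_k\colonequals\cC\!\left(X(-\J_k,-\J_{k-1}-1)\right),
\]
is a random walk started at $Y_0=0$ whose increments $\Delta_k$ are i.i.d.\ copies of $\cC(X(-\J,-1))$; by construction $M=\min\{m\in\N:Y_m\ge1\}$, which is a.s.\ finite by Lemma~\ref{prop-J-moment} and satisfies $\E|\Delta_1|\le\CHI<\infty$ by Proposition~\ref{prop-J-finite}.

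Next I would compare with the simple random walk $S_j\colonequals\cC(X(-j,-1))$ (indeed a simple random walk, since each symbol of $X$ is a burger or an order with probability $\tfrac12$ and $\cC$ is preserved by $\cR$). Put $P\colonequals\min\{j\in\N:S_j=1\}$, so $S_j\le0$ for $j<P$. Because $S$ moves in unit steps and $S_{\J_M}=Y_M\ge1$, we have $\J_M\ge P$; and any renewal time $\J_m$ with $\J_m\le P-1$ satisfies $S_{\J_m}\le0<1$, hence is not the first renewal time with $\cC(X(-\J_m,-1))\ge1$, so it belongs to $\{\J_1,\dots,\J_{M-1}\}$. It follows that $M\ge 1+N_{P-1}$, where $N_n\colonequals\#\{m\in\N:\J_m\le n\}$ counts renewal times, so it suffices to prove $\E(N_{P-1})=\infty$.

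The heart of the proof is thus this last divergence: $N_{P-1}$ is the number of renewal times lying inside the negative excursion of $S$ that precedes its first hit of level $1$, and one must show that on average this is infinite. Using the standard estimate $\PP(P>t)\asymp t^{-1/2}$ for the simple random walk $S$, together with the fact that renewal times occur along $S$ with a density comparable to $t^{-1/2}$ — which I would extract from the characterization of $\J_m$ in Lemma~\ref{prop-J-af} and from Lemma~\ref{prop-backward-burger}, with the (few) unidentified $\db$'s and $\so$'s controlled by Lemma~\ref{prop-few-SD} — one would obtain $\PP(N_{P-1}\ge m)\succeq 1/m$ for all $m$, whence $\E(N_{P-1})=\sum_{m\ge1}\PP(N_{P-1}\ge m)=\infty$ and $\E(M)=\infty$. (This $\E(M)=\infty$ is exactly the input used, together with $\E(\cC(X(-\J,-1)))\ge0$ from Proposition~\ref{prop-J-finite}, to deduce $\E(\cC(X(-\J,-1)))=0$ in Lemma~\ref{prop-J-count-mean}.)

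The main obstacle is the step just described. In \cite{shef-burger} the reduced word $X(-\J,-1)$ contains exactly one burger and the renewal times are, essentially, the ascending ladder epochs of $S$ at burger positions, so counting renewal times in any given window is transparent; in our setting $X(-\J,-1)$ may contain several burgers — the same phenomenon that necessitated the separate argument of Lemma~\ref{prop-overshoot-finite} — so substantially more care is needed to verify that a long negative excursion of $S$ still carries proportionally many renewal times. One also cannot route the argument through the block lengths $\J_m-\J_{m-1}\eqD\J$: these have infinite mean, so Wald's identity is unavailable for the lengths, which is precisely why the comparison must be made with the simple random walk $S$ and with the infinite–mean first–passage estimate of Lemma~\ref{prop-mean-infty}.
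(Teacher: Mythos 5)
Your setup is sound: $\J_M\geq P$ gives $M\geq 1+N_{P-1}$, and reducing to $\E(N_{P-1})=\infty$ is a legitimate strategy. The gap is exactly where you flag it. The bound $\PP(N_{P-1}\geq m)\succeq 1/m$ is asserted, not derived, and the lemmas you cite cannot supply it: Lemmas~\ref{prop-J-af} and~\ref{prop-backward-burger} are purely structural, and Lemma~\ref{prop-few-SD} controls only unidentified $\db$'s and $\so$'s. The quantitative step you need --- that a long negative excursion of $S$ carries at least $c\sqrt{\text{length}}$ renewal times with probability bounded below --- is a lower-tail estimate for $N_n$ conditional on $\{P>n\}$, and the only transfer tool available, Lemma~\ref{prop-mean-mono} (which you do not invoke), gives first-moment comparisons rather than tail bounds. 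There is also a hidden extra loss you do not address: when $q>0$, identified $\db$'s contribute to $\cN_{\hb|\cb}(X(-n,-1))$ without being renewal times, so $N_n\leq\cN_{\hb|\cb}(X(-n,-1))$, and your sub-goal $\E(N_{P-1})=\infty$ is actually \emph{stronger} than controlling $\E(\cN_{\hb|\cb}(X(-P,-1)))$.

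The paper's proof is purely first-moment and avoids all of this, and it \emph{does} use Wald's equation --- contrary to your closing remark --- just not for the block lengths (you correctly observe those have infinite mean), but for the per-block burger counts $\cN_{\hb|\cb}(X(-\J_m,-\J_{m-1}-1))$, which are i.i.d.\ with finite mean by Proposition~\ref{prop-J-finite}. Since each symbol of $X_{-\J_m}\cdots X_{-\J_{m-1}-1}$ is identified, one has $\cN_{\hb|\cb}(X(-\J_M,-1))\leq\sum_{m=1}^{M}\cN_{\hb|\cb}(X(-\J_m,-\J_{m-1}-1))$; and since $P\leq\J_M$ while $j\mapsto\cN_{\hb|\cb}(X(-j,-1))$ is nondecreasing, the left side dominates $\cN_{\hb|\cb}(X(-P,-1))$. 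Writing $|X(-P,-1)|$ in terms of $\cN_{\hb|\cb|\db}(X(-P,-1))$ (possible because $\cC(X(-P,-1))=1$), invoking Lemma~\ref{prop-mean-infty} to get $\E(|X(-P,-1)|)=\infty$, and using Lemma~\ref{prop-few-SD} together with $\E(\J_M^\zeta)<\infty$ to absorb the $\db$'s and $\so$'s, one gets $\E(\cN_{\hb|\cb}(X(-P,-1)))=\infty$, and Wald then forces $\E(M)=\infty$. You already have every needed ingredient in hand; the missing move is to apply Wald to $\cN_{\hb|\cb}$ per block instead of hunting for a tail bound on the renewal count.
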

\begin{proof}
With $P$ as in Lemma~\ref{prop-mean-infty}, i.e., the smallest $j\in\N$ for which $\cC(X(-j,-1))=1$,
\eqb \label{eqn-X(-P,-1)}
|X(-P,-1)| = 2\cN_{\hb|\cb|\db}\left(X(-P,-1)\right) + 1\,.
\eqe 
For $\ep, \zeta \in (0,1/2)$ and the events $F_n = F_n(\ep , n^\zeta)$ and the random variable $\Xi$ in~\eqref{eqn-no-F-sum},
\[
\cN_{\db|\so}\left(X(-P,-1)\right) \leq
\begin{cases}
 \ep |X(-P,-1)| + \ep P^\zeta & \text{if $F_P$ does not occur}\\ \Xi & \text{if $F_P$ occurs}\,.\end{cases}
\]
By this and~\eqref{eqn-X(-P,-1)},
\eqb \label{eqn-X(-P,-1)-again}
 |X(-P,-1)| 
 \leq 2 \cN_{\hb|\cb}\left(X(-P,-1)\right) + 2 \ep P^\zeta + 2 \ep |X(-P,-1)| + 2\Xi + 1 .
\eqe 
Since $\cC(X(-\J_M, -1)) \geq 1$, we have $P \leq \J_M$.
Since $\E(\Xi) < \infty$ and $\E(P^\zeta) \leq \E(\J_M^\zeta) < \infty$,
$\E(|X(-P,-1)|) = \infty$ by Lemma~\ref{prop-mean-infty}, and $\ep  <1/2$, 
we deduce from~\eqref{eqn-X(-P,-1)-again} that
\[ \E\left(\cN_{\hb|\cb}\left(X(-P,-1)\right)\right) = \infty\,.
\]
Since $P \leq \J_M$,
\[\cN_{\hb|\cb}(X(-P,-1)) \leq \cN_{\hb|\cb}(X(-\J_M,-1))\,.\]
Since each symbol in $X(-\J_m, -\J_{m-1}-1)$ is identified,
\[ \cN_{\hb|\cb}(X(-\J_M,-1)) \leq \sum_{m=1}^M \cN_{\hb|\cb}(X(-\J_m, -\J_{m-1}-1)\,.\]
The summands are i.i.d., and have finite expectation by Proposition~\ref{prop-J-finite}.
But the left hand side has infinite expectation, so by Wald's equation, $\E[M]=\infty$.
\end{proof}

\begin{lem} \label{prop-J-count-mean}
\eqbn
\E\left( \cC(X(-\J,-1)) \right) = 0\,.
\eqen
\end{lem}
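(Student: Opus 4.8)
The plan is to prove that $\mu := \E(\cC(X(-\J,-1)))$ satisfies $\mu \le 0$; since Proposition~\ref{prop-J-finite} already gives $\mu \ge 0$, this forces $\mu = 0$. I would argue by contradiction, assuming $\mu > 0$ and deriving an inconsistency with Lemma~\ref{E[M]} (which says $\E[M] = \infty$) and Lemma~\ref{prop-overshoot-finite} (which says $\E(\cC(X(-\J_M,-1))) < \infty$), where $M$ is the smallest $m$ with $\cC(X(-\J_m,-1)) \ge 1$.

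The key structural fact I would exploit is that $\cC$ is additive under concatenation of words and invariant under the reduction relations~\eqref{eqn-theta-relations}--\eqref{eqn-theta-relations'} (each relation either deletes a burger together with an order, or replaces a burger by a burger / an order by an order). Hence, setting $Y_k := \cC(X(-\J_k,-\J_{k-1}-1))$ with $\J_m$ as in~\eqref{def-J_m}, I would observe that $\cC(X(-\J_m,-1)) = \sum_{k=1}^m Y_k =: S_m$ for every $m$. The blocks $X_{-\J_k}\cdots X_{-\J_{k-1}-1}$ are i.i.d.\ with the law of $X_{-\J}\cdots X_{-1}$ (as recalled in the proof of Lemma~\ref{prop-overshoot-finite}), so the $Y_k$ are i.i.d.\ with mean $\mu$, and since $|Y_1| \le |X(-\J,-1)|$ we get $\E|Y_1| \le \CHI < \infty$ by Proposition~\ref{prop-J-finite}. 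Thus $(S_m)_{m\ge0}$ is an integer-valued random walk with i.i.d.\ increments of finite mean $\mu$; in particular $M = \min\{m\in\N : S_m \ge 1\}$ is a stopping time for the filtration generated by $(Y_k)$, and $M < \infty$ a.s.\ by Lemma~\ref{prop-J-moment}.

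To close the argument I would apply Wald's identity to each bounded stopping time $M \wedge n$, giving $\E(S_{M\wedge n}) = \mu\,\E(M \wedge n)$. Because $(S_m)$ is integer-valued, on $\{M > n\}$ the definition of $M$ forces $S_{M\wedge n} = S_n \le 0 < 1 \le S_M$, while on $\{M \le n\}$ one has $S_{M\wedge n} = S_M$; hence $S_{M\wedge n} \le S_M$ pointwise, so $\mu\,\E(M\wedge n) = \E(S_{M\wedge n}) \le \E(S_M) = \E(\cC(X(-\J_M,-1)))$, which is finite by Lemma~\ref{prop-overshoot-finite}. Letting $n\to\infty$ and using $\E(M\wedge n) \uparrow \E(M) = \infty$ from Lemma~\ref{E[M]} would then make the left side tend to $\infty$ while staying bounded by $\E(S_M) < \infty$ --- a contradiction with $\mu > 0$. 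Therefore $\mu \le 0$, and combined with Proposition~\ref{prop-J-finite} this gives $\E(\cC(X(-\J,-1))) = 0$.

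I do not expect a genuine obstacle here: the two substantive inputs --- finiteness of $\CHI$ (needed for $\E|Y_1| < \infty$) and finiteness of the overshoot $\E(\cC(X(-\J_M,-1)))$, which is the new difficulty relative to \cite[Lem.~3.5]{shef-burger} where the overshoot is always exactly $1$ --- have already been established in Proposition~\ref{prop-J-finite} and Lemma~\ref{prop-overshoot-finite}. The only points demanding a little care are the elementary additivity/reduction-invariance of $\cC$ and the pointwise inequality $S_{M\wedge n} \le S_M$, which relies on $(S_m)$ being integer-valued so that $S_m < 1$ entails $S_m \le 0$ for $m < M$; after that the conclusion follows immediately from Wald's identity.
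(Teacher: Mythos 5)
Your argument is correct and is essentially the paper's proof in lightly different clothing: both decompose $\cC(X(-\J_m,-1))$ as an i.i.d.\ random walk $S_m=\sum_{k\le m} Y_k$ of mean $\mu$ using the block structure at the $\J_m$'s, bound $S_{M\wedge n}\le\cC(X(-\J_M,-1))$ pointwise, and derive the contradiction between $\mu\,\E(M)\le\E(\cC(X(-\J_M,-1)))<\infty$ (via Lemma~\ref{prop-overshoot-finite}) and $\E(M)=\infty$ (Lemma~\ref{E[M]}). Where you invoke Wald's identity for the bounded stopping time $M\wedge n$ and then pass to the limit by monotone convergence, the paper applies optional stopping to the martingale $A_m = S_m-\alpha m$ and then uses Fatou; these are the same step.
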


\begin{proof}
Write $\alpha = \E\left(\cC(X(-\J,-1) \right)$.
Observe that by Proposition~\ref{prop-J-finite},
\[
0 \leq \alpha \leq \E\left(|\cC(X(-\J,-1)| \right) \leq \E\left(|X(-\J,-1)| \right) < \infty\,.
\]
The strong Markov property implies that the words $X_{-\J_m} \cdots X_{-\J_{m-1}-1}$ for $m\in\N$ are i.i.d., and each has the same law as $X_{-\J} \cdots X_{-1}$. By Lemma~\ref{prop-J-basic}, none of the reduced words $X(-\J_m, -\J_{m-1}-1)$ contains an unidentified $\db$ or $\so$. By definition of $\alpha$, we find that
\eqbn
A_m \colonequals \cC\left(X(-\J_m,-1) \right) - \alpha m = \sum_{k=1}^m \cC\left(X(-\J_k, -\J_{k-1}-1)\right) - \alpha m
\eqen
is a martingale in $m$.

Let $M$ be the smallest $m\in\N$ for which $\cC\left(X(-\J_m,-1) \right) \geq 1$, as in Lemma~\ref{prop-J-moment}.
By the optional stopping theorem, for each $n \in \N$ we have $\E\left(A_{M\wedge n} \right) = 0$. Since $A_{M\wedge n} \leq \cC(X(-\J_M, -1))$ and the latter quantity has finite expectation by Lemma~\ref{prop-overshoot-finite}, it follows from Fatou's lemma that
\eqbn
0 \leq \E(A_M) \leq \E\left(\cC(X(-\J_M, -1) )\right).
\eqen
In particular $\E(A_M)\geq 0$ implies
\eqbn
\alpha \E(M) \leq \E\left(\cC(X(-\J_M,-1)) \right).
\eqen
By Lemma~\ref{prop-overshoot-finite} $\E\left(\cC(X(-\J_M,-1)) \right)<\infty$ and by Lemma~\ref{E[M]} $\E(M) = \infty$,
so $\alpha\leq 0$.  We already showed in Proposition~\ref{prop-J-finite} that $\alpha\geq 0$, so in fact $\alpha=0$.
\end{proof}

The following corollary is the reason why we know the variance and covariance of $Z$ in Theorem~\ref{thm-variable-SD} in the case when $z=1$. 

\begin{cor}
If $q = 0$ then $\CHI = 2$.
\end{cor}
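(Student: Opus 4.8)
The plan is to exploit the fact that when $q = 0$ there are (a.s.) no $\db$ symbols in $X$, which forces the reduced word $X(-\J,-1)$ to contain exactly one burger.

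First, recall that in the setting of this section we have $p_\fo = p_\eb = 0$, so when $q = p_\db = 0$ the only symbols that appear in $X$ are $\hb, \cb, \ho, \co$, and $\so$; in particular every burger in $X$ (or in any reduced subword of $X$) is an $\hb$ or a $\cb$. By the definition~\eqref{def-J} of $\J$, the reduced word $X(-\J+1,-1)$ contains no $\hb$ and no $\cb$, hence no burger at all. Using Lemma~\ref{prop-associative} to write $X(-\J,-1) = \cR\big(X_{-\J}\, X(-\J+1,-1)\big)$, together with assertion~\ref{item-J-burger} of Lemma~\ref{prop-J-basic} (which gives $X_{-\J} \in \{\hb,\cb\}$) and assertion~\ref{item-J-match} of that lemma (which guarantees that $X_{-\J}$ is not matched in $X_{-\J}\cdots X_{-1}$, so that its identification survives in the reduction), we conclude that $X(-\J,-1)$ contains exactly one burger, i.e.\ $\cB(X(-\J,-1)) = 1$.

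Since $\cC = \cB - \cO$ by Definition~\ref{def-theta-count}, it follows that
\[
|X(-\J,-1)| = \cB(X(-\J,-1)) + \cO(X(-\J,-1)) = 2\,\cB(X(-\J,-1)) - \cC(X(-\J,-1)) = 2 - \cC(X(-\J,-1)).
\]
Taking expectations, using Proposition~\ref{prop-J-finite} to guarantee that both sides are finite and Lemma~\ref{prop-J-count-mean} (which gives $\E(\cC(X(-\J,-1))) = 0$), we obtain $\CHI = \E(|X(-\J,-1)|) = 2$. There is no real obstacle here: the only slightly delicate point is the combinatorial claim that $X(-\J,-1)$ has exactly one burger, which is immediate from the two cited assertions of Lemma~\ref{prop-J-basic}.
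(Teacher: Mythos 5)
Your proof is correct and follows essentially the same route as the paper's: observe that when $q=0$ the reduced word $X(-\J,-1)$ contains exactly one burger, deduce $|X(-\J,-1)| = 2 - \cC(X(-\J,-1))$, and apply Lemma~\ref{prop-J-count-mean}. You have merely spelled out the details of the combinatorial claim (via Lemmas~\ref{prop-associative} and~\ref{prop-J-basic}) and the finiteness step (via Proposition~\ref{prop-J-finite}) that the paper leaves implicit.
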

\begin{proof}
When $q = 0$ the word $X(-\J,-1)$ contains exactly one burger. Hence in this case $|X(-\J,-1)| = 2- \cC(X(-\J,-1))$. Therefore Lemma~\ref{prop-J-count-mean} implies $\CHI = 2$ in this case.
\end{proof}

\begin{lem} \label{prop-J-limit}
\eqbn
\lim_{n\rta\infty} \E\left(|X(-n,-1)| \times \one_{(\J>n)} \right) = 0.
\eqen
\end{lem}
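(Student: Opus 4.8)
The plan is to upgrade the almost sure convergence $|X(-n,-1)|\,\one_{(\J>n)}\to 0$ (valid because $\J<\infty$ a.s.\ by Lemma~\ref{prop-J-basic}) to convergence of expectations. There is no integrable random variable dominating $|X(-n,-1)|\,\one_{(\J>n)}$ uniformly in $n$, so I would fix $\ep,\zeta\in(0,1/2)$ and split the event $\{\J>n\}$ according to whether the ``few unidentified symbols'' event $F_n\colonequals F_n(\ep,n^\zeta)$ of~\eqref{eqn-few-SD-event} occurs, defined with $X(-n,-1)$ in place of $X(1,n)$ using translation invariance (exactly as in the proof of Proposition~\ref{prop-J-finite}). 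On $F_n$ one has $|X(-n,-1)|\le n$ trivially, while $\PP(F_n)\le c_0 e^{-c_1 n^\zeta}$ by Lemma~\ref{prop-few-SD}, so $\E(|X(-n,-1)|\,\one_{F_n})\le n c_0 e^{-c_1 n^\zeta}\to 0$.

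On $\{\J>n\}\cap F_n^c$, every burger in $X(-n,-1)$ is a $\db$ by the definition of $\J$, so the computation leading to~\eqref{eqn-length-on-F} in the proof of Proposition~\ref{prop-J-finite} applies and gives $|X(-n,-1)|\le C_1\bigl(-\cC(X(-n,-1))\bigr)+C_2 n^\zeta$ on this event, with $C_1=1+\tfrac{2\ep}{1-\ep}$ and $C_2=\tfrac{2\ep}{1-\ep}$. The contribution of the second term is controlled since $C_2 n^\zeta\one_{(\J>n)}\le C_2\J^\zeta$, and $\J^\zeta\in L^1$ by Lemma~\ref{prop-J-moment} (using $\J\le\J_M$) while $\one_{(\J>n)}\to 0$ a.s., so dominated convergence gives $C_2\E(n^\zeta\one_{(\J>n)})\to 0$. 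For the first term I would use that $k\mapsto \cC(X(-k,-1))$ is a simple random walk — hence a mean-zero martingale for the backward filtration — and that $\J$ is a stopping time for this filtration; optional stopping at the bounded time $\J\wedge n$ then yields
\[
\E\bigl(\cC(X(-n,-1))\,\one_{(\J>n)}\bigr)=-\,\E\bigl(\cC(X(-\J,-1))\,\one_{(\J\le n)}\bigr).
\]
Since $|\cC(X(-\J,-1))|\le|X(-\J,-1)|\in L^1$ by Proposition~\ref{prop-J-finite}, dominated convergence together with Lemma~\ref{prop-J-count-mean} ($\E(\cC(X(-\J,-1)))=0$) shows the right-hand side tends to $0$; peeling off the $F_n$-part, which again contributes at most $n c_0 e^{-c_1 n^\zeta}$, shows $\E(\cC(X(-n,-1))\,\one_{(\J>n)\cap F_n^c})\to 0$ as well. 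Assembling the three estimates gives $\limsup_{n\to\infty}\E(|X(-n,-1)|\,\one_{(\J>n)})\le 0$, and nonnegativity of the quantity then forces the limit to be $0$.

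The main obstacle is the martingale step. Because $\E(\J)=\infty$ in general (for instance when $p=q=0$), one cannot bound the stopped walk $\cC(X(-(\J\wedge n),-1))$ in $L^1$ uniformly in $n$ and conclude by uniform integrability; the argument must instead rely on the sharper input that $\cC(X(-\J,-1))$ has mean zero (Lemma~\ref{prop-J-count-mean}) and is dominated by the integrable variable $|X(-\J,-1)|$ (Proposition~\ref{prop-J-finite}). The purpose of the splitting via $F_n$ and Lemma~\ref{prop-few-SD} is precisely to neutralize the unidentified $\db$'s, which are the reason $|X(-n,-1)|$ fails to equal $2-\cC(X(-n,-1))$ on the event $\{\J>n\}$.
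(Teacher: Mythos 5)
Your proof is correct and follows essentially the same route as the paper's: the optional stopping identity $\E(\cC(X(-n,-1))\one_{(\J>n)})=-\E(\cC(X(-\J,-1))\one_{(\J\le n)})$ combined with dominated convergence and Lemma~\ref{prop-J-count-mean}, the decomposition on $F_n$ versus $F_n^c$ to bring in Lemma~\ref{prop-few-SD} and the inequality~\eqref{eqn-length-on-F}, and a moment bound for $\J$ to control the $n^\zeta$ term. The only cosmetic difference is that you control $\E(n^\zeta\one_{(\J>n)})$ by dominated convergence with dominator $\J^\zeta$ rather than via Chebyshev with a slightly larger exponent $\zeta'$; both are fine.
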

\begin{proof}
By the optional stopping theorem, for each $n\in\N$,
\eqb \label{eqn-J-stop-decomp}
0 = \E\left( \cC(X(-\J \wedge n,- 1)) \right) = \E\left( \cC(X(-\J,-1)) \one_{(\J\leq n)} \right) + \E\left( \cC(X(-n,-1)) \one_{(\J >  n)} \right).
\eqe
Since
\[|\cC(X(-\J,-1)) \one_{(\J\leq n)}| \leq |\cC(X(-\J,-1))| \leq |X(-J,-1)|\,,
\]
and by Proposition~\ref{prop-J-finite} $\E(|X(-J,-1)|)<\infty$,
by dominated convergence (and Lemma~\ref{prop-J-count-mean}),
\eqbn
\lim_{n\rta\infty} \E\left( \cC(X(-\J,-1)) \one_{(\J\leq n)} \right) = \E\left(\cC(X(-\J,-1)) \right) = 0\,.
\eqen
It therefore follows from~\eqref{eqn-J-stop-decomp} that
\eqb \label{eqn-C-on-J}
\lim_{n\rta \infty} \E\left( \cC(X(-n,-1)) \one_{(\J >  n)} \right) = 0.
\eqe
Now fix $\ep,\zeta \in (0,1/2)$ and let $F_n = F_n(\ep, n^\zeta)$ be as in Lemma~\ref{prop-few-SD} with $X_{-n}\cdots X_{-1}$ in place of $X_1\cdots X_n$, as in the proof of Proposition~\ref{prop-J-finite}.
By~\eqref{eqn-length-on-F} and since $|X(-n,-1)|  \leq n$, 
\eqb \label{eqn-length-on-J-decomp}
|X(-n,-1)| \one_{(\J > n)} \leq - \left(1 + \frac{2\ep}{1-\ep}\right) \cC(X(-n,-1)) \one_{(\J >  n)} + \frac{2\ep}{1-\ep} n^\zeta \one_{(\J > n)} +   n \one_{F_n}.
\eqe
By~\eqref{eqn-C-on-J}, the expectation of the first term on the right in~\eqref{eqn-length-on-J-decomp} tends to 0 as $n\rta\infty$.
By Lemma~\ref{prop-few-SD}, $\lim_{n\rta\infty} n \PP(F_n) = 0$. By Lemma~\ref{prop-overshoot-finite}, for each $\zeta'\in (\zeta,1/2)$ we have $\E(\J^{\zeta'}) \leq \E(\J_M^{\zeta'}) < \infty$, so by Chebyshev's inequality $\PP(\J > n) \leq \E(\J_M^{\zeta'})/n^{\zeta'}$.
By combining these observations with~\eqref{eqn-length-on-J-decomp}, we obtain the statement of the lemma.
\end{proof}

\subsection{Variance of the discrepancy between burger types}
\label{sec-var-bound}

In this subsection we obtain an asymptotic formula for $\op{Var}\cQ(X'(1,n))$, where here $\cQ$ is as in Definition~\ref{def-theta-count} and $X'$ is as in Definition~\ref{def-X-identification}.
This formula will be used to obtain the variance and covariance for the limiting Brownian motion in Theorem~\ref{thm-variable-SD}.
In particular, we prove Proposition~\ref{prop-var-limit} below.
The proof is similar to the argument found in~\cite[\S~3.1]{shef-burger}, but unlike in~\cite[\S~3.1]{shef-burger}, all of the assumptions needed to make the argument work have already been proven.
Recall from Proposition~\ref{prop-J-finite} that $\CHI$ is finite.

\begin{prop} \label{prop-var-limit}
Let $\CHI$ be as in~\eqref{eqn-chi-def}. Then
\eqbn
\lim_{n\rta\infty} n^{-1} \op{Var}\left(\cQ(X'(-n,-1) ) \right) = 1 + (p+q) \CHI.
\eqen
\end{prop}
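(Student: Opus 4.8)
The plan is to follow \cite[\S 3.1]{shef-burger}: set up a renewal-type recursion for $f(n):=\op{Var}\big(\cQ(X'(-n,-1))\big)$ by cutting the word at the first backward renewal time $\J$ of~\eqref{def-J}. Since reduction of words over $\Theta_0$ preserves $\cQ$, one has $\cQ(X'(-n,-1))=\sum_{i=1}^{n}\psi(X'_{-i})$ with $\psi=+1$ on $\{\hb,\co\}$ and $\psi=-1$ on $\{\cb,\ho\}$, and $\E[\cQ(X'(-n,-1))]=0$ by the hamburger--cheeseburger symmetry. On $\{\J\le n\}$ I would split $\cQ(X'(-n,-1))=\cQ(X'(-\J,-1))+\cQ(X'(-n,-\J-1))$. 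By Lemma~\ref{prop-J-basic}(4) the reduced word $X(-\J,-1)$ consists of burgers of one type and orders of the opposite type, so $\cQ(X'(-\J,-1))=\pm|X(-\J,-1)|$, with $\E|X(-\J,-1)|=\CHI<\infty$ by Proposition~\ref{prop-J-finite}; in particular $\cQ(X'(-\J,-1))^2=|X(-\J,-1)|^2$. Reading the word backwards, the strong Markov property makes $X_{-\J-1}X_{-\J-2}\cdots$ an independent copy of $X_{-1}X_{-2}\cdots$, so that $\cQ(X'(-n,-\J-1))$, given $\J=j$, is distributed as $\cQ(X'(-(n-j),-1))$; and the $\pm1$ sign in $\cQ(X'(-\J,-1))$ is, conditionally on everything else, a uniform random sign (again by the symmetry), which kills the cross term. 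This yields
\begin{equation*}
f(n)=\E\!\big[|X(-\J,-1)|^2\,\one_{\{\J\le n\}}\big]+\E\!\big[f(n-\J)\,\one_{\{\J\le n\}}\big]+\E\!\big[\cQ(X'(-n,-1))^2\,\one_{\{\J>n\}}\big].
\end{equation*}

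Write this as $f=h+\PP(\J=\cdot)*f$ with inhomogeneous term $h(n)=\E[|X(-\J,-1)|^2\,\one_{\{\J\le n\}}]+g(n)$, where $g(n):=\E[\cQ(X'(-n,-1))^2\,\one_{\{\J>n\}}]$. The next step is to show $h(n)=o(n)$ and to identify its precise growth. For $g$: on $\{\J>n\}$ the reduced word $X(-n,-1)$ has no burgers, so $|\cQ(X'(-n,-1))|\le|X(-n,-1)|\le n$ and hence $g(n)\le n\,\E[|X(-n,-1)|\,\one_{\{\J>n\}}]=o(n)$ by Lemma~\ref{prop-J-limit}; a finer estimate (using that $i\mapsto\cC(X(-i,-1))$ is simple random walk and Lemma~\ref{prop-few-SD} to discard the unidentified $\db$'s and $\so$'s) gives its leading order. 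For the term $\E[|X(-\J,-1)|^2\,\one_{\{\J\le n\}}]$: note that $\E|X(-\J,-1)|=\CHI<\infty$ while $\E|X(-\J,-1)|^2=+\infty$ (this is forced, since a finite second moment would make the renewal equation give $f(n)=O(\sqrt n)$), so this really is a truncated tail sum, to be controlled via Lemma~\ref{prop-few-SD} together with $|X(-\J,-1)|\le\J$ and standard random walk estimates.

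Finally, solve the renewal equation. Because $\J$ has infinite mean -- its tail is regularly varying of index $1/2$, being governed by first passage of $i\mapsto\cC(X(-i,-1))$ -- the renewal function $U(n)=\sum_m\PP(\J_m\le n)$ grows like $n^{1/2}$, and the solution is $f=U*h$; convolving the asymptotics of $h$ with $U(\{\cdot\})\asymp(\cdot)^{-1/2}$ produces $f(n)\sim c\,n$ for an explicit $c$. Carrying out this computation exactly as in \cite[\S 3.1]{shef-burger} -- now legitimate, since every input it requires has been established: finiteness of $\CHI$ (Proposition~\ref{prop-J-finite}), the centering $\E[\cC(X(-\J,-1))]=0$ (Lemma~\ref{prop-J-count-mean}), uniform integrability of incomplete-block lengths (Lemma~\ref{prop-J-limit}), and negligibility of unidentified $\db$'s and $\so$'s (Lemma~\ref{prop-few-SD}) -- yields $c=1+(p+q)\CHI$: the ``$1$'' is the contribution of the underlying simple random walk $\cC$ (equivalently of the $\Theta_0$-symbols), while $(p+q)\CHI$ is the extra fluctuation injected by the $\db$ and $\so$ symbols, the factor $\CHI$ entering through the size-biased renewal length.

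The hard part will be the analysis of this inhomogeneous renewal equation with heavy-tailed renewal increments: one must pin down the exact leading-order behavior of both $\E[|X(-\J,-1)|^2\,\one_{\{\J\le n\}}]$ (an infinite-mean truncated tail) and $g(n)$, and then apply the correct heavy-tailed renewal theorem while checking that all the $o(n)$ error terms are genuinely negligible -- this is precisely where Lemma~\ref{prop-J-limit} and Lemma~\ref{prop-few-SD} do the work that was merely assumed in \cite{shef-burger}. The secondary, more routine points are verifying that the cross terms vanish by symmetry and that the $\pm1$ signs decouple as claimed.
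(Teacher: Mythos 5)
Your proposal takes a genuinely different route from the paper's, and it has a real gap that you yourself flag. You set up the renewal recursion $f(n) = h(n) + \sum_j \PP(\J=j)\,f(n-j)$ with $h(n) = \E\bigl[|X(-\J,-1)|^2\one_{\{\J\le n\}}\bigr] + g(n)$, and propose to solve it with heavy-tailed renewal theory. The recursion itself is fine -- the cross term does vanish by the $\dagger$-symmetry you invoke, and the split at $\J$ is legitimate because $\cQ$ is additive over the identified letters -- but the constant is never actually derived. Since $\E|X(-\J,-1)|^2=\infty$ and $\E\J=\infty$, extracting $f(n)\sim cn$ from $f=U*h$ (with $U$ the renewal function of an infinite-mean renewal process) requires a regular-variation/Tauberian input together with a sharp leading-order asymptotic for the truncated second moment $\E[|X(-\J,-1)|^2\one_{\{\J\le n\}}]$. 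Neither of these is developed by you or by the paper (which does not need them), and ``yields $c=1+(p+q)\CHI$, the factor $\CHI$ entering through the size-biased renewal length'' is a guess rather than a computation; it also does not match the mechanism by which $\CHI$ actually arises.

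The paper sidesteps the renewal equation entirely. Because $\cQ$ is additive and $\cQ(X_i')^2\equiv 1$, translation invariance gives
\[
\op{Var}\bigl(\cQ(X'(1,n))\bigr) \;=\; n \;+\; 2\sum_{i=2}^{n} \E\bigl[\cQ(X_0')\,\cQ(X'(-(i-1),-1))\bigr],
\]
and the problem reduces to showing that the single covariance $\E[\cQ(X_0')\,\cQ(X'(-m,-1))]\to\frac{(p+q)\CHI}{2}$. This is done by decomposing $X'(-m,-1)$ at $\J$ and multiplying by $\cQ(X_0')$ rather than squaring: $X_0'$ is independent of $X(-\J,-1)$ when $X_0\in\Theta_0$, while when $X_0\in\{\db,\so\}$ the identified symbol $X_0'$ takes the sign of $\cQ(X(-\J,-1))=\pm|X(-\J,-1)|$, giving $\E[\cQ(X_0')\cQ(X(-\J,-1))]=\PP(X_0\in\{\db,\so\})\,\E|X(-\J,-1)|=\frac{(p+q)\CHI}{2}$; the piece $\cQ(X'(-m,-\J-1))$ contributes zero by conditional independence given $X_{-\J}\cdots X_{-1}$ and the $\dagger$-symmetry; and the $\{\J>m\}$ remainder is killed by Lemma~\ref{prop-J-limit}. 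A Cesàro sum then finishes the proof with no renewal theorem and no control of $\E[|X(-\J,-1)|^2\one_{\{\J\le n\}}]$. Your route would require substantially more machinery to close; the covariance route uses only what is already proved in Section~\ref{sec-J-basic}.
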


\begin{proof}
By Lemma~\ref{prop-J-basic}, the word $X(-\J,-1)$ is equal to $X'(-\J,-1)$ and consists of either $\hb$'s and $\co$'s (if $X_{-\J} = \hb$) or $\cb$'s and $\ho$'s (if $X_{-\J} = \cb$). Therefore,
\eqb \label{eqn-J-discrep}
 \cQ\left(X(-\J,-1)\right) = \cQ\left(X'(-\J,-1)\right) = \pm |X(-\J,-1)|
\eqe
where the sign is positive if $X_{-\J} = \hb$ and negative if $X_{-\J} = \cb$. We observe that $X_0$ is independent from $X(-\J,-1)$, and that $X'_0$ is determined by $X_0$ on the event $\left\{X_0 \not\in\left\{\db, \so\right\} \right\}$. Therefore,
\[
\E\left(\cQ(X'_0) \cQ(X(-\J,-1)) \one_{\left( X_0 \not\in\left\{\db, \so\right\} \right)} \right) = 0.
\]
If on the other hand $X_0 \in\left\{\db, \so\right\}$, if $X_{-\J} = \hb$ then $X_0'\in\{\hb,\co\}$, and if $X_{-\J} = \cb$ then $X_0'\in\{\cb,\ho\}$.
Therefore, if $X_0 \in\left\{\db, \so\right\}$ then $\cQ(X'_0)$ has the same sign as $\cQ(X(-\J,-1))$, so
\begin{align} \label{eqn-discrep-mean-J}
\E\left(\cQ(X'_0) \cQ(X(-\J,-1)) \right)
&=\E\left(\cQ(X'_0) \cQ(X(-\J,-1)) \one_{(X_0\in\{\db,\so\})} \right) \notag\\
 &= \PP\!\left( X_0 \in \left\{\db, \so\right\} \right) \E\left(|X(-\J,-1)| \right) = \frac{\CHI (p+q)}{2}.
\end{align}

We next observe that $X_0'$ is determined by $X_{-\J} \cdots X_{-1}$ and $X_0$, so by the strong Markov property, for each $n\in\N$ it holds that $X_0'$ is conditionally independent from $X'_{-n} \cdots X'_{-\J-1}$ given $X'_{-\J}\cdots X'_{-1}$ (here we set $X(-n,-\J-1) = \emptyset$ if $n \leq \J$, so that the assertion holds vacuously in this case).  By symmetry $\cQ(X'(-n,-\J-1))$ has zero conditional mean given $X_{-\J} \cdots X_{-1}$, so
\eqb \label{eqn-after-J}
\E\left( \cQ(X_0') \cQ(X'(-n,-\J-1)) \;|\; X_{-\J} \cdots X_{-1}\right)
= 0.
\eqe
Therefore,
\eqb \label{eqn-discrep-mean-n}
\E\left( \cQ(X_0') \cQ(X'(-n, -1)) \right)
= \E\left( \cQ(X_0') \cQ(X(-\J, -1)) \one_{\J\leq n} \right) + \E\left( \cQ(X_0') \cQ(X'(-n, -1)) \one_{\J >  n} \right).
\eqe
By~\eqref{eqn-J-discrep},~\eqref{eqn-discrep-mean-J}, and dominated convergence (with $|X(-\J,-1)|$ as the dominator; recall Proposition~\ref{prop-J-finite}) we find that the first term on the right in~\eqref{eqn-discrep-mean-n} tends to $\CHI(p+q)/2$ as $n\rta\infty$. The absolute value of the second term is at most $\E\left(|X(-n,-1)| \one_{(\J > n)} \right)$, which tends to 0 by Lemma~\ref{prop-J-limit}. By translation invariance, we therefore have
\alb
\op{Var}\left(\cQ(X'(1,n)) \right)
&= \E\left( \cQ(X'(-n,-1))^2 \right) \\
&= \sum_{i=1}^n \E\left( \cQ(X_i')^2 \right) + 2 \sum_{i=2}^n \E\left(\cQ(X_i') \cQ(X'(1,i-1))\right) \\
&= n + 2 \sum_{i=2}^n \E\left(\cQ(X_0') \cQ(X'(-i+1, -1))\right) \\
&= \ \left(1 + \CHI (p+q) \right) n + o(n).\qedhere
\ale
\end{proof}

\subsection{Expected length of the reduced word}
\label{sec-moment-bound}

In this subsection we estimate the expectations of several quantities related to the reduced words $X(1,n)$ and $X'(1,n)$ for $n\in\N$ (recall~\eqref{eqn-X(a,b)}).
As one might expect due to the diffusive scaling for $Z^n$ in~\eqref{eqn-Z^n-def}, these quantities will typically be of order $n^{1/2}$. 
We first prove in Lemma~\ref{prop-length-mean-upper'} an upper bound for the length of the latter word, which may be shorter than $|X(1,n)|$ since there could be $\db$'s in $X_1\dots X_n$ which are identified by burgers in $\dots X_{-1} X_0$ but matched to orders in $X_1\dots X_n$. 
In Lemma~\ref{prop-length-mean-upper}, we transfer this to an upper bound for $|X(1,n)|$ using Lemma~\ref{prop-few-SD}. 
We then use a comparison to simple random walk on $\Z^2$ (via Lemma~\ref{prop-mean-mono}) to prove a corresponding lower bound for the expected number of burgers and orders in $X(1,n)$ (Lemma~\ref{prop-H-mean}).

\begin{lem} \label{prop-length-mean-upper'}
For $n\in\N$, we have (using the notation $\preceq$ from Section~\ref{sec-basic}),
\eqbn
\E\left(|X'(1,n)| \right) \preceq n^{1/2}\,.
\eqen
\end{lem}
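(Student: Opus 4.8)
The plan is to bound $|X'(1,n)|$ by controlling separately the number of burgers and the number of orders in the reduced word $X'(1,n)$, and to exploit the fact that burgers and orders in this reduced word correspond to quantities that can be read off from the process $i \mapsto \cC(X(1,i))$ together with a ``last time the running minimum was attained'' argument. Note that $|X'(1,n)| = \cB(X'(1,n)) + \cO(X'(1,n))$, and since $\cC(X'(1,n)) = \cC(X(1,n))$ is a simple random walk (independent of $p,q$), controlling one of the two counts suffices: $\cB - \cO = \cC(X(1,n))$ has absolute value $\preceq n^{1/2}$ in expectation by standard simple random walk estimates.

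So the core task is to bound, say, $\E(\cO(X'(1,n)))$. The key observation is that an order $X_i'$ (for $i \in [1,n]_\Z$) survives in the reduced word $X'(1,n)$ only if it is not matched by any burger in $X_{i+1}' \cdots X_n'$; reading the word \emph{backward} from index $n$, such surviving orders correspond to records of the backward walk. More precisely, I would look at the time-reversed word and note that the orders remaining in $\cR(X_1' \cdots X_n')$ are in bijection with a subset of the indices $i$ such that $\cC(X'(i,n)) = \cC(X(i,n))$ attains a new minimum (over $i$, reading $i$ from $n$ down to $1$) — this is exactly the structure used in~\cite[\S 3]{shef-burger} and in the proof of Lemma~\ref{prop-mean-infty} above. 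The number of such record times for a simple random walk of length $n$ has expectation $\asymp n^{1/2}$. The subtlety is that not every record time contributes an order: some contribute burgers (hamburgers vs.\ cheeseburgers), but in any case the total count of ``pivotal'' backward-record indices is $\asymp n^{1/2}$ in expectation, and both $\cB(X'(1,n))$ and $\cO(X'(1,n))$ are each dominated, up to the additive $|\cC(X(1,n))|$ term, by this count. Since the $\db$'s and $\so$'s only ever get \emph{identified} (replaced by ordinary symbols) or matched, passing from $X$ to $X'$ does not increase the relevant counts beyond what the backward-record structure controls, so the presence of these special symbols causes no extra difficulty here — this is precisely why the lemma is stated for $X'$ rather than $X$.

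Concretely, I would argue: let $L_n \colonequals \#\{ j \in [1,n]_\Z : \cC(X(j,n)) < \cC(X(i,n)) \text{ for all } i \in (j,n]_\Z \}$ be the number of strict backward-running-minimum times. A standard reflection/renewal computation for simple random walk gives $\E(L_n) \asymp n^{1/2}$ (each such time is a renewal of the ladder structure of the walk run backward, and the number of ladder epochs up to time $n$ has expectation of order $n^{1/2}$). Then I claim $\cO(X'(1,n)) \leq L_n + C$ and $\cB(X'(1,n)) \le L_n + C$: indeed, any order in $\cR(X_1' \cdots X_n')$ fails to be consumed, so at the moment it would be consumed by the backward-reading burger-stack the stack is empty, forcing $\cC$ to be at a new backward minimum; an analogous statement holds for burgers via the same record structure. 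Hence $|X'(1,n)| = \cB(X'(1,n)) + \cO(X'(1,n)) \leq 2 L_n + 2C$, giving $\E(|X'(1,n)|) \preceq n^{1/2}$.

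The main obstacle I anticipate is making the bijection between surviving symbols of the reduced word and backward-record times of $\cC(X(\cdot,n))$ completely rigorous in the presence of the identification relations~\eqref{eqn-theta-relations'}: one must check that a $\db$ which is identified (and hence behaves like a burger) but then matched still fits the record-time bookkeeping, and that a $\db$ or $\so$ which survives unidentified into $X'(1,n)$ — there is at most a controlled number of these — does not spoil the count. For this I would invoke Lemma~\ref{prop-few-SD} (with $A$ a small power of $n$) to say that the number of unidentified $\db$'s and $\so$'s in $X(1,n)$ is $o(n^{1/2})$ except on an event of superpolynomially small probability, on which $|X'(1,n)| \le n$ contributes negligibly; on the complementary event the reduced word is, up to $o(n^{1/2})$ symbols, a word in $\Theta_0$ and the clean record-time argument applies. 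Assembling these pieces — the simple random walk estimate $\E(L_n) \asymp n^{1/2}$, the structural inequality $\cB(X'(1,n)) \vee \cO(X'(1,n)) \le L_n + o(n^{1/2}) + (\text{superpoly-small error})$, and $\cB - \cO = \cC(X(1,n))$ — yields the stated bound.
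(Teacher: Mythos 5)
Your structural claim is false, and the gap is essential rather than cosmetic. You assert that each surviving order in $\cR(X'_1\cdots X'_n)$ forces the backward simple random walk $j\mapsto\cC(X(j,n))$ to a strict new minimum, and hence $\cO(X'(1,n))\leq L_n+C$. But whether an $\ho$ at position $i$ survives depends only on whether the \emph{hamburger} stack is empty at time $i-1$, i.e.\ on whether $\dpt(X'(1,\cdot))$ is at its running minimum, and this is not at all the same as $\cC(X'(1,\cdot))=\dpt(X'(1,\cdot))+\ddt(X'(1,\cdot))$ being at a minimum: one summand can drop while the other rises. Concretely, for the word $x=(\cb\,\ho)^m$ ($n=2m$), one has $\cR(x)=\ho^m\cb^m$, so $\cO(\cR(x))=m=n/2$, while the backward $\cC$-walk just oscillates between $0$ and $-1$ and therefore $L_n=1$. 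So $\cO(X'(1,n))$ cannot be controlled by $L_n$ plus a constant, or even plus anything $o(n^{1/2})$, and your bound breaks. (The intuition you describe \emph{would} be correct for $\fo$'s, which are unfulfilled precisely when both stacks are empty — that is the relevant structure in~\cite{shef-burger} — but $\ho$'s and $\co$'s care about a single stack, which $\cC$ does not track.)

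What the lemma actually requires is control of $\dpt(X'(1,\cdot))$ and $\ddt(X'(1,\cdot))$ \emph{individually}, which for $p,q>0$ are not simple random walks and whose running minima are essentially what is being estimated — so a direct record-time argument on those is circular. The paper breaks the circle by establishing a second-moment bound on the discrepancy $\cQ(X'(1,n))=\dpt(X'(1,n))-\ddt(X'(1,n))$ via Proposition~\ref{prop-var-limit} (which depends on the entire analysis of the backward renewal time $\J$ in Section~\ref{sec-J-basic}, including finiteness of $\CHI$ from Proposition~\ref{prop-J-finite} and the dominated-convergence input from Lemma~\ref{prop-J-limit}), and then combines it with the elementary fact that $\cC(X'(1,n))=\dpt+\ddt$ is a simple random walk. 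Together these give Chebyshev tail bounds on $\dpt(X'(1,n))$ at a \emph{fixed} time $n$, which are then upgraded to a bound on $\cN_\hb(X'(-n,-1))$ by a backward stopping-time ($K_k$) and union-bound argument. Your proposal replaces the one genuinely hard input (second moment of $\cQ$) with a false combinatorial shortcut; invoking Lemma~\ref{prop-few-SD} does not help, since that lemma controls unidentified $\db$'s and $\so$'s, whereas $X'$ already contains only $\Theta_0$ symbols and the difficulty lies entirely in the correlations of $\dpt$ induced by the identification.
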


\begin{proof}
By the symmetry between hamburgers and cheeseburgers, $\E\left(\cQ(X'(1,n))\right)=0$, so by Proposition~\ref{prop-var-limit} and translation invariance, for each $n\in\N$ we have $\E\left( \cQ(X'(1,n))^2\right)= \op{Var}\left(\cQ(X'(-n,-1) ) \right)\preceq n$.
Since $n\mapsto \cC( X'(1,n))$ is a simple random walk, $\E\left( \cC(X'(1,n))^2 \right) = n$. With $\dpt(X'(1,n))$ as in Definition~\ref{def-theta-count}, $\dpt(X'(1,n)) = \frac12 \left( \cQ(X'(1,n)) + \cC(X'(1,n))\right)$.
By a union bound and the Chebyshev inequality, we infer
\eqb \label{eqn-d-tail}
\PP\!\left(|\dpt(X'(1,n))| \geq k \right) \preceq n/k^2,\quad\quad \forall n,k \in \N.
\eqe

For $k\in\N$, let $K_k$ be the smallest $i \in \N$ for which $X(-i,-1)$ contains at least $k$ hamburgers.
Then $X_{-K_k}$ is a $\hb$ without a match in $X_{-K_k} \dots X_{-1}$, so each $\db$ or $\so$ in $X(-K_k +1 , -1)$ must be identified and there are no hamburger orders in $X(-K_k+1,-1)$.
Consequently, the word $X(-K_k, -1)$ contains at least $k$ hamburgers, no unidentified $\db$'s or $\so$'s, and no orders other than cheeseburger orders. Therefore,
\eqbn
\dpt\left(X(-K_k, -1) \right) = \dpt\left(X'(-K_k, -1) \right) \geq k.
\eqen
It follows that if $K_k \leq n$, then either
\eqbn
\dpt\left(X'(-n, -K_k- 1) \right) \leq -k/2 \quad \op{or}\quad \dpt\left(X'(-n,-1) \right) \geq k/2.
\eqen
Since $K_k$ is a backward stopping time for the word $X$, we infer from the strong Markov property and translation invariance that the conditional law of $\dpt\left(X'(-n, -K_k- 1) \right)$ given $X_{-K_k} \cdots X_{-1}$ is the same as the law of $\dpt\left(X'(1, n - K_k) \right)$. By~\eqref{eqn-d-tail} and the union bound,
\eqbn
\PP\!\left( K_k \leq n \right) \preceq n/k^2\,,
\eqen
and hence
\eqb \label{eqn-H-tail}
\PP\!\left(\cN_{\hb}\left(X'(-n,-1) \right) \geq k \right) \preceq n/k^2,\quad \forall k,n \in \N.
\eqe
By combining~\eqref{eqn-d-tail} and~\eqref{eqn-H-tail} and noting that $\cN_{\hb|\ho}(x) \leq 2 \cN_{\hb}(x)  + |\dpt( x )|$ for every word $x$, we get
\eqbn
\PP\!\left(\cN_{\hb|\ho}\left(X'(-n,-1) \right) \geq k \right) \preceq n/k^2,\quad \forall k,n \in \N.
\eqen
By symmetry, the analogous estimate holds with $\cb$ and $\co$ in place of $\hb$ and $\ho$.
Since $|X'(-n,-1)|=\cN_{\hb|\cb|\ho|\co}(X'(-n,-1))$, a union bound therefore implies
\eqbn
\PP\!\left( \left| X'(-n,-1) \right| \geq k\right) \preceq n/k^2,\quad \forall k,n \in \N.
\eqen
Hence
\eqbn
\E\left( \left|X'(-n,-1)\right| \right)
= \sum_{k=1}^\infty \PP\!\left( \left| X'(-n,-1) \right| \geq k\right)
\preceq \int_1^\infty (1 \wedge (n/k^2)) \, dk \preceq n^{1/2}\,,
\eqen
which finishes the proof in view of translation invariance.
\end{proof}

We now estimate the expectation of $|X(1,n)|$, which may be larger than the expectation of $|X'(1,n)|$ since some duplicate burgers with no match in $X_1 \cdots X_n$ may correspond to hamburgers or cheeseburgers in $X'$ which have a match in $X'_1 \cdots X'_n$.

\begin{lem} \label{prop-length-mean-upper}
\eqb \label{eqn-length-mean-upper}
\E\left(|X(1,n)|\right) \preceq n^{1/2}\,.
\eqe
and
\eqb \label{eqn-SD-mean-upper}
\E\left( \cN_{\db|\so}(X(1,n))\right) = o(n^{1/2})\,,
\eqe
as $n\in\N$ tends to infinity.
\end{lem}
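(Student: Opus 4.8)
The plan is to deduce Lemma~\ref{prop-length-mean-upper} from the bound on $\E(|X'(1,n)|)$ in Lemma~\ref{prop-length-mean-upper'} by controlling the discrepancy between $X(1,n)$ and $X'(1,n)$, which is governed entirely by unidentified (in $X_1\cdots X_n$) symbols and by $\db$'s that are identified by an earlier burger in $\cdots X_{-1}X_0$ but matched to an order in $X_1\cdots X_n$. The key point is that every symbol of $X(1,n)$ which is \emph{not} present in $X'(1,n)$ (or, more precisely, every discrepancy between the two reduced words) can be charged to an element of $\{\db,\so\}$ in $X_1\cdots X_n$ which is unidentified in $X_1\cdots X_n$, together with at most one order or burger it affects. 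Concretely, if we first reduce $X_1\cdots X_n$ \emph{after} substituting the correct identifications (i.e.\ form $X'(1,n)$), the only way $|X(1,n)|$ can exceed $|X'(1,n)|$ is through symbols whose identification depends on the bi-infinite past; these are exactly the $\db$'s and $\so$'s that are unidentified when the word $X_1\cdots X_n$ is read in isolation. Thus one has a deterministic bound of the form
\eqb \label{eqn-length-plan}
|X(1,n)| \leq |X'(1,n)| + C\,\cN_{\db|\so}(X(1,n))
\eqe
for an absolute constant $C$ (one can take $C=2$, since each such symbol contributes itself plus possibly one order it fails to consume or one burger it spuriously creates). Both bounds \eqref{eqn-length-mean-upper} and \eqref{eqn-SD-mean-upper} then reduce to estimating $\E(\cN_{\db|\so}(X(1,n)))$.

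To estimate $\E(\cN_{\db|\so}(X(1,n)))$, I would integrate the tail bound furnished by Lemma~\ref{prop-few-SD}. Fix a small $\ep>0$ and a small exponent $\zeta \in (0,1/2)$, and split according to the event $F_n(\ep, n^\zeta)$ of \eqref{eqn-few-SD-event}. On $F_n(\ep,n^\zeta)^c$ we have $\cN_{\db|\so}(X(1,n)) \leq \ep\,\cN_{\ho}(X(1,n)) + \ep n^\zeta \leq \ep |X'(1,n)| + \ep n^\zeta$ (using $\cN_{\ho}(X(1,n)) \le \cN_{\ho}(X'(1,n)) \le |X'(1,n)|$, since the $\ho$'s in the reduced word are genuine hamburger orders in both readings). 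On $F_n(\ep,n^\zeta)$ we use the trivial bound $\cN_{\db|\so}(X(1,n)) \leq |X(1,n)| \leq n$ together with $\PP(F_n(\ep,n^\zeta)) \leq c_0 e^{-c_1 n^\zeta}$, which decays faster than any power of $n$. Combining with \eqref{eqn-length-plan},
\eqb \label{eqn-integ-plan}
\E(|X(1,n)|) \leq \E(|X'(1,n)|) + C\ep\,\E(|X'(1,n)|) + C\ep n^\zeta + C n\, c_0 e^{-c_1 n^\zeta},
\eqe
and by Lemma~\ref{prop-length-mean-upper'} the right-hand side is $\preceq n^{1/2}$ (the last two terms are lower order since $\zeta < 1/2$). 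This gives \eqref{eqn-length-mean-upper}. For \eqref{eqn-SD-mean-upper}, run the same splitting on $\cN_{\db|\so}(X(1,n))$ directly: on $F_n^c$ it is at most $\ep|X'(1,n)| + \ep n^\zeta$, which has expectation $\preceq \ep n^{1/2}$, and on $F_n$ the contribution is $\preceq n e^{-c_1 n^\zeta} = o(n^{1/2})$; since $\ep>0$ was arbitrary, $\E(\cN_{\db|\so}(X(1,n))) = o(n^{1/2})$.

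The main obstacle, and the step requiring genuine care rather than bookkeeping, is establishing the deterministic inequality \eqref{eqn-length-plan} with a clean constant — i.e.\ correctly arguing that \emph{every} excess symbol in $\cR(X_1\cdots X_n)$ over $\cR(X_1'\cdots X_n')$ is attributable to an unidentified $\db$ or $\so$ of $X_1\cdots X_n$. One must check that identified $\db$'s and $\so$'s (even those identified only via the infinite past, i.e.\ by a burger among $\cdots X_{-1}X_0$) do \emph{not} inflate the length: when such a symbol $X_i$ is identified as, say, an $\hb$ by a past burger, then within the reduced word $X'(1,n)$ it behaves exactly as that $\hb$ would, and the discrepancy it can create in $X(1,n)$ (where it was treated as unidentified and hence stayed to the right, unconsumed) is confined to itself plus at most the single order that would have consumed it — which is already accounted for by counting $X_i \in \{\db,\so\}$. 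Making this precise likely uses the associativity relation $\cR(xy) = \cR(\cR(x)\cR(y))$ (Lemma~\ref{prop-associative}) applied with $x = \cdots X_{-1}X_0$-dependent data absorbed appropriately, together with the observation, noted just before the lemma, that the only $\db$'s causing trouble are those ``identified by burgers in $\cdots X_{-1}X_0$ but matched to orders in $X_1\cdots X_n$,'' whose count is at most $\cN_{\db|\so}(X(1,n))$ plus $|X'(1,n)|$-measurable quantities. If a direct symbol-pushing argument proves awkward, an alternative is to bound $|X(1,n)| \le |X'(1,n)| + 2\cN_{\db|\so}(X_1\cdots X_n)$ with $\cN_{\db|\so}$ of the \emph{full} subword rather than the reduced word, since the full-subword count is at least the reduced-word count and admits the same tail bound from Lemma~\ref{prop-few-SD} (its proof bounds $\cN_{\db|\so}(X(1,n))$, and the number of unidentified such symbols in the full word equals that in the reduced word); this costs nothing in the final estimate.
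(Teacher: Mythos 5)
Your overall route is the paper's: compare $|X(1,n)|$ with $|X'(1,n)|$ via a bound of the form $|X(1,n)| \le |X'(1,n)| + 2\,\cN_{\db}(X(1,n))$, then control $\cN_{\db|\so}(X(1,n))$ by splitting on the event $F_n(\ep,n^\zeta)$ of Lemma~\ref{prop-few-SD} and invoking Lemma~\ref{prop-length-mean-upper'}. However, there is a genuine error in your estimate of $\cN_{\db|\so}(X(1,n))$ on $F_n^c$: you invoke $\cN_\ho(X(1,n)) \le \cN_\ho(X'(1,n))$, and this inequality is false. Passing to the identified word makes \emph{more} burgers available in $X_1'\cdots X_n'$ than in $X_1\cdots X_n$ (unidentified $\db$'s turn into genuine $\hb$'s or $\cb$'s), so a hamburger order that goes unmatched in the isolated word may well be consumed once identifications are substituted. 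Concretely, take $X_0=\hb$, $X_1=\db$, $X_2=\ho$: then $X(1,2)=\cR(\db\ho)=\db\ho$ has $\cN_\ho=1$, while $X_1'=\hb$, so $X'(1,2)=\cR(\hb\ho)=\emptyset$ has $\cN_\ho=0$. The jump from $\cN_\ho(X(1,n))$ to $|X'(1,n)|$ therefore points the wrong way, and this is exactly where your ``no self-reference'' version of the argument breaks.

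The correct fix is the self-referencing you identified as the main obstacle but then tried to avoid. One first establishes the deterministic comparison $|X'(1,n)| \le |X(1,n)| \le |X'(1,n)| + 2\cN_{\db}(X(1,n))$ (the paper's one-line observation: any $i\in[1,n]_\Z$ at which $X_i$ is unmatched in $X_1\cdots X_n$ but $X_i'$ is matched in $X_1'\cdots X_n'$ either has $X_i=\db$ or is matched to a $\db$ within $X_1\cdots X_n$, so there are at most $2\cN_\db(X(1,n))$ such $i$). Then, on $F_n(\ep,n^\zeta)^c$, one uses only the \emph{correct} bound $\cN_\ho(X(1,n)) \le |X(1,n)|$ and chains
\begin{equation*}
\cN_{\db|\so}(X(1,n)) \le \ep\,|X(1,n)| + \ep\,n^\zeta \le \ep\,|X'(1,n)| + 2\ep\,\cN_{\db|\so}(X(1,n)) + \ep\,n^\zeta,
\end{equation*}
and \emph{rearranges} (using $\ep<1/2$) to get $\cN_{\db|\so}(X(1,n)) \le \tfrac{\ep}{1-2\ep}|X'(1,n)| + \tfrac{\ep}{1-2\ep}n^\zeta$. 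Adding $n\,\one_{F_n}$, taking expectations and applying Lemmas~\ref{prop-length-mean-upper'} and~\ref{prop-few-SD} then completes both estimates exactly as you outlined. Your proposed ``alternative''~--- bounding by $\cN_{\db|\so}(X_1\cdots X_n)$ of the full subword~--- does not repair the gap: that quantity is of order $n$, and Lemma~\ref{prop-few-SD} controls $\cN_{\db|\so}$ of the \emph{reduced} word only (which, as you correctly note, coincides with the number of $\db$'s and $\so$'s in $X_1\cdots X_n$ that are unidentified when read in isolation~--- but that is the reduced-word count, not the full-subword count, so it is not an alternative at all).
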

\begin{proof}
If $i\in [1,n]_\Z$ is such that $X_i$ does not have a match in $X_1\cdots X_n$ but $X_i'$ has a match in $X_1'\cdots X_n'$, then either $X_i =\db$ or $X_i$ is matched to a $\db$ in the word $X_1\cdots X_n$. Therefore,
\eqb \label{eqn-reduced-compare}
|X'(1,n)| \leq |X(1,n)| \leq |X'(1,n)| + 2 \cN_{\db}(X(1,n))\,.
\eqe

Now fix $\ep, \zeta \in (0,1/2)$ and for $n\in\N$ let $F_n = F_n(\ep, n^\zeta)$ be the event defined in~\eqref{eqn-few-SD-event}.
On the event $F_n^c$, we have
\eqbn
\cN_{\db|\so}(X(1,n)) \leq \ep |X(1,n)|   + \ep n^\zeta  \leq \ep |X'(1,n)| + 2 \ep \cN_{\db|\so}(X(1,n))  + \ep n^\zeta\,,
\eqen
where we used~\eqref{eqn-reduced-compare} in the second inequality.
After re-arranging this inequality, and considering also the possibility that $F_n$ occurs, we get
\eqb \label{eqn-SD-length-compare}
  \cN_{\db|\so}(X(1,n)) \leq \frac{\ep}{1-2\ep} |X'(1,n)|     + \frac{\ep}{1-2\ep}  n^\zeta + n \one_{F_n}.
\eqe
Combining~\eqref{eqn-SD-length-compare}, the bound $\E(|X'(1,n)|)\preceq n^{1/2}$ from Lemma~\ref{prop-length-mean-upper'}, the exponential decay of $\E(n\one_{F_n})$ from Lemma~\ref{prop-few-SD}, and the fact that $\ep>0$ can be made arbitrarily small, we easily obtain~\eqref{eqn-SD-mean-upper}.
We obtain~\eqref{eqn-length-mean-upper} from~\eqref{eqn-SD-mean-upper}, \eqref{eqn-reduced-compare} and Lemma~\ref{prop-length-mean-upper'}.
\end{proof}

\begin{lem} \label{prop-H-mean}
For $n\in\N$,
\eqb \label{eqn-H-mean}
\E\left( \cN_{\hb}\left(X (1,n) \right) \right) \asymp \E\left( \cN_{\ho}\left(X (1,n) \right) \right) \asymp  n^{1/2}\,.
\eqe
\end{lem}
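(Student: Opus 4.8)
The statement packages two inequalities in each direction: $\E(\cN_{\hb}(X(1,n)))\asymp n^{1/2}$ and $\E(\cN_{\ho}(X(1,n)))\asymp n^{1/2}$. For the upper bounds, note that $\cN_{\hb}(X(1,n))\le |X(1,n)|$ and $\cN_{\ho}(X(1,n))\le |X(1,n)|$, so both follow immediately from Lemma~\ref{prop-length-mean-upper}, which gives $\E(|X(1,n)|)\preceq n^{1/2}$. So the content is entirely in the lower bounds $\E(\cN_{\hb}(X(1,n)))\succeq n^{1/2}$ and $\E(\cN_{\ho}(X(1,n)))\succeq n^{1/2}$.

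For the lower bounds the plan is to use the comparison to simple random walk furnished by Lemma~\ref{prop-mean-mono}. Under $\PP^{0,0}$ the process $i\mapsto \ddi(i)=(\dipt(X(1,i)),\ddt(X(1,i)))$ is a simple random walk on $\Z^2$, started at the origin; moreover under $\PP^{0,0}$ the word $X(1,n)$ has no $\db,\eb,\fo,\so$ symbols, so $\cN_{\hb}(X(1,n))=\dipt(X(1,n))-\inf_{0\le i\le n}\dipt(X(1,i))$ and similarly $\cN_{\ho}(X(1,n))=-\inf_{0\le i\le n}\dipt(X(1,i))$ — indeed, the number of unmatched hamburgers in the reduced word is the running overshoot of the $\hb$-$\ho$ random walk above its past minimum, and the number of unmatched hamburger orders is minus that past minimum. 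Hence $\E^{0,0}(\cN_{\hb}(X(1,n)))$ and $\E^{0,0}(\cN_{\ho}(X(1,n)))$ are both of order $n^{1/2}$ by the standard fact that the running maximum (resp.\ minimum) of a mean-zero, finite-variance random walk has expectation of order $n^{1/2}$ (Donsker / reflection principle, or a direct second-moment computation). Now I would like to apply Lemma~\ref{prop-mean-mono} to transfer these lower bounds from $\PP^{0,0}$ to $\PP^{p,q}$. However, Lemma~\ref{prop-mean-mono} as stated concerns $\cB(X(1,n))$ and $\cO(X(1,n))$, i.e.\ the \emph{total} number of burgers and orders, and asserts $\E^{p,q}(\cB(X(1,n))\one_E)\ge\E^{0,0}(\cB(X(1,n))\one_E)$, not individually $\cN_{\hb}$ or $\cN_{\ho}$. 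To bridge this, take $E$ to be the whole sample space and use the symmetry between hamburgers and cheeseburgers: under either law, $\E(\cN_{\hb}(X(1,n)))=\E(\cN_{\cb}(X(1,n)))$ so $\E(\cB(X(1,n)))=2\E(\cN_{\hb}(X(1,n)))+\E(\cN_{\db|\eb}(X(1,n)))=2\E(\cN_{\hb}(X(1,n)))$ in the $\PP^{p,q}$ case since $p_\eb=0$, up to the contribution of $\db$'s — more precisely $\cB(X(1,n))=\cN_{\hb|\cb}(X(1,n))+\cN_{\db}(X(1,n))$. By~\eqref{eqn-SD-mean-upper} of Lemma~\ref{prop-length-mean-upper}, $\E(\cN_{\db|\so}(X(1,n)))=o(n^{1/2})$, so $\E(\cB(X(1,n)))=2\E(\cN_{\hb}(X(1,n)))+o(n^{1/2})$ under $\PP^{p,q}$, while under $\PP^{0,0}$ we have exactly $\E^{0,0}(\cB(X(1,n)))=2\E^{0,0}(\cN_{\hb}(X(1,n)))\asymp n^{1/2}$. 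Combining with Lemma~\ref{prop-mean-mono}, $2\E^{p,q}(\cN_{\hb}(X(1,n)))+o(n^{1/2})=\E^{p,q}(\cB(X(1,n)))\ge\E^{0,0}(\cB(X(1,n)))\succeq n^{1/2}$, which gives $\E^{p,q}(\cN_{\hb}(X(1,n)))\succeq n^{1/2}$. The same argument with $\cO$ in place of $\cB$ and~\eqref{eqn-mean-mono-O}, using $\cO(X(1,n))=\cN_{\ho|\co}(X(1,n))+\cN_{\so}(X(1,n))$ and again $\E(\cN_{\so}(X(1,n)))=o(n^{1/2})$ together with the $\ho\leftrightarrow\co$ symmetry, gives $\E^{p,q}(\cN_{\ho}(X(1,n)))\succeq n^{1/2}$.

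I expect the main obstacle to be purely bookkeeping: making sure the identities $\cN_{\hb}=\cN_{\cb}$ and $\cN_{\ho}=\cN_{\co}$ in expectation really hold, which they do because the law~\eqref{eqn-theta-prob} is invariant under the involution $\hb\leftrightarrow\cb$, $\ho\leftrightarrow\co$ (and $\db,\eb,\fo,\so$ fixed) and reduction commutes with this involution (cf.\ Remark~\ref{remark-duality}); and verifying the clean simple-random-walk computation $\E^{0,0}(\cN_{\hb}(X(1,n)))\asymp n^{1/2}$ and $\E^{0,0}(\cN_{\ho}(X(1,n)))\asymp n^{1/2}$, where the only subtlety is the standard one that the overshoot and undershoot of a recurrent planar simple random walk projected to one coordinate have expectation of order $n^{1/2}$ — this is a one-dimensional statement about the coordinate walk $i\mapsto\dipt(X(1,i))$ and follows from the reflection principle (the maximum of the walk has the law of $|\,\text{walk at time }n|$ up to parity) together with $\E|S_n|\asymp n^{1/2}$.
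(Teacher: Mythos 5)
Your proof is correct and follows essentially the same strategy as the paper's: the upper bounds come from Lemma~\ref{prop-length-mean-upper}, the lower bounds come from the comparison with $\PP^{0,0}$ via Lemma~\ref{prop-mean-mono}, the identification under $\PP^{0,0}$ of $\cN_{\hb}$ and $\cN_{\ho}$ with overshoots/undershoots of the coordinate random walk, and the hamburger/cheeseburger symmetry plus $\E(\cN_{\db|\so}(X(1,n)))=o(n^{1/2})$ to recover the individual counts from the total burger/order counts. (The paper parameterizes the walk via the backward word $X(-n,-1)$, which has the same law by translation invariance, but the argument is the same.)
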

\begin{proof}
The upper bounds for both expectations in~\eqref{eqn-H-mean} follow from Lemma~\ref{prop-length-mean-upper}, so we only need to prove the lower bounds.

Recall that $\PP^{0,0}$ denotes the law of $X$ with $p = q = 0$ and $\E^{0,0}$ is the corresponding expectation.
By Lemma~\ref{prop-mean-mono},
\eqb \label{eqn-burger-mean-compare}
\begin{aligned}
\E \left(\cN_{\hb|\cb|\db}(X(1,n)) \right) &\geq \E^{0,0}\left(\cN_{\hb|\cb}(X(1,n))\right)\\
\E \left(\cN_{\ho|\co|\so}(X(1,n)) \right) &\geq \E^{0,0}\left(\cN_{\ho|\co}(X(1,n))\right)\,.
\end{aligned}
\eqe
If all symbols in $X_{-n}\cdots X_{-1}$ are identified, then
\begin{align*}
 \cN_{\hb}(X(-n,-1)) &= \max_{1\leq i\leq n} \dpt(X(-i,-1)) \\
 \cN_{\ho}(X(-n,-1)) &= \max_{1\leq i\leq n+1} -\dpt(X(-n,-i))\,.
\end{align*}
Under $\PP^{0,0}$, the maps $i\mapsto\dd(X(-i,-1))$ and $i\mapsto\dd(X(-n,-i))$ are two-dimensional simple random walks,
so we deduce (using
e.g., Donsker's invariance principle and Fatou's lemma together with the fact that Brownian motion has a well-defined running supremum process which is positive at any given time)
\begin{equation} \label{E00>=n^1/2}
\begin{aligned}
 \E^{0,0}(\cN_{\hb}(X(-n,-1))) &\succeq n^{1/2} \\
 \E^{0,0}(\cN_{\ho}(X(-n,-1))) &\succeq n^{1/2}\,.
\end{aligned}
\end{equation}
By symmetry
 $\E\left(\cN_{\hb}(X(1,n))\right) = \E\left( \cN_{\cb}(X(1,n))\right)$ and
 $\E\left(\cN_{\ho}(X(1,n))\right) = \E\left( \cN_{\co}(X(1,n))\right)$,
 and by \eqref{eqn-SD-mean-upper} of Lemma~\ref{prop-length-mean-upper}
$\E\left(\cN_{\db|\so}(X(1,n))\right) = o(n^{1/2})$,
which combined with \eqref{eqn-burger-mean-compare} and \eqref{E00>=n^1/2}
gives the lower bounds
 $\E \left(\cN_{\hb}(X(1,n)) \right) \succeq n^{1/2}$ and
 $\E \left(\cN_{\ho}(X(1,n)) \right) \succeq n^{1/2}$.
\end{proof}

\subsection{Tail bound for the length of the reduced word}
\label{sec-word-length}

In this subsection we prove the following analogue of~\cite[Lem.~3.13]{shef-burger}, which will be used to prove tightness of the sequence of paths $Z^n$ defined in~\eqref{eqn-Z^n-def} in the proof of Theorem~\ref{thm-variable-SD}.

\begin{prop} \label{prop-length-sup}
There are constants $a_0,a_1> 0$ such that for each $n\in\N$ and $r > 0$,
\eqb  \label{eqn-length-max-all}
\PP\!\left( \max_{\substack{i,j\in [1,n]_\Z\\1\leq i\leq j\leq n}} |X(i,j)| > r n^{1/2} \right) \leq a_0 e^{-a_1 r}
 \,.
\eqe
\end{prop}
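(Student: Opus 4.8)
The plan is to prove \eqref{eqn-length-max-all} in two stages: first reduce the double maximum over sub-intervals to an exponential tail bound for $|X(1,m)|$ with \emph{fixed} endpoints, and then establish the latter by comparing $|X(1,m)|$ to running extrema of a simple random walk plus a small, controllable error.

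\textbf{Step 1 (dyadic reduction).} Using the associativity of reduction, $\cR(xy)=\cR(\cR(x)\cR(y))$ (Lemma~\ref{prop-associative}), I would first check that for $1\le a\le b\le c$ one has $|X(a,c)|\le |X(a,b)|+|X(b+1,c)|$: when the two reduced words $X(a,b)$ and $X(b+1,c)$ are concatenated and reduced, every symbol that gets \emph{removed} is an order of $X(b+1,c)$ cancelled against an earlier burger of $X(a,b)$, and distinct removed orders use distinct burgers. Consequently, covering an arbitrary $[i,j]\subseteq[1,n]$ by at most two dyadic sub-blocks at each scale $2^\ell$, $0\le\ell\le\lceil\log_2 n\rceil$, gives $|X(i,j)|\le\sum_\ell 2\max_D|X(D)|$, the inner maximum over dyadic blocks $D$ of length $2^\ell$ inside $[1,n]$. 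A union bound over the $O(n/2^\ell)$ blocks at each scale, with a scale-dependent threshold $\rho_\ell\asymp\max\!\big(r,\log(n/2^\ell)\big)$ chosen so that $\sum_\ell 2^{\ell/2}\rho_\ell\preceq(r+1)n^{1/2}$ while $\sum_\ell(n/2^\ell)e^{-c\rho_\ell}\preceq e^{-c'r}$, then reduces \eqref{eqn-length-max-all} to the single-block estimate: there exist $b_0,b_1>0$ with $\PP\big(|X(1,m)|>\rho\,m^{1/2}\big)\le b_0 e^{-b_1\rho}$ for all $m\in\N$, $\rho>0$. (For bounded $\rho$ the claimed bound is vacuous, so only large $\rho$ matters.)

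\textbf{Step 2 (structure of the reduced word).} Next I would write out the possible shapes of a reduced word in $\Theta_0$ and track how $\cN_\hb,\cN_\ho,\cN_\cb,\cN_\co$ evolve as symbols are appended; a stale order in $X'(1,\cdot)$ is never subsequently consumed, which yields the exact identity
\eqbn
|X'(1,m)|=\cC\big(X(1,m)\big)-2\min_{0\le i\le m}\dipt(i)-2\min_{0\le i\le m}\didt(i),
\eqen
where $k\mapsto\cC(X(1,k))$ is a simple random walk, $\dipt(k)+\didt(k)=\cC(X(1,k))$, and $\dipt(k)-\didt(k)=\cQ(X'(1,k))$. Combining this with $|X(1,m)|\le|X'(1,m)|+2\cN_\db(X(1,m))$ (see~\eqref{eqn-reduced-compare}) gives
\eqbn
|X(1,m)|\preceq\max_{0\le i\le m}\big|\cC(X(1,i))\big|+\max_{0\le i\le m}\big|\cQ(X'(1,i))\big|+\cN_{\db|\so}(X(1,m)).
\eqen

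\textbf{Step 3 (the three terms, and the main obstacle).} The first term has a Gaussian tail on scale $m^{1/2}$ by the reflection principle for simple random walk, and the third is handled by Lemma~\ref{prop-few-SD}: on the complement of $F_m(\ep,\rho m^{1/2}/\ep)$ one has $\cN_{\db|\so}(X(1,m))\le\ep\cN_\ho(X(1,m))+\rho m^{1/2}$, which — after substituting $|X(1,m)|\le|X'(1,m)|+2\cN_{\db|\so}(X(1,m))$ and taking $\ep$ small — gets absorbed into the first two terms plus $O(\rho m^{1/2})$, while $\PP(F_m(\ep,\rho m^{1/2}/\ep))\le c_0 e^{-c_1\rho}$. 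The decisive remaining point is an exponential tail on scale $m^{1/2}$ for $\max_{0\le i\le m}|\cQ(X'(1,i))|$. Here the increments $\cQ(X'_i)$ are mean zero (by hamburger/cheeseburger symmetry) and bounded but \emph{not} independent — precisely the obstruction that invalidates the Azuma argument of~\cite[Lem.~3.13]{shef-burger}. Instead, the identification of a symbol in $\{\db,\so\}$ is governed by the freshest unconsumed non-duplicate burger seen from its position, i.e.\ by the renewal structure of Section~\ref{sec-J-basic}: the backward renewal times $\J_m$ cut $X$ into i.i.d.\ pieces of finite expected reduced length $\CHI=\E|X(-\J,-1)|<\infty$ (Proposition~\ref{prop-J-finite}) over which the signed count has zero mean (Lemma~\ref{prop-J-count-mean}). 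Using this decomposition together with the moment estimates of Section~\ref{sec-moment-bound} (in particular $\E|X(1,m)|\asymp m^{1/2}$, Lemmas~\ref{prop-length-mean-upper} and~\ref{prop-H-mean}), the variance asymptotics $\op{Var}\,\cQ(X'(1,m))\asymp m$ (Proposition~\ref{prop-var-limit}), and the bound on $\cN_{\db|\so}$ from Lemma~\ref{prop-few-SD}, I would represent $\cQ(X'(1,\cdot))$ (up to negligible errors) as a mean-zero random walk with i.i.d.\ increments indexed by renewals and apply a maximal inequality, thereby boosting the merely second-moment control to the required exponential bound for the running maximum. Feeding the bounds for all three terms back into Step~1 gives the single-block estimate and hence \eqref{eqn-length-max-all}. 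The hard part is exactly this last step: without a Lipschitz property one cannot concentrate $\cQ(X'(1,m))$ directly, and the polynomial tail coming from the variance must be upgraded to an exponential one by exploiting the weak dependence encoded in the finiteness of $\CHI$ and the renewal/block decomposition.
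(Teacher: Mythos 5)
Your proposal correctly identifies both that the Azuma/Lipschitz argument of~\cite[Lem.~3.13]{shef-burger} fails here and that a renewal structure must replace it; these are indeed the two central ideas. The decomposition you set up, however, differs from the paper's, and Step~3 contains a gap that does not close as written.

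You reduce to controlling $\max_i|\cC(X(1,i))|$, $\max_i|\cQ(X'(1,i))|$, and $\cN_{\db|\so}(X(1,m))$, and propose to handle the hard term $\cQ$ via the backward renewal times $\J_m$ of Section~\ref{sec-J-basic}, invoking $\CHI=\E|X(-\J,-1)|<\infty$ (Proposition~\ref{prop-J-finite}) and the variance asymptotics of Proposition~\ref{prop-var-limit}, then ``apply a maximal inequality, thereby boosting the merely second-moment control to the required exponential bound.'' This last step is not a valid inference. Maximal inequalities (Doob, Kolmogorov, L\'evy) transfer a tail bound for the endpoint to the running maximum; they cannot upgrade $L^2$ control to an exponential tail. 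With i.i.d.\ block increments known only to have a finite mean, the best you get for the sum is a polynomial (Markov/Chebyshev-type) tail, which is strictly weaker than~\eqref{eqn-length-max-all}. The paper avoids this by using an entirely different renewal time: the hamburger renewal time $\Jh$ of Lemma~\ref{prop-J^H-tail}, together with the quantity $H_m=\cN_\hb(X(-\Jh_m,-\Jh_{m-1}-1))$. The reason this works, and the $\J$-based blocks do not, is Lemma~\ref{prop-D-count-finite}: it furnishes an \emph{exponential} tail for $\cN_{\db|\so}(X(-\Jh+1,-1))$, hence for $H_m$, after which Chernoff's bound is available. Nothing in the paper (nor in your outline) shows that $|X(-\J,-1)|$ has exponential moments, and Lemma~\ref{prop-J-moment} only yields moments of $\J$ of order below $1/2$; so a Chernoff-style argument over $\J$-blocks is not justified. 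Relatedly, the paper tracks $\cN_\hb$ rather than $\cQ$ because $j\mapsto\cN_\hb(X(-j,-1))$ is monotone increasing, which collapses the running maximum over $j\in[1,n]_\Z$ to a single renewal count; $\cQ(X'(1,\cdot))$ has no such monotonicity, so your formulation forgoes this simplification.

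Two smaller remarks. Your identity $|X'(1,m)|=\cC(X(1,m))-2\min_{i\le m}\dipt(i)-2\min_{i\le m}\didt(i)$ is correct, and your dyadic covering in Step~1 would work, but it is more elaborate than necessary: the paper uses the one-line subadditivity $|X(i,j)|\leq|X(i,n)|+|X(j+1,n)|$ (each unmatched burger in $X(i,j)$ either survives into $X(i,n)$ or is consumed by a distinct order counted in $X(j+1,n)$), which reduces the double maximum directly to $\max_j|X(-j,-1)|$ after translation. To complete your proof along the lines of the paper you would need to (i) replace the $\J$-renewals by the $\Jh$-renewals, (ii) prove the exponential tail for $H_m$ (i.e.\ Lemma~\ref{prop-D-count-finite}, which in turn needs both Lemma~\ref{prop-few-SD} and the tail estimate $\PP(\Jh>n)\asymp n^{-1/2}$ of Lemma~\ref{prop-J^H-tail}), and (iii) assemble $|X(-j,-1)|=2\cN_\db+2\cN_{\hb|\cb}-\cC$ rather than going through $\cQ$.
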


To prove Proposition~\ref{prop-length-sup}, we will study the times at which unmatched hamburgers are added when we read the word backwards. 
The increments of $X$ between these times are i.i.d., and the number of $\db$'s which are identified at each of these times (some of which also correspond to unmatched hamburgers in our word) can be bounded using Lemma~\ref{prop-few-SD} (c.f.\ Lemma~\ref{prop-D-count-finite}). Using a lower bound for the probability that a reduced word of length $n$ contains no hamburgers (Lemma~\ref{prop-J^H-tail}) and Chernoff's inequality, we get an upper tail bound for the number of hamburgers in $X(-n,-1)$.
By symmetry, we also have an analogous bound for the number of cheeseburgers in $X(-n,-1)$. 
Since the difference $\cC(X(-n,-1))$ between the number of burgers and the number of orders in $X(-n,-1)$ evolves as a simple random walk on $\Z$ and by another application of Lemma~\ref{prop-few-SD}, this will be enough to prove Proposition~\ref{prop-length-sup}.

\begin{lem} \label{prop-J^H-tail}
Let $\Jh$ be the smallest $j\in\N$ for which $X(-j,-1)$ contains a hamburger. Then
\eqb \label{eqn-J^H-tail}
\PP\!\left(\Jh > n \right) \asymp   n^{-1/2}
\eqe
with the implicit constant depending only on $p$.
\end{lem}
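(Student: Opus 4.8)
The plan is to express $\{\Jh>n\}$ as a persistence event for a mean-zero random walk associated with the backward renewal times $\J_m$ of \eqref{def-J_m}, and then to apply the standard $n^{-1/2}$ estimate for such events.

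First I would record two structural facts. The map $j\mapsto\cN_\hb(X(-j,-1))$ is non-decreasing: prepending one letter to the left of a reduced word can never destroy a hamburger (an order or a $\so$ remains unconsumed on the left; a $\cb$ interacts with neither a hamburger nor anything that can become one; and an $\hb$ can only create hamburgers, possibly by converting some leading $\db$'s). Second — viewing the hamburgers as a stack, where $\hb$'s and $\db$'s-identified-as-$\hb$ push, $\ho$'s pop, and nothing else matters — one obtains the identity $\cN_\hb(X(-j,-1))=\max_{0\le i\le j}\dpt(X(-i,-1))$. Hence $\{\Jh>n\}=\{\cN_\hb(X(-n,-1))=0\}=\{\dpt(X(-i,-1))\le 0\text{ for all }i\in[1,n]\}$.

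Next I would decompose at the renewal times. By the backward analogue of Lemma~\ref{prop-J-basic}, the blocks $X(-\J_m,-\J_{m-1}-1)$ are i.i.d.\ and each reduces to a word of the form $\co^{a}\hb^{b}$ or $\ho^{a}\cb^{b}$, containing no $\db$ or $\so$; reducing their concatenation and keeping track of the reading order, the statistic $S_m:=\dpt(X(-\J_m,-1))=\sum_{k=1}^m\dpt\big(X(-\J_k,-\J_{k-1}-1)\big)$ is an honest random walk, and its running maximum up to $m^*(n):=\#\{m\colon\J_m\le n\}$ equals $\cN_\hb(X(-n,-1))$. Its increments have mean zero: by the $\hb\leftrightarrow\cb$ symmetry together with the block structure, $\E\big(\dpt(X(-\J,-1))\big)=\tfrac12\E\big(\cC(X(-\J,-1))\big)$, which vanishes by Lemma~\ref{prop-J-count-mean}; and by Proposition~\ref{prop-J-finite} the blocks have finite expected reduced length $\CHI<\infty$. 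Thus $\{\Jh>n\}=\{S_m\le 0\text{ for all }0\le m\le m^*(n)\}$, and it remains to show this probability is $\asymp n^{-1/2}$. For the upper bound, $m^*(n)$ is at least a fixed positive multiple of $n/\log n$ with overwhelming probability (renewal estimates together with $\PP(\J>n)=O(n^{-\zeta})$ for $\zeta<1/2$ from Lemma~\ref{prop-J-moment}), so the event forces a mean-zero random walk to stay $\le 0$ for that many steps, giving probability $\preceq(n/\log n)^{-1/2}$; for the lower bound one asks $S$ to stay $\le 0$ while the blocks stay short, so that $m^*(n)$ is comparable to $n$. Lemma~\ref{prop-few-SD} is used throughout to neglect the $\db$'s and $\so$'s among the letters $X_{-n}\cdots X_{-\J_{m^*(n)}-1}$, and Proposition~\ref{prop-var-limit} (via the comparison of $X$ with its identification) confirms that $i\mapsto\dpt(X(-i,-1))$ has fluctuations of the right order.

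The main obstacle is upgrading the estimate from $(n/\log n)^{-1/2}$ to $n^{-1/2}$ (and, simultaneously, getting a matching lower bound of the same order): the renewal clock $m\mapsto\J_m$ and the walk $S$ are strongly correlated — short blocks tend to carry small increments, and the event $\{S_m\le 0\ \forall m\}$ biases toward exactly those blocks — so the naive substitution $m^*(n)\approx cn$ is not legitimate, and one must instead run a persistence/first-passage estimate directly for the time-indexed process $i\mapsto\dpt(X(-i,-1))$, controlling its joint law with the renewal structure. This is the delicate part of the argument, and it rests essentially on the finiteness $\CHI<\infty$ from Section~\ref{sec-J-basic} together with the quantitative bound of Lemma~\ref{prop-few-SD}.
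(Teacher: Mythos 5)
Your decomposition $\{\Jh>n\}=\{S_m\le 0\text{ for }0\le m\le m^*(n)\}$ with $S_m=\dpt(X(-\J_m,-1))$ is correct (one checks inductively, using that $\cN_\hb(X(-j,-1))$ is constant on $[\J_{m-1},\J_m-1]_\Z$, that $\cN_\hb(X(-\J_m,-1))=\max\{0,S_1,\dots,S_m\}$), and your observation that the increments have mean zero by $\hb\leftrightarrow\cb$ symmetry and Lemma~\ref{prop-J-count-mean} is also correct. But the argument as written does not close, and the reason is exactly the one you flag in your last paragraph. Two specific points. First, the bound ``$m^*(n)\gtrsim n/\log n$ with overwhelming probability'' is not implied by the cited moment bound: $\E(\J^\zeta)<\infty$ for $\zeta<1/2$ gives only $\PP(\J>n)=O(n^{-\zeta})$, which is consistent with a renewal increment whose tail decays like $n^{-1/2}$, in which case $m^*(n)\asymp n^{1/2}$ rather than $n/\log n$. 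That already degrades the upper bound from your argument to $\preceq n^{-1/4}$, which is weaker than what is claimed. Second, even granting a lower bound on $m^*(n)$, the event $\{S_m\le 0\ \forall m\le m^*(n)\}$ biases $m^*(n)$ to be atypically small (negative excursions of $S$ are correlated with long renewal blocks), so one cannot substitute a typical value of $m^*(n)$ into the persistence estimate; you would need a genuine first-passage estimate for the time-indexed process $i\mapsto\dpt(X(-i,-1))$ jointly with the renewal structure, which is precisely the hard part. A further missing ingredient is a second-moment bound on the block increments: Proposition~\ref{prop-J-finite} gives $\E|X(-\J,-1)|<\infty$ but not $\E|X(-\J,-1)|^2<\infty$, and the $M^{-1/2}$ persistence asymptotic for a mean-zero walk requires being in the Gaussian domain of attraction.

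The paper's route is quite different and sidesteps all of this. Setting $E_n=\{X(1,n)\text{ contains no hamburgers}\}$, so $\PP(E_n)=\PP(\Jh>n)$ by translation invariance, one observes that an unmatched $\ho$ is created at step $i$ exactly when $E_{i-1}$ holds and $X_i$ identifies to an unmatched $\ho$; since $\PP(X_i=\ho)=(1-p)/4$ is the worst-case conditional probability of this given $E_{i-1}$, and $X_i\in\{\ho,\so\}$ is necessary, one gets the two-sided bound $\E(\cN_\ho(X(1,n)))\le\sum_{i=0}^{n-1}\PP(E_i)\le\frac{4}{1-p}\,\E(\cN_\ho(X(1,n)))$. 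Combined with $\E(\cN_\ho(X(1,n)))\asymp n^{1/2}$ from Lemma~\ref{prop-H-mean}, this gives $\sum_{i<n}\PP(E_i)\asymp n^{1/2}$; since $\PP(E_n)$ is non-increasing, one then reads off $\PP(E_n)\asymp n^{-1/2}$ by a short Abel-summation trick (for the lower bound, sum over $i\in[n,4C^2n]_\Z$). No renewal analysis, no persistence estimate, no second moments. If you wanted to pursue the renewal approach you would essentially need to reprove a first-passage estimate tailored to this dependent renewal process, whereas the paper deduces the exact exponent from quantities already controlled in Sections~\ref{sec-J-basic}--\ref{sec-moment-bound}.
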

\begin{proof}
For $n\in\N\cup\{0\}$, let $E_n$ be the event that $X(1,n)$ contains no hamburgers (recall that $X(1,0) = \emptyset$). By translation invariance,
\eqb \label{eqn-J^H-event-compare}
\PP\!\left( E_n \right) = \PP\!\left( \Jh > n \right)\,.
\eqe
In particular, $n\mapsto \PP(E_n)$ is non-increasing.

Suppose $i \in [1,n]_\Z$.  If $X_i$ identifies to $\ho$ in $X_1\cdots X_n$ and has no match in $X_1 \dots X_{i-1}$, then $E_{i-1}$ occurs and $X_i\in\{\ho,\so\}$.  On the other hand, if $E_{i-1}$ occurs, then by independence of the symbols of $X$, it holds with conditional probability $\frac{1-p}{4}$ that $X_i = \ho$, in which case $X_i$ does not have a match in $X_1 \cdots X_i$.
Therefore,
\eqbn
 \E\left(\cN_{\ho}(X(1,n)) \right) \leq \sum_{i=0}^{n-1} \PP\!\left( E_i \right)  \leq \frac{4}{1-p} \E\left(\cN_{\ho}(X(1,n)) \right) .
\eqen
By Lemma~\ref{prop-H-mean} we can find a constant $C>1$ such that for each $n\in\N$
\eqb \label{eqn-J^H-sum}
C^{-1} n^{1/2} \leq \sum_{i=1}^{n-1} \PP\!\left( E_i \right) \leq C n^{1/2}.
\eqe
By monotonicity of $\PP(E_n)$, we immediately obtain
\eqbn
n \PP(E_n) \leq   \sum_{i=1}^{n-1} \PP( E_i)  \leq C n^{ 1/2} .
\eqen
Furthermore,
\eqbn
 4 C^2 n  \PP(E_n) \geq   \sum_{i=n}^{\lceil 4 C^2 n \rceil-1} \PP(E_i) \geq  2 C n^{1/2} - C n^{1/2} = C n^{1/2} .
\eqen
Combining these two relations with~\eqref{eqn-J^H-event-compare} yields~\eqref{eqn-J^H-tail}.
\end{proof}

 \begin{lem} \label{prop-D-count-finite}
Let $\Jh$ be the smallest $j \in \N$ for which $X(-j,-1)$ contains a hamburger.
There are constants $a_0 , a_1 > 0$ depending only on $p$ such that for each $m \in\N$, we have
\eqb \label{eqn-D-count-finite}
\PP\!\left( \cN_{\db|\so}\left(X(-\Jh+1,-1)\right) > m \right) \leq a_0 e^{-a_1 m} .
\eqe
\end{lem}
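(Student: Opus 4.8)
The plan is to combine the polynomial tail bound for $\Jh$ from Lemma~\ref{prop-J^H-tail} with the density estimate of Lemma~\ref{prop-few-SD}, after first extracting a structural consequence of the event $\{\Jh = n\}$. Recall that throughout this section $p_\fo = p_\eb = 0$, so the only symbols that occur are $\ho,\co,\hb,\cb,\so,\db$.

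First I would note that a reduced word containing no hamburger cannot acquire one by prepending an order, a $\db$, an $\so$, or a $\cb$; hence on $\{\Jh = n\}$ we have $X_{-\Jh} = \hb$, and by minimality of $\Jh$ the reduced word $X(-\Jh+1,-1) = X(-(\Jh-1),-1)$ contains no hamburger. Writing this reduced word as $O\cdot\cb^c$, where $O$ is the word in $\{\ho,\co,\so,\db\}$ appearing to the left of its cheeseburgers, the requirement $\Jh = n$ says that when $X_{-\Jh} = \hb$ is prepended and $\hb\,O\,\cb^c$ is reduced, at least one hamburger survives. Tracking the number of hamburgers through the left-to-right reduction of $\hb\,O$ — a copy of Sheffield's one-color inventory dynamics started from a single hamburger, in which each $\db$ adds a hamburger, each $\ho$ removes one, and $\co,\so$ are inert while a hamburger is present — one sees that this count equals $1 + \cN_\db(P) - \cN_\ho(P)$ over each prefix $P$ of $O$ until it first hits $0$, after which it stays $0$. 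Survival of a hamburger therefore forces $\cN_\db(P) \ge \cN_\ho(P)$ for every prefix $P$ of $O$, and in particular
\[
\cN_\ho\!\left(X(-(\Jh-1),-1)\right) \le \cN_\db\!\left(X(-(\Jh-1),-1)\right) \le \cN_{\db|\so}\!\left(X(-(\Jh-1),-1)\right)
\]
on the event $\{\Jh = n\}$.

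With this in hand, for $B \ge 1$ I would split
\[
\PP\!\left(\cN_{\db|\so}(X(-\Jh+1,-1)) \ge m\right) \le \PP(\Jh > B) + \sum_{n=1}^{B}\PP\!\left(\Jh = n,\ \cN_{\db|\so}(X(-(n-1),-1)) \ge m\right).
\]
On $\{\Jh = n\}\cap\{\cN_{\db|\so}(X(-(n-1),-1)) \ge m\}$ the previous paragraph gives $\cN_{\db|\so}(X(-(n-1),-1)) \ge \tfrac12\bigl(\cN_\ho(X(-(n-1),-1)) \vee m\bigr)$, so by the translation-invariant form of Lemma~\ref{prop-few-SD} (applied with $\ep = \tfrac12$ and $A = m$) each summand is at most $c_0 e^{-c_1 m}$, whence the sum is at most $B c_0 e^{-c_1 m}$. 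Since $\PP(\Jh > B) \asymp B^{-1/2}$ by Lemma~\ref{prop-J^H-tail}, taking $B = \lceil e^{c_1 m/2}\rceil$ balances the two terms and yields $\PP(\cN_{\db|\so}(X(-\Jh+1,-1)) \ge m) \preceq e^{-c_1 m/4}$, which is \eqref{eqn-D-count-finite} with $a_1 = c_1/4$.

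The only genuinely model-specific step, and the main obstacle, is the structural claim of the second paragraph: a priori the window $X(-(\Jh-1),-1)$ could contain a large reservoir of unmatched hamburger orders, which would render the density bound of Lemma~\ref{prop-few-SD} useless, and the observation that the definition of $\Jh$ forces $\cN_\ho \le \cN_\db$ inside this window is exactly what rescues the argument. One must also keep in mind that $\Jh - 1$ is random and is not a stopping time for the forward reading of the word inside the window, which is why Lemma~\ref{prop-few-SD} is applied separately for each fixed candidate value $n$ of $\Jh$ (paying the factor $B$) rather than at a single random window.
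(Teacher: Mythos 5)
Your proof is correct and follows essentially the same route as the paper: establish $\cN_{\db}(X(-\Jh+1,-1)) \geq \cN_{\ho}(X(-\Jh+1,-1))$ on $\{\Jh<\infty\}$, apply Lemma~\ref{prop-few-SD} to each window $[-n,-1]$ with $n\leq B$ via a union bound, use Lemma~\ref{prop-J^H-tail} to control $\PP(\Jh>B)$, and optimize over $B$. The only cosmetic difference is that the paper applies Lemma~\ref{prop-few-SD} with $\ep=1$ rather than $\ep=\tfrac12$ (since $\cN_{\db|\so}\geq\cN_{\ho}$ and $\cN_{\db|\so}\geq m$ directly give $\cN_{\db|\so}\geq\cN_{\ho}\vee m$), and your more explicit inventory-dynamics justification of the structural inequality, while correct, is more detailed than the paper's terse one-sentence argument.
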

\begin{proof}
We observe that $\cN_{\db}\left(X(-\Jh+1,-1)\right) \geq \cN_{\ho}\left(X(-\Jh+1,-1)\right)$; indeed, otherwise it is not possible for all of the $\ho$'s in $X(-\Jh+1,-1)$ to be fulfilled in $X_{-\Jh} \dots X_{-1}$ while still leaving a leftover $\hb$. Now let $c_0 , c_1 > 0$ be as in Lemma~\ref{prop-few-SD} with $\ep = 1$. By that lemma and a union bound,
\eqbn
\PP\!\left( \cN_{\db|\so}\left(X(-\Jh+1,-1)\right) \geq m , \, \Jh \leq   e^{c_1 m/2} \right) \leq c_0 e^{-c_1 m/2} .
\eqen
On the other hand, by Lemma~\ref{prop-J^H-tail} we have
\eqbn
\PP\! \left( \Jh >  e^{c_1 m/2} \right) \preceq e^{-c_1 m/4} .
\eqen
Combining these estimates yields~\eqref{eqn-D-count-finite}.
\end{proof}

\begin{proof}[Proof of Proposition~\ref{prop-length-sup}]
Let $\Jh_0 = 0$ and for $m\in\N$ inductively let $\Jh_m$ be the smallest $j \geq \Jh_{m-1}$ for which $X(-j,-\Jh_{m-1}-1)$ contains a hamburger. Then $\Jh_1$ is the same as the time $\Jh$ from Lemma~\ref{prop-J^H-tail} and by the strong Markov property the increments $X_{-\Jh_m} \cdots X_{-\Jh_{m-1}-1}$ for $m\in\N$ are i.i.d.  For $m\in\N$, let
\eqbn
H_m \colonequals \cN_{\hb}\left(X(-\Jh_m, -\Jh_{m-1}-1)\right) = 1 + \cN_{\db}\left(X(-\Jh_m +1, -\Jh_{m-1}-1)\right).
\eqen
Since none of the reduced words $X(-\Jh_m,-\Jh_{m-1}-1)$ contain $\ho$'s,
\eqb \label{eqn-H-sum}
\cN_{\hb}\left( X(-\Jh_m, -1) \right) = \sum_{k=1}^m H_k \,.
\eqe

By Lemma~\ref{prop-D-count-finite}, for some positive number $\beta>0$ (depending only on $p$) $\E(e^{\beta H_k})<\infty$,
and since the $H_k$'s are i.i.d., Chernoff's bound
implies that there are positive numbers $\wt c_0, \wt c_1  > 0$ such that for each $m \in \N$,
\eqb \label{eqn-H-sum-tail}
\PP\!\left( \sum_{k=1}^m H_k \geq \wt c_0 m \right) \leq  e^{-\wt c_1 m}.
\eqe

By Lemma~\ref{prop-J^H-tail}, we can find a constant $c>0$ such that for each $n, m \in\N$,
\eqbn
\PP\!\left( \Jh_m - \Jh_{m-1} > n \right) \geq c n^{-1/2}.
\eqen
Since the increments $\Jh_m - \Jh_{m-1}$ are i.i.d., we infer that for each $n, m \in \N$,
\eqb \label{eqn-J-exp-bound}
\PP\!\left( \Jh_m \leq n \right) \leq
\PP\!\left( \Jh_k - \Jh_{k-1} \leq n,\, \forall k \leq m \right) \leq \left(1 - c n^{-1/2} \right)^m
\leq \exp[-c m/n^{1/2}]\,.
\eqe

Recall that  $\cN_{\hb}(X(-j,-1))$ is monotone increasing in $j$.
If $\Jh_{m} \geq n$ and $\cN_{\hb}\left( X(-\Jh_{m}, -1) \right) \leq \wt c_0 m$, then $\cN_{\hb}(X(-j,-1)) \leq \wt c_0 m$ for each $j \in [1,n]_\Z$. By taking $m = \lfloor r n^{1/2} / \wt c_0\rfloor$ and applying~\eqref{eqn-H-sum}, ~\eqref{eqn-H-sum-tail}, and~\eqref{eqn-J-exp-bound}, we find that for each $n\in\N$,
\eqb \label{eqn-burger-tail}
\PP\!\left( \max_{j\in [1,n]_\Z} \cN_{\hb}(X(-j,-1)) >  r n^{1/2} \right) \leq c_0 e^{-c_1 r}
\eqe
for appropriate $c_0, c_1 > 0$ independent of $r$ and $n$.
By symmetry, the analogous estimate holds with $\cb$ in place of $\hb$.

Since $j\mapsto \cC(X(-j,-1))$ is a simple random walk, we have (see e.g.~\cite[Prop.~2.1.2b]{lawler-limic-walks})
\eqb \label{eqn-net-count-tail}
\PP\!\left( \max_{j \in [1,n]_\Z} |\cC(X(-j,-1))| > r n^{1/2} \right) \leq b_0 e^{-b_1 r^2}
\eqe
for universal constants $b_0, b_1 > 0$. By Lemma~\ref{prop-few-SD} (applied with $\ep = \frac12$ and $A = \text{const}\times r n^{1/2}$) and the union bound, except on an event of probability $\leq \exp(-\Theta(r))$,
\eqb \label{eqn-D-tail}
\cN_{\db}\left(X(-j, -1) \right) \leq \frac12 \text{const} \times r n^{1/2}  - \frac12 \cC(X(-j,-1)) + \frac12 \cN_{\hb|\cb|\db}(X(-j,-1))\,,  \quad \forall j \in [1, n]_\Z .
\eqe
Re-arranging gives
\eqb \label{eqn-D-tail'}
\cN_{\db}\left(X(-j, -1) \right) \leq  \text{const}\times r n^{1/2}  -  \cC(X(-j,-1))  +   \cN_{\hb|\cb }(X(-j,-1))\,,    \quad \forall j \in [1, n]_\Z .
\eqe
By writing $|X(-j,-1)|= 2\cN_{\db}(X(-j,-1))+2\cN_{\hb|\cb}(X(-j,-1))-\cC(X(-j,-1))$, using the bound~\eqref{eqn-D-tail'},
and the bounds~\eqref{eqn-burger-tail} and~\eqref{eqn-net-count-tail}, we obtain
\eqb  \label{eqn-length-sup-forward}
\PP\!\left( \max_{j\in [1,n]_\Z} |X(-j,-1)| > r n^{1/2} \right) \leq \text{const} \times e^{-\text{const}\times r}
 \,.
\eqe

We now observe that for $1\leq i\leq j\leq n$, each order and each unidentified $\so$ or $\db$ in $X(i,j)$ also appears in $X(i,n)$; and each $\hb$ or $\cb$ in $X(i,j)$ either appears in $X(i,n)$ or is consumed by a unique order in $X(j+1,n)$.  Thus
\eqb  \label{eqn-reduced-length-compare}
|X(i,j)| \leq |X(i,n)|  +  |X(j+1,n)|\,.
\eqe
This bound~\eqref{eqn-reduced-length-compare} together with \eqref{eqn-length-sup-forward} imply \eqref{eqn-length-max-all}.
\end{proof}

\subsection{Convergence to correlated Brownian motion}
\label{sec-variable-SD-proof}

We are now ready to conclude the proof of Theorem~\ref{thm-variable-SD}. We first establish tightness.

\begin{lem} \label{prop-variable-SD-tight}
Suppose we are in the setting of Theorem~\ref{thm-variable-SD}.
The sequence of laws of the paths $Z^n$ for $n\in\N$ is tight in the topology of uniform convergence on compacts of $\R$.
\end{lem}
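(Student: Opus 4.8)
The plan is to deduce this from the uniform exponential tail bound on reduced‑word lengths in Proposition~\ref{prop-length-sup}, via the standard modulus‑of‑continuity criterion for tightness in $C([-T,T],\R^2)$ (e.g.\ \cite{whitt-limits-book}). Since the topology of local uniform convergence on $\R$ is metrizable so that a family of laws on $C(\R,\R^2)$ is tight iff its restriction to $C([-T,T],\R^2)$ is tight for every $T>0$, and since tightness of the pair $(U^n,V^n)$ follows from tightness of each coordinate separately, it suffices to show: for every $T>0$, $\ep>0$ and $\eta>0$ there is $\delta\in(0,1)$ with $\limsup_{n\to\infty}\PP\big(w_{U^n}(\delta;[0,T])>\ep\big)<\eta$, where $w_f(\delta;I):=\sup\{|f(t)-f(s)|:s,t\in I,\ |t-s|\le\delta\}$, together with the same statement for $V^n$ and on $[-T,0]$. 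The hypothesis that $\{Z^n(0)\}$ is tight is trivial since $Z^n(0)=0$, and by translation invariance of the law of $X$ the intervals $[-T,0]$ and $[0,T]$ are symmetric, so I would treat $[0,T]$.

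The key deterministic input is the bound $|\dipt(j)-\dipt(i)|\le|X(i+1,j)|$ for integers $0\le i\le j$. Indeed, since $\dpt$ is additive under concatenation and invariant under the reduction of words in $\Theta_0$ (each application of a relation in~\eqref{eqn-theta-relations} leaves $\cN_{\hb}-\cN_{\ho}$ unchanged), and since $X'$ has only $\Theta_0$ symbols a.s., one gets $\dipt(j)-\dipt(i)=\dpt\big(X'(i+1,j)\big)$; then $|\dpt(X'(i+1,j))|\le|X'(i+1,j)|\le|X(i+1,j)|$, the last inequality being the left‑hand inequality of~\eqref{eqn-reduced-compare} translated. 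The identical bound holds for $\didt$ (using $|\ddt(w)|\le|w|$). Accounting for the linear interpolation defining $\dipt$ and the floor in $U^n(t)=n^{-1/2}\dipt(nt)$, for $0\le s\le t\le T$ one has
\[
|U^n(t)-U^n(s)|\;\le\;n^{-1/2}\max\big\{|X(a+1,b)|:\lfloor ns\rfloor\le a\le b\le\lceil nt\rceil\big\},
\]
and likewise for $V^n$, because $\dipt(nt)$ and $\dipt(ns)$ both lie between the minimum and maximum of $\dipt$ over the integers in $[\lfloor ns\rfloor,\lceil nt\rceil]$.

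Next I would run a covering argument. Fix $\delta\in(0,1)$ and take $n\ge 2/\delta$, so that $[\lfloor ns\rfloor,\lceil nt\rceil]$ has length $\le 2n\delta$ whenever $|t-s|\le\delta$. Cover $[0,\lceil nT\rceil]$ by the $O(T/\delta)$ intervals $I_k=[k\lfloor n\delta\rfloor,(k+3)\lfloor n\delta\rfloor]$, each of length $\ell\asymp n\delta$, so that every subinterval of length $\le 2n\delta$ is contained in some $I_k$. By translation invariance of the law of $X$ and Proposition~\ref{prop-length-sup} applied on each $I_k$ with $r$ chosen so that $r\ell^{1/2}=\ep n^{1/2}$, i.e.\ $r\asymp\ep\delta^{-1/2}$, together with a union bound over the $O(T/\delta)$ intervals, one obtains
\[
\PP\big(w_{U^n}(\delta;[0,T])>\ep\big)\;\le\;\frac{CT}{\delta}\,\exp\!\big(-c\,\ep\,\delta^{-1/2}\big)
\]
for constants $C,c>0$ depending only on $p,q$, uniformly over $n\ge2/\delta$. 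Since $\delta^{-1}e^{-c\ep\delta^{-1/2}}\to0$ as $\delta\to0$, choosing $\delta$ small enough makes the right‑hand side $<\eta$; the same estimate holds for $V^n$ and on $[-T,0]$, which gives tightness.

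The probabilistic content is entirely carried by Proposition~\ref{prop-length-sup}, so the remaining work is bookkeeping: keeping the floors, the factor coming from linear interpolation, and the scaling $r\asymp\ep\delta^{-1/2}$ under the change $n\mapsto\ell\asymp n\delta$ consistent, plus the already‑recorded comparison $|X'(i+1,j)|\le|X(i+1,j)|$. The only point requiring slight care — and which I expect to be the mild obstacle — is ensuring the covering bound is uniform in $n$ and still tends to $0$ as $\delta\to0$ despite involving $O(T/\delta)$ intervals; this works precisely because the exponential rate $\ep\delta^{-1/2}$ furnished by Proposition~\ref{prop-length-sup} dominates $\log(1/\delta)$.
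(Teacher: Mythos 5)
Your argument is correct and takes essentially the same route as the paper: bound increments of $Z^n$ by reduced-word lengths $|X(i,j)|$, cover $[0,nT]$ by $O(T/\delta)$ overlapping blocks of length $\asymp n\delta$, apply Proposition~\ref{prop-length-sup} with $r\asymp\ep\delta^{-1/2}$ to each block, take a union bound, and use that $\delta^{-1}e^{-c\ep\delta^{-1/2}}\to 0$ as $\delta\to 0$. The paper packages the tightness criterion slightly differently (constructing, for each $n$, a single event of probability $\ge 1-\ep$ on which the modulus of continuity is controlled simultaneously at all dyadic scales $2^{-m}$, then invoking Arzel\`a--Ascoli, and working with $\|Z^n(t)-Z^n(s)\|_1$ rather than coordinate-wise), but the probabilistic content and the role of Proposition~\ref{prop-length-sup} are identical.
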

\begin{proof}
Fix $T \geq 1$ and $\ep > 0$. For $N\in\N$, we cover the time interval $[0,T]$ by $N$ blocks
of the form $[k T/N,(k+2) T/N]$ for $k\in[0,N-1]_\Z$. Note that successive blocks overlap.
Within each block, the path $Z^n$ has (up to rounding error) $2 n T/N$ steps.
Any pair of times $s,t\in[0,T]$ with $|s-t|<T/N$ lie in some common block,
and if $s,t\in\Z/n$ and $s<t$, $\|Z^n(s)-Z^n(t)\|_1$ is bounded by $|X(ns,nt)|$.
Thus Proposition~\ref{prop-length-sup} together with the union bound implies that there exist constants $a_0, a_1 > 0$,
such that for any $n\geq N$ (here we take $n\geq N$ to avoid worrying about rounding error),
\eqbn
\PP\!\left(\sup_{\substack{s,t\in[0,T]\\|s-t|\leq T/N}} \|Z^n(t) - Z^n(s)\|_1 \geq 2^{-m} \right) \leq 2N a_0 \exp\left( - a_1 T^{-1/2} N^{1/2} 2^{-m} \right) \,.
\eqen
By choosing $N=N_{T,\ep,m}$ sufficiently large, depending on $T$, $\ep$, and $m$,
we can make this probability at most $\ep 2^{-m}$ for all $n\geq N_{T,\ep,m}$.
By starting with $\delta_m = T/N_{T,\ep,m}$, and then possibly shrinking $\delta_m$,
we can arrange that
\eqbn
\PP\!\left(\sup_{\substack{s,t\in[0,T]\\|s-t|\leq\delta_m}} \|Z^n(t) - Z^n(s)\|_1 \geq 2^{-m} \right) \leq \ep 2^{-m}
\eqen
for all $n\in\N$, not just $n\geq N_{T,\ep,m}$. By the union bound, we obtain that for each $n\in\N$, it holds except on an event of probability at most $\ep$ that, whenever $m\in\N$ and $s,t \in [0,T]$ with $|t-s| \leq \delta_m$, we have $\|Z^n(t) -Z^n(s)\|_1 < 2^{-m}$. By the Arzel\'a-Ascoli theorem, we obtain tightness of the paths $Z^n|_{[0,\infty)}$ in the topology of uniform convergence on compacts. Tightness of the sequence of the full processes (defined on $\R$) follows from translation invariance.
\end{proof}

\begin{proof}[Proof of Theorem~\ref{thm-variable-SD}]
By Lemma~\ref{prop-variable-SD-tight} and Prokhorov's theorem, for any sequence of $n$'s tending to infinity, there exists a subsequence $n_k$ and a random continuous path $Z = ( U, V) : \R \rta \R^2$ such that, as $k$ tends to infinity, $Z^{n_k}|_{[0,\infty)}$ converges to $Z$ in law in the topology of uniform convergence on compacts.

Next we show that the law of $Z$ is uniquely determined
(independently of the subsequence).
Consider any subsequence $n_k$ for which $Z^{n_k}|_{[0,\infty)}$ converges in law (in the topology of uniform convergence on compacts).
By the Skorokhod representation theorem, we can find a coupling of a sequence of random words $(X^{n_k})$, each with the law of $X$, such that if we define $Z^{n_k}$ with $X^{n_k}$ in place of $X$, then a.s.\ as $k$ tends to infinity, $Z^{n_k}$ converges to $Z$ uniformly on compact subsets of $[0,\infty)$.

Fix real numbers $t_0 < t_2 < \cdots < t_N$. For $j \in [1,N]_\Z$ and $k\in\N$, let
\eqbn
\Upsilon_j^{n_k} \colonequals n_k^{-1/2} \dd \big( X^{n_k}(\lfloor t_{j-1} n_k \rfloor + 1, \lfloor t_j n_k \rfloor ) \big).
\eqen
Observe that $\Upsilon_j^{n_k}$ differs from $Z^{n_k}(t_j) - Z^{n_k}(t_{j-1})$ in either coordinate by at most
\[2 n_k^{-1/2} + n_k^{-1/2} \cN_{\db|\so}(X^{n_k}\left(\lfloor t_{j-1} n_k \rfloor + 1, \lfloor t_j n_k \rfloor \right)).\]
By Lemma~\ref{prop-few-SD} and Proposition~\ref{prop-length-sup}, the latter quantity tends to $0$ in probability as $k$ tends to infinity, and since by the Skorokhod coupling $Z^{n_k}\rta Z$,
in fact $\Upsilon_j^{n_k} \rta  Z(t_j) - Z(t_{j-1})$ a.s.\ for each $j\in [1,N]_\Z$. The random variables $(\Upsilon_j^{n_k} : j\in [1,N]_\Z ) $ are independent, and by translation invariance of the law of $X$ together with our above observation about $\Upsilon_j^{n_k}$, the law of each $\Upsilon_j^{n_k}$ converges as $k$ tends to infinity to the law of $Z(t_j ) - Z(t_{j-1})$. Hence the increments $Z(t_j) - Z(t_{j-1})$ are independent and each has the same law as $Z(t_j - t_{j-1})$,
i.e., $Z$ has independent stationary increments.

By Proposition~\ref{prop-length-sup} and the Vitali convergence theorem, we find that for each $t\geq 0$, the first and second moments of the coordinates of $Z^{n_k}(t)$ converge to the corresponding quantities for $Z(t)$.
Convergence of the expectations implies that $\E( Z(t)) = 0$ for each $t \geq 0$,
and convergence of variances implies with Proposition~\ref{prop-length-sup} implies $Z(t)$ has finite variance.
Thus $Z$ is a continuous L\'evy process with independent stationary mean-zero increments, so $Z$ must be a two-dimensional Brownian motion with $Z(0)= 0$ and some variances, covariance, and zero drift.

Since $\cC(X'(1,n))$ is a simple random walk,
\eqbn
\lim_{k \rta \infty} \op{Var}\left(n_k^{-1/2} \cC(X'(1,n_k)) \right) = 1\,,
\eqen
and by Proposition~\ref{prop-var-limit},
\eqbn
\lim_{k \rta \infty} \op{Var}\left(n_k^{-1/2} \cQ(X'(1,n_k)) \right) = 1 + (p+q) \CHI\,.
\eqen
Furthermore, the conditional law of $X$ given $\cC(X(1,m))$
 for all $m\in\N$ is invariant under the involution operation~\eqref{eqn-involution} and this operation changes the sign of $\cQ(X'(1,m))$, so
\eqbn
\op{Cov}\left( \cC(X'(1,m)), \cQ(X'(1,m)) \right) = 0,\quad \forall m \in \N\,.
\eqen
Equivalently,
\alb
\op{Var}\left( U(t) + V(t) \right) &=1,\\
 \op{Var}\left( U(t) -  V(t) \right) &= 1 + (p+ q) \CHI,\\
 \op{Cov}\left( U(t) + V(t), U(t) - V(t) \right) &= 0.
\ale
Recalling the formula~\eqref{eqn-y-z}, this implies that $Z$ must be as in~\eqref{eqn-bm-cov-variable-SD}.

If the full sequence $\{Z^n\}_{n\in \N}$ failed to converge uniformly on compact subsets of $[0,\infty)$ to the law of $Z$, then there would be a subsequence
bounded away from the law of $Z$.  But by Prokhorov's theorem and the argument above,
there would be a subsubsequence converging in law to $Z$, a contradiction.
Hence the full sequence $\{Z^n\}_{n\in \N}$ converges uniformly on compact subsets of $[0,\infty)$, and thus on compact subsets of $\R$ by translation invariance.

The statement that $\CHI =2$ when $q = 0$ is established in Lemma~\ref{prop-J-count-mean}. We thus obtain the statement of the theorem when $p_\so = p$, $p_\db = q$, and $p_\fo = p_\eb=0$. By Corollary~\ref{prop-identification-law} we obtain the statement of the theorem in general.
\end{proof}

\section{Open problems}
\label{sec-open-problems}

Here we list some open problems related to the model studied in this paper, some of which were mentioned in the text.

\begin{enumerate}
\item Compute the value of the constant $\CHI$ in Theorem~\ref{thm-variable-SD} when $p_\db - p_\eb \neq0$ ($z\neq1$).
Figure~\ref{fig:chi-kappa} shows computer simulations of the value of $\CHI$ and the corresponding value of $\kappa$ in terms of $y$ and $z$.
\begin{figure}[h!]
 \begin{center}
\includegraphics[width=\textwidth/3]{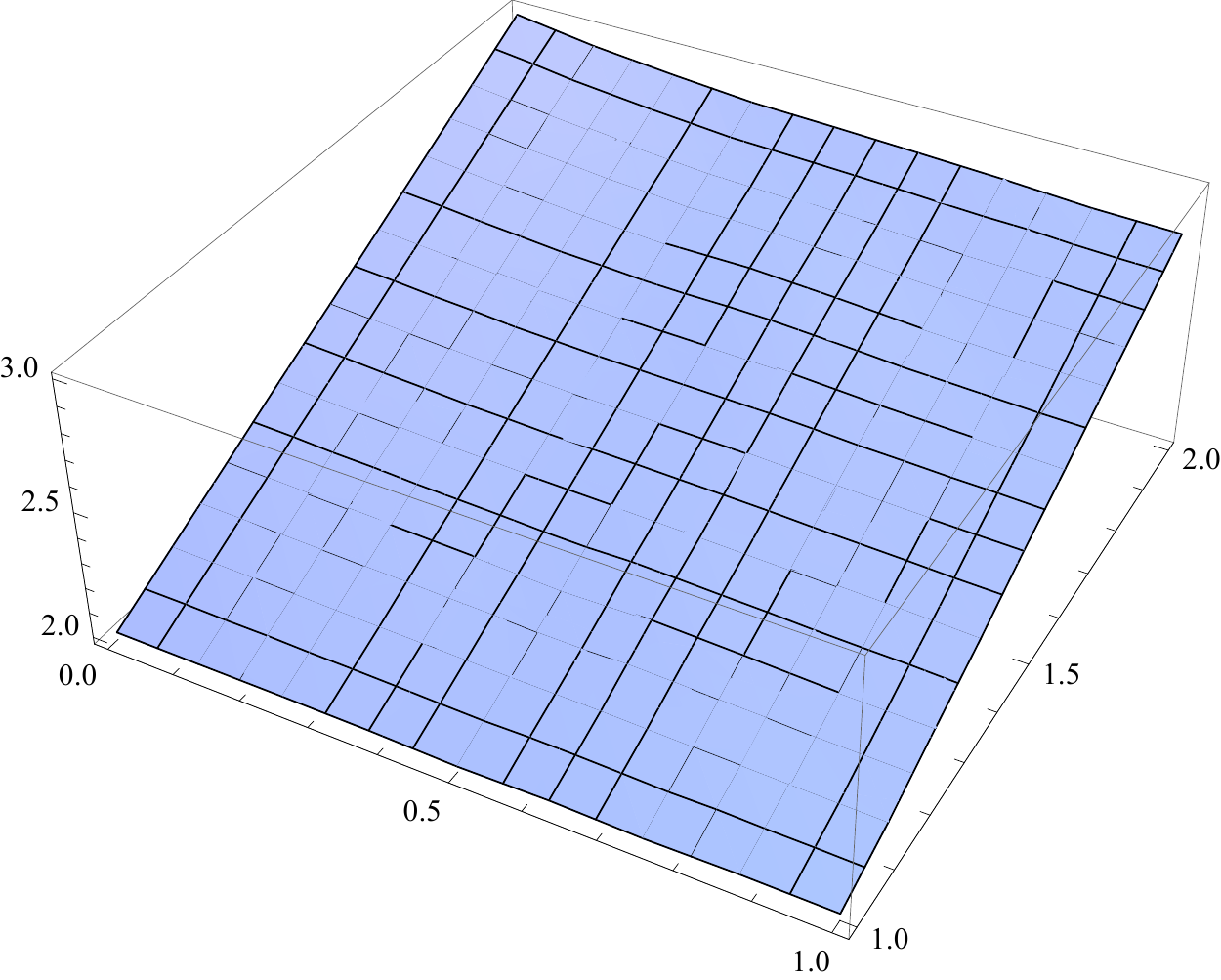}
\includegraphics[width=\textwidth/3]{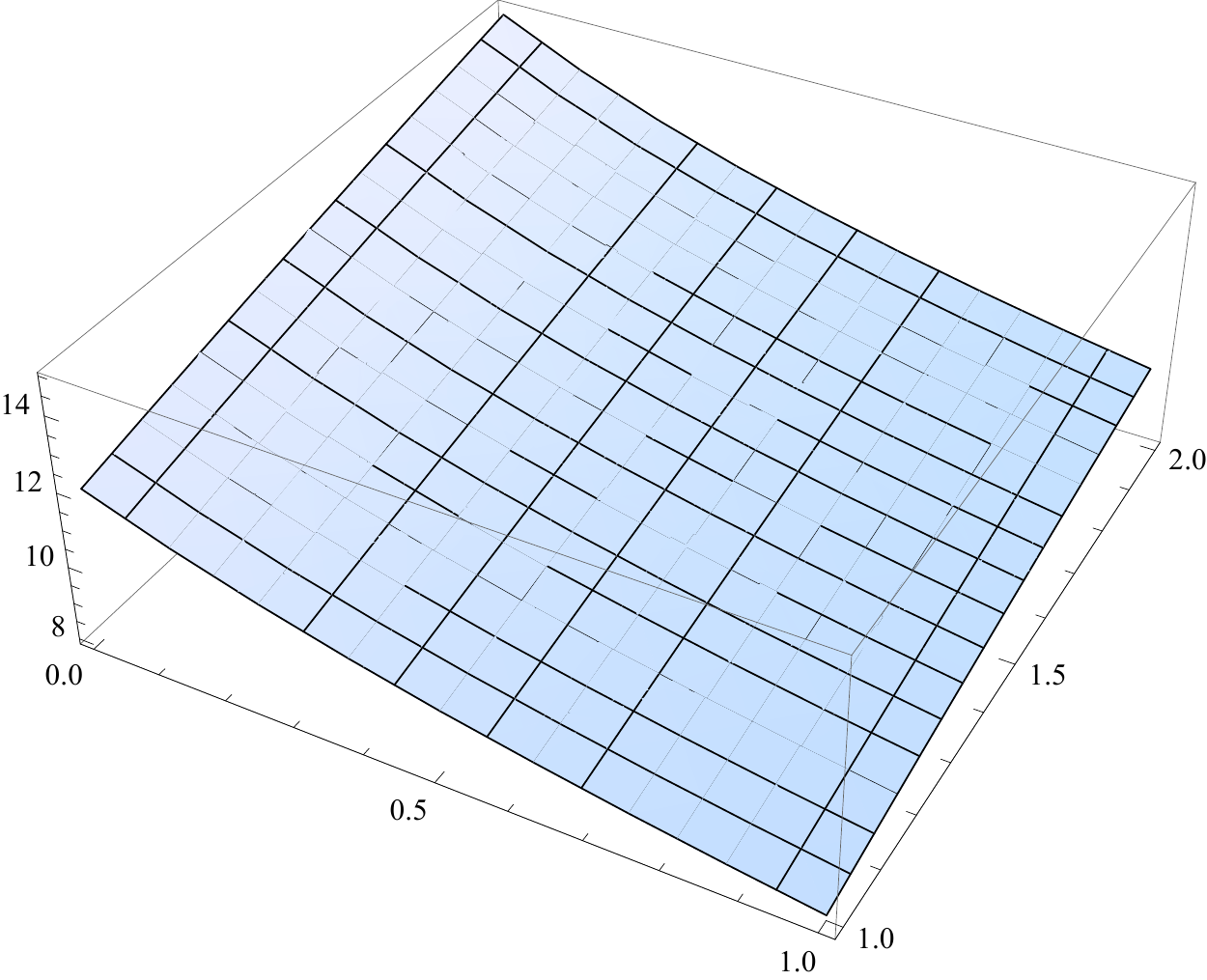}
\end{center}
\caption{Experimental plots for $\CHI$ and $\kappa$ as a function of $(y,z)\in[0,1]\times [1,2]$.}
\label{fig:chi-kappa}
\end{figure}
\item Prove an infinite-volume peanosphere scaling limit result similar to Theorems~\ref{thm-all-S} and~\ref{thm-variable-SD} in the case when $p_\so \neq1$ and $p_\db-p_\eb < 0$ ($y > 0$ and $z \in (0,1)$) or when $p_\fo - p_\so > 0$ and $p_\db-p_\eb \neq0$ ($y >1$ and $z\neq1$).
\item Prove a scaling limit result for the walk $Z^n|_{[0,2]}$ conditioned on the event that the reduced word $X(1,2n) =\emptyset$ (which encodes a finite-volume spanning-tree-decorated random planar map), possibly just in the case when $p_\db = p_\eb = p_\fo = 0$. See~\cite[Thm.~1.8]{gms-burger-finite} for an analogous result in the case when $p_\db = p_\eb = p_\so= 0$ and $p_\fo \in [0,1/2)$.
\item Prove a scaling limit result for the bending loop model of Remark~\ref{remark-bending}. In particular is there an encoding of this model in terms of a model on words analogous to the one studied in this paper?
\item For many statistical mechanics models on random planar maps which converge in the scaling limit to $\SLE_\kappa$-decorated LQG for some value of $\kappa > 0$, it is expected that the same model on a Euclidean lattice converges in the scaling limit to $\SLE_\kappa$.
Recall that for peanosphere scaling limit results, the correlation of the Brownian motion is given by $-\cos(4\pi/\kappa)$.
In light of Lemma~\ref{prop-activity} and Theorems~\ref{thm-all-S} and~\ref{thm-variable-SD}, it is therefore natural to make the following conjecture, which expands the conjecture in~\cite{kassel-wilson-active}.
\begin{conj}
 Let $\Lambda$ be either the triangular, hexagonal, or square lattice and suppose that either $y = 0$ and $z > 0$ or $y \in [0,1]$ and $z\in [1,\infty)$. Let $T$ be a spanning tree on $\Lambda$ sampled according to the law~\eqref{eqn-spanning-tree-law} (defined, e.g., by taking a limit of the law~\eqref{eqn-spanning-tree-law} on large finite sub-graphs of $\Lambda$) and let $\lambda$ be its associated Peano curve. Then $\lambda$ converges in law in the scaling limit to $\SLE_\kappa$, where $\kappa \geq 8$ is chosen so
\eqbn
- \cos\left(\frac{4\pi}{\kappa} \right) =
\begin{dcases}
-\frac{z}{1+z},\quad &y = 0 \\
-\frac{(z-y) \CHI}{(y+1)(z+1) + (z-y)\CHI} \quad &(y,z) \in [0,1] \times  [1,\infty),
\end{dcases}
\eqen
where $\CHI$ (depending on $y$ and $z$) is as in Theorem~\ref{thm-variable-SD}.
\end{conj}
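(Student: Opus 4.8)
The plan is to follow the template that has succeeded for the handful of lattice interfaces known to converge to SLE --- the uniform spanning tree Peano curve \cite{lsw-lerw-ust}, critical percolation, and the FK--Ising model --- adapted so that the \emph{value} of $\kappa$ is supplied by the peanosphere results of this paper. That template has four steps: (i) prove tightness of the law of the Peano curve $\lambda$ in a topology on curves modulo reparametrization; (ii) exhibit a discrete observable built from $\lambda$ that is, up to lower-order corrections, a discrete (sub/super)martingale and whose scaling limit is conformally covariant; (iii) parametrize $\lambda$ by half-plane capacity, show that the observable precomposed with the random Loewner maps is a martingale, read off the driving function, and conclude via L\'evy's characterization that the driving function is $\sqrt\kappa$ times a standard Brownian motion, so the limit is chordal SLE$_\kappa$; and (iv) identify $\kappa$. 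Step (iv) is where the present work enters: Theorems~\ref{thm-all-S} and~\ref{thm-variable-SD} together with the mating-of-trees correspondence of~\cite{wedges} force any conformally invariant scaling limit of $\lambda$ to be encoded by a correlated planar Brownian motion with correlation $-\cos(4\pi/\kappa)$ for the $\kappa$ appearing in the conjecture; since this correlation determines $\kappa \in [8,\infty)$ uniquely, no other value is consistent, and step~(iv) is essentially free once steps (i)--(iii) are in place.

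For step~(i), the natural starting point is the domain Markov property recorded just after~\eqref{eqn-spanning-tree-law}: conditionally on an initial segment of $\lambda$, the remaining decorated map is again an active spanning tree with bending energy on the unexplored region. One would then seek Russo--Seymour--Welsh--type crossing estimates for the law~\eqref{eqn-spanning-tree-law}, from which tightness of $\lambda$ follows by the usual Aizenman--Burchard argument. In the FK regime ($z=1$, $y\ge 1$, corresponding to $q=(y-1)^2\in[0,4)$) one may hope to import the FKG/monotonicity inputs available for the FK model; for $y<1$ or $z\ne 1$ positive association is lost, which is already a substantial obstruction. An alternative is to prove tightness first in the bipolar-orientation representation (for $y=0$) or via the hamburger--cheeseburger encoding itself, where Proposition~\ref{prop-length-sup} already gives strong control on the ``time change'' relating the discrete walk $Z^n$ to the curve, and then transfer.

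Step~(ii) is the real crux. One wants a function on oriented edges of the medial graph (equivalently on the quadrangulation $Q$) defined from $\lambda$ and weighted by suitable powers of $y$ and $z$ tracking active and duplicate (i.e.\ bending) events --- a parafermionic observable generalizing Smirnov's FK and loop $O(n)$ observables and the winding observable used for the UST Peano curve. The exponent in the weight should be pinned by demanding that the local relations of the model (the exchange and fulfillment relations~\eqref{eqn-theta-relations}--\eqref{eqn-theta-relations'}, rephrased in terms of the curve) cancel around each vertex and face, yielding discrete holomorphicity; this is precisely the computation in which $-\cos(4\pi/\kappa)$ should reappear algebraically, consistent with step~(iv). \textbf{This is the main obstacle:} no such observable is currently known for the two-parameter family, and it is entirely possible that for generic $(y,z)$ the model is not integrable in this sense, in which case one would need a route to conformal invariance that does not pass through an exactly discrete-holomorphic quantity --- at present there is no such route for any non-trivial model. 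A more modest intermediate goal would be to treat the special cases $(y,z)=(1,1)$ (UST, already done \cite{lsw-lerw-ust}), $(y,z)=(0,1)$ (uniform bipolar orientations, where the combinatorial structure of \cite{kmsw-bipolar} might yield an observable), and the FK--Ising point $(y,z)=(1+\sqrt2,1)$ --- though the last lies outside the conjectured range $\kappa\ge 8$.

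Finally, granting an observable, step~(iii) is routine: with $\lambda$ capacity-parametrized one shows the observable is a continuous martingale in the exploration filtration, extracts from its expansion near the tip that the Loewner driving function has continuous paths, quadratic variation $\kappa\,dt$, and no drift, and invoke L\'evy's theorem. Universality across the triangular, hexagonal, and square lattices would, as always, be the last and least understood ingredient, since a candidate observable and its (near-)holomorphicity are typically tied to the specific lattice; one would hope that once conformal invariance is established on one lattice, a renormalization or coupling argument extends it, but this remains open even for FK percolation with general $q$.
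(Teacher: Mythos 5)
The statement you are addressing is not a theorem of the paper---it is Conjecture~6.1 in the ``Open problems'' section (Section~\ref{sec-open-problems}), where the authors explicitly write ``Prove this conjecture.'' The paper offers no proof, only the peanosphere scaling limits (Theorems~\ref{thm-all-S} and~\ref{thm-variable-SD}) and the domain Markov property of the law~\eqref{eqn-spanning-tree-law} as evidence for the conjectured $\kappa$, together with pointers to the two special cases already settled in the literature: UST at $(y,z)=(1,1)$ giving $\kappa=8$ via~\cite{lsw-lerw-ust}, and the FK--Ising point $(y,z)=(1+\sqrt2,1)$ via~\cite{kemp-smirnov-fk-bdy}, the latter lying outside the conjectured range $\kappa\ge 8$ (as you correctly note).

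Your proposal is therefore not comparable to a proof in the paper, because there is none; it is a program outline, and you are honest that it is not a proof. The outline itself is reasonable and matches what the authors would presumably consider the ``standard'' route: tightness via domain Markov and crossing estimates, a discrete holomorphic observable, the Loewner/L\'evy characterization, and identification of $\kappa$ by consistency with the peanosphere correlation $-\cos(4\pi/\kappa)$. Step~(iv) is indeed the only place the present paper contributes directly, and you are right that it fixes $\kappa$ uniquely once conformal invariance is granted. Two small remarks on content: first, in the regime $y<1$ or $z\ne1$ the loss of the FKG inequality is not merely an obstruction to tightness but to nearly every soft argument in this subject, and Proposition~\ref{prop-length-sup} controls the hamburger--cheeseburger walk, not the Euclidean geometry of $\lambda$ on a lattice, so it does not transfer to lattice tightness in any direct way; second, the bipolar-orientation encoding of~\cite{kmsw-bipolar} is specific to random planar maps with particular face-degree laws and is not available on a fixed Euclidean lattice, so it does not straightforwardly yield a lattice observable at $y=0$. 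The genuine gap remains exactly where you locate it: for generic $(y,z)$ no discrete holomorphic (or parafermionic) observable is known, and there is currently no route to conformal invariance for any non-integrable planar model that bypasses such an observable. Stating this honestly, as you do, is the correct assessment; but it also means the proposal cannot be regarded as a proof, and the conjecture remains open.
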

Prove this conjecture.  The case when $y = z =1$ corresponds to the uniform spanning tree and has been treated in~\cite{lsw-lerw-ust}.  The case $(y,z)=(1+\sqrt{2},1)$ corresponds to the FK--Ising model and has recently been addressed in~\cite{kemp-smirnov-fk-bdy}.
\end{enumerate}

\appendix

\section{Basic properties of the burger model}
\label{sec-prelim}

Recall that a word in $\mcl W(\Theta)$ is called \textit{reduced\/} if all of its orders (i.e., elements of $\{\co,\ho,\fo,\so\}$), $\db$'s, and $\eb$'s lie to the left of all of its $\hb$'s and $\cb$'s.  

\begin{lem} \label{prop-reduction}
The reduction operation of Definition~\ref{def-reduce} is well-defined, i.e., for every finite word $x \in \mcl W(\Theta)$ there is a unique reduced word $\cR(x)$ which is equivalent to $x$ modulo the relations~\eqref{eqn-theta-relations} and~\eqref{eqn-theta-relations'}.
\old{ Moreover, $\cR(x)$ can be obtained from $x$ inductively by the formula
\eqb \label{eqn-reduction}
\cR(x) = \cR\left( \cR(x_1\dots x_{|x|-1}) x_{|x|} \right) .
\eqe }
\end{lem}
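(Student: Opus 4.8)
\textbf{Proof proposal for Lemma~\ref{prop-reduction}.}

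The plan is to prove the two halves separately: \emph{existence} of a reduced form, and \emph{uniqueness}. For existence, I would define a concrete algorithm (a ``greedy'' left-to-right reduction) and show it terminates in a reduced word. Process the letters of $x$ one at a time, maintaining a current word $w$ which is always reduced; when the next letter $\theta$ of $x$ is appended, apply the relations~\eqref{eqn-theta-relations} and~\eqref{eqn-theta-relations'} to restore the reduced form. Concretely: if $\theta \in \{\hb,\cb\}$, it simply goes to the end of $w$ and $w\theta$ is still reduced. If $\theta$ is an order or a $\db$/$\eb$, look at the rightmost $\hb$ or $\cb$ in $w$ (if any): this burger identifies $\theta$ via~\eqref{eqn-theta-relations'} as an honest element of $\Theta_0$, and then the order-fulfillment relation $\cb\co = \hb\ho = \emptyset$ either cancels that burger (if the identified order matches it) or commutes past it (using $\cb\ho=\ho\cb$, $\hb\co=\co\hb$) to join the block of orders/$\db$'s/$\eb$'s on the left; in the cancellation case $w$ shrinks, and one must then re-examine the (new) rightmost burger, repeating finitely often since $|w|$ strictly decreases. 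If $w$ has no $\hb$ or $\cb$, then $\theta$ stays in $\{\db,\eb,\fo,\so\}$ (unidentified) and is appended to the left block. In every case the result is reduced and $|w|$ never increases, so after $|x|$ steps we obtain a reduced word equivalent to $x$.

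For uniqueness I would invoke the standard diamond lemma / Newman's lemma for rewriting systems: orient each relation in~\eqref{eqn-theta-relations}--\eqref{eqn-theta-relations'} as a length-non-increasing (and, for the commutation relations, a ``move orders leftward'') rewrite rule, check that the rewriting system is terminating (a suitable well-founded measure: e.g. the pair (word length, number of inversions between a burger and a later order/$\db$/$\eb$), ordered lexicographically, strictly decreases under every rule), and then verify local confluence by checking all overlapping pairs of rule applications resolve to a common word. The normal forms of this system are exactly the reduced words, so termination plus local confluence gives a unique normal form for each equivalence class, which is $\cR(x)$. Alternatively, and perhaps more cleanly, one can avoid rewriting-systems machinery entirely by exhibiting an \emph{invariant}: associate to each word its ``stack state'' (the sequence of available burgers read with multiplicity, together with the count of unfulfilled orders of each type and the bookkeeping of unidentified $\db,\eb,\fo,\so$ symbols) and check that this state is unchanged by each of the relations~\eqref{eqn-theta-relations} and~\eqref{eqn-theta-relations'}; since a reduced word is determined by its stack state, any two reduced words equivalent to $x$ coincide.

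I expect the main obstacle to be the uniqueness argument, specifically the interaction between the commutation relations $\cb\ho=\ho\cb$, $\hb\co=\co\hb$ and the substitution relations for $\so$ and $\db$: one must make sure that the \emph{identification} of an $\so$ or $\db$ (as $\ho/\co$ or $\hb/\cb$) is forced — i.e. does not depend on the order in which one applies relations — which is where the ``freshest available burger'' discipline really matters. Carefully setting up the invariant so that it records precisely which burger each non-$\Theta_0$ symbol is identified by (rather than just the net counts) should handle this; this is also exactly the data needed for Definition~\ref{def-identification}, so it dovetails with the rest of the appendix. The termination/length-monotonicity claims are routine given the explicit algorithm above.
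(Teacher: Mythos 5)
Your existence argument (append letters one at a time to a maintained reduced word, resolving each new letter greedily) is the same construction the paper uses; the minor garbling in your description (``in the cancellation case $w$ shrinks, and one must then re-examine the rightmost burger'' — in fact cancellation terminates the step; it is the commutation case that iterates, and it preserves length rather than shrinking it) does not affect soundness.

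For uniqueness you offer two routes. The second one (exhibit an invariant, show it is preserved by each single relation, observe a reduced word is determined by it) is essentially what the paper does — the paper's invariant is the algorithmically defined word $\cR(x)$ itself, which carries exactly the ``stack state'' information you describe. Your first route, via Newman's lemma, is a genuinely different proof, and in fact it is cleaner than you seem to expect: the concern you flag about overlaps between the commutation rules and the identification rules does not materialize, because \emph{there are no critical pairs at all}. Every left-hand side of a rewrite rule begins with $\hb$ or $\cb$ and ends with a letter in $\{\ho,\co,\fo,\so,\db,\eb\}$, so no nonempty suffix of one left-hand side can be a prefix of another; local confluence is therefore automatic, with nothing to check. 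This buys a proof of uniqueness that is completely independent of the details of the greedy algorithm. The one place your sketch needs repair is the termination measure: the pair (length, number of burger-before-later-$\{\text{order},\db,\eb\}$ inversions) does \emph{not} strictly decrease under the rules $\hb\so\to\hb\co$ and $\cb\so\to\cb\ho$, which leave both coordinates unchanged. Inserting a middle coordinate gives a correct lexicographic measure, e.g.\ $(\,|x|,\ \#\{i: x_i\notin\Theta_0\},\ \#\{(i,j): i<j,\ x_i\in\{\hb,\cb\},\ x_j\in\{\ho,\co\}\}\,)$, which one checks strictly decreases under every one of the oriented relations. With that fix, termination plus (vacuous) local confluence gives unique normal forms, the normal forms are precisely the reduced words, and you get both existence and uniqueness from the same argument — arguably a tighter package than the paper's two-stage proof.
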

\begin{proof}
The proof follows the same argument as~\cite[Prop.~2.1]{shef-burger}.
To prove existence, we define $\cR(x)$ by induction on the length $|x|$ of the word, as follows. We set $\cR(x) =\emptyset$ if $|x|  = 0$ (i.e., $x = \emptyset$). Suppose now that $n\in\BB N$, a reduced word $\cR(x)$ equivalent to $x$ has been constructed for all words $\wt x $ with $|\wt x| \leq n-1$, and we are given a word $x=x_1\dots x_n$ with $|x| = n$. 

If we set $\wt x = x_1\dots x_{n-1}$, then the reduced word $\cR(\wt x)$ consists of a (possibly empty) word $ U$ consisting of orders, $\db$'s, and $\eb$'s followed by a (possibly empty) word $u$ consisting of $\hb$'s and $\cb$'s. If either $u = \emptyset$ or $x_{n} \in \{\hb, \cb\}$, then we set $\cR(x) = \cR(\wt x) x_{n}$ to get a reduced word equivalent to $x$. If $u\not=\emptyset$ and $x_{n} \in \{\db , \eb\}$, we replace $x_{n}$ by an $\hb$ or $\cb$ using~\eqref{eqn-theta-relations'}, then append this burger to the end of $\cR(\wt x)$ to get $\cR(x)$. If $u\not=\emptyset$ and $x_{n} = \ho$, then using~\eqref{eqn-theta-relations}, we either match $x_{n}$ to the rightmost $\hb$ in $u$ or (if $u$ has no $\hb$'s) we move $x_{n}$ to the position between $U$ and $u$ to form a reduced word $\cR(x)$ equivalent to $x$. We define $\cR(x)$ similarly if $x_{n} = \co$. Finally, if $u\not=\emptyset$ and $x_{n} \in \{\fo, \so\}$, we replace $x_{n}$ by an $\ho$ or $\co$ (depending on the rightmost burger in $u$) using~\eqref{eqn-theta-relations} then proceed as in the case $x_{n} \in \{\ho,\co\}$ to once again get a reduced word equivalent to $x$. 

To prove uniqueness, we observe that $\cR(x)$, as defined above, equals $x$ if $x$ is already reduced.
The relations~\eqref{eqn-theta-relations} and~\eqref{eqn-theta-relations'} list several pairs $(w, w')$ of two-letter or zero-letter words which are defined to be equivalent.
If $(w,w')$ is one of these pairs of words (e.g., $(w,w') = (\hb \so , \hb \co)$) then the above construction shows that replacing an instance of the word $w$ which appears anywhere in the word $x$ with the word $w'$ in the same position will have no effect on the reduced word $\cR(x)$. 
If $x$ and $x'$ are two words which are equivalent modulo the relations~\eqref{eqn-theta-relations} and~\eqref{eqn-theta-relations'}, then $x'$ can be obtained from $x $ by performing finitely many replacement operations of the above form. 
Consequently, $\cR(x) = \cR(x')$ whenever $x$ and $x'$ are equivalent. In particular, if $x$ and $x'$ are both reduced, then $x = x'$.  
\end{proof}

We make frequent use of the following consequence of uniqueness:
\begin{lem} \label{prop-associative} 
For any finite words $x , y \in \mcl W(\Theta)$, 
\eqb \label{eqn-reduce-associative}
\cR( \cR(x) \cR(y)) = \cR(xy) .
\eqe
\end{lem}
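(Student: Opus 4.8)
The statement to prove is Lemma~\ref{prop-associative}: $\cR(\cR(x)\cR(y)) = \cR(xy)$ for all finite words $x,y \in \mcl W(\Theta)$.

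\textbf{Approach.} The plan is to deduce this directly from the uniqueness half of Lemma~\ref{prop-reduction}, which says that the reduced form of a word is the unique reduced word equivalent to it under the relations~\eqref{eqn-theta-relations} and~\eqref{eqn-theta-relations'}. The key observation is that ``equivalence modulo the relations'' is a genuine congruence on $\mcl W(\Theta)$: it is reflexive, symmetric, transitive, and compatible with concatenation on both sides (because the relations are applied to contiguous subwords, so replacing a subword of $x$ by an equivalent subword produces a word equivalent to $x$, and likewise inside a larger word $xy$). Writing $x \sim x'$ for this equivalence, the point is that $x \sim \cR(x)$ and $y \sim \cR(y)$ by the existence part of Lemma~\ref{prop-reduction}, hence $xy \sim \cR(x)\cR(y)$ by compatibility with concatenation. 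Then $\cR(xy)$ and $\cR(\cR(x)\cR(y))$ are both reduced words equivalent to $xy$ (the former directly, the latter since $\cR(x)\cR(y) \sim \cR(\cR(x)\cR(y))$ and $xy \sim \cR(x)\cR(y)$, so by transitivity $xy \sim \cR(\cR(x)\cR(y))$). By the uniqueness statement of Lemma~\ref{prop-reduction}, two reduced words equivalent to the same word must be equal, giving $\cR(xy) = \cR(\cR(x)\cR(y))$.

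\textbf{Key steps in order.} First, record that $\sim$ (equivalence modulo the defining relations) is an equivalence relation compatible with concatenation --- this is essentially by definition of the semigroup presentation, since applying a relation means rewriting a two-letter or zero-letter factor in place, and such a rewrite inside $x$ is also a legal rewrite inside $xy$ or $yx$. Second, invoke Lemma~\ref{prop-reduction}: for every finite word $w$, the word $\cR(w)$ is reduced and satisfies $w \sim \cR(w)$. Third, apply compatibility: from $x \sim \cR(x)$ and $y \sim \cR(y)$ conclude $xy \sim \cR(x)\cR(y)$. Fourth, combine with $\cR(x)\cR(y) \sim \cR(\cR(x)\cR(y))$ and transitivity to get $xy \sim \cR(\cR(x)\cR(y))$; also $xy \sim \cR(xy)$. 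Fifth, note $\cR(xy)$ and $\cR(\cR(x)\cR(y))$ are both reduced and both $\sim xy$, so by uniqueness they coincide.

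\textbf{Main obstacle.} There is no substantive obstacle here; the lemma is a formal consequence of the well-definedness of $\cR$. The only thing to be slightly careful about is making the congruence property explicit --- i.e.\ that a single relation-application inside a subword of a larger word is itself an instance of the relations, so that $w_1 \sim w_2$ implies $u w_1 v \sim u w_2 v$ for all $u,v$. This is immediate from the fact that the presentation in~\eqref{eqn-theta-relations}--\eqref{eqn-theta-relations'} is given by rewriting factors, and is exactly the mechanism already used at the end of the proof of Lemma~\ref{prop-reduction} (``$x'$ can be obtained from $x$ by performing finitely many replacement operations''). So the proof is genuinely short: it is just the statement that in any monoid presented by generators and relations, passing to normal forms is a homomorphism when normal forms are unique.
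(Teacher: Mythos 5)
Your proof is correct and follows the same route as the paper: establish that equivalence modulo the defining relations is a congruence (compatible with concatenation), observe that both $\cR(xy)$ and $\cR(\cR(x)\cR(y))$ are reduced words equivalent to $xy$, and conclude by the uniqueness part of Lemma~\ref{prop-reduction}. The only cosmetic difference is that you spell out the congruence and transitivity steps more explicitly than the paper's terse one-paragraph argument does.
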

\begin{proof}
If $x,y,x',y' \in \mcl W(\Theta)$ are finite words such that $x$ is equivalent to $x'$ and $y$ is equivalent to $y'$ modulo the relations~\eqref{eqn-theta-relations} and~\eqref{eqn-theta-relations'}, then $xy$ is equivalent to $x'y'$ modulo these relations (since one can apply the relations to convert $x$ to $x'$, then apply the relations to convert $y$ to $y'$ separately). Consequently, $ \cR(x) \cR(y) $ and hence also $\cR(\cR(x) \cR(y))$ is equivalent to $xy$ modulo our relations. Since $\cR(\cR(x) \cR(y))$ is a reduced word,~\eqref{eqn-reduce-associative} follows from the uniqueness statement in Lemma~\ref{prop-reduction}. 
\end{proof}

The above lemmas were purely combinatorial, but the following ones also use the stochastic properties of the burger model. 
Fix $\pvec =(p_\fo,p_\so,p_\db,p_\eb) \in
\mcl P$ and let $X$ be a bi-infinite word whose symbols
$\{X_i\}_{i\in\Z}$ are i.i.d.\ with the law~\eqref{eqn-theta-prob}, as
in Section~\ref{sec-burger}.  We prove that a.s.\ each symbol in $X$
has a match and that the bi-infinite identification $X' = \cI(X)$ is
well-defined and contains only symbols in $\Theta_0$, i.e., that the
objects of Definitions~\ref{def-X-identification} and~\ref{def-match}
are well-defined.

\begin{lem} \label{prop-backward-burger} Let $m\in\N$ and let $P_m $
  be the smallest $j\in\N$ for which the net burger count satisfies
  $\cC(X(-j,-1)) = m$. If $X_{-P_m } \in \left\{\hb, \cb\right\}$,
  then for any $n \in [1,P_m]_\Z$, every symbol in $X_{-P_m}\cdots
  X_{-n}$ is identified, and the reduced word
  $X(-P_m,-n)$ contains at least one burger.
\end{lem}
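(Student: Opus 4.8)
The plan is to prove the two assertions simultaneously, by downward induction on $n$ running from $n=P_m$ down to $n=1$, appending the symbols $X_{-P_m+1},\dots,X_{-1}$ one at a time on the right and tracking the reduced word via Lemma~\ref{prop-associative} and the explicit reduction procedure in the proof of Lemma~\ref{prop-reduction}. Two preliminary observations are used throughout. First, each relation in~\eqref{eqn-theta-relations} and~\eqref{eqn-theta-relations'} either deletes a burger--order pair, permutes a commuting pair, replaces a burger by a burger, or replaces an order by an order, so $\cC$ is invariant under reduction and additive under concatenation. Second, $j\mapsto\cC(X(-j,-1))$ is a simple random walk started from $0$, hence a.s.\ finite and a.s.\ attaining the value $m$, so $P_m$ is a.s.\ well defined with $\cC(X(-P_m,-1))=m$; and since $P_m$ is the \emph{first} such time, $\cC(X(-(n-1),-1))\le m-1$ for every $n\in[1,P_m]_\Z$, whence, by additivity,
\[
\cC\bigl(X_{-P_m}\cdots X_{-n}\bigr)=\cC(X(-P_m,-1))-\cC(X(-(n-1),-1))\ge m-(m-1)=1,\qquad\forall\, n\in[1,P_m]_\Z.
\]

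For the induction, let $H(n)$ be the statement that every symbol of $X_{-P_m}\cdots X_{-n}$ is identified in this word and that $\cR(X_{-P_m}\cdots X_{-n})$ contains at least one $\hb$ or $\cb$. The base case $H(P_m)$ is immediate from the hypothesis $X_{-P_m}\in\{\hb,\cb\}\subset\Theta_0$. For the inductive step, assume $H(n+1)$ with $1\le n\le P_m-1$ and set $R:=\cR(X_{-P_m}\cdots X_{-(n+1)})$. Since all symbols of $X_{-P_m}\cdots X_{-(n+1)}$ are identified, $R$ is a reduced word over $\Theta_0$, hence of the form $Uu$ with $U$ a string of orders and $u$ a string of $\hb$'s and $\cb$'s, and $u\ne\emptyset$ by $H(n+1)$. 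Now append $X_{-n}$, using $\cR(X_{-P_m}\cdots X_{-n})=\cR(RX_{-n})$: if $X_{-n}\in\Theta_0$ it is identified trivially, while if $X_{-n}\in\{\db,\eb,\fo,\so\}$ then, because the right end of $u$ is an $\hb$ or $\cb$, one of the relations~\eqref{eqn-theta-relations'} replaces $X_{-n}$ by an element of $\Theta_0$; either way $X_{-n}$ is identified. Hence all symbols of $X_{-P_m}\cdots X_{-n}$ are identified, so $\cR(X_{-P_m}\cdots X_{-n})$ is again a reduced word over $\Theta_0$, say $U'u'$ with $U'$ a string of orders and $u'$ a string of burgers; its net count is $|u'|-|U'|$, which equals $\cC(X_{-P_m}\cdots X_{-n})\ge1$ by the displayed inequality, so $u'\ne\emptyset$, i.e.\ $\cR(X_{-P_m}\cdots X_{-n})$ contains an $\hb$ or $\cb$. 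This establishes $H(n)$, and $H(n)$ for all $n\in[1,P_m]_\Z$ is exactly the conclusion of the lemma (recall $X(-P_m,-n)=\cR(X_{-P_m}\cdots X_{-n})$).

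The argument is mostly bookkeeping, but the point requiring care is the last step of the induction: a priori a reduced word with positive net count could consist only of orders and unmatched $\db$'s, none of which is a genuine $\hb$ or $\cb$. This is why the two halves of $H(n)$ must be carried together --- once we know every symbol is identified the reduced word lives over $\Theta_0$, where positive net count does force a hamburger or cheeseburger, and the identification property in turn holds because the earlier reduced word $R$ already carries such a burger, which identifies any symbol appended after it. The inequality $\cC(X_{-P_m}\cdots X_{-n})\ge1$, a consequence of the minimality in the definition of $P_m$, is the quantitative input that makes the induction close.
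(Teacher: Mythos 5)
Your proof is correct and follows essentially the same approach as the paper's: establish $\cC(X(-P_m,-n))\ge 1$ for all $n$ via the minimality of $P_m$ and additivity/reduction-invariance of $\cC$, then induct downward on $n$ from $P_m$, using the presence of an identified burger (an $\hb$ or $\cb$) at the right end of the reduced word to identify the next appended symbol. The paper phrases the induction as solely on the identification statement and derives the burger-containment from the pre-established inequality inside the inductive step, whereas you carry both claims as a joint hypothesis $H(n)$; this is a purely expository difference, and your version is arguably a bit more explicit about why the burger that identifies $X_{-n}$ must be a genuine $\hb$ or $\cb$ rather than a leftover $\db$ or $\eb$.
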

\begin{proof}
A.s.\ $P_m$ is well-defined and finite.
Suppose $n\in[1,P_m]_\Z$.  By definition of $P_m$,
\old{\alb
\cC\left(X(-P_m, -1) \right) = \cC\left(X(-P_m, -n\right)+ \cC\left(X(-n+1, -1) \right) &= m \\
\cC\left(X(-n+1, -1) \right)  &\leq m-1\,,
\ale
}
$\cC\left(X(-P_m, -1) \right)=m$ and $\cC\left(X(-n+1, -1) \right) \leq m-1$, but we also have
\alb
\cC\left(X(-P_m, -1) \right) & = \cC\left(X_{-P_m} \cdots X_{-1}\right)\\
& = \cC\left(X_{-P_m} \cdots X_{-n}\right)+\cC\left(X_{-n+1} \cdots X_{-1}\right)\\
&= \cC\left(X(-P_m, -n)\right)+ \cC\left(X(-n+1, -1) \right)\,,
\ale
so $\cC(X(-P_m,-n))\geq 1$ and hence the reduced word $X(-P_m,-n)$ contains at least one burger.

We now show by induction on $P_m-n\in[0,P_m-1]_\Z$ that every symbol in $X_{-P_m}\cdots X_{-n}$ is identified.  Since $X_{-P_m}\in\{\hb,\cb\}$, the claim is true for $n=P_m$.  If the claim is true for $P_m\geq n\geq 2$, then since $X(-P_m,-n)$ contains a burger, each of which is by induction identified, the next symbol $X_{-n+1}$ becomes identified, completing the induction.
\end{proof}

\begin{cor}
Almost surely each symbol in $X$ is identified, i.e., the bi-infinite identified word $X'=\cI(X)$ is well-defined and contains only symbols in $\Theta_0$.
\end{cor}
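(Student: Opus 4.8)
The plan is to reduce everything to showing that one fixed symbol, say $X_{-1}$, is almost surely identified; the full statement then follows from the translation invariance of the law of $X$ together with a union bound over $i\in\Z$, since a countable union of null events is null. Symbols in $\Theta_0$ are identified by definition, so the content of the argument concerns the case $X_{-1}\in\{\fo,\so,\db,\eb\}$, where being identified amounts to the reduction of the symbols lying to the left of $X_{-1}$ having an $\hb$ or $\cb$ as its rightmost symbol.

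First I would consider the backward net-count walk $j\mapsto\cC(X(-j,-1))$. By~\eqref{eqn-theta-prob} each $X_i$ is a burger with probability $1/2$ and an order with probability $1/2$, so this walk is a simple random walk on $\Z$, hence almost surely unbounded above; consequently, for each $m\in\N$ the first backward time $P_m$ at which the walk reaches $m$ (as in Lemma~\ref{prop-backward-burger}) is a.s.\ finite. Reading $X$ from right to left, each $P_m$ is a stopping time, and by the strong Markov property applied at $P_{m-1}$ the words $W_m\colonequals X_{-P_m}\cdots X_{-P_{m-1}-1}$ for $m\in\N$ are i.i.d. Since $P_m$ is a record time, the symbol $X_{-P_m}$ must be a burger; conditionally on being a burger it lies in $\{\hb,\cb\}$ with probability $1-p_\db-p_\eb$, which is strictly positive because $\pvec\in\mcl P$ forces $p_\db+p_\eb<1$. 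As the event $\{X_{-P_m}\in\{\hb,\cb\}\}$ is determined by $W_m$, these events are i.i.d.\ with positive probability, so almost surely at least one (indeed infinitely many) of them occurs.

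Fix such an $m$, so that $X_{-P_m}\in\{\hb,\cb\}$. By Lemma~\ref{prop-backward-burger}, every symbol of $X_{-P_m}\cdots X_{-1}$ is identified within that word; in particular $X_{-1}$ is. The one point needing care — and the only real subtlety — is that this identification agrees with the one coming from any longer prefix $X_j\cdots X_{-1}$ with $j\le-P_m$, so that the bi-infinite identification $X_{-1}'$ is well defined and lies in $\Theta_0$. This follows from Lemma~\ref{prop-backward-burger} (which also asserts that $X(-P_m,-k)$ contains a burger for every $k\in[1,P_m]_\Z$) together with Lemma~\ref{prop-associative}: writing $\cR(X_{-P_m}\cdots X_{-k})=Uu$ with $u$ a nonempty string of burgers, one checks that for any $j\le-P_m$ the reduction $\cR(X_j\cdots X_{-k})=\cR\bigl(\cR(X_j\cdots X_{-P_m-1})\,Uu\bigr)$ has the same rightmost burger as $Uu$, since prepending symbols on the left cannot consume the burgers $u$ sitting at the right end of a reduced word. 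Hence the rightmost-burger type governing the identification of each $X_{-k}$, $k\in[1,P_m]_\Z$, is unchanged as the window grows.

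Applying translation invariance and the union bound as described completes the proof; since every symbol of $X$ is then identified as some element of $\Theta_0$, the word $\cI(X)$ is well defined and consists only of symbols in $\Theta_0$. I expect the main obstacle to be not the probabilistic input (which is a short renewal/strong-Markov argument) but the bookkeeping in the stability step, i.e.\ verifying carefully, via the reduced-word normal form and $\cR(\cR(x)\cR(y))=\cR(xy)$, that an unmatched burger acts as a permanent barrier behind which all identifications are frozen.
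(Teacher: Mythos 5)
Your proof is correct and follows essentially the same strategy as the paper: both reduce to a single fixed symbol via translation invariance, use the strong Markov property at the backward record times $P_m$ of the net-count walk $j\mapsto\cC(X(-j,-1))$ to conclude that almost surely some $X_{-P_m}\in\{\hb,\cb\}$ (since $p_\db+p_\eb<1$), and then invoke Lemma~\ref{prop-backward-burger}. Your explicit verification of the stability step — that extending the window further left cannot dislodge the rightmost burger of $X(-P_m,-k)$, so the bi-infinite identification is well defined — is a worthwhile elaboration of a point the paper leaves implicit, but it does not change the underlying argument.
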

\begin{proof}
For $m\in\N$, by the strong Markov property, conditional on the stopping time $P_m$, the word $X_{-P_{m+1}}\cdots X_{-P_m-1}$ is independent of $X_{-P_m}\cdots X_{-1}$, and in particular $\PP(X_{-P_{m+1}}\in\{\hb,\cb\}) = 1-p_\db-p_\eb > 0$.  Almost surely for some $m$ this event occurs, in which case, by Lemma~\ref{prop-backward-burger}, the symbol $X_0$ is identified.  By translation invariance, a.s.\ every symbol in $X$ is identified.
\end{proof}

\begin{lem} \label{prop-identification-exists}
Almost surely,
\eqb \label{eqn-burger-to-infty}
\lim_{n\rta\infty} \cN_{\hb}(X(-n,-1)) = \lim_{n\rta\infty} \cN_{\cb}(X(-n,-1)) = \infty.
\eqe
\end{lem}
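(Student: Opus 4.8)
\emph{Proof strategy.} The plan is to first show that $n\mapsto\cN_{\hb}(X(-n,-1))$ and $n\mapsto\cN_{\cb}(X(-n,-1))$ are non-decreasing, then to identify their limits with $+\infty$ by combining a renewal argument with an ergodicity argument.

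Write $L_{\hb}:=\lim_{n\to\infty}\cN_{\hb}(X(-n,-1))$ and $L_{\cb}:=\lim_{n\to\infty}\cN_{\cb}(X(-n,-1))$, a priori taking values in $\{0,1,2,\dots\}\cup\{\infty\}$. That these limits exist is the first step. By Lemma~\ref{prop-associative}, $X(-(n+1),-1)=\cR\big(X_{-(n+1)}\,X(-n,-1)\big)$, so it is enough to check that prepending a single symbol $\theta\in\Theta$ to a reduced word $r$ can only increase, or leave unchanged, the number of $\hb$'s and the number of $\cb$'s occurring in it. This is a routine finite case analysis using~\eqref{eqn-theta-relations} and~\eqref{eqn-theta-relations'}: as $\theta$ gets rewritten while moving to the right through $r$, at each step it is either deleted together with an order of $r$, commutes past a symbol of $r$, relabels a $\fo$ or $\so$ of $r$ as an order, relabels a $\db$ or $\eb$ of $r$ as a burger, or comes to rest inside the burger suffix of $r$; only the last two possibilities affect $\cN_{\hb}$ or $\cN_{\cb}$, and each of them increases the relevant count. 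Hence $L_{\hb}=\sup_n\cN_{\hb}(X(-n,-1))$ and likewise for $L_{\cb}$, and it remains to prove $L_{\hb}=L_{\cb}=\infty$ almost surely.

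The next step is to show $L_{\hb}+L_{\cb}=\infty$ a.s. Reduction preserves $\cB-\cO$, so $j\mapsto\cC(X(-j,-1))$ is a simple random walk on $\Z$, hence recurrent; let $P_m$ be the first $j\in\N$ with $\cC(X(-j,-1))=m$, which is a.s.\ finite and strictly increasing in $m$. The walk makes a $+1$ step at time $P_m$, so $X_{-P_m}$ is a burger; conditionally on the $\pm1$ increments of the walk, the symbols at distinct burger positions are i.i.d., so the events $\{X_{-P_m}\in\{\hb,\cb\}\}$ are conditionally independent, each of probability $1-p_\db-p_\eb>0$. By the conditional Borel--Cantelli lemma, a.s.\ $X_{-P_m}\in\{\hb,\cb\}$ for infinitely many $m$. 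For every such $m$, Lemma~\ref{prop-backward-burger} implies that each symbol of $X_{-P_m}\cdots X_{-1}$ is identified within this word, so $X(-P_m,-1)$ consists only of symbols of $\Theta_0$, whence
\[
\big(\cN_{\hb}+\cN_{\cb}\big)\big(X(-P_m,-1)\big)\ \geq\ \big(\cN_{\hb}+\cN_{\cb}-\cN_{\ho}-\cN_{\co}\big)\big(X(-P_m,-1)\big)\ =\ \cC\big(X(-P_m,-1)\big)\ =\ m .
\]
Since $P_m\to\infty$, letting $m\to\infty$ along such indices and invoking the monotonicity of the previous paragraph yields $L_{\hb}+L_{\cb}=\infty$ a.s.

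Finally I would separate the two burger types. The model is invariant under the involution $\tau$ of $\Theta$ interchanging $\hb\leftrightarrow\cb$ and $\ho\leftrightarrow\co$ and fixing $\db,\eb,\fo,\so$: the measure~\eqref{eqn-theta-prob} is $\tau$-invariant and the relations~\eqref{eqn-theta-relations}--\eqref{eqn-theta-relations'} are $\tau$-equivariant, and $\cN_{\hb}\circ\tau=\cN_{\cb}$; hence $L_{\hb}$ and $L_{\cb}$ have the same law, so $\PP(L_{\hb}=\infty)=\PP(L_{\cb}=\infty)$. Moreover, appending a single symbol on the right of a reduced word changes the number of $\hb$'s it contains by at most $1$ (a short case check), and from this one sees that the event $\{L_{\hb}=\infty\}$ is invariant under the left shift of the bi-infinite word; as the shift acts ergodically on the i.i.d.\ word $X$, it follows that $\PP(L_{\hb}=\infty)\in\{0,1\}$. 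If this probability were $0$, then $\PP(L_{\cb}=\infty)=0$ as well, so $L_{\hb}+L_{\cb}<\infty$ a.s., contradicting the previous paragraph. Hence $\PP(L_{\hb}=\infty)=\PP(L_{\cb}=\infty)=1$, which is~\eqref{eqn-burger-to-infty}. I expect the last step to be the main obstacle: the renewal argument only controls the sum $\cN_{\hb}+\cN_{\cb}$, and splitting it into the two types genuinely seems to require both the $\hb\leftrightarrow\cb$ symmetry and the shift-invariance of $\{L_{\hb}=\infty\}$; the bounded-perturbation facts underpinning monotonicity and shift-invariance are elementary but demand a careful enumeration of the cases in~\eqref{eqn-theta-relations}--\eqref{eqn-theta-relations'}.
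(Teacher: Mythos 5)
Your proof is correct, and it uses the same structural inputs as the paper's argument --- the recurrence of the net-count walk $j\mapsto\cC(X(-j,-1))$, the first-passage times $P_m$, conditional independence of the burger-position symbols, and Lemma~\ref{prop-backward-burger} --- but it organizes the symmetry step in a genuinely different way. The paper's proof is direct and quantitative: it defines an iterated sequence of times $\ell_k$ at which the overshoot word $X(-\ell_k,-\ell_{k-1}-1)$ contains at least $2\ell_{k-1}+2r$ burgers; conditionally on $X_{-\ell_k}\in\{\hb,\cb\}$, the $\hb\leftrightarrow\cb$ symmetry yields at least $\ell_{k-1}+r$ hamburgers with conditional probability at least $1/2$, and since $X(-\ell_{k-1},-1)$ can feed it at most $\ell_{k-1}$ hamburger orders, $\cN_\hb(X(-\ell_k,-1))\geq r$ with conditional probability bounded below by $(1-p_\db-p_\eb)/2$; a Borel--Cantelli argument and monotonicity then finish. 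You instead first establish the weaker statement $L_\hb+L_\cb=\infty$ a.s.\ (cheap, via $\cN_\hb+\cN_\cb\geq\cC$ at the times $P_m$ with $X_{-P_m}\in\{\hb,\cb\}$) and then separate the two types by observing that $\{L_\hb=\infty\}$ is invariant under the shift, hence trivial by ergodicity, and has the same probability as $\{L_\cb=\infty\}$ by the $\hb\leftrightarrow\cb$ involution. What your route saves is the careful choice of the geometrically growing $\ell_k$'s and the overshoot bookkeeping; what it costs is the appeal to the $0$-$1$ law plus two small combinatorial facts (monotonicity of $n\mapsto\cN_\hb(X(-n,-1))$, and the $\pm1$ Lipschitz bound for $\cN_\hb$ under appending one symbol on the right). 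Both of these are true --- indeed the monotonicity is used without comment in the paper's proof of Proposition~\ref{prop-length-sup} --- but your one-sentence case check for prepending glosses over the cascade that can occur when a prepended burger identifies a $\db$ or $\eb$ of $r$ which is subsequently matched by a later order in $r$; the cleaner justification is that prepending a symbol on the left can neither unmatch nor re-identify any $\hb$ or $\cb$ already present in the burger suffix of $r$, since no order lying to their left can consume them, and identifications to their left are unaffected once a genuine $\Theta_0$ burger separates them from the new symbol.
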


\begin{proof}
Suppose $r\in\N$.  Let $\ell_0=0$, and for $k\in\N$ inductively define
\eqbn
\ell_k \colonequals \min \big\{t\in\N: \cC(X_{-t}\cdots X_{-\ell_{k-1}-1}) = 2 \ell_{k-1} + 2 r\big\}.
\eqen
Since $j \mapsto \cC(X(-j,-1))$ is a simple random walk, a.s.\ each of the times $\ell_k$ is finite.

By the strong Markov property, conditional on $\ell_{k-1}$, the word $X_{-\ell_k}\cdots X_{-\ell_{k-1}-1}$ is independent of
of $X_{-\ell_{k-1}}\cdots X_{-1}$.  With probability $1-p_\db-p_\eb$ the symbol $X_{-\ell_k}$ is $\hb$ or $\cb$,
in which case, by Lemma~\ref{prop-backward-burger}, each symbol of $X_{-\ell_k}\cdots X_{-\ell_{k-1}-1}$
is identified.  Conditional on this event, since $X_{-\ell_k}\cdots X_{-\ell_{k-1}-1}$ contains at least $2\ell_{k-1}+2 r$ burgers,
by symmetry it must be that with probability at least $1/2$ it contains at least $\ell_{k-1}+r$ hamburgers.  But
$X(-\ell_{k-1},-1)$ contains no more than $\ell_{k-1}$ hamburger orders, so $X(-\ell_k,-1)$ contains at least $r$ hamburgers.
So regardless of $X_{-\ell_{k-1}}\cdots X_{-1}$, with probability at least $(1-p_\db-p_\eb)/2$, the reduced word $X(-\ell_k,-1)$ contains at least $r$ hamburgers.  Almost surely this event will occur for some $k$.  Since $\cN_\hb(X(-n,-1))$ is monotone increasing in $n$, almost surely $\liminf_{n\to\infty} \cN_\hb(X(-n,-1)) \geq r$.
Since $r\in\N$ was arbitrary, \eqref{eqn-burger-to-infty} follows.
\end{proof}

\begin{lem} \label{prop-match}
Almost surely, for each $i\in \Z$ there is a unique $\phi(i) \in \Z$, called the \emph{match} of~$i$, such that $X_{\phi(i)}$ cancels out $X_i$ in the reduced word $X(\phi(i), i)$ if $\phi(i)< i$ or $X(i,\phi(i))$ if $i < \phi(i)$.
\end{lem}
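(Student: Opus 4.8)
The plan is to establish Lemma~\ref{prop-match} by combining the two facts already proven just above it: that a.s.\ every symbol in $X$ is identified (the corollary to Lemma~\ref{prop-backward-burger}), and that a.s.\ $\cN_{\hb}(X(-n,-1))$ and $\cN_{\cb}(X(-n,-1))$ both tend to $\infty$ as $n\to\infty$ (Lemma~\ref{prop-identification-exists}), together with the symmetric statements obtained by reading the word forward. The underlying point is that once we know all symbols are identified, the identified bi-infinite word $X' = \cI(X)$ is a bi-infinite word in the alphabet $\Theta_0 = \{\hb,\cb,\ho,\co\}$, and the matching for $X$ is by definition the matching for $X'$ (an order in $X$ is matched to the burger that its identification consumes in $X'$). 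So it suffices to produce a well-defined matching involution on the bi-infinite identified word, and the content is that the ``freshest available burger'' rule never fails: every $\ho$ (resp.\ $\co$) in $X'$ has a hamburger (resp.\ cheeseburger) somewhere to its left that has not already been consumed, and symmetrically every $\hb$ (resp.\ $\cb$) has an $\ho$ (resp.\ $\co$) to its right that consumes it.

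First I would fix $i\in\Z$ and, by translation invariance, assume $i=0$; I will show $X_0$ has a unique match a.s. Suppose first $X_0'\in\{\ho\}$ (the case $\co$ is identical with hamburgers and cheeseburgers interchanged, and the burger cases are handled by the forward-reading argument below). By Lemma~\ref{prop-backward-burger} applied along the stopping times $P_m$ — exactly as in the corollary proving every symbol is identified — we know $X_0$ is identified, and moreover for the first $m$ with $X_{-P_m}\in\{\hb,\cb\}$ the reduced word $X(-P_m,-1)$ contains at least one burger and all symbols of $X_{-P_m}\cdots X_{-1}$ are identified. More strongly, by Lemma~\ref{prop-identification-exists} we may pick $n$ large enough that $\cN_{\hb}(X(-n,-1))\ge 1$; then the rightmost hamburger in the reduced word $\cR(X_{-n}\cdots X_{-1})$ corresponds to a genuine $\hb$ symbol $X_{-j}$ (for some $j\le n$) that has not been consumed by any order in $X_{-n}\cdots X_{-1}$. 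Because hamburger orders are fulfilled by the freshest available hamburger and the word is reduced, the identification $X_0'=\ho$ consumes precisely this $X_{-j}$; set $\phi(0) := -j$. Uniqueness of $\phi(0)$ is immediate from the deterministic reduction algorithm (Lemma~\ref{prop-reduction}): the symbol consumed by a given order in a reduced word is uniquely determined, and the relation $\cR(xy) = \cR(\cR(x)\cR(y))$ (Lemma~\ref{prop-associative}) shows this choice is stable under enlarging the window $[-n,-1]$ further, so $\phi(0)$ does not depend on $n$.

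Second, for the case $X_0'\in\{\hb\}$ I would run the mirror-image argument reading the word \emph{from left to right}: by the same reasoning applied to the word $\cdots X_{-1}X_0 X_1\cdots$ read forward, a.s.\ every symbol is identified and $\cN_{\ho}(X(1,n))\to\infty$ and $\cN_{\co}(X(1,n))\to\infty$ (these are the forward analogues of Lemma~\ref{prop-identification-exists}, proven by the identical argument using translation invariance, or one may simply invoke that the burger $X_0$ must be consumed by \emph{some} order since $\cR(X)$ is a.s.\ empty — a fact already asserted in the text after Definition~\ref{def-match}). Pick $n$ with $\cN_{\ho}(X(1,n))\ge 1$ so large that the reduced word $\cR(X_1\cdots X_n)$ has no leftover $\hb$ corresponding to $X_0$; then there is a unique $\ho$ symbol $X_j$ ($1\le j\le n$) consuming $X_0$, and we set $\phi(0):=j$. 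Symmetry then gives $\phi(\phi(0))=0$: if $X_{-j}$ consumes $X_0$ then $X_0$ is the freshest available hamburger seen by $X_{-j}$'s order, so $X_0 = X_{\phi(-j)}$. The remaining burger/order bookkeeping is routine.

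\textbf{The main obstacle} is purely one of careful sequencing: one must ensure that the choice of ``the burger consumed by $X_0'$'' computed inside a finite window $[-n,-1]$ genuinely stabilizes as $n\to\infty$ and is not later invalidated by symbols further to the left (which could, a priori, be burger orders consuming the candidate burger, or — in the original alphabet $\Theta$ — $\db$/$\so$ symbols whose identification changes). This is exactly what $\cR(xy)=\cR(\cR(x)\cR(y))$ (Lemma~\ref{prop-associative}) rules out: once a burger survives in $\cR(X_{-n}\cdots X_{-1})$ and is the rightmost of its type, it survives in every longer reduced prefix until (and if) it meets its matching order, and the a.s.\ identification of all symbols (already established) guarantees no $\db$/$\so$ ambiguity persists. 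Spelling out this stability carefully, and noting it is symmetric under reading direction, is the only real content; everything else is invoking the lemmas above verbatim.
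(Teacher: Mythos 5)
Your handling of the order case ($X_0'\in\{\ho,\co\}$) is essentially the paper's, and your care about the stability of the rightmost burger under enlarging the window is justified: the burger suffix $u$ of a reduced word $Uu$ is never altered by prepending symbols on the left. The burger case is where the argument has a genuine gap. Of the two routes you offer, (i) is circular --- the assertion in Section~\ref{sec-burger} that ``$\cR(X)$ is a.s.\ empty'' explicitly cites this appendix for its proof, so it is precisely what Lemma~\ref{prop-match} must establish --- and (ii) does not go through as stated. The proof of Lemma~\ref{prop-identification-exists} rests entirely on Lemma~\ref{prop-backward-burger}, whose inductive step propagates identification from left to right: a leftmost genuine burger $X_{-P_m}\in\{\hb,\cb\}$ identifies every symbol to its right in the window. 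There is no mirror version with a rightmost genuine order identifying symbols to its left, because identification of a symbol depends only on symbols to its left; the induction simply cannot be run in reverse. Translation invariance only matches $\cN_{\ho}(X(1,n))$ with $\cN_{\ho}(X(-n,-1))$, not with $\cN_{\hb}(X(-n,-1))$, and the burger--order swap composed with word reversal, which would interchange these, is not a symmetry of the reduction rules once the extra symbols $\db,\eb,\fo,\so$ are present. Finally, even granted $\cN_{\ho}(X(1,n))\to\infty$, this does not by itself force $X_0$ to be consumed: for instance $\cR(\db\,\ho)=\db\,\ho$ has one $\ho$, yet prepending $\hb$ gives $\cR(\hb\,\db\,\ho)=\hb$, so the prepended $\hb$ survives. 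What you actually need is that $\cN_{\hb}(X(0,n))$ a.s.\ returns to $0$, which is a different and less accessible statement.

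The paper instead closes the burger case with a short Mass-Transport argument that avoids any forward-reading analogue: let each order send one unit of mass to the burger it consumes. By the order case every order a.s.\ has a match, so the expected mass out of a site is $\PP(X_0\text{ is an order})=\tfrac12$; the transport rule is translation-invariant, so by the Mass-Transport Principle the expected mass into a site is also $\tfrac12$; but that mass equals $\one(X_0\text{ is a burger that is consumed})$, and $\PP(X_0\text{ is a burger})=\tfrac12$, so a.s.\ every burger is consumed. This is both shorter and valid for all of $\mcl P$, whereas an honest proof that $\cN_{\hb}(X(0,n))$ hits $0$ would require quantitative estimates (such as Lemma~\ref{prop-H-mean}) that appear only much later in the paper and only for restricted parameters.
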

\begin{proof}
If $X'_i$ is an order, from Lemma~\ref{prop-identification-exists} we see that a.s.\ it has a match in $X_{-n}\cdots X_i$
for some sufficiently large $n$ (using translation invariance).

Next we use the Mass-Transport Principle (see e.g.~\cite[Chapter~8]{Lyons-Peres}).  We let each order send one unit of mass to the burger that it consumes.
Since each order consumes a burger by the above paragraph and each letter is an order with probability $1/2$, the expected mass out of a letter is $1/2$.  Since this mass transport rule
is translation invariant, by the Mass-Transport Principle,
the expected mass into a letter is $1/2$.  Thus any given letter is with probability $1/2$ a burger that is consumed.
But it is a burger with probability $1/2$, so a.s.\ all burgers are consumed.
\end{proof}

\def\@rst #1 #2other{#1}
\renewcommand\MR[1]{\relax\ifhmode\unskip\spacefactor3000 \space\fi
  \MRhref{\expandafter\@rst #1 other}{#1}}
\renewcommand{\MRhref}[2]{\href{http://www.ams.org/mathscinet-getitem?mr=#1}{MR#1}}

\renewcommand{\arXiv}[1]{\href{http://arxiv.org/abs/#1}{arXiv:#1}}
\renewcommand{\arxiv}[1]{\href{http://arxiv.org/abs/#1}{#1}}

\phantomsection
\pdfbookmark[1]{References}{bib}

\def\cprime{$'$}

\end{document}